\numberwithin{equation}{section}
\newtheorem{thm}{Theorem}[section]
\newtheorem{lem}[thm]{Lemma}
\newtheorem{prop}[thm]{Proposition}
\newtheorem{cor}[thm]{Corollary}
\theoremstyle{definition}
\newtheorem{defn}[thm]{Definition}
\newtheorem{remark}[thm]{Remark}
\newtheorem{algo}{Algorithm}
\crefname{lem}{Lemma}{Lemmas}
\crefname{thm}{Theorem}{Theorems}
\crefname{prop}{Proposition}{Propositions}
\crefname{question}{Question}{Questions}
\crefname{defn}{Definition}{Definitions}
\crefname{conj}{Conjecture}{Conjectures}
\crefname{figure}{Figure}{Figures}
\crefname{cor}{Corollary}{Corollaries} 
\newcommand\inv{\operatorname{inv}}
\newcommand\vi{\vec{i}}
\newcommand\va{\vec{a}}
\newcommand\vT{\vec{T}}
\newcommand\vS{\vec{S}}
\newcommand\SE{\operatorname{SE}}
\newcommand\NE{\operatorname{NE}}
\newcommand\tn{\tilde{\nu}}
\newcommand\ts{\tilde{\sigma}}
\newcommand\QQ{\mathbb{Q}}
\newcommand\TT{\mathcal{T}}
\newcommand{\ZZ}{\mathbb{Z}}
\newcommand\LL{\mathcal{L}}
\newcommand\vq{\mathbf{q}}
\newcommand\sym{\mathfrak{S}}
\renewcommand\vec[1]{\mathbf{#1}}
\newcommand\flr[1]{\left\lfloor #1\right\rfloor}
\newcommand\ceil[1]{\left\lceil #1\right\rceil}
\newcommand\norm[1]{\lVert #1\rVert}
\newcommand\Qbinom[3]{\genfrac{[}{]}{0pt}{}{#1}{#2}_{#3}}
\newcommand\qbinom[2]{\Qbinom{#1}{#2}{q}}
\newcommand\qHyper[5]{
  {}_{#1}\phi_{#2} \left(
    \begin{matrix}
      #3\\
      #4\\
    \end{matrix}
    ; #5
  \right)
}
\newcommand\TRIL[2]{
  \def\X{#1} \def\Y{#2}
  \foreach \i in {0,...,\X}
  {
    \draw[gray,very thin] (\i,0) -- (\i,\i);
    \draw[gray,very thin] (\i,\i) -- (\X,\i);
    \node at (\i,-.3) {\i};
  }
}
\newcommand\LHLL[2]{
  \def\X{#1} \def\Y{#2}
  \foreach \i in {0,...,\X}
  {
\pgfmathsetmacro{\m}{\Y};
    \draw[gray,very thin] (\i,0) -- (\i,\m);
  }
\pgfmathsetmacro{\m}{\Y-1};
  \foreach \x in {1,...,\X}
{ \foreach \j in {0,...,\m}
  {
\draw[gray,very thin] (\x-1,\j) -- (\x,\j);
}}
\pgfmathsetmacro{\m}{\Y-1};
 \foreach \i in {2,...,\X}
 \foreach \j in {0,...,\m} 
{\foreach \y in {2,...,\i}
{
\pgfmathsetmacro{\w}{\j+(\y-1)/\i};
\pgfmathsetmacro{\z}{\j+(\y-1)/(\i+1)};
\draw[gray,very thin] (\i-1,\w) -- (\i,\z);
}}
}
\newcommand\LHlabel[2]{
  \def\X{#1} \def\Y{#2}
  \foreach \i in {0,...,\X}
  {
\pgfmathsetmacro{\m}{\Y};
    \node at (\i,-.3) {\i};
  }
\pgfmathsetmacro{\m}{\Y+.5};
\pgfmathsetmacro{\m}{\Y};
  \foreach \j in {0,...,\m}
  {
    \node at (-.3,\j) {\j};
  }
}
\newcommand\LHLLL[2]{
  \LHLL{#1}{#2} \LHlabel{#1}{#2}
  \draw[gray,very thin] (0,#2) -- (#1,#2);
}
\title{Lecture hall graphs and the Askey scheme}
\author{Sylvie Corteel}
\address{(Sylvie Corteel) CNRS, IRIF Université Paris Cité, Paris, France and Department of Mathematics, University of California Berkeley, USA}
\email{corteel@irif.fr}
\author{Bhargavi Jonnadula}
\address{(Bhargavi Jonnadula) Mathematical Institute, University of Oxford, Oxford OX2 6GG, UK}
\curraddr{Nu Quantum Ltd., Cambridge, UK}
\email{bhargavi.jonnadula@nu-quantum.com}
\author{Jonathan P. Keating}
\address{(Jonathan P. Keating) Mathematical Institute, University of Oxford, Oxford OX2 6GG, UK}
\email{keating@maths.ox.ac.uk}
\author{Jang Soo Kim}
\address{(Jang Soo Kim) Department of Mathematics,
Sungkyunkwan University (SKKU), Suwon, South Korea}
\email{jangsookim@skku.edu}
\begin{document}

\begin{abstract}
  We establish, for every family of orthogonal polynomials in the
  \( q \)-Askey scheme and the Askey scheme, a combinatorial model for
  mixed moments and coefficients in terms of paths on the lecture hall
  graph. This generalizes the previous results of Corteel and Kim for
  the little \( q \)-Jacobi polynomials. We build these combinatorial
  models by bootstrapping, beginning with polynomials at the bottom
  and working towards Askey--Wilson polynomials which sit at the top
  of the \( q \)-Askey scheme. As an application of the theory, we provide
  the first combinatorial proof of the symmetries in the parameters of
  the Askey--Wilson polynomials.
\end{abstract}

\subjclass[2020]{Primary 05A15; Secondary 33D45, 05A30}

\maketitle


\section{Introduction}

Orthogonal polynomials are classical objects that play a central role
in a wide range of areas of mathematics. Since the pioneering work of
Flajolet \cite{Flajolet1980} and Viennot \cite{ViennotLN, ViennotOP}
in the 1980s, researchers have found them to possess many interesting
combinatorial properties. See \cite{CKS, ViennotOP, Zeng2021} and
references therein for more details.

Orthogonal polynomials can be defined as a sequence
\( \{p_n(x)\}_{n\ge0} \) of polynomials with \( \deg p_n(x) =n \) such
that there is a linear functional \( \LL \) satisfying
\( \LL(p_n(x)p_m(x)) = K_n \delta_{n,m} \), where \( K_n\ne 0 \). Their 
\emph{moments \( \{\sigma_n\}_{n\ge0} \)} are defined by
\[
  \sigma_n = \frac{\LL(x^n)}{\LL(1)}.
\]
More generally, the \emph{mixed moments
  \( \{\sigma_{n,k}\}_{n,k\ge 0} \)} are defined by
\[
  \sigma_{n,k} = \frac{\LL(x^np_k(x))}{\LL(p_k(x)^2)}.
\]
By the orthogonality, we have
\[
  x^n = \sum_{k=0}^{n} \sigma_{n,k} p_k(x),
\]
which can be taken as the definition of the mixed moments. We also
define the \emph{coefficients} (or \emph{dual mixed moments})
\( \{\nu_{n,k}\}_{n,k\ge 0} \) by
\[
  p_n(x) = \sum_{k=0}^{n} \nu_{n,k} x^k.
\]

Viennot \cite{ViennotLN, ViennotOP} found a combinatorial
interpretation for $\sigma_{n,k}$ using Motzkin paths. To illustrate,
suppose that \( \{ p_n(x) \}_{n\ge 0} \) is a sequence of monic
orthogonal polynomials satisfying the 3-term recurrence
\[
p_{n+1}(x) = (x-b_n)p_n(x)-\lambda_{n}p_{n-1}(x).
\]
A \emph{Motzkin path} is a path on $\mathbb{Z}^2$ from $(a, b)$ to
$(c,d)$ consisting of up steps \( (1,1) \), horizontal steps
\( (1,0) \), and down steps \( (1,-1) \) that never go below the line
$y = 0$. The weight \( w(p) \) of a Motzkin path \( p \) is the
product of the weights of the steps in \( p \), where the weight of an
up step is always \( 1 \), the weight of a horizontal step at height
$k$ is $b_k$, and the weight of a down step starting at height $k$ is
$\lambda_k$. Viennot showed that $\sigma_{n,k}$ is the sum of
\( w(p) \) for all Motzkin paths \( p \) from \( (0,0) \) to
\( (n,k) \). For example,
\[
  \sigma_{3,1} = b_0^2+b_0b_1+b_1^2+\lambda_1+\lambda_2
\]
is the generating function for all Motzkin paths from \( (0,0) \) to
\( (3,1) \) as shown in \Cref{fig:motzkin}. Using this combinatorial
interpretation for \( \sigma_{n,k} \) one can show that the moments of
Hermite, Charlier, and Laguerre polynomials are generating functions
for perfect matchings, set partitions, and permutations, respectively.
Viennot also found a combinatorial model for $\nu_{n,k}$ using lattice
paths called Favard paths. 

\begin{figure}
  \hspace*{\fill}
    \begin{subfigure}[b]{0.19\textwidth}
\centering
\begin{tikzpicture}[scale = 0.65]
\draw(-0.25,-0.25) grid (3.5,2.5);
\foreach \x in {0,1,...,3} { \node [anchor=north] at (\x,-0.3) {\x}; }
\foreach \y in {0,1,2} { \node [anchor=east] at (-0.3,\y) {\y}; }
\draw [thick] (0,0) -- (1,0) -- (2,0)-- (3,1);
\filldraw[black](0,0)circle[radius=2pt];
\filldraw[black](1,0)circle[radius=2pt];
\filldraw[black](2,0)circle[radius=2pt];
\filldraw[black](3,1)circle[radius=2pt];
\node [anchor=north] at (0.5,0.75) {$b_0$};
\node [anchor=north] at (1.5,0.75) {$b_0$};
\end{tikzpicture}
\captionsetup{labelformat=empty}
\caption{$w(p) =b_0^2$}
\end{subfigure} 
   \begin{subfigure}[b]{0.19\textwidth}
\centering
\begin{tikzpicture}[scale = 0.65]
\draw(-0.25,-0.25) grid (3.5,2.5);
\foreach \x in {0,1,...,3} { \node [anchor=north] at (\x,-0.3) {\x}; }
\draw [thick] (0,0) -- (1,0) -- (2,1)-- (3,1);
\filldraw[black](0,0)circle[radius=2pt];
\filldraw[black](1,0)circle[radius=2pt];
\filldraw[black](2,1)circle[radius=2pt];
\filldraw[black](3,1)circle[radius=2pt];
\node [anchor=north] at (0.5,0.75) {$b_0$};
\node [anchor=north] at (2.5,1.75) {$b_1$};
\end{tikzpicture}
\captionsetup{labelformat=empty}
\caption{$w(p) =b_0 b_1$}
\end{subfigure} 
  \begin{subfigure}[b]{0.19\textwidth}
\centering
\begin{tikzpicture}[scale = 0.65]
\draw(-0.25,-0.25) grid (3.5,2.5);
\foreach \x in {0,1,...,3} { \node [anchor=north] at (\x,-0.3) {\x}; }
\draw [thick] (0,0) -- (1,1) -- (2,1)-- (3,1);
\filldraw[black](0,0)circle[radius=2pt];
\filldraw[black](1,1)circle[radius=2pt];
\filldraw[black](2,1)circle[radius=2pt];
\filldraw[black](3,1)circle[radius=2pt];
\node [anchor=north] at (1.5,1.75) {$b_1$};
\node [anchor=north] at (2.5,1.75) {$b_1$};
\end{tikzpicture}
\captionsetup{labelformat=empty}
\caption{$w(p) = b_1^2$}
\end{subfigure} 
\begin{subfigure}[b]{0.19\textwidth}
\centering
\begin{tikzpicture}[scale = 0.65]
\draw(-0.25,-0.25) grid (3.5,2.5);
\foreach \x in {0,1,...,3} { \node [anchor=north] at (\x,-0.3) {\x}; }
\draw [thick] (0,0) -- (1,1) -- (2,0)-- (3,1);
\filldraw[black](0,0)circle[radius=2pt];
\filldraw[black](1,1)circle[radius=2pt];
\filldraw[black](2,0)circle[radius=2pt];
\filldraw[black](3,1)circle[radius=2pt];
\node [anchor=north] at (1.75,1.1) {$\lambda_1$};
\end{tikzpicture}
\captionsetup{labelformat=empty}
\caption{$w(p) = \lambda_1$}
\end{subfigure} 
\begin{subfigure}[b]{0.19\textwidth}
\centering
\begin{tikzpicture}[scale = 0.65]
\draw(-0.25,-0.25) grid (3.5,2.5);
\foreach \x in {0,1,...,3} { \node [anchor=north] at (\x,-0.3) {\x}; }
\draw [thick] (0,0) -- (2,2) -- (3,1);
\filldraw[black](0,0)circle[radius=2pt];
\filldraw[black](1,1)circle[radius=2pt];
\filldraw[black](2,2)circle[radius=2pt];
\filldraw[black](3,1)circle[radius=2pt];
\node [anchor=north] at (2.75,2.1) {$\lambda_2$};
\end{tikzpicture}
\captionsetup{labelformat=empty}
\caption{$w(p) = \lambda_2$}
\end{subfigure}\hspace{0.0cm} 
  \hspace*{\fill}
  \caption{ The Motzkin paths from \( (0,0) \) to \( (3,1) \) and
    their weights. The weights of horizontal and down steps are
    indicated.}
\label{fig:motzkin}
\end{figure}

There is a natural way to extend univariate orthogonal polynomials to
multivariate orthogonal polynomials using bialternant formulas. In the
expansion of the multivariate orthogonal polynomials (resp.~Schur
polynomials) in terms of Schur polynomials (resp.~multivariate
orthogonal polynomials), the coefficients are given as determinants of
\( \nu_{n,k} \) (resp.~\( \sigma_{n,k} \)). Therefore one may ask whether
the Lindstr\"om--Gessel--Viennot (LGV) lemma \cite{LGV} (see also
\cite[Chapter~32]{Aigner2018}) can be used to find an interpretation
for the Schur coefficients using nonintersecting paths. However,
neither Motzkin paths nor Favard paths give such a model, since their
underlying graphs are not planar. One of the motivations of this paper
is to find a planar graph that provides a lattice path interpretation
for the mixed moments, so that it can be extended naturally to
multivariate orthogonal polynomials. 

Corteel and Kim \cite{LHT} introduced
the lecture hall graph in which paths are in natural bijection with
lecture hall partitions and anti-lecture hall compositions. Lecture
hall partitions with $n$ nonnegative parts were originally defined in
the enumeration of certain affine permutations coming from the affine
Coxeter group $\tilde{C}_n$ \cite{BME1}. They also have many
connections with other combinatorial objects; see \cite{LHPSavage} for
example.

Corteel and Kim \cite{LHT} found a path model in the lecture hall
graph for both mixed moments and coefficients of the little
\( q \)-Jacobi polynomials. A nice feature of their combinatorial
model is that it can be extended to give a tableau model for
multivariate little \( q \)-Jacobi polynomials. Their discovery of the
combinatorial model for little \( q \)-Jacobi polynomials, however,
was somewhat by accident, because the coefficient \( \nu_{n,k} \) of
the little \( q \)-Jacobi polynomial happens to be equal to a formula,
previously established in \cite{trunc_LHP}, for a generating function
for truncated lecture hall partitions.

The goal of this paper is to develop a general interpretation of the
mixed moments \( \sigma_{n,k} \) and the coefficients \( \nu_{n,k} \)
of orthogonal polynomials as generating functions for paths in the
lecture hall graph. We show that it is not a coincidence that paths on
the lecture hall graph and little $q$-Jacobi polynomials are related.
In fact, we prove that the relation is much stronger and that the
orthogonal polynomials in the whole \( q \)-Askey scheme \cite{KLS}
can be studied this way. 

We give a simple algorithm to construct a possible candidate giving a
lecture hall path interpretation for both coefficients and mixed
moments of orthogonal polynomials. This algorithm allows us to
rediscover the lecture hall path interpretations for the little
\( q \)-Jacobi polynomials without any prior knowledge of the
generating function for truncated lecture hall partitions. 

Our main result may be summarized in the following ``meta'' theorem.
This is deliberately quite vague for now. The purpose of our
manuscript is to develop the tools and results necessary to make it
precise. 

\begin{thm}
  For every family of polynomials in the $q$-Askey scheme and in the
  Askey scheme, the mixed moments and coefficients have combinatorial
  models on the lecture hall graph.
\end{thm}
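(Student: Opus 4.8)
The plan is to make this meta-theorem precise by attaching to each family in the $q$-Askey scheme (and to its $q\to 1$ Askey-scheme degenerations) an explicit weighting of the lecture hall graph $\mathcal{G}$ whose path generating functions reproduce the coefficients $\nu_{n,k}$ and the mixed moments $\sigma_{n,k}$. First I would isolate the structural content of such a model, independent of any particular family. Writing the two relevant classes of weighted paths in $\mathcal{G}$ through generating functions $\NE$ and $\SE$, the aim is to find a weight assignment for which one of these counts $\nu_{n,k}$ and the dual one counts $\sigma_{n,k}$. Since both $\nu_{n,k}$ and $\sigma_{n,k}$ are determined by the recurrence coefficients $b_n,\lambda_n$ through the recurrences for them that follow from \eqref{eq:def:sigma}, \eqref{eq:def:nu}, and \eqref{eq:3-term}, it is enough to show that the path generating functions satisfy the same recurrences. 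Unlike the Motzkin model, the lecture hall model does not make these recurrences manifest, so each case reduces to a transfer-matrix identity on $\mathcal{G}$ that I would verify locally by decomposing a path according to its first step.

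Second, I would formalize the simple algorithm mentioned above. Given a family with a known product or $q$-hypergeometric formula for $\nu_{n,k}$, the algorithm reads off candidate edge weights by matching that formula against the canonical factorization of a lecture hall path weight. Running the algorithm on the little $q$-Jacobi polynomials should recover the Corteel--Kim model of \cite{LHT} as a special case, thereby explaining the apparent coincidence with the truncated lecture hall partition formula of \cite{trunc_LHP} without assuming it.

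Third comes the bootstrapping itself. I would order the families by height in the scheme and induct upward, starting from the base cases at the bottom where the recurrence coefficients are simplest and the candidate weights can be checked by hand. At each arrow of the scheme I would show that the more general family admits weights lifting those of the family below it, with the limit transition of the arrow serving as a consistency check that pins the new weights down; the correctness of each model is then confirmed by the first-step recurrence verification. The induction terminates at the Askey--Wilson polynomials at the top of the $q$-Askey scheme.

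The main obstacle will be the Askey--Wilson case itself. There the coefficients and mixed moments are genuinely four-parameter ${}_4\phi_3$ objects, the candidate weights are correspondingly intricate, and the target is no longer reachable as a clean limit from a single family below, so the recurrence identity must be established by a direct computation on $\mathcal{G}$. Once this is in place, the payoff is the advertised application: the symmetries of the Askey--Wilson polynomials under permutations of their four parameters translate into the statement that the path generating function on $\mathcal{G}$ is invariant under the corresponding permutations of the weight data, which I would prove by constructing explicit weight-preserving bijections on lecture hall paths rather than by manipulating the ${}_4\phi_3$ series.
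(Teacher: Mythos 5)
Your outline matches the paper's architecture at the level of slogans (guess the weights from a closed formula, verify, bootstrap up the scheme, prove the $a,b,c,d$ symmetry combinatorially), but the engine you propose for the verification step is not the one the paper uses, and I do not think it can be made to run. You want to reduce correctness of a candidate weight system to the recurrences for $\sigma_{n,k}$ and $\nu_{n,k}$ induced by the three-term recurrence \eqref{eq:3-term}, checked by decomposing a lecture hall path at its first step. A first-step decomposition on $\mathcal{G}$ peels off one \emph{column} of the graph and yields identities of the shape $h^w_{n,k}=w(0;k,k)\,h^{w'}_{n-1,k}+h^{w'}_{n-1,k-1}$ with a \emph{shifted} weight system $w'$ (\Cref{lem:rec}); it does not produce the Motzkin-type relation $\sigma_{n+1,k}=\sigma_{n,k-1}+b_k\sigma_{n,k}+\lambda_{k+1}\sigma_{n,k+1}$, and the paper never attempts to verify the latter on $\mathcal{G}$. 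What actually drives the bootstrap is a different structural fact: at each arrow of the scheme the mixed moments of the upper family \emph{factor} through those of the lower family after a parameter substitution, e.g.\ \eqref{eq:mu^L=mu^C} and \eqref{eq:ts-AW1}, and the extra Pochhammer factor per column is absorbed into the graph by the expanding lemma (\Cref{lem:two-to-one}), which trades a factor $(1+bq^{i})$ in column $i$ for an additional row of weights. Your plan contains no substitute for this mechanism; it also shows that Askey--Wilson \emph{is} reached from big $q$-Jacobi by exactly the same device, so your expectation that the top of the scheme needs a separate direct computation is unfounded.

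A second concrete gap: you work throughout with mixed moments relative to $\{x^n\}$, but for big $q$-Jacobi and Askey--Wilson the clean factorizations above hold only for the \emph{factorial} mixed moments relative to $\{(x|d)^n\}$ (and, in the second bootstrap, relative to continuous $q$-Hermite polynomials). Returning to the standard basis requires \Cref{pro:add-one-row} together with \Cref{lem:inter-mixed2}; without introducing mixed moments relative to other bases the induction cannot be set up beyond little $q$-Jacobi. Relatedly, the continuous $q$-Hermite family is a genuine exception to the guessing algorithm and needs its own ad hoc three-row weight system (\Cref{pro:4}), which your uniform treatment of base cases misses. Your symmetry argument is closest in spirit to the paper's (\Cref{lem:(23)}--\Cref{lem:(12)}), but note that the paper carries it out on a reformulated model indexed by tuples in $\TT$ rather than on lecture hall paths directly, and for two of the three transpositions the invariance rests on a $q$-binomial inversion identity rather than a pure weight-preserving bijection.
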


For example, \Cref{thm:AW-wt-full} gives a combinatorial model for the
mixed moments of the Askey--Wilson polynomials, and
\Cref{pro:h-e-dual} together with this provides a model for their
coefficients.

For certain polynomials in the $q$-Askey scheme, we can give different
models: some will be just on the lecture hall graph of finite height;
others will be on the graph of infinite height. For the polynomials in
the \( q \)-Askey scheme, the models with finite height have the
benefit that we can take the limit $q\rightarrow 1$ and those with
infinite rows give a combinatorial model for the formal power series
expansion of the mixed moments and the coefficients.

Here are some advantages of our lecture hall graph models. The first
advantage of this method is that it gives a combinatorial model for
the mixed moments and the coefficients with \emph{the same graph
  model}, where southeast paths are used for mixed moments and
northeast paths without consecutive east steps are used for
coefficients. Note that Viennot's combinatorial models do not have
this uniform property because Motzkin paths and Favard paths have
different underlying graphs.

The second advantage is that it has a nice relationship with the
\( q \)-Askey scheme. We start with the Stieltjes--Wigert polynomials
and we establish simple lemmas that let us go up in the scheme. The
only family that needs to be treated as a special case is that of the
continuous \( q \)-Hermite polynomials.

The third advantage is that, since the lecture hall graph is planar,
our combinatorial models can be naturally extended to study
multivariate orthogonal polynomials using the
Lindstr\"om--Gessel--Viennot lemma, which is not possible for
Viennot's model. This was done in \cite{LHT} for little $q$-Jacobi
polynomials and we can now generalize this to the whole \( q \)-Askey
scheme. In a forthcoming paper, we will give applications of our
combinatorial models to multivariate versions of orthogonal
polynomials, total positivity, and random matrix theory.

The last but not least advantage is that it gives a combinatorial
model for the mixed moments of Askey--Wilson polynomials. Using this
model we give the first combinatorial proof of the symmetry of the
parameters \( a,b,c,d \) in the Askey--Wilson polynomials, which until
now has been understood only analytically. We note that there is a
combinatorial model, called staircase tableaux, for moments of
Askey--Wilson polynomials with some reparametrization of \( a,b,c,d \)
to \( \alpha,\beta,\gamma,\delta \); see \cite{Corteel2011} and
\cite{Corteel2012}. It is known that the new parameters
\( \alpha,\beta,\gamma,\delta \) are also symmetric, but finding a
combinatorial proof is still open.

The paper is organized as follows. In \Cref{sec:preliminaries} we
define the lecture hall graph and we introduce mixed moments relative
to different bases. In \Cref{sec:guess-prov-techn} we prove useful
properties of the lecture hall graph, which will be used throughout
the paper. In \Cref{sec:bootstr-meth-from} we find weight systems for
the mixed moments of Stieltjes--Wigert polynomials, \( q \)-Bessel
polynomials, little \( q \)-Jacobi polynomials, big \( q \)-Jacobi
polynomials, and Askey--Wilson polynomials, using a bootstrapping
method. In \Cref{sec:anoth-bootstr-meth} we give another bootstrapping
method to find a lecture hall graph model for mixed moments of
Askey--Wilson polynomials relative to continuous \( q \)-Hermite
polynomials. In \Cref{sec:comb-prop-askey} we give another
combinatorial model for mixed moments of Askey--Wilson polynomials.
Using this model we give the first combinatorial proof of the symmetry
of \( a,b,c,d \) in Askey--Wilson polynomials.

Finally, we remark that our method can also be applied to all orthogonal
polynomials in the \( q \)-Askey scheme and in the Askey scheme. The
remaining orthogonal polynomials will be covered in a forthcoming
paper.

\section{Preliminaries}
\label{sec:preliminaries}

In this section we give basic definitions and lemmas on the lecture
hall graph. We then define mixed moments and coefficients of
orthogonal polynomials relative to other bases. We will follow the
standard notation of basic hypergeometric series; see for example
\cite{GR}.

\subsection{Lecture hall graphs}
\label{sec:lecture-hall-graph}

The lecture hall graph was first introduced by Corteel and
Kim~\cite{LHT} in their study of little \( q \)-Jacobi polynomials.
It has further been studied in \cite{Corteel2020}
and \cite{Corteel2021a}.

For nonnegative integers \( t,i,j \) with \( j\le i \),
we denote
\[
 v^t_{i,j} = \left(i,t+ \frac{j}{i+1}\right) \in \ZZ\times\QQ.
\]

\begin{defn}
  The \emph{lecture hall graph} is the (undirected) graph
  \( \mathcal{G} = (V,E) \), where
\begin{align*}
  V &= \{v^t_{i,j}: t,i,j\in \ZZ_{\ge0}, 0\le j\le i \},\\
  E &= \{(v^t_{i,j}, v^t_{i,j+1}) : t,i,j\in\ZZ_{\ge0}, 0\le j\le i \} 
        \cup \{(v^t_{i,j}, v^t_{i+1,j}) : t,i,j\in\ZZ_{\ge0}, 0\le j\le i \}.
\end{align*}
Here, \( v^t_{i,i+1} \) is defined to be \( v^{t+1}_{i,0} \); see
\Cref{fig:LHL}.
\end{defn}

\begin{figure}
  \centering
  \begin{tikzpicture}[scale=2]
    \LHLL{4}2
    \begin{scope}
      \node at (0,0) [circle,fill,inner sep=1pt]{};
      \node at (1,0) [circle,fill,inner sep=1pt]{};
      \node at (1,1/2) [circle,fill,inner sep=1pt]{};
      \node at (2,0) [circle,fill,inner sep=1pt]{};
      \node at (2,1/3) [circle,fill,inner sep=1pt]{};
      \node at (2,2/3) [circle,fill,inner sep=1pt]{};
      \node at (3,0) [circle,fill,inner sep=1pt]{};
      \node at (3,1/4) [circle,fill,inner sep=1pt]{};
      \node at (3,2/4) [circle,fill,inner sep=1pt]{};
      \node at (3,3/4) [circle,fill,inner sep=1pt]{};
      \node at (0,1) [circle,fill,inner sep=1pt]{};
      \node at (1,1) [circle,fill,inner sep=1pt]{};
      \node at (1,3/2) [circle,fill,inner sep=1pt]{};
      \node at (2,1) [circle,fill,inner sep=1pt]{};
      \node at (2,4/3) [circle,fill,inner sep=1pt]{};
      \node at (2,5/3) [circle,fill,inner sep=1pt]{};
      \node at (3,1) [circle,fill,inner sep=1pt]{};
      \node at (3,5/4) [circle,fill,inner sep=1pt]{};
      \node at (3,6/4) [circle,fill,inner sep=1pt]{};
      \node at (3,7/4) [circle,fill,inner sep=1pt]{};
    \end{scope}
    \begin{scope}[shift={(0.2,0.1)}]
      \small
      \node at (0,0) {\( v^0_{0,0} \)};
      \node at (1,0) {\( v^0_{1,0} \)};
      \node at (1,1/2) {\( v^0_{1,1} \)};
      \node at (2,0) {\( v^0_{2,0} \)};
      \node at (2,1/3) {\( v^0_{2,1} \)};
      \node at (2,2/3) {\( v^0_{2,2} \)};
      \node at (3,0) {\( v^0_{3,0} \)};
      \node at (3,1/4) {\( v^0_{3,1} \)};
      \node at (3,2/4) {\( v^0_{3,2} \)};
      \node at (3,3/4) {\( v^0_{3,3} \)};
      \node at (0,1) {\( v^1_{0,0} \)};
      \node at (1,1) {\( v^1_{1,0} \)};
      \node at (1,3/2) {\( v^1_{1,1} \)};
      \node at (2,1) {\( v^1_{2,0} \)};
      \node at (2,4/3) {\( v^1_{2,1} \)};
      \node at (2,5/3) {\( v^1_{2,2} \)};
      \node at (3,1) {\( v^1_{3,0} \)};
      \node at (3,5/4) {\( v^1_{3,1} \)};
      \node at (3,6/4) {\( v^1_{3,2} \)};
      \node at (3,7/4) {\( v^1_{3,3} \)};
    \end{scope}
    \node at (0.5,2.3) {\( \vdots \)};
    \node at (1.5,2.3) {\( \vdots \)};
    \node at (2.5,2.3) {\( \vdots \)};
    \node at (3.5,2.3) {\( \vdots \)};
    \node at (4.5,0.5) {\( \cdots \)};
    \node at (4.5,1.5) {\( \cdots \)};
  \end{tikzpicture}
  \caption{The lecture hall graph \( \mathcal{G} \).}
  \label{fig:LHL}
\end{figure}

We consider three kinds of steps:
\begin{itemize}
\item an \emph{east step} is a directed edge of the form
  \( (v^t_{i,j}, v^t_{i+1,j}) \) in \( \mathcal{G} \),
\item a \emph{north step} is a directed edge of the form
  \( (v^t_{i,j}, v^t_{i,j+1}) \) or \( (v^t_{i,i}, v^{t+1}_{i,0}) \),
\item a \emph{south step} is a directed edge of the form
  \( (v^t_{i,j}, v^t_{i,j-1}) \) or \( (v^t_{i,0}, v^{t-1}_{i,i}) \).
\end{itemize}

For two vertices \( u \) and \( v \) in \( \mathcal{G} \), a \emph{path} from
\( u \) to \( v \) is a sequence \( (v_0,v_1,\dots,v_n) \) of vertices
of \( \mathcal{G} \) such that \( v_0=u \), \( v_n=v \), and each
\( (v_{i}, v_{i+1}) \) is an east, north, or south step. We also
consider a path from \( u\in \mathcal{G} \) to \( v=(a,\infty) \), which is a
sequence \( (v_0,v_1,v_2,\ldots) \) of vertices of \( \mathcal{G} \) such that
\( v_0=u \), \( \lim_{n\to \infty}v_n=v \), and each
\( (v_{i}, v_{i+1}) \) is an east, north, or south step. Similarly, a
path from \( u=(a,\infty) \) to \( v\in \mathcal{G} \) is a sequence
\( (\ldots, v_{2},v_{1},v_0) \) of vertices of \( \mathcal{G} \) such that
\( v_0=v \), \( \lim_{n\to \infty}v_{n}=u \), and each
\( (v_{i+1}, v_{i}) \) is an east, north, or south step.

Let \( \SE(u\to v) \) denote the set of paths from \( u \) to \( v \)
in \( \mathcal{G} \) consisting of south and east steps. We also
define \( \NE^*(u\to v) \) to be the set of paths from \( u \) to
\( v \) in \( \mathcal{G} \) consisting of north and east steps with
\emph{no consecutive east steps}. Here, consecutive east steps means a
pair \( (s_1,s_2) \) of steps of the form
\( s_1=(v^t_{i,j},v^t_{i+1,j}) \) and
\( s_2=(v^t_{i+1,j},v^t_{i+2,j}) \). For example, in \Cref{fig:1}, the
path \( p_1\in \SE((1,\infty)\to (5,0)) \) has consecutive east steps
\( (v^2_{2,2}, v^2_{3,2}) \) and \( (v^2_{3,2}, v^2_{4,2}) \) but the
path \( p_2\in \NE^*((1,0)\to (5,\infty)) \) does not have any
consecutive east steps.

\begin{figure}
  \centering
  \begin{tikzpicture}[scale=1.7]
    \LHLL53 \LHlabel52
    \draw [red, very thick] (5,0) -- (5,3/6) -- (4,3/5)
     -- (4,7/5) -- (3,6/4) -- (2,5/3) -- (2,2) -- (1,2) -- (1,3);
     \draw [blue, very thick] (1,0) -- (2,0) -- (2,1/3) -- (3,1/4)
     -- (3,7/4) -- (4,8/5) -- (4,12/5) -- (5,14/6) -- (5,3);
     \node at (.8,2.8) {\( p_1 \)};
     \node at (.8,.2) {\( p_2 \)};
   \begin{scope}[shift={(-.5,.1)}]
   \end{scope}
   \end{tikzpicture}
   \caption{A path \( p_1 \) in \( \SE((1,\infty)\to (5,0)) \)
     and a path \( p_2 \) in \( \NE^*((1,0)\to (5,\infty)) \).}
\label{fig:1}
\end{figure}

Paths in \( \SE(u\to v) \) or
\( \NE^*(u\to v) \) are closely related to lecture hall
partitions. To describe their connection we need some definitions.

A \emph{lecture hall partition}
is a nonnegative integer sequence
  \( \lambda=(\lambda_{1},\dots,\lambda_n) \) such that
  \[
    \frac{\lambda_{1}}{n} \ge \frac{\lambda_{2}}{n-1} \ge
     \cdots \ge \frac{\lambda_{n}}{1}.
  \]
An \emph{anti-lecture hall composition}
is a nonnegative integer sequence
  \( \alpha=(\alpha_{1},\dots,\alpha_n) \) such that
  \[
    \frac{\alpha_{1}}{1} \ge \frac{\alpha_{2}}{2} \ge
     \cdots \ge \frac{\alpha_{n}}{n}.
  \]
  As variations of the above objects, we consider the set
  \( AL_{n,k} \) of \emph{truncated anti-lecture hall compositions}
  and the set \( RL^*_{n,k} \) of \emph{truncated strict reverse
    lecture hall partitions} defined by
  \begin{align*}
    AL_{n,k}
    &= \left\{ (\alpha_{k+1},\dots,\alpha_n)
      :    \frac{\alpha_{k+1}}{k+1} \ge \frac{\alpha_{k+2}}{k+2} \ge
     \cdots \ge \frac{\alpha_{n}}{n} \right\}, \\
    RL^*_{n,k}
    &= \left\{ (\rho_{k+1},\dots,\rho_n):
          \frac{\rho_{k+1}}{k+1} < \frac{\rho_{k+2}}{k+2} <
     \cdots < \frac{\rho_{n}}{n} \right\}.
\end{align*}

Now we define a map \( \phi \), which sends a path to a sequence of
integers. Suppose that \( p \) is a path from \( (k,a) \) to
\( (n,b) \) for some \( k,n\in \ZZ_{\ge0} \) and
\( a,b\in \ZZ_{\ge0}\cup\{\infty\} \). For \( i=k+1,\dots,n \),
suppose that the east step of \( p \) between \( x=i-1 \) and
\( x=i \) is the \( \alpha_i \)th east step among all east steps
between \( x=i-1 \) and \( x=i \) from the bottom. In other words, if
the east step of \( p \) between \( x=i-1 \) and \( x=i \) is
\( (v^t_{i-1,j},v^t_{i,j}) \) in \( \mathcal{G} \), then
\( \alpha_i = ti + j \). We define
\( \phi(p) = (\alpha_{k+1},\dots,\alpha_n) \). For example, if
\( p_1 \) and \( p_2 \) are the paths in \Cref{fig:1}, then
\( \phi(p_1) = (4,5,6,3) \) and \( \phi(p_2) = (0,1,7,12) \). This map
gives natural bijections from paths in a lecture hall graph to
truncated anti-lecture hall compositions and truncated reverse lecture
hall partitions.

\begin{prop}\label{pro:3bij}
  The map \( \phi \) induces the following two bijections:
  \begin{align*}
   \phi &: \SE((k,\infty)\to (n,0)) \to AL_{n,k}, \\
   \phi &: \NE^*((k,0)\to (n,\infty)) \to RL^*_{n,k}.
  \end{align*}
\end{prop}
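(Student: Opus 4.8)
The plan is to show $\phi$ is well defined and bijective by reducing to a purely local, column-by-column analysis, exploiting that the step constraints of a path only couple adjacent columns. First I would note that a path in $\SE((k,\infty)\to(n,0))$ (resp.\ in $\NE^*((k,0)\to(n,\infty))$) has nondecreasing $x$-coordinate that increases by exactly $1$ at each east step, so it contains exactly one east step between $x=i-1$ and $x=i$ for each $i\in\{k+1,\dots,n\}$; hence $\phi(p)=(\alpha_{k+1},\dots,\alpha_n)$ is well defined. Writing this east step as $(v^t_{i-1,j},v^t_{i,j})$ with $0\le j\le i-1$, so that $\alpha_i=ti+j$, the path arrives in column $i$ at $v^t_{i,j}$, and I would record that $v^t_{i,j}$ has exactly $t(i+1)+j=\alpha_i+\flr{\alpha_i/i}$ column-$i$ vertices below it (using $t=\flr{\alpha_i/i}$). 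Symmetrically, the east step leaving column $i$ departs from $v^{t'}_{i,j'}$ with $\alpha_{i+1}=t'(i+1)+j'$, which therefore has exactly $\alpha_{i+1}$ column-$i$ vertices below it. Between these two east steps the path stays in column $i$ and uses only south steps (resp.\ north steps), so everything is controlled by the relative heights of the arrival and departure vertices.

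The key step is the arithmetic translation between vertex positions and lecture-hall inequalities: for integers $a,b\ge0$ and $i\ge1$,
\[
  b\le a+\flr{a/i}\iff \frac{b}{i+1}\le\frac{a}{i},
  \qquad
  b> a+\flr{a/i}\iff \frac{b}{i+1}>\frac{a}{i},
\]
which follows from $\tfrac{(i+1)a}{i}=a+\tfrac{a}{i}$, the identity $\flr{a+a/i}=a+\flr{a/i}$ (as $a\in\ZZ$), and the fact that an integer is $\le$ (resp.\ $>$) a real number iff it is $\le$ (resp.\ $>$) its floor. For an $\SE$ path, the south steps at an intermediate column $i$ force the departure vertex weakly below the arrival vertex, i.e.\ $\alpha_{i+1}\le\alpha_i+\flr{\alpha_i/i}$, which the lemma rewrites as $\tfrac{\alpha_i}{i}\ge\tfrac{\alpha_{i+1}}{i+1}$. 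For an $\NE^*$ path, forbidding consecutive east steps forces at least one north step at each intermediate column, so the departure vertex lies strictly above the arrival vertex, i.e.\ $\alpha_{i+1}> \alpha_i+\flr{\alpha_i/i}$, rewritten as $\tfrac{\alpha_i}{i}<\tfrac{\alpha_{i+1}}{i+1}$. These are precisely the defining chains of $AL_{n,k}$ and $RL^*_{n,k}$.

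It then remains to check the boundaries and invert the map. For an $\SE$ path the descent from $(k,\infty)$ at column $k$ and the descent to $(n,0)=v^0_{n,0}$ at column $n$ are always possible and unique, so they impose no constraint; symmetrically an $\NE^*$ path ascends from $(k,0)=v^0_{k,0}$ (no preceding east step, so $\rho_{k+1}\ge0$ with equality permitted) and ascends to $(n,\infty)$. Conversely, given a sequence in the target set, each entry determines one east step, the (strict, for $\NE^*$) inequalities guarantee the forced south/north connections are legitimate, and these connections are unique, yielding a unique preimage. I expect the main obstacle to be exactly the arithmetic lemma of the second paragraph: the naive comparison of the continuous heights $t+\tfrac{j}{i+1}$ uses a common denominator $i+1$ and gives the wrong inequality, and it is the floor identity that converts the discrete ``number of vertices below'' into the correct lecture-hall denominators $i$ and $i+1$.
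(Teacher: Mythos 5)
Your proposal is correct and follows essentially the same route as the paper: a column-by-column translation of the step constraints into the lecture-hall chains, together with the explicit inverse $t=\flr{\alpha_i/i}$, $j=\alpha_i-i\flr{\alpha_i/i}$. The only (cosmetic) difference is that you compare integer counts of vertices below the arrival and departure points and then invoke the floor identity $b\le a+\flr{a/i}\iff b/(i+1)\le a/i$, whereas the paper compares the rational $y$-coordinates $\alpha_i/i$ of the east-step endpoints directly and appeals to monotonicity of $y$ along the path; your version is, if anything, more explicit about why the $\NE^*$ case yields strict inequalities.
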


\begin{proof}
  Let \( p\in \SE((k,\infty)\to (n,0)) \) and
  \( \phi(p) = (\alpha_{k+1},\dots,\alpha_n) \). By the construction
  of \( \phi \), for \( i=k+1,\dots,n \), if the east step of \( p \)
  between \( x=i-1 \) and \( x=i \) is \( (v^t_{i-1,j},v^t_{i,j}) \),
  then \( \alpha_i = ti + j \). Since \( \alpha_i/i = t+j/i \) is the
  \( y \)-coordinate of the starting point \( v^t_{i-1,j} \) of this
  east step, we have
  \( \frac{\alpha_{k+1}}{k+1} \ge \frac{\alpha_{k+2}}{k+2} \ge \cdots
  \ge \frac{\alpha_{n}}{n} \). Thus
  \( (\alpha_{k+1},\dots,\alpha_n)\in AL_{n,k} \). Conversely, for
  \( (\alpha_{k+1},\dots,\alpha_n)\in AL_{n,k} \), we can reconstruct
  \( p\in \SE((k,\infty)\to (n,0)) \) using the relations
  \( t = \flr{\alpha_i/i} \) and
  \( j = \alpha_i - i\flr{\alpha_i/i} \). This shows the first
  bijection. The second bijection can be proved similarly.
\end{proof}

Our main goal is to find combinatorial models for mixed moments
\( \sigma_{n,k} \) and coefficients \( \nu_{n,k} \) of orthogonal
polynomials. We will see later in \Cref{pro:h-e-dual} that a
combinatorial model for \( \sigma_{n,k} \) using \( \SE(u\to v) \)
immediately gives a combinatorial model for \( \nu_{n,k} \) using
\( \NE^*(u\to v) \), and vice versa. Hence, in this paper we will mostly consider the
set \( \SE(u\to v) \) rather than \( \NE^*(u\to v) \). For brevity, we
will write \( p:u\to v \) to mean \( p\in \SE(u\to v) \).
  
\medskip

A \emph{weight system} is a function \( w \) that assigns a weight
\( w(s) \) to each east step \( s=(v^t_{i,j}, v^t_{i+1,j}) \) in
\( \mathcal{G} \). We denote
\[
  w(t;i,j) := w(v^t_{i,j},v^t_{i+1,j}).
\]
In other words, \( w(t;i,j) \) is the weight of the \( j \)th east
step from the bottom, where the bottommost one is the \( 0 \)th step,
among the east steps in the region
\( \{(x,y): i\le x\le i+1, t\le y<t+1 \} \); see \Cref{fig:LHL-wt}.
Given a weight system \( w \), the weight \( w(p) \) of a path \( p \)
is defined to be the product of the weights of all east steps in
\( p \).

\begin{figure}
  \centering
  \begin{tikzpicture}[scale=2]
    \LHLL{4}2
    \LHlabel{4}2
    \begin{scope}
      \node at (0,0) [circle,fill,inner sep=1pt]{};
      \node at (1,0) [circle,fill,inner sep=1pt]{};
      \node at (1,1/2) [circle,fill,inner sep=1pt]{};
      \node at (2,0) [circle,fill,inner sep=1pt]{};
      \node at (2,1/3) [circle,fill,inner sep=1pt]{};
      \node at (2,2/3) [circle,fill,inner sep=1pt]{};
      \node at (3,0) [circle,fill,inner sep=1pt]{};
      \node at (3,1/4) [circle,fill,inner sep=1pt]{};
      \node at (3,2/4) [circle,fill,inner sep=1pt]{};
      \node at (3,3/4) [circle,fill,inner sep=1pt]{};
      \node at (0,1) [circle,fill,inner sep=1pt]{};
      \node at (1,1) [circle,fill,inner sep=1pt]{};
      \node at (1,3/2) [circle,fill,inner sep=1pt]{};
      \node at (2,1) [circle,fill,inner sep=1pt]{};
      \node at (2,4/3) [circle,fill,inner sep=1pt]{};
      \node at (2,5/3) [circle,fill,inner sep=1pt]{};
      \node at (3,1) [circle,fill,inner sep=1pt]{};
      \node at (3,5/4) [circle,fill,inner sep=1pt]{};
      \node at (3,6/4) [circle,fill,inner sep=1pt]{};
      \node at (3,7/4) [circle,fill,inner sep=1pt]{};
    \end{scope}
    \begin{scope}[shift={(0.5,0.1)}]
      \small
      \node at (0,0) {\( w(0;0,0) \)};
      \node at (1,0) {\( w(0;1,0) \)};
      \node at (1,1/2) {\( w(0;1,1) \)};
      \node at (2,0) {\( w(0;2,0) \)};
      \node at (2,1/3) {\( w(0;2,1) \)};
      \node at (2,2/3) {\( w(0;2,2) \)};
      \node at (3,0) {\( w(0;3,0) \)};
      \node at (3,1/4) {\( w(0;3,1) \)};
      \node at (3,2/4) {\( w(0;3,2) \)};
      \node at (3,3/4) {\( w(0;3,3) \)};
      \node at (0,1) {\( w(1;0,0) \)};
      \node at (1,1) {\( w(1;1,0) \)};
      \node at (1,3/2) {\( w(1;1,1) \)};
      \node at (2,1) {\( w(1;2,0) \)};
      \node at (2,4/3) {\( w(1;2,1) \)};
      \node at (2,5/3) {\( w(1;2,2) \)};
      \node at (3,1) {\( w(1;3,0) \)};
      \node at (3,5/4) {\( w(1;3,1) \)};
      \node at (3,6/4) {\( w(1;3,2) \)};
      \node at (3,7/4) {\( w(1;3,3) \)};
    \end{scope}
    \node at (0.5,2.3) {\( \vdots \)};
    \node at (1.5,2.3) {\( \vdots \)};
    \node at (2.5,2.3) {\( \vdots \)};
    \node at (3.5,2.3) {\( \vdots \)};
    \node at (4.5,0.5) {\( \cdots \)};
    \node at (4.5,1.5) {\( \cdots \)};
  \end{tikzpicture}
  \caption{An illustration of \( w(t;i,j) \).}
  \label{fig:LHL-wt}
\end{figure}

The following definition will be used throughout this paper.
\begin{defn} \label{def:h-e} For a weight system \( w \) and two
  nonnegative integers \( n \) and \( k \), we define
  \begin{align*}
  h^w_{n,k} &= \sum_{p\in \SE((k,\infty)\to (n,0))} w(p),  \\
  e^w_{n,k} &= \sum_{p\in \NE^*((k,0)\to (n,\infty))} w(p).
\end{align*}
\end{defn}

By definition, we have \( h^w_{n,k}=e^w_{n,k}=0 \) if \( n<k \) and
\( h^w_{n,n}=e^w_{n,n}=1 \). Note that \( h^w_{n,k} \) and
\( e^w_{n,k} \) generalize the \emph{homogeneous symmetric function}
\[
  h_n(x_0,x_1,\ldots) = \sum_{0\le i_0\le i_1 \le \cdots \le i_{n-1}}
  x_{i_0} x_{i_1} \cdots x_{i_{n-1}}
\]
 and the \emph{elementary symmetric function} 
\[
  e_n(x_0,x_1,\ldots) = \sum_{0\le i_0< i_1 < \cdots < i_{n-1}}
  x_{i_0} x_{i_1} \cdots x_{i_{n-1}}
\]
in the following sense: if \( w \) is the weight system defined by
\[
  w(t;i,j) =
  \begin{cases}
   x_t & \mbox{if \( j=0 \)},\\
   0 & \mbox{otherwise,}
  \end{cases}
\]
then we have
\[
  h_n(x_0,x_1,\ldots) = h^w_{n,0}, \qquad 
  e_n(x_0,x_1,\ldots) = e^w_{n,0}.
\]

The quantities \( h^w_{n,k} \) and \( e^w_{n,k} \) are dual to each
other in the following sense.

\begin{lem}\label{lem:dual-wt}
  We have the matrix identity
  \[
    (h^w_{n,k})_{n,k=0}^\infty =
    \left( ((-1)^{n-k} e^w_{n,k})_{n,k=0}^\infty \right)^{-1}.
  \]
  Equivalently, for \( n\ge m \), we have
  \[
    \sum_{r=m}^n h^w_{n,r}(-1)^{r-m} e^w_{r,m} = \delta_{n,m}.
  \]
\end{lem}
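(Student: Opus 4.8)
The plan is to use the fact that both matrices involved are lower triangular with unit diagonal: since $h^w_{n,k}=e^w_{n,k}=0$ for $n<k$ and $h^w_{n,n}=e^w_{n,n}=1$, the matrix identity is equivalent to the stated family $\sum_{r=m}^n h^w_{n,r}(-1)^{r-m}e^w_{r,m}=\delta_{n,m}$, each instance of which is a genuinely finite sum in $r$. I would read this sum as a signed generating function over pairs of paths and cancel it by an involution (all identities being understood term-by-term as formal series in the weights, so that the rearrangements below are legitimate).

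First I would expand each product $h^w_{n,r}e^w_{r,m}$ over pairs $(q,p)$ with $q\in\NE^*((m,0)\to(r,\infty))$ and $p\in\SE((r,\infty)\to(n,0))$, and apply the bijections of \Cref{pro:3bij} to replace $q$ by $(\rho_{m+1},\dots,\rho_r)\in RL^*_{r,m}$ and $p$ by $(\alpha_{r+1},\dots,\alpha_n)\in AL_{n,r}$. Concatenating produces a single integer sequence $\beta=(\beta_{m+1},\dots,\beta_n)$ whose ratios $c_i:=\beta_i/i$ satisfy $c_{m+1}<\cdots<c_r$ and $c_{r+1}\ge\cdots\ge c_n$, with no constraint linking $c_r$ and $c_{r+1}$. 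The essential bookkeeping point is that the weight factorizes as $w(q)w(p)=\prod_{i=m+1}^n W_i(\beta_i)$, where $W_i(\beta_i)$ is the weight of the $\beta_i$-th east step (counted from the bottom) between columns $i-1$ and $i$; crucially this depends only on $\beta$, and neither on the cut $r$ nor on whether column $i$ came from the $\NE^*$ half or the $\SE$ half, because an east step's weight is a function of its location alone.

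Interchanging the order of summation, I would fix $\beta$ and sum over the admissible cuts $r$, pulling out the common weight factor. The claim then reduces to showing that the signed count $N(\beta):=\sum_{r\text{ admissible}}(-1)^{r-m}$ equals $\delta_{n,m}$. When $n=m$ the sequence $\beta$ is empty, the only cut is $r=m$, and $N(\beta)=1$, as required. The substance is the case $n>m$, where I must show $N(\beta)=0$ for every $\beta$. The key structural observation is that, as $r$ increases, the prefix condition $c_{m+1}<\cdots<c_r$ only becomes harder while the suffix condition $c_{r+1}\ge\cdots\ge c_n$ only becomes easier, so the admissible $r$ form an interval $[P,Q]$, where $P$ is the last ascent and $Q$ the first descent of $(c_{m+1},\dots,c_n)$ (with the conventions $P=m$ if there is no ascent and $Q=n$ if there is no descent).

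The main obstacle, and the heart of the argument, is to prove that this interval is either empty or of length exactly $2$, so that $(-1)^{r-m}$ cancels in pairs and $N(\beta)=0$. I expect to settle this by the following dichotomy: since each position in $\{m+1,\dots,n-1\}$ is either an ascent or a descent, $P$ and $Q$ cannot coincide; if $Q<P$ the interval is empty, while if $P\le Q$ and one had $Q\ge P+2$, then the position $P+1$ (a genuine position in this range) would be forced to be simultaneously an ascent and a descent, a contradiction, so $Q=P+1$. Equivalently, this is a sign-reversing involution that fixes $\beta$ and toggles $r$ between the two adjacent admissible cuts, the absence of fixed points for $n>m$ being precisely the length-$2$ statement. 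Once $N(\beta)=0$ is established for $n>m$, summing against the common weight factor yields $\sum_{r=m}^n h^w_{n,r}(-1)^{r-m}e^w_{r,m}=0$, and together with the $n=m$ case this proves the identity.
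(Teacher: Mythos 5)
Your proof is correct and is essentially the paper's argument: both reduce the signed sum to pairs of truncated (anti-)lecture hall sequences via \Cref{pro:3bij}, note that the weight depends only on the concatenated sequence and not on the cut \( r \), and cancel over the cut position — your observation that the admissible cuts form an interval of length \( 0 \) or \( 2 \) is precisely the paper's sign-reversing involution that moves the boundary element across the cut, as you yourself note. (One cosmetic slip: when \( Q\ge P+2 \), the position \( P+1 \) is forced to be \emph{neither} an ascent nor a descent rather than both, but the contradiction is the same.)
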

\begin{proof}
  The equivalence of the two identities is immediate from the fact
  that \( (h^w_{n,k})_{n,k=0}^\infty \) is lower triangular. Thus it
  suffices to prove the second identity. Since this identity is
  clearly true for \( n=m \), we assume that \( n>m \). We follow the
  argument in the proof of \cite[Proposition~3.5]{LHT}.

  By \Cref{pro:3bij}, we can write
  \begin{equation}\label{eq:4}
    \sum_{r=m}^n h^w_{n,r}(-1)^{r-m} e^w_{r,m}
    =  \sum_{(\rho,\alpha)\in X} W(\rho,\alpha),
  \end{equation}
  where \( X \) is the set of pairs \( (\rho,\alpha) \) of nonnegative
  integer sequences
  \( \rho=(\rho_{m+1},\rho_{m+2},\dots,\rho_{r}) \) and
  \( \alpha=(\alpha_{r+1},\alpha_{r+2},\dots,\alpha_{n}) \), for some \( r \),
  such that
  \begin{equation}\label{eq:3}
    \frac{\rho_{m+1}}{m+1} < \frac{\rho_{m+2}}{m+2} < \cdots < \frac{\rho_{r}}{r},
    \qquad 
    \frac{\alpha_{r+1}}{r+1} \ge \frac{\alpha_{r+2}}{r+2} \ge \cdots \ge \frac{\alpha_{n}}{n}, 
  \end{equation}
  and \( W(\rho,\alpha) \) is defined by
  \[
    W(\rho,\alpha) = (-1)^{r-m}
    \prod_{i=m+1}^{r} w(\flr{\rho_i/i};i,\rho_i-i\flr{\rho_i/i})
    \prod_{i=r+1}^{n} w(\flr{\alpha_i/i};i,\alpha_i - i\flr{\alpha_i/i}).
  \]

  We define a sign-reversing involution on \( X \) as follows. Suppose
  that \( (\rho,\alpha)\in X \) is given as in \eqref{eq:3}. If
  \( \alpha_{r+1}/(r+1)\le \rho_r/r \), then let
  \( \alpha' = (\rho_r,\alpha_{r+1},\dots,\alpha_n) \) and
  \( \rho' = (\rho_{m+1},\dots,\rho_{r-1}) \), and otherwise let
  \( \alpha' = (\alpha_{r+2},\dots,\alpha_n) \) and
  \( \rho' = (\rho_{m+1}, \dots, \rho_r, \alpha_{r+1}) \).
  Then \( (\rho',\alpha')\in X \) and
  \( W(\rho',\alpha') = - W(\rho,\alpha) \). It is easy to check that
  the map \( (\rho,\alpha) \mapsto (\rho',\alpha') \) is a
  sign-reversing involution on \( X \) with no fixed point if
  \( n>m \). Thus (\ref{eq:4}) is equal to \( 0 \) and the proof is completed.
\end{proof}

The following simple lemma will be used frequently in this paper.

\begin{lem}\label{lem:wt-mult}
  Suppose that \( w \) and \( w' \) are weight systems such that
  \( w'(t;i,j) = C_i \cdot w(t;i,j) \) for all
  \( t,i,j\in\ZZ_{\ge0} \) with \( j\le i \). Then
  \[
    h^{w'}_{n,k} = C_{k}C_{k+1} \cdots C_{n-1} \cdot h^{w}_{n,k}.
  \]
\end{lem}

\begin{proof}
This follows from the observation that
\begin{align*}
  h^{w'}_{n,k} &= \sum_{p\in \SE((k,\infty)\to (n,0))} w'(p)\\
  &= \sum_{p\in \SE((k,\infty)\to (n,0))} C_{k}C_{k+1} \cdots C_{n-1} w(p)
  = C_{k}C_{k+1} \cdots C_{n-1} \cdot h^{w}_{n,k},
\end{align*}
because every \( p\in \SE((k,\infty)\to (n,0)) \) has exactly one east
step between \( x=i \) and \( x=i+1 \) for \( i= k,k+1,\dots,n-1 \).
\end{proof}

\subsection{Mixed moments relative to other bases}

Recall that the mixed moments \( \{\sigma_{n,k}\}_{n,k\ge 0} \) of
orthogonal polynomials \( \{p_n(x)\}_{n\ge0} \) with respect to a linear
functional \( \LL \) are defined by
\[
  \sigma_{n,k} = \frac{\LL(x^np_k(x))}{\LL(p_k(x)^2)},
\]
or equivalently,
\[
  x^n = \sum_{k=0}^{n} \sigma_{n,k} p_k(x).
\]
By abuse of terminology, we extend the definition of the mixed moments
\( \sigma_{n,k} \) to polynomials that are not necessarily orthogonal
polynomials.

\begin{defn}
  Let \( \{p_n(x)\}_{n\ge0} \) be a sequence of polynomials with
  \( \deg p_n(x) = n \). The \emph{mixed moments}
  \( \{\sigma_{n,k}\}_{n,k\ge 0} \) and the \emph{coefficients}
  \( \{\nu_{n,k}\}_{n,k\ge 0} \) of \( \{p_n(x)\}_{n\ge0} \) are
  defined by
  \[
      x^n = \sum_{k=0}^{n} \sigma_{n,k} p_k(x), \qquad 
      p_n(x) = \sum_{k=0}^{n} \nu_{n,k} x^k.
  \]
\end{defn}

More generally, we also define mixed moments and coefficients by
choosing a basis other than \( \{x^n\}_{n\ge0} \).

\begin{defn}
  Let \( \{p_n(x)\}_{n\ge0} \) and \( \{q_n(x)\}_{n\ge0} \) be
  sequences of polynomials with \( \deg p_n(x) = \deg q_n(x) = n \).
  We define the \emph{mixed moments \( \{\sigma_{n,k}\}_{n,k\ge 0} \)}
  and the \emph{coefficients} \( \{\nu_{n,k}\}_{n,k\ge 0} \) of
  \( \{p_n(x)\}_{n\ge0} \) \emph{relative to} \( \{q_n(x)\}_{n\ge0} \)
  by
\[
  q_n(x) = \sum_{k=0}^{n} \sigma_{n,k} p_k(x), \qquad
  p_n(x) = \sum_{k=0}^{n} \nu_{n,k} q_k(x).
\]
\end{defn}

Note that the original mixed moments \( \{\sigma_{n,k}\}_{n,k\ge 0} \)
of \( \{p_n(x)\}_{n\ge0} \) are the mixed moments of
\( \{p_n(x)\}_{n\ge0} \) relative to the standard basis polynomials
\( \{x^n\}_{n\ge0} \). Note also that, by definition, the mixed
moments \( \{\sigma_{n,k}\}_{n,k\ge 0} \) and the coefficients
\( \{\nu_{n,k}\}_{n,k\ge 0} \) of \( \{p_n(x)\}_{n\ge0} \) relative to
\( \{q_n(x)\}_{n\ge0} \) are the entries of the change-of-basis
matrices between the two bases \( \{p_n(x)\}_{n\ge0} \) and
\( \{q_n(x)\}_{n\ge0} \) of the space of polynomials. Therefore the
two matrices \( (\sigma_{n,k})_{n,k=0}^\infty \) and
\( (\nu_{n,k})_{n,k=0}^\infty \) are inverses of each other. By this
fact and \Cref{lem:dual-wt}, we obtain the following proposition.

\begin{prop}\label{pro:h-e-dual}
  Let \( \sigma_{n,k} \) and \( \nu_{n,k} \) be the mixed moments and
  coefficients of polynomials \( \{p_n(x)\}_{n\ge0} \) (with respect
  to some basis \( \{ q_n(x) \}_{n\ge 0} \)). If \( w \) is a weight
  system such that
  \begin{equation}\label{eq:26}
    \sigma_{n,k} = h_{n,k}^w, \qquad n\ge k\ge 0,
  \end{equation}
  then
  \begin{equation}\label{eq:27}
    \nu_{n,k} = (-1)^{n-k} e_{n,k}^{w}, \qquad n\ge k\ge 0.
  \end{equation}
  Conversely, if \( w \) is a weight
  system satisfying \eqref{eq:27}, then \eqref{eq:26} also holds.
\end{prop}

\Cref{pro:h-e-dual} implies that if we have a lecture hall graph
model for \( \sigma_{n,k} \) then the same weight system also gives a
lecture hall graph model for \( (-1)^{n-k}\nu_{n,k} \) and vice
versa. Therefore we will henceforth only focus on finding a lecture
hall graph model for the mixed moments \( \sigma_{n,k} \).

By the following
lemma with \( b_n(x) = x^n \), we can use the mixed moments
\( \{\ts_{n,k}\}_{n,k\ge 0} \) relative to \( \{q_n(x)\}_{n\ge0} \) as
an intermediate step to study the original mixed moments
\( \{\sigma_{n,k}\}_{n,k\ge 0} \).

\begin{lem}\label{lem:inter-mixed}
  Let \( \{p_n(x)\}_{n\ge0} \), \( \{q_n(x)\}_{n\ge0} \), and
  \( \{b_n(x)\}_{n\ge0} \) be sequences of polynomials with
  \( \deg p_n(x) = \deg q_n(x) = \deg b_n(x) = n \) and let
\begin{align*}
 b_n(x) &= \sum_{k=0}^{n} \sigma_{n,k} p_k(x),\\
  b_n(x) &= \sum_{k=0}^{n} \tau_{n,k} q_k(x),\\
  q_n(x) &= \sum_{k=0}^{n} \ts_{n,k} p_k(x).
\end{align*}
Then we have
  \[
    \sigma_{n,k} = \sum_{r=k}^{n} \tau_{n,r} \ts_{r,k}.
  \]
\end{lem}
\begin{proof}
Observe that
  \[
\sum_{k=0}^{n} \sigma_{n,k} p_k(x) = b_n(x) = \sum_{r=0}^{n} \tau_{n,r} q_r(x)
  = \sum_{r=0}^{n} \tau_{n,r} \sum_{k=0}^r\ts_{r,k} p_k(x).
  \]
  Since \( \{p_n(x)\}_{n\ge0} \) is a basis of the polynomial space,
  we obtain the lemma.
\end{proof}

We note that an equivalent statement of \Cref{lem:inter-mixed}, for
the case that \( \{p_n(x)\}_{n\ge0} \) and \( \{q_n(x)\}_{n\ge0} \)
are orthogonal polynomials and \( b_n(x) = x^n \), has been proved in
\cite[Proposition~2.2]{KS15}. \Cref{lem:inter-mixed} allows us to find
a lecture hall graph model for \( \sigma_{n,k} \) using those for
\( \tau_{n,k} \) and \( \ts_{n,k} \). To give a precise statement we
introduce some definitions.

\begin{defn}
  The \emph{height} of a weight system \( w \) is the smallest integer
  \( \ell \) such that \( w(t;i,j)=0 \) for all \( t\ge \ell \). If
  there is no such integer, the height of \( w \) is defined to be
  \( \infty \). For a weight system \( w^{(1)} \) of height
  \( \ell<\infty \) and any weight system \( w^{(2)} \), we define
  \( w^{(1)}\sqcup w^{(2)} \) to be the weight system obtained by
  adding \( w^{(2)} \) on top of \( w^{(1)} \), that is,
\[
     (w^{(1)}\sqcup w^{(2)})(t;i,j) =
    \begin{cases}
     w^{(1)}(t;i,j) & \mbox{if \( t<\ell \)},\\
     w^{(2)}(t-\ell;i,j) & \mbox{if \( t\ge \ell \).}
    \end{cases}
\]
\end{defn}

\begin{lem}\label{lem:inter-mixed2}
  Let \( \sigma_{n,k}, \tau_{n,k} \), and \( \ts_{n,k} \) be given as
  in \Cref{lem:inter-mixed}. Suppose that \( w^{(1)} \) is a weight
  system of finite height with \( h^{w^{(1)}}_{n,k} = \tau_{n,k} \)
  and that \( w^{(2)} \) is a weight system with
  \( h^{w^{(2)}}_{n,k} = \ts_{n,k} \). Then
  \( h^{w^{(1)}\sqcup w^{(2)}}_{n,k} = \sigma_{n,k} \).
\end{lem}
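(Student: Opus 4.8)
The plan is to reduce the statement to a path-decomposition identity and then prove that identity by a weight-preserving bijection. Combining the hypotheses \( h^{w^{(1)}}_{n,r} = \tau_{n,r} \) and \( h^{w^{(2)}}_{r,k} = \ts_{r,k} \) with the formula of \Cref{lem:inter-mixed}, the desired equality \( h^{w^{(1)}\sqcup w^{(2)}}_{n,k} = \sigma_{n,k} \) is equivalent to
\[
  h^{w^{(1)}\sqcup w^{(2)}}_{n,k} = \sum_{r=k}^{n} h^{w^{(1)}}_{n,r}\, h^{w^{(2)}}_{r,k}.
\]
So it suffices to establish this identity directly from \Cref{def:h-e}. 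Let \( \ell \) denote the finite height of \( w^{(1)} \), so that \( (w^{(1)}\sqcup w^{(2)})(t;i,j) \) equals \( w^{(1)}(t;i,j) \) for \( t<\ell \) and \( w^{(2)}(t-\ell;i,j) \) for \( t\ge\ell \).

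The core of the argument is to cut each path \( p\in\SE((k,\infty)\to(n,0)) \) along the horizontal line \( y=\ell \). Since such a path moves weakly east and weakly south, it contains exactly one east step between \( x=i \) and \( x=i+1 \) for each \( i=k,\dots,n-1 \) (as in the proof of \Cref{lem:wt-mult}), and the levels of these east steps are weakly decreasing as \( x \) increases from \( k \) to \( n \). Hence there is a unique \( r \) with \( k\le r\le n \) such that the \( r-k \) east steps between \( x=k \) and \( x=r \) lie at levels \( \ge\ell \) while the remaining \( n-r \) east steps lie at levels \( <\ell \). I will argue that the path must leave level \( \ell \) through the single vertex \( v^\ell_{r,0} \): the only south step lowering the level is of the form \( (v^t_{i,0},v^{t-1}_{i,i}) \), and intermediate south steps pass through every vertex of a given column, so \( v^\ell_{r,0}=(r,\ell) \) is forced as the crossing point. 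Splitting \( p \) at \( v^\ell_{r,0} \) yields an upper part \( p_{\mathrm{up}} \) from \( (k,\infty) \) to \( v^\ell_{r,0} \) using only levels \( \ge\ell \), and a lower part \( p_{\mathrm{low}} \) from \( v^\ell_{r,0} \) to \( (n,0) \) using only levels \( <\ell \).

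Next I would check that this cut is weight-preserving and bijective onto the product of path families. Because the weight of a path is the product of its east-step weights, and these split between \( p_{\mathrm{up}} \) and \( p_{\mathrm{low}} \), the weight of \( p \) under \( w^{(1)}\sqcup w^{(2)} \) factors as \( w^{(1)}(p_{\mathrm{low}})\cdot w^{(2)}(\bar p_{\mathrm{up}}) \), where \( \bar p_{\mathrm{up}} \) is the translate of \( p_{\mathrm{up}} \) by \( t\mapsto t-\ell \). The translated upper parts range over all of \( \SE((k,\infty)\to(r,0)) \), so they sum to \( h^{w^{(2)}}_{r,k} \). For the lower parts I would match each \( p_{\mathrm{low}} \) with a path in \( \SE((r,\infty)\to(n,0)) \) by prepending the unique column of weightless south steps from \( (r,\infty) \) down to \( v^\ell_{r,0} \); since \( w^{(1)} \) has height \( \ell \), every path counted in \( h^{w^{(1)}}_{n,r} \) whose east steps reach level \( \ge\ell \) has weight zero, so this prepending is a weight-preserving bijection onto the nonzero-weight paths and the lower parts sum to \( h^{w^{(1)}}_{n,r} \). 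Summing over \( r \) then gives the displayed identity.

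The main obstacle I anticipate is the boundary bookkeeping in this last step: one must verify carefully that the crossing vertex is exactly \( v^\ell_{r,0} \) in every case, including the degenerate cases \( r=k \) and \( r=n \) where one of the two parts consists only of south steps and contributes the empty product \( 1 \), and that the height-\( \ell \) vanishing of \( w^{(1)} \) exactly reconciles the mismatch between paths that start at \( v^\ell_{r,0} \) and paths that start at \( (r,\infty) \) in the definition of \( h^{w^{(1)}}_{n,r} \). Once this correspondence is pinned down, the factorization of weights and the appeal to \Cref{lem:inter-mixed} finish the proof.
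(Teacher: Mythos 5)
Your proposal is correct and follows essentially the same route as the paper: the paper's proof is exactly the decomposition of each path \( p:(k,\infty)\to(n,0) \) at its crossing point \( (r,\ell) \) with the line \( y=\ell \), factoring the weight into a \( w^{(2)} \)-part and a \( w^{(1)} \)-part and then invoking \Cref{lem:inter-mixed}. The boundary details you flag (the crossing vertex being \( v^\ell_{r,0} \) and the height-\( \ell \) vanishing reconciling \( \SE((r,\ell)\to(n,0)) \) with \( \SE((r,\infty)\to(n,0)) \)) are left implicit in the paper but are verified correctly in your write-up.
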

\begin{proof}
  Let \( \ell \) be the height of \( w^{(1)} \). Since
  \begin{align*}
    h^{w^{(1)}\sqcup w^{(2)}}_{n,k}
    &= \sum_{p:(k,\infty)\to (n,0)} (w^{(1)}\sqcup w^{(2)})(p)\\
    &= \sum_{r=k}^{n} 
      \left( \sum_{p:(k,\infty)\to (r,\ell)} w^{(2)}(p) \right)
      \left( \sum_{p:(r,\ell)\to (n,0)} w^{(1)}(p) \right) 
      =    \sum_{r=k}^{n} \tau_{n,r} \ts_{r,k},
  \end{align*}
we have \( h^{w^{(1)}\sqcup w^{(2)}}_{n,k}=\sigma_{n,k} \).
\end{proof}

For a sequence \( d=(d_0,d_1,d_2,\ldots) \), define
\[
  (x|d)^n = (x-d_0)(x-d_1) \cdots (x-d_{n-1}).
\]
We will call the mixed moments \( \ts_{n,k} \) relative to
\( \{(x|d)^n\}_{n\ge0} \) the \emph{factorial mixed moments}. In most
cases we will consider the original mixed moments \( \sigma_{n,k} \),
the factorial mixed moments \( \ts_{n,k} \) for some \( (x|d)_n \),
and the mixed moments relative to the continuous \( q \)-Hermite
polynomials.

\section{Properties of weight systems}
\label{sec:guess-prov-techn}

In this section we prove useful properties of weight systems of height
\( 1 \) and weight systems of infinite height. Firstly, we consider
weight systems of height \( 1 \). In this case our lecture hall graph
model is equivalent to a special case of the planar network of Fomin
and Zelevinsky \cite{Fomin2000}. Using their result
\cite[Lemma~6]{Fomin2000}, given a lower unitriangular matrix
\( (\sigma_{n,k})_{n,k=0}^\infty \), one can deduce that there is a
unique weight system \( w \) of height \( 1 \) satisfying
\( h^w_{n,k} = \sigma_{n,k} \). We give an explicit formula for the
weight system \( w \) using minors of the matrix
\( (\sigma_{n,k})_{n,k=0}^\infty \).

In the case of weight systems of infinite height, there can be many
weight systems \( w \) satisfying \( h^w_{n,k} = \sigma_{n,k} \). We
provide a simple algorithm to discover a weight system of infinite
height. Although our algorithm does not always produce a correct
weight system, it does provide a valid one for all orthogonal
polynomials in the \( q \)-Askey scheme except for the case of the
continuous \( q \)-Hermite polynomials. In particular, this algorithm
allows us to discover systematically the weight system for the mixed
moments of little \( q \)-Jacobi polynomials in \cite{LHT}, which was
first discovered ``by accident'' when they happened to come across the
results in lecture hall partitions \cite{trunc_LHP}. Furthermore, this
algorithm can also be used to discover weight systems for big
\( q \)-Jacobi polynomials and Askey--Wilson polynomials in this
paper. We also provide various recurrences for weight systems which
can be used to prove that a given weight system \( w \) indeed
satisfies \( h^w_{n,k} = \sigma_{n,k} \).

\subsection{Weight systems of height 1}

In this subsection we will show that weight systems of height 1 are
particularly useful because their weights are uniquely determined by
\( h^w_{n,k} \) and they can be used to build other weight systems.
Observe that the lecture hall graph of height \( 1 \) can be
identified with a staircase grid as shown in \Cref{fig:image7}.
This implies that the total number of paths \( p:(k,1) \to (n,0) \) is
\( \binom{n}{k} \).

\begin{figure}
  \centering
  \begin{tikzpicture}[scale=1.7]
    \LHLL{5}1 \LHlabel51
    \draw[white, thick] (0,1) -- (5,1);
    \draw [red, very thick] (5,0) -- (5,1/6) -- (4,1/5)
     -- (4,2/5) -- (3,2/4) -- (2,2/3) -- (2,1);
    \node at (5.5,.5) {\(\Leftrightarrow\)};
   \end{tikzpicture}
  \begin{tikzpicture}[scale=0.7]
    \TRIL{5}5
    \draw [red, very thick] (5,0) -- (5,1) -- (4,1)
     -- (4,2) -- (3,2) -- (3,2) -- (2,2);
  \end{tikzpicture}
  \caption{A path in the lecture hall graph of height \( 1 \) and
    its corresponding path in the staircase lattice.}
  \label{fig:image7}
\end{figure}

In order to show the uniqueness for a weight system of height \( 1 \),
we introduce some notation. Given a matrix
\( A=(a_{i,j})_{i,j=0}^\infty \), let \( A_{r}(i,j) \) denote the
minor of \( A \) with row indices \( i,i+1,\ldots,i+r-1 \) and column
indices \( j,j+1,\ldots,j+r-1 \).

\begin{prop}\label{prop:MM/MM}
  Let \( \{a_{n,k}\}_{n\ge k\ge 0} \) be a triangular array of
  indeterminates and let \( A=(a_{n,k})_{n,k\ge0} \), where
  \( a_{n,k}=0 \) if \( n<k \). Then there is a unique weight system
  \( w \) of height \( 1 \) satisfying \( h^w_{n,k} = a_{n,k} \) for
  all \( n\ge k\ge0 \). Moreover, each \( w(0;i,j) \) is the rational
  function in the indeterminates \( a_{n,k} \) given by
  \begin{equation}\label{eq:MM/MM}
    w(0;i,j) = \frac{A_{j+1}(i-j+1,0) A_{j}(i-j,0)}{A_{j}(i-j+1,0) A_{j+1}(i-j,0)}.
  \end{equation}
\end{prop}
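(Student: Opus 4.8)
The claim has two parts: existence and uniqueness of a height-$1$ weight system $w$ with $h^w_{n,k}=a_{n,k}$, and an explicit minor formula for its weights. The plan is to exploit the identification, already noted in the excerpt, of the height-$1$ lecture hall graph with the staircase lattice, so that $h^w_{n,k}$ becomes a sum over $\SE$-paths $(k,\infty)\to(n,0)$ whose weights are products of the east-step weights $w(0;i,j)$. The key structural fact I would isolate first is that for each fixed column $x=i$, a path uses exactly one east step (between $x=i$ and $x=i+1$), and the height $j$ at which it crosses is weakly monotonic along the path. Writing $b_i=w(0;i,j)$ in terms of the crossing heights makes $h^w_{n,k}$ a polynomial in the weights that is \emph{linear} in each $w(0;i,j)$ with positive coefficients, and crucially triangular: the ``staircase'' structure forces the $j$th crossing height to be bounded by the available rows.

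\textbf{Proving existence and uniqueness.}
The cleanest route is induction, determining the weights $w(0;i,j)$ one diagonal at a time. I would order the unknowns by the diagonal index $j$ (equivalently by $i-j$, the starting column). For a fixed $k$, the quantity $h^w_{n,k}=a_{n,k}$ is a sum over paths from $(k,\infty)$ to $(n,0)$; the \emph{lowest} such path (staying at minimal height) contributes a single new weight not appearing in any $a_{n',k'}$ with smaller $n'-k'$, while all other paths involve weights already determined. This shows each new weight is forced, giving uniqueness, and that one can solve for it as a rational function of the $a_{n,k}$, giving existence, provided the relevant denominators are nonzero. Since the $a_{n,k}$ are indeterminates, the denominators are nonzero polynomials and the construction goes through over the field of rational functions. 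Alternatively, and more conceptually, I would invoke the cited Fomin--Zelevinsky result \cite[Lemma~6]{Fomin2000}: a height-$1$ lecture hall graph is a planar network, for which the total-nonnegativity/factorization theory gives a unique weight system recovering any prescribed unitriangular matrix of path sums. This immediately yields existence and uniqueness; the remaining work is only to identify the weights explicitly.

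\textbf{Deriving the minor formula.}
For the explicit formula \eqref{eq:MM/MM}, the plan is to apply the Lindstr\"om--Gessel--Viennot lemma to families of nonintersecting paths. The minor $A_j(i-j,0)$ is, by LGV, the weighted sum over $j$-tuples of nonintersecting $\SE$-paths with sources at columns $i-j,\dots,i-1$ and sinks at columns $0,\dots,j-1$ (using that $a_{n,k}=h^w_{n,k}$ counts single paths). In the staircase lattice such a nonintersecting family fills a triangular region, so its total weight factors as a product of east-step weights over a determined set of cells. Taking the ratio of four such minors as in \eqref{eq:MM/MM}, I expect massive telescoping: the products of weights over the four triangular regions cancel except for the single cell $w(0;i,j)$. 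Concretely, I would show $A_{j+1}(i-j+1,0)/A_{j+1}(i-j,0)$ and $A_j(i-j,0)/A_j(i-j+1,0)$ each equal a ratio of row products whose quotient isolates $w(0;i,j)$.

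\textbf{The main obstacle.}
The hardest step is verifying that the nonintersecting path families computing the minors genuinely have the rigid ``frozen'' structure that makes the LGV weight a clean product, so that the fourfold ratio telescopes to exactly one weight. This requires a careful combinatorial argument that, in the staircase lattice with the specified sources and sinks, there is essentially a unique nonintersecting family (or that all families share a common product of weights up to the boundary cells), so the minor is genuinely a monomial ratio and no cross terms survive. I expect to handle this by an explicit description of the forced path configuration, after which the cancellation in \eqref{eq:MM/MM} is a bookkeeping exercise rather than a genuine difficulty.
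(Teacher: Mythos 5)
Your proposal matches the paper's proof in all essentials: the paper likewise invokes the LGV lemma, observes that the nonintersecting family computing each minor \( A_r(k,0) \) is frozen (the path from \( (a,1) \) to \( (k+a,0) \) is forced to use the east steps of weights \( w(0;a,a),\dots,w(0;a+k-1,a) \)), obtains \( A_r(k,0)=\prod_{a=0}^{r-1}\prod_{b=0}^{k-1}w(0;a+b,a) \), and lets the fourfold ratio telescope to the single weight \( w(0;i,j) \), with existence/uniqueness deferred to the Fomin--Zelevinsky planar-network result exactly as you suggest. The step you flag as the main obstacle is handled in the paper by the same explicit description of the forced configuration, so your plan is correct and essentially identical.
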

\begin{proof}
  Suppose that \( w \) is a weight system of height \( 1 \) satisfying
  \( h^w_{n,k} = a_{n,k} \) for all \( n\ge k\ge0 \). Then, by the LGV
  lemma,
\begin{equation}\label{eq:M(k,0,r)}
   A_{r}(k,0) = \prod_{a=0}^{r-1} \prod_{b=0}^{k-1} w(0;a+b,a),
 \end{equation}
 because there is only one nonintersecting family of paths from
 \( (0,1),(1,1),\dots,(r-1,1) \) to
 \( (k,0),(k+1,0),\dots,(k+r-1,0) \). More precisely, for
 \( 0\le a\le r-1 \), the unique path from \( (a,1) \) to
 \( (k+a,0) \) has the east steps of weights
 \( w(0;a,a), w(0;a+1,a),\dots,w(0;a+k-1,a) \). See
 \Cref{fig:image1} for an example.
\begin{figure}
  \centering
  \begin{tikzpicture}[scale=1.5]
    \LHLL{6}1
    \LHlabel{6}1
    \draw[white, thick] (0,1) -- (6,1);
    \draw [red, very thick] (4,0) -- (0,0) -- (0,1);
    \draw [red, very thick] (5,0) -- (5,1/6) -- (4,1/5) -- (3,1/4) -- (2,1/3)
    -- (1,1/2) -- (1,1);
    \draw [red, very thick] (6,0) -- (6,2/7) -- (5,2/6)
    -- (4,2/5) -- (3,2/4) -- (2,2/3) -- (2,1);
  \end{tikzpicture}
  \caption{The unique family of nonintersecting paths
  from \( (0,1), (1,1), (2,1) \) to \( (4,0),(5,0),(6,0) \).}
  \label{fig:image1}
\end{figure}
 By substituting \eqref{eq:M(k,0,r)}
 into the right-hand side of \eqref{eq:MM/MM}, we get the left-hand
 side of \eqref{eq:MM/MM}.
\end{proof}

\begin{remark}
Our path model also has a close connection with totally
positive matrices. Fomin and Zelevinsky \cite{Fomin2000} showed that a
square matrix \( A \) is totally positive if and only if every
``initial'' minor is positive. Moreover, they showed that in this case
there is a unique weight system on a certain planar graph that gives
a combinatorial meaning to the \( (i,j) \)-entry of \( A \). Our
path model of height \( 1 \) for the mixed moment
\( \sigma_{n,k} \) is the unique weight system of Fomin and Zelevinsky
for the lower unitriangular matrix
\( (\sigma_{n,k})_{n,k=0}^\infty \). As an application we obtain a
total positivity of the matrix of the mixed moments (and also for the
coefficients) of the big \( q \)-Jacobi polynomials with some
reparametrization. More details will be given in our forthcoming paper.
\end{remark}

\begin{remark}
Nakagawa et al.~\cite{Nakagawa2001}
used a lattice equivalent to a weight system of height \( 1 \)
in their study of Macdonald's ninth variation of Schur functions.
\end{remark}

The following proposition shows that given an array \( \{a_{n,k}\} \),
finding a height 1 weight system \( w \) with
\( h^w_{n,k} = a_{n,k} \) is equivalent to the same problem with
\( e^w_{n,k} = a_{n,k} \). Hence, by \Cref{prop:MM/MM}, there is also
a unique weight system \( w \) of height \( 1 \) such that
\( e^w_{n,k} = a_{n,k} \).

\begin{prop}\label{pro:w=overline w}
  Let \( w \) be a weight system of height \( 1 \). Define the weight
  system \( \overline{w} \) of height \( 1 \) by
  \( \overline{w}(0;i,j) = w(0;i,i-j) \) for \( 0\le j\le i \). Then
  \[
    h^w_{n,k} = e^{\overline{w}}_{n,k}, \qquad 
    e^w_{n,k} = h^{\overline{w}}_{n,k}.
  \]
\end{prop}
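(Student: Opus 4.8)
The plan is to push both quantities through the bijection $\phi$ of \Cref{pro:3bij} to turn them into explicit sums over integer sequences, and then to match those sums by a componentwise complementation. First I would use that $w$, and hence $\overline{w}$, has height $1$: in any path counted by $h^w_{n,k}$ or by $e^{\overline{w}}_{n,k}$ there is exactly one east step crossing each vertical strip between $x=i-1$ and $x=i$ for $i=k+1,\dots,n$ (south and north steps fix the $x$-coordinate), and if such a step sits at a level $t\ge 1$ its weight vanishes. Thus only paths all of whose east steps lie at level $t=0$ contribute.

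Applying \Cref{pro:3bij} and noting that an east step at level $t=0$ across the strip $i$ is $(v^0_{i-1,j},v^0_{i,j})$ with $\phi$-value $\alpha_i=j\in\{0,\dots,i-1\}$ and weight $w(0;i-1,j)$, I would record
\[
  h^w_{n,k} = \sum_{\alpha} \prod_{i=k+1}^{n} w(0;i-1,\alpha_i),
\]
summed over $\alpha=(\alpha_{k+1},\dots,\alpha_n)$ with $0\le\alpha_i\le i-1$ and $\frac{\alpha_{k+1}}{k+1}\ge\cdots\ge\frac{\alpha_n}{n}$, and similarly
\[
  e^{\overline{w}}_{n,k}
   = \sum_{\rho} \prod_{i=k+1}^{n} \overline{w}(0;i-1,\rho_i)
   = \sum_{\rho} \prod_{i=k+1}^{n} w(0;i-1,(i-1)-\rho_i),
\]
summed over $\rho=(\rho_{k+1},\dots,\rho_n)$ with $0\le\rho_i\le i-1$ and $\frac{\rho_{k+1}}{k+1}<\cdots<\frac{\rho_n}{n}$.

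The crux is the componentwise complementation $\rho_i\mapsto\alpha_i:=(i-1)-\rho_i$, an involution of $\{0,\dots,i-1\}$ that identifies the two products term by term. It remains to check that it carries the strict chain defining $RL^*_{n,k}$ to the weak chain defining $AL_{n,k}$. Multiplying out and substituting $\rho_i=(i-1)-\alpha_i$, one finds that $\frac{\rho_i}{i}<\frac{\rho_{i+1}}{i+1}$ is equivalent to $i\alpha_{i+1}<(i+1)\alpha_i+1$; since $i\alpha_{i+1}$ and $(i+1)\alpha_i$ are integers this is the same as $i\alpha_{i+1}\le(i+1)\alpha_i$, i.e.\ $\frac{\alpha_i}{i}\ge\frac{\alpha_{i+1}}{i+1}$. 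Hence complementation is a weight-preserving bijection between the two index sets, and the two displayed sums agree, proving $h^w_{n,k}=e^{\overline{w}}_{n,k}$.

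For the second identity I would observe that $w\mapsto\overline{w}$ is an involution, since $\overline{\overline{w}}(0;i,j)=\overline{w}(0;i,i-j)=w(0;i,i-(i-j))=w(0;i,j)$. Applying the identity already proved with $\overline{w}$ in place of $w$ then gives $e^w_{n,k}=e^{\overline{\overline{w}}}_{n,k}=h^{\overline{w}}_{n,k}$. I expect the only genuinely delicate point to be the integrality step that converts the strict inequalities of $RL^*_{n,k}$ into the weak inequalities of $AL_{n,k}$; the rest is bookkeeping through $\phi$ together with the height-$1$ reduction.
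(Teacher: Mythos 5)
Your proposal is correct and follows essentially the same route as the paper: reduce to sums over the truncated (anti-)lecture hall sequences via $\phi$, apply the componentwise complementation $\rho_i\mapsto (i-1)-\rho_i$, and check that it exchanges the weak and strict chains (the paper's proof states this check as ``straightforward,'' which you carry out explicitly via the integrality argument). Your derivation of the second identity from the first using that $w\mapsto\overline{w}$ is an involution is also how the paper concludes.
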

\begin{proof}
  By \Cref{pro:3bij}, we have
  \[
    h^w_{n,k} = \sum_{\lambda\in X} \prod_{i=k}^{n-1} w(0;i,\lambda_{i+1}),
  \]
  where \( X \) is the set of integer sequences
  \( \lambda=(\lambda_{k+1},\dots,\lambda_n) \) such that
  \[
    1> \frac{\lambda_{k+1}}{k+1} \ge \frac{\lambda_{k+2}}{k+2}
   \ge \cdots \ge \frac{\lambda_{n}}{n}\ge0.
  \]
  Similarly,
  \[
    e^{\overline{w}}_{n,k} = \sum_{\mu\in Y} \prod_{i=k}^{n-1} \overline{w}(0;i,\mu_{i+1}),
  \]
  where \( Y \) is the set of integer sequences
  \( \mu=(\mu_{k+1},\dots,\mu_n) \) such that
  \[
    0 \le \frac{\mu_{k+1}}{k+1} < \frac{\mu_{k+2}}{k+2}
   < \cdots < \frac{\mu_{n}}{n} <1.
  \]
  
  For \( \lambda\in X \), define \( \psi(\lambda)=\mu \)
  by \( \mu_i = i-1-\lambda_i \). It is straightforward to check that
  \( \psi:X\to Y \) is a weight-preserving bijection, which implies the desired identity.
\end{proof}

The following two lemmas give recurrences for \( h^w_{n,k} \), which
can be used to prove an identity of the form
\( \sigma_{n,k} = h^w_{n,k} \).

\begin{lem}\label{lem:rec}
  Let \( w \) be a weight system of height \( 1 \). We have
  \[
    h^{w}_{n,k} =  w(0;k,k) h^{w'}_{n-1,k} + h^{w'}_{n-1,k-1},
  \]
  where \( w'(0;i,j) = w(0;i+1,j) \) for \( i,j\in \ZZ_{\ge0} \).
\end{lem}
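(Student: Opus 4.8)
The plan is to reduce the claimed identity to a transparent statement about the east-step data of height-$1$ paths and then split that sum according to its first east step. First I would record a normal form for $h^v_{N,K}$ attached to an arbitrary height-$1$ weight system $v$. Since $v(t;\cdot,\cdot)=0$ for $t\ge 1$, a path in $\SE((K,\infty)\to(N,0))$ can have nonzero weight only if every one of its east steps lies at level $t=0$; such a path descends column $K$ straight down to the vertex $v^0_{K,K}$ and thereafter only moves inside the level-$0$ strip. Writing its east steps as $v^0_{c,j_c}\to v^0_{c+1,j_c}$ for $c=K,\dots,N-1$, the index is non-increasing along this portion, because an east step preserves the index $j$, a south step lowers it by $1$, and at height $1$ there is no level-wrap to reset it. Hence these paths are in weight-preserving bijection with the sequences $K\ge j_K\ge j_{K+1}\ge\cdots\ge j_{N-1}\ge 0$ (the bound $j_K\le K$ forcing $j_c\le c$ automatically), and
\[
  h^v_{N,K} \;=\; \sum_{K\ge j_K\ge\cdots\ge j_{N-1}\ge 0}\ \prod_{c=K}^{N-1} v(0;c,j_c).
\]
This is the height-$1$ specialization of the bijection in \Cref{pro:3bij}, after using that, for sub-diagonal entries ($j_c\le c$), the lecture-hall condition $\frac{j_c}{c+1}\ge\frac{j_{c+1}}{c+2}$ collapses to the plain inequality $j_{c+1}\le j_c$; it is this collapse that makes the column shift below clean.

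With this normal form in hand, I would apply it to $h^w_{n,k}$ and split the sum on the value of the first index $j_k\in\{0,1,\dots,k\}$. In the top case $j_k=k$ the first east step contributes the factor $w(0;k,k)$, and the remaining data is a non-increasing sequence $k\ge j_{k+1}\ge\cdots\ge j_{n-1}\ge 0$ weighted by $\prod_{c=k+1}^{n-1}w(0;c,j_c)$. In the complementary case $j_k\le k-1$ the whole sequence is non-increasing with leading bound $k-1$, weighted by $\prod_{c=k}^{n-1}w(0;c,j_c)$. The final step is to recognize these two sums as $h^{w'}$-quantities: since $w'(0;c,j)=w(0;c+1,j)$, re-indexing the $w$-columns $k+1,\dots,n-1$ (resp.\ $k,\dots,n-1$) as $w'$-columns identifies the top case with $w(0;k,k)\,h^{w'}_{n-1,k}$ and the complementary case with $h^{w'}_{n-1,k-1}$, the leading bounds $\le k$ and $\le k-1$ being exactly those supplied by the normal form for $w'$ starting at columns $k$ and $k-1$. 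Adding the two cases yields the identity, and the degenerate range $n\le k$ is immediate from $h_{n,n}=1$ and $h_{n,k}=0$ for $n<k$.

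I expect the only real obstacle to be bookkeeping rather than anything deep. The naive attempt to shift columns directly in the lecture-hall form of \Cref{pro:3bij} appears to fail, because the denominators $i$ in $\frac{\alpha_i}{i}$ do not match after shifting by one column, which tempts one to conclude the two sides cannot agree. The resolution — and the single point that needs care — is precisely the equivalence between the lecture-hall chain restricted to sub-diagonal entries and plain non-increasing sequences; once all three sums are rewritten over non-increasing sequences the denominator mismatch disappears and the column shift becomes a literal relabeling. I would therefore isolate that equivalence as a one-line floor estimate (if $0\le a\le m-1$ then $\frac{a}{m}\ge\frac{b}{m+1}$ forces $b\le\frac{(m+1)a}{m}<a+1$, so $b\le a$, and conversely $b\le a$ gives $\frac{b}{m+1}\le\frac{a}{m+1}\le\frac{a}{m}$) and invoke it uniformly to establish the normal form, after which the rest of the argument is routine.
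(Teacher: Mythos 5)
Your proof is correct and follows essentially the same route as the paper: both split according to whether the first east step sits at the maximal index $k$ (contributing the factor $w(0;k,k)$) or not, and both identify the remaining sum with an $h^{w'}$-quantity by shifting columns by one. Your intermediate "normal form" over non-increasing sequences is just the sequence-language version of the paper's path-shifting bijection (and your floor-estimate justifying the collapse of the lecture-hall chain to plain monotonicity is a correct, if slightly more explicit, rendering of what the paper reads off geometrically).
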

\begin{proof}
  Since \( w \) is of height \( 1 \), we have
  \[
    h^{w}_{n,k} = \sum_{p:(k,1)\to(n,0)} w(p).
  \]
  Consider a path \( p:(k,1)\to(n,0) \). If the first east step of
  \( p \) is the \( k \)th east step from the bottom, then \( p \)
  passes through \( v^0_{k+1,k} \). Otherwise \( p \) passes
  through \( v^0_{k,k-1} \) as shown in \Cref{fig:image3}.
\begin{figure}
  \centering
  \begin{tikzpicture}[scale=1.5]
    \LHLL{6}1
    \draw[white, thick] (0,1) -- (6,1);
    \draw [red, very thick] (4,3/5) -- (3,3/4) -- (3,1);
    \node at (3,1.2) {\( (k,1) \)};
    \node [right] at (6,0) {\( (n,0) \)};
    \node [right] at (4,3/5) {\( v^0_{k+1,k} \)};
    \node [left] at (3,3/4) {\( v^0_{k,k} \)};
    \node at (3.5,0.85) {\small \( w(0;k,k) \)};
  \end{tikzpicture}
  \begin{tikzpicture}[scale=1.5]
    \LHLL{6}1
    \draw[white, thick] (0,1) -- (6,1);
    \draw [red, very thick] (3,2/4) -- (3,1);
    \node at (3,1.2) {\( (k,1) \)};
    \node [right] at (6,0) {\( (n,0) \)};
    \node [right] at (3,1/2) {\( v^0_{k,k-1} \)};
  \end{tikzpicture}
  \caption{The beginning of a path \( p:(k,1)\to(n,0) \) is drawn as
    in the first diagram if it has an east step of maximum height,
    which has weight \( w(0;k,k) \). Otherwise, \( p \) is drawn as in
    the second diagram.}
  \label{fig:image3}
\end{figure}
Thus, 
  \begin{equation}\label{eq:1}
    h^{w}_{n,k} = w(0;k,k) \sum_{p:v^0_{k+1,k}\to(n,0)} w(p)
      + \sum_{p:v^0_{k,k-1}\to(n,0)} w(p).
  \end{equation}

  Now consider a path \( p:v^0_{k,k-1}\to(n,0) \). Note that
  \( p \) is determined by its east steps. Let
  \( p':v^0_{k-1,k-1}\to(n-1,0) \) be the path obtained from \( p \)
  by replacing each east step \( (v^0_{i,j},v^0_{i+1,j}) \) of
  \( p \) by an east step \( (v^0_{i-1,j},v^0_{i,j}) \) as shown
  in \Cref{fig:image4}. By construction, we have
  \( w(p) = w'(p') \). Since the map \( p\mapsto p' \)
  is a bijection from the set of paths \( p:v^0_{k,k-1}\to(n,0) \)
  to the set of paths \( p':v^0_{k-1,k-1}\to(n-1,0) \), we have
  \begin{equation}\label{eq:2}
    \sum_{p:v^0_{k,k-1}\to(n,0)} w(p) = 
    \sum_{p':v^0_{k-1,k-1}\to(n-1,0)} w'(p')= h^{w'}_{n-1,k-1}.
  \end{equation}

  \begin{figure}
  \centering
  \begin{tikzpicture}[scale=1.5]
    \LHLL{6}1
    \draw[white, thick] (0,1) -- (6,1);
    \draw [red, very thick] (3,2/4) -- (4,2/5) -- (4,1/5) -- (5,1/6)
     -- (5,0) -- (6,0);
    \node at (3,1.2) {\( (k,1) \)};
    \node [right] at (6,0) {\( (n,0) \)};
    \node [left] at (3,1/2) {\( v^0_{k,k-1} \)};
    \begin{scope}[shift={(0,-1.7)}]
    \LHLL{6}1
    \draw[white, thick] (0,1) -- (7,1);
    \draw [red, very thick] (2,2/3) -- (3,2/4) -- (3,1/4) -- (4,1/5)
     -- (4,0) -- (5,0);
    \node at (2,1.2) {\( (k-1,1) \)};
    \node [below] at (5,0) {\( (n-1,0) \)};
    \node [left] at (2,2/3) {\( v^0_{k-1,k-1} \)};
    \end{scope}
  \end{tikzpicture}
  \caption{The correspondence between a path
    \( p:v^0_{k,k-1}\to(n,0) \) and a path
    \( p:v^0_{k-1,k-1}\to(n-1,0) \).}
  \label{fig:image4}
\end{figure}

The lemma then follows from \eqref{eq:1} and \eqref{eq:2}.
\end{proof}

\begin{lem}\label{lem:rec1+}
  Let \( w \) be a weight system of height 1. We have
  \[
    h^{w}_{n,k} = w(0;n-1,0) h^{w}_{n-1,k} +  h^{w^+}_{n-1,k-1},
  \]
  where \( w^+(0;i,j) = w(0;i+1,j+1) \) for \( i,j\in \ZZ_{\ge0} \).
\end{lem}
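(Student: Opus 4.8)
The plan is to mirror the proof of \Cref{lem:rec}, but to decompose each path according to its \emph{last} east step (the one between $x=n-1$ and $x=n$) rather than its first. As in that proof, since $w$ has height $1$ we may write
\[
  h^{w}_{n,k} = \sum_{p:(k,1)\to(n,0)} w(p),
\]
and every such path has exactly one east step between $x=n-1$ and $x=n$, ending at some vertex $v^0_{n,j}$ and then descending by south steps to $(n,0)$. I would split the sum according to whether this last east step is the bottommost one ($j=0$) or not ($j\ge 1$). Recall also that, as noted in the proof of \Cref{lem:rec}, a height-$1$ path is determined by its east steps, which I will use to set up the bijections below.

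If $j=0$, the last east step is $(v^0_{n-1,0},v^0_{n,0})$ with weight $w(0;n-1,0)$, and deleting it leaves a path $p:(k,1)\to(n-1,0)$. The truncation $p\mapsto p\setminus\{\text{last step}\}$ is a weight-preserving bijection onto all such paths, so this part of the sum contributes $w(0;n-1,0)\,h^{w}_{n-1,k}$, which is the first term.

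For the case $j\ge 1$ I would use the fact that a south--east path can never increase its height: once it reaches the bottom row $y=0$ it is trapped there and can only move east. Hence if the final east step has height $j\ge 1$, then \emph{every} east step of $p$ has height $\ge 1$. This lets me define a diagonal shift $p\mapsto p'$ that replaces each east step $(v^0_{i,j},v^0_{i+1,j})$ of $p$ by the east step $(v^0_{i-1,j-1},v^0_{i,j-1})$; since $j\le i$ the shifted vertices are legitimate, and $p'$ is a path $(k-1,1)\to(n-1,0)$. By definition $w^+(0;i-1,j-1)=w(0;i,j)$, so $w(p)=w^+(p')$, and the shift is a bijection onto $\SE((k-1,1)\to(n-1,0))$ (the inverse shifts every east step up and to the right). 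Thus this part of the sum equals $h^{w^+}_{n-1,k-1}$, giving the second term. The boundary case $k=0$ is automatically consistent: the first east step is then forced to height $0$, the case $j\ge1$ is empty, and the first term already accounts for the whole of $h^{w}_{n,0}$.

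I expect the only real work to be in the case $j\ge1$: verifying that the height-$\ge1$ condition propagates to the whole path (the ``trapped at $y=0$'' observation) and that the diagonal shift is a well-defined, weight-preserving bijection onto all of $\SE((k-1,1)\to(n-1,0))$; the $j=0$ case is a routine truncation. Alternatively, one could run the entire argument on the non-increasing integer sequences furnished by \Cref{pro:w=overline w}, where the two cases become $\lambda_n=0$ and $\lambda_n\ge1$ and the shift is simply $\mu_i=\lambda_{i+1}-1$; there the only subtlety is observing that the anti-lecture-hall inequalities reduce to plain monotonicity of the heights because every $y$-coordinate involved is $<1$.
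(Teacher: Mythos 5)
Your proof is correct and is precisely the argument the paper intends: its proof of this lemma is a one-line remark that one argues as in \Cref{lem:rec} but conditions on the \emph{last} east step instead of the first, which is exactly your split into the case $j=0$ (truncation, giving $w(0;n-1,0)\,h^{w}_{n-1,k}$) and the case $j\ge 1$ (diagonal shift onto $\SE((k-1,1)\to(n-1,0))$, giving $h^{w^{+}}_{n-1,k-1}$). The supporting points you single out — that the east-step levels of a height-one SE path are weakly decreasing, so $j\ge1$ at the last step forces all levels $\ge 1$, and that the shift is weight-preserving because $w^{+}(0;i-1,j-1)=w(0;i,j)$ — are all sound.
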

\begin{proof}
  This can be proved similarly as in the proof of \Cref{lem:rec}
  except that we consider the last east step of
  \( p:(k,1)\to (n,0) \) instead of the first east step.
\end{proof}

Using \Cref{lem:rec1+} we can prove the following proposition, which
states that a weight system for mixed moments can be obtained from a
weight system for factorial mixed moments by adding a weight system of
height \( 1 \) at the bottom.

\begin{prop}\label{pro:add-one-row}
  Fix a polynomial sequence \( \{p_n(x)\}_{n\ge0} \) with
  \( \deg(p_n(x)) = n \). Let \( \sigma_{n,k} \) and \( \ts_{n,k} \)
  be the mixed moments and the factorial mixed moments of
  \( \{p_n(x)\}_{n\ge0} \), i.e.,
  \[
    x^n = \sum_{k=0}^{n} \sigma_{n,k} p_k(x),
    \qquad
    (x|d)^n = \sum_{k=0}^{n} \ts_{n,k} p_k(x).
  \]
  Suppose that \( \ts_{n,k}=h^{\widetilde{w}}_{n,k} \)
  for a weight system \( \widetilde{w} \).
  Then \( \sigma_{n,k}=h^{w}_{n,k} \), where \( w \)
  is the weight system defined by
  \[
    w(t;i,j) =
    \begin{cases}
     d_{j} & \mbox{if \( t=0 \)},\\
     \widetilde{w}(t-1;i,j) & \mbox{if \( t\ge 1 \)}.
    \end{cases}
  \]
\end{prop}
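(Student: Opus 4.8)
The plan is to factor the monomial-to-$p_k$ change of basis through the intermediate basis $\{(x|d)^n\}_{n\ge0}$ and to recognize the leftover ``factorial-to-monomial'' piece as a height-$1$ weight system. Concretely, let $\tau_{n,k}$ be the connection coefficients defined by $x^n = \sum_{k=0}^n \tau_{n,k}(x|d)^k$. Instantiating the setup of \Cref{lem:inter-mixed} with $b_n(x)=x^n$, $q_n(x)=(x|d)^n$, and the given $\{p_n(x)\}_{n\ge0}$, the three expansions there read $x^n=\sum_k\sigma_{n,k}p_k(x)$, $x^n=\sum_k\tau_{n,k}(x|d)^k$, and $(x|d)^n=\sum_k\ts_{n,k}p_k(x)$, so $\sigma_{n,k}$, $\tau_{n,k}$, $\ts_{n,k}$ are exactly the mixed moments, the connection coefficients, and the factorial mixed moments. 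By \Cref{lem:inter-mixed2} it therefore suffices to produce a weight system of finite height whose $h$-values are $\tau_{n,k}$, because the hypothesis already gives $h^{\widetilde{w}}_{n,k}=\ts_{n,k}$.

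Next I would show $\tau_{n,k} = h^{w^{(1)}}_{n,k}$, where $w^{(1)}$ is the height-$1$ weight system with $w^{(1)}(0;i,j) = d_j$ (and $w^{(1)}(t;i,j)=0$ for $t\ge1$). From $x(x|d)^k = (x|d)^{k+1} + d_k(x|d)^k$ one reads off the recurrence $\tau_{n,k} = \tau_{n-1,k-1} + d_k\,\tau_{n-1,k}$, with $\tau_{n,n}=1$ and $\tau_{n,k}=0$ for $n<k$. On the path side, the crucial observation is that $w^{(1)}(0;i,j)$ depends only on $j$, so the shifted weight system $w'$ appearing in \Cref{lem:rec}, namely $w'(0;i,j)=w^{(1)}(0;i+1,j)$, coincides with $w^{(1)}$ itself. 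Hence \Cref{lem:rec} yields precisely $h^{w^{(1)}}_{n,k} = d_k\,h^{w^{(1)}}_{n-1,k} + h^{w^{(1)}}_{n-1,k-1}$, and the boundary values $h^{w^{(1)}}_{n,n}=1$, $h^{w^{(1)}}_{n,k}=0$ for $n<k$ hold by definition. A straightforward induction on $n$ then gives $\tau_{n,k}=h^{w^{(1)}}_{n,k}$ for all $n\ge k\ge0$.

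Since $w^{(1)}$ has height $\ell=1<\infty$, I can now apply \Cref{lem:inter-mixed2} with this $w^{(1)}$ and with $w^{(2)}=\widetilde{w}$, obtaining $h^{w^{(1)}\sqcup\widetilde{w}}_{n,k} = \sigma_{n,k}$. To finish, I check that $w^{(1)}\sqcup\widetilde{w}$ is the weight system $w$ in the statement: the definition of $\sqcup$ with $\ell=1$ gives $(w^{(1)}\sqcup\widetilde{w})(t;i,j)=w^{(1)}(0;i,j)=d_j$ when $t=0$ and $=\widetilde{w}(t-1;i,j)$ when $t\ge1$, which is exactly $w$. This proves $\sigma_{n,k}=h^{w}_{n,k}$.

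The argument is mostly bookkeeping once the reduction is in place; the only real content is the identification $\tau_{n,k}=h^{w^{(1)}}_{n,k}$, and the single mild subtlety there is noticing that the $i$-independence of $d_j$ collapses the shifted system of \Cref{lem:rec} back to $w^{(1)}$, so the path recurrence matches the factorial recurrence verbatim. One could instead verify this identification directly through \Cref{pro:3bij}, matching height-$1$ paths with truncated anti-lecture hall compositions, but the recurrence route is shorter.
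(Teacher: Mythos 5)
Your proposal is correct and follows essentially the same route as the paper: reduce via \Cref{lem:inter-mixed2} to showing that the height-$1$ weight system $w_1(0;i,j)=d_j$ computes the coefficients of $x^n$ in the basis $\{(x|d)^k\}$, then verify this by a recurrence. The only (immaterial) difference is that the paper runs the induction on the polynomial identity $x^n=\sum_k h^{w_1}_{n,k}(x|d)^k$ using \Cref{lem:rec1+} and the shifted sequence $d^+$, whereas you match the coefficient recurrences via \Cref{lem:rec}, exploiting that $d_j$ is independent of $i$.
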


\begin{proof}
  By \Cref{lem:inter-mixed2}, it suffices to show that
\begin{equation}\label{eq:x=x|d}
  x^n = \sum_{k=0}^{n} h^{w_1}_{n,k} (x|d)^k,
\end{equation}
where \( w_1 \) is the weight system of height \( 1 \) defined by
\( w_1(0;i,j) = d_{j} \). We will prove this by induction on \( n \).

  If \( n=0 \), then \( k=0 \) and both sides of \eqref{eq:x=x|d} are
  equal to \( 1 \). Let \( n\ge1 \) and suppose that \eqref{eq:x=x|d} holds for
  \( n-1 \). By \Cref{lem:rec1+}, 
  \[
    h^{w_1}_{n,k} = d_0 h^{w_1}_{n-1,k} + h^{w^+_1}_{n-1,k-1}.
  \]
  Then, by the induction hypothesis,
  \begin{align*}
    \sum_{k=0}^{n} h^{w_1}_{n,k} (x|d)^k
    &= d_0 \sum_{k=0}^{n-1} h^{w_1}_{n-1,k} (x|d)^k 
      +  (x-d_0)\sum_{k=1}^{n} h^{w^+_1}_{n-1,k-1} (x|d^+)^{k-1}\\
    &= d_0 x^{n-1} +(x-d_0)x^{n-1} = x^{n},
  \end{align*}
  where \( d^+=(d_1,d_2,\ldots) \). Thus \eqref{eq:x=x|d} also holds
  for \( n \).
\end{proof}

\subsection{Weight systems of infinite height}
\label{sec:weight-syst-infin}

In this subsection we consider weight systems of infinite height. To
see the difference between weight systems of infinite height and those
of height \( 1 \), suppose that given a triangular array
\( \{a_{n,k}\}_{n\ge k\ge0} \) we want to find a weight system \( w \)
such that \( h_{n,k}^w = a_{n,k} \). If \( w \) is a weight system of
height \( 1 \), then it is uniquely determined and we can use the
formula in \Cref{prop:MM/MM} to get the weight system. If, on the
other hand, \( w \) is of infinite height, the weight \( w \) is not
uniquely determined in general. How can we find such a weight system
then? We propose the following algorithm to construct a possible
candidate \( w \).

\begin{algo}[Finding a weight system of infinite height]
  \label{alg:1}
  Fix a monomial ordering for the monomials of the parameters in
  \( \{a_{n,k}\}_{n\ge k\ge0} \). Define a weight system \( w \) such
  that the weight of the \( j \)th east step from the bottom between
  \( x=n-1 \) and \( x=n \) to be the \( j \)th term of
  \( a_{n,n-1} \) according to the monomial ordering.
\end{algo}

It may seem too optimistic to hope that such a simple algorithm
provides a correct weight system. However, this algorithm gives a
valid weight system of infinite height for all orthogonal polynomials
in the \( q \)-Askey scheme except the continuous \( q \)-Hermite
polynomials. Once we have a candidate weight system \( w \) explicitly
constructed via this algorithm, we can prove the identity
\( h_{n,k}^w = a_{n,k} \) using recursions such as the one in the
following lemma.

\begin{lem}\label{lem:rec2}
  For any weight system \( w \) and a positive integer \( \ell \), we have
  \[
    h^w_{n,k} =  \sum_{r=k}^{n} h^{w_\ell}_{n,r} h^{w_\ell^+}_{r,k} ,
  \]
  where \( w_\ell \) is the weight system of height \( \ell \) defined
  by 
  \[
    w_\ell(t;i,j) =
    \begin{cases}
      w(t;i,j) & \mbox{if \( t<\ell \)},\\
     0 & \mbox{if \( t\ge\ell \),}
    \end{cases}
  \]
  and \( w_\ell^+ \) is the weight system defined by
  \( w_\ell^+(t;i,j) = w(t+\ell;i,j) \).
\end{lem}

\begin{proof}
  Consider a path \( p :(k,\infty) \to (n,0) \). Let \( r \) be the
  largest integer such that \( p \) passes through \( (r,\ell) \).
  Then we can decompose \( p \) as \( p=p_1p_2 \), where
  \( p_1:(k,\infty) \to (r,\ell) \) and \( p_2:(r,\ell) \to (n,0) \),
  and \( p_2 \) starts with a south step.
  Thus,
  \begin{align*}
    h^{w}_{n,k}
    &= \sum_{p:(k,\infty)\to(n,0)} w(p)\\
    &= \sum_{r=k}^{n} \sum_{p_1:(k,\infty) \to (r,\ell)} w(p_1)
      \sum_{\substack{p_2:(r,\ell) \to (n,0) \\ p_2 \text{ starts with a south step}}} w(p_2)\\
    &= \sum_{r=k}^{n} \sum_{p_1:(k,\infty) \to (r,0)} w_\ell^+(p_1) \sum_{p_2:(r,\ell) \to (n,0)} w_\ell(p_2)\\
    &= \sum_{r=k}^{n} h^{w_\ell}_{n,r} h^{w_\ell^+}_{r,k} . \qedhere
  \end{align*}
\end{proof}

If there is a weight system \( w_1 \) of height \( 1 \) and a weight
system \( w \) of infinite height with the ``shifting property'',
which is given in \eqref{eq:shifting} below, we can construct a weight
system of any height.

\begin{prop}\label{prop:any-ht}
  Suppose that \( \{\sigma_{n,k}\}_{n,k\ge0} \) satisfies
\[
  \sigma_{n,k} = h^{w}_{n,k} = h^{w_1}_{n,k}
\]
for a weight system \( w_1 \) of height \( 1 \) and a weight system
\( w \) of infinite height such that
  \begin{equation}\label{eq:shifting}
   w(t;i,j) = C_i^{t} w(0;i,j),
  \end{equation}
  for all \( t,i,j \in \ZZ_{\ge0} \) with \( j\le i \). Then, for any
  integer \( \ell\ge1 \), we have
  \[
    \sigma_{n,k} = h^{w^{(\ell)}}_{n,k},
  \]
  where \( w^{(\ell)} \) is the weight system of height \( \ell \) given by
  \[
    w^{(\ell)}(t;i,j)=
\begin{cases}
 w(t;i,j) & \mbox{if \( 0\le t< \ell-1 \)},\\
 C_i^{\ell-1} w_1(0;i,j) & \mbox{if \( t=\ell-1 \).}
\end{cases}
  \]
\end{prop}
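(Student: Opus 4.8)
The plan is to induct on \( \ell \), using the bottom-row decomposition of \Cref{lem:rec2} together with the scaling law of \Cref{lem:wt-mult} to absorb the factors \( C_i \) that the shifting property \eqref{eq:shifting} produces. The base case \( \ell=1 \) is immediate: by definition \( w^{(1)}(t;i,j) = C_i^{0}w_1(0;i,j)=w_1(0;i,j) \) at \( t=0 \) and vanishes for \( t\ge1 \), so \( w^{(1)}=w_1 \) and \( h^{w^{(1)}}_{n,k}=h^{w_1}_{n,k}=\sigma_{n,k} \) by hypothesis.

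For the inductive step I would first record the effect of the shift operator \( w\mapsto w^+ \), where \( w^+(t;i,j)=w(t+1;i,j) \). Under \eqref{eq:shifting} we have \( w^+(t;i,j)=C_i^{t+1}w(0;i,j)=C_i\cdot w(t;i,j) \), so \Cref{lem:wt-mult} gives \( h^{w^+}_{r,k}=C_kC_{k+1}\cdots C_{r-1}\cdot h^{w}_{r,k}=C_k\cdots C_{r-1}\,\sigma_{r,k} \). The crucial observation is that the \emph{same} scaling identity holds for \( w^{(\ell+1)} \): I claim that \( (w^{(\ell+1)})^{+}=C_i\cdot w^{(\ell)} \) as weight systems. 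This is a short case check on the row index \( t \): for \( 0\le t\le \ell-2 \) both sides equal \( C_i\,w(t;i,j) \) (using \eqref{eq:shifting} on the left), while for \( t=\ell-1 \) the shift lands exactly on the top row of \( w^{(\ell+1)} \), giving \( (w^{(\ell+1)})^{+}(\ell-1;i,j)=C_i^{\ell}w_1(0;i,j) \), which is precisely \( C_i \) times the top row \( C_i^{\ell-1}w_1(0;i,j) \) of \( w^{(\ell)} \); both sides vanish for \( t\ge\ell \). I expect this matching of the top row to be the one place where the precise exponent \( C_i^{\ell-1} \) in the definition of \( w^{(\ell)} \) is essential, and hence the main (though modest) point to keep straight.

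With the claim in hand the induction closes cleanly. For \( \ell\ge1 \) both \( w \) and \( w^{(\ell+1)} \) share the same bottom row \( w(0;i,j) \); writing \( b \) for that common height-\(1\) system and applying \Cref{lem:rec2} to each yields
\[
  h^{w}_{n,k} = \sum_{r=k}^{n} h^{b}_{n,r}\, h^{w^+}_{r,k},
  \qquad
  h^{w^{(\ell+1)}}_{n,k} = \sum_{r=k}^{n} h^{b}_{n,r}\, h^{(w^{(\ell+1)})^{+}}_{r,k}.
\]
By \Cref{lem:wt-mult} and the claim, \( h^{(w^{(\ell+1)})^{+}}_{r,k}=C_k\cdots C_{r-1}\,h^{w^{(\ell)}}_{r,k} \), and the induction hypothesis \( h^{w^{(\ell)}}_{r,k}=\sigma_{r,k}=h^{w}_{r,k} \) forces \( h^{(w^{(\ell+1)})^{+}}_{r,k}=h^{w^+}_{r,k} \) term by term. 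Hence the two displayed sums agree, giving \( h^{w^{(\ell+1)}}_{n,k}=h^{w}_{n,k}=\sigma_{n,k} \) and completing the induction.
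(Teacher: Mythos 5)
Your proof is correct, but it is organized differently from the paper's. The paper argues in one stroke rather than by induction: it splits each path \( p:(k,\infty)\to(n,0) \) at the single height \( y=\ell-1 \), uses the shifting property to extract the factor \( C_k^{\ell-1}\cdots C_{r-1}^{\ell-1} \) from the portion of the path above that line, and then invokes the hypothesis \( h^{w}_{r,k}=h^{w_1}_{r,k} \) to replace that upper portion by the scaled height-one system all at once, reassembling the result as \( h^{w^{(\ell)}}_{n,k} \). You instead induct on \( \ell \), peeling off the shared bottom row via \Cref{lem:rec2} and reducing \( w^{(\ell+1)} \) to \( w^{(\ell)} \) through the identity \( (w^{(\ell+1)})^{+}(t;i,j)=C_i\,w^{(\ell)}(t;i,j) \); your case check of that identity, including the exponent bookkeeping on the top row, is the one delicate point and you handle it correctly, as is the observation that \( w \) and \( w^{(\ell+1)} \) share the same bottom row so that \Cref{lem:rec2} produces the same outer factor \( h^{b}_{n,r} \) in both decompositions. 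The two arguments use the same two ingredients --- the multiplicative scaling of \Cref{lem:wt-mult} and the hypothesis that \( w \) and \( w_1 \) compute the same array --- but deploy them at opposite ends of the weight system: the paper's top-down split avoids induction and directly exhibits \( w^{(\ell)} \) as ``\( w \) truncated, with a scaled copy of \( w_1 \) placed on top,'' while your bottom-up recursion isolates a clean structural recursion among the \( w^{(\ell)} \) themselves. Both are complete; there is no gap in your argument.
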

\begin{proof}
  Using the notation in \Cref{lem:rec2} with \( \ell \) replaced by
  \( \ell-1 \), we have
  \[
    h^w_{n,k} =  \sum_{r=k}^{n} h^{w_{\ell-1}}_{n,r} h^{w_{\ell-1}^+}_{r,k}.
  \]
  By \eqref{eq:shifting}, we have
  \( w_{\ell-1}^+(t;i,j) = w(t+\ell-1;i,j) = C_i^{\ell-1} w(t;i,j) \).
  Thus, by \Cref{lem:wt-mult} and the assumption, we obtain
  \[
    h^{w_{\ell-1}^+}_{r,k} =
    C_k^{\ell-1} C_{k+1}^{\ell-1} \cdots C_{r-1}^{\ell-1} h^{w}_{r,k}
    = C_k^{\ell-1} C_{k+1}^{\ell-1} \cdots C_{r-1}^{\ell-1} h^{w_1}_{r,k}
    = h^{w'}_{r,k},
  \]
  where \( w' \) is the weight system of height \( 1 \) defined by
  \( w'(0;i,j) = C_i^{\ell-1} w_1(0;i,j) \). Hence,
  \[
    h^w_{n,k} =  \sum_{r=k}^{n} h^{w_{\ell-1}}_{n,r} h^{w'}_{r,k}.
  \]
  On the other hand, applying \Cref{lem:rec2} to the weight system
  \( w^{(\ell)} \), we also have
  \[
    h^{w^{(\ell)}}_{n,k} =  \sum_{r=k}^{n} h^{w_{\ell-1}}_{n,r} h^{w'}_{r,k}.
  \]
  Combining the results, we obtain
  \( \sigma_{n,k} = h^{w}_{n,k} = h^{w^{(\ell)}}_{n,k} \).
\end{proof}

\section{A bootstrapping method from Stieltjes--Wigert to Askey--Wilson}
\label{sec:bootstr-meth-from}

In this section, we construct lecture hall graph models for the mixed
moments of Stieltjes--Wigert polynomials, \( q \)-Bessel polynomials,
little \( q \)-Jacobi polynomials, big \( q \)-Jacobi polynomials, and
Askey--Wilson polynomials. We first find a lecture hall graph model of
height \( 1 \) for the mixed moments of Stieltjes--Wigert polynomials,
and models of height \( 1 \) and infinite height for \( q \)-Bessel
polynomials. We then prove ``expanding lemmas'' (\Cref{lem:two-to-one}
and \Cref{lem:two-to-one-gen}), which turn a weight system of height
\( 1 \) to a weight system of height \( 2 \). Using the expanding
lemmas, we obtain lecture hall graph models of height \( 1 \) and
infinite height for little \( q \)-Jacobi polynomials, big
\( q \)-Jacobi polynomials, and Askey--Wilson polynomials.

\subsection{Stieltjes--Wigert}

The monic \emph{Stieltjes--Wigert polynomials} are defined by
\[
  S_n(x;q) = (-1)^{n} q^{-n^2}
  \qHyper11{q^{-n}}{0}{q,-q^{n+1}x}.
\]
The mixed moments \( \sigma_{n,k} \) and the coefficients
\( \nu_{n,k} \) of Stieltjes--Wigert polynomials are given by
\begin{align*}
  \sigma_{n,k} &= q^{k^2-n^2+\binom{n-k}{2}} \qbinom{n}{k}, \\
  \nu_{n,k} &= (-1)^{n-k} q^{k^2-n^2} \qbinom{n}{k}.
\end{align*}
Hence the mixed moments of Stieltjes--Wigert polynomials are
\( q \)-binomial coefficients with some factors.
The \( q \)-binomial coefficients satisfy the following recurrences:
\begin{equation}\label{eq:rec-q-bin}
  \qbinom{n}{k} = q^k\qbinom{n-1}{k} + \qbinom{n-1}{k-1} =
  \qbinom{n-1}{k} + q^{n-k}\qbinom{n-1}{k-1}.
\end{equation}
We begin with two simple weight systems giving \( q \)-binomial
coefficients with some possible factors.

\begin{lem}\label{lem:q-binom-wt}
  Let \( w \) be the weight system of height \( 1 \) defined by
  \( w(0;i,j) = q^{j} \). Then
\[
  h^w_{n,k} = \qbinom{n}{k}.
\]
\end{lem}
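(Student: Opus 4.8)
The plan is to prove the identity $h^w_{n,k} = \qbinom{n}{k}$ by induction on $n$, using the recurrence for $h^w_{n,k}$ provided in \Cref{lem:rec}. Recall that \Cref{lem:rec} gives
\[
  h^w_{n,k} = w(0;k,k)\, h^{w'}_{n-1,k} + h^{w'}_{n-1,k-1},
\]
where $w'(0;i,j) = w(0;i+1,j)$. For our weight system $w(0;i,j) = q^j$, we have $w(0;k,k) = q^k$, and crucially the shifted weight system satisfies $w'(0;i,j) = w(0;i+1,j) = q^j = w(0;i,j)$, so $w' = w$. Thus the recurrence collapses to
\[
  h^w_{n,k} = q^k\, h^w_{n-1,k} + h^w_{n-1,k-1}.
\]
This is exactly the first $q$-Pascal recurrence in \eqref{eq:rec-q-bin}.

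First I would establish the base and boundary cases. The identities $h^w_{n,n} = 1 = \qbinom{n}{n}$ and $h^w_{n,k} = 0 = \qbinom{n}{k}$ for $n < k$ follow from the remarks after \Cref{def:h-e}, matching the corresponding values of the $q$-binomial coefficient; the case $n = 0$ (where necessarily $k = 0$) gives $h^w_{0,0} = 1 = \qbinom{0}{0}$. Then, assuming the identity holds for $n-1$, substituting the inductive hypothesis $h^w_{n-1,k} = \qbinom{n-1}{k}$ and $h^w_{n-1,k-1} = \qbinom{n-1}{k-1}$ into the collapsed recurrence yields
\[
  h^w_{n,k} = q^k \qbinom{n-1}{k} + \qbinom{n-1}{k-1} = \qbinom{n}{k},
\]
where the last equality is precisely the left-hand $q$-Pascal identity in \eqref{eq:rec-q-bin}. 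This completes the induction.

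I do not anticipate a serious obstacle here; the entire argument hinges on the observation that the weight $w(0;i,j) = q^j$ is independent of $i$, which forces $w' = w$ and makes \Cref{lem:rec} reproduce the $q$-Pascal recurrence verbatim. The only point requiring mild care is bookkeeping at the boundary: one should check that the recurrence is applied only in the range $0 < k < n$ where both terms on the right are meaningful, and that the edge cases $k = 0$ and $k = n$ are handled separately via the boundary values, so that no ill-defined term such as $h^w_{n-1,-1}$ is invoked. A direct combinatorial alternative would be to invoke \Cref{pro:3bij} to identify $h^w_{n,k}$ with a generating function over truncated anti-lecture hall compositions $AL_{n,k}$ and recognize the statistic $\sum_{i} \alpha_i$ as tracking a $q$-binomial, but the inductive route via \Cref{lem:rec} is cleaner and avoids any explicit manipulation of the combinatorial objects.
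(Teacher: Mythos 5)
Your proof is correct and follows essentially the same route as the paper: both apply \Cref{lem:rec}, observe that $w(0;i,j)=q^j$ is independent of $i$ so the shifted system equals $w$ itself, and conclude by matching the resulting recurrence with the first $q$-Pascal identity in \eqref{eq:rec-q-bin} together with the boundary values $h^w_{n,n}=1$ and $h^w_{n,0}=1$. Your extra care about the range of $k$ in the recurrence is implicit in the paper's proof but adds nothing beyond it.
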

\begin{proof}
  Note that the weight $w(0;i,j) = q^j$ is independent of $i$
  and \( w(0;k,k)=q^k \).
  Therefore, by applying \Cref{lem:rec}, we obtain the recurrence
  \[
    h_{n,k}^w= q^k h_{n-1,k}^w+h_{n-1,k-1}^w
  \]
  for $1\le k\le n-1$, which is the same recurrence as the first one
  in \eqref{eq:rec-q-bin}. Since \( h^w_{n,k} \) and
  \( \qbinom{n}{k} \) have the same initial conditions, namely,
  $h_{n,0}^w=h_{n,n}^w=1$, we conclude that
  $h_{n,k}^w=\qbinom{n}{k}$.
\end{proof}

\begin{lem}\label{lem:q-binom-wt2}
  Let \( w \) be the weight system of height \( 1 \) defined by
  \( w(0;i,j) = q^{i-j} \). Then
\[
  h^w_{n,k} = q^{\binom{n-k}{2}} \qbinom{n}{k}.
\]
\end{lem}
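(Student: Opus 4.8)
The plan is to set up a recurrence for $h^w_{n,k}$ directly from the structure of height-$1$ paths and then match it against one of the $q$-binomial recurrences in \eqref{eq:rec-q-bin}, exactly as in the proof of \Cref{lem:q-binom-wt}, except that here it is more convenient to peel off the \emph{last} east step rather than the first.

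First I would apply \Cref{lem:rec1+} to the weight system $w(0;i,j)=q^{i-j}$. This requires computing $w(0;n-1,0)=q^{n-1}$ and examining the shifted system $w^{+}(0;i,j)=w(0;i+1,j+1)=q^{(i+1)-(j+1)}=q^{i-j}=w(0;i,j)$. The crucial observation is that $w^{+}=w$: this self-similarity is exactly what makes this proof clean, because it collapses the recurrence to one that stays within the same weight system, namely
\[
  h^w_{n,k} = q^{n-1} h^w_{n-1,k} + h^w_{n-1,k-1},
  \qquad h^w_{n,n}=1 .
\]
(The boundary value $h^w_{n,0}=q^{\binom{n}{2}}$ then comes out automatically from iterating the recurrence with $h^w_{n-1,-1}=0$, consistent with the claimed formula.)

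Next I would verify by induction on $n$ that $q^{\binom{n-k}{2}}\qbinom{n}{k}$ satisfies this same recurrence and the same boundary condition. Substituting the closed form into the right-hand side and invoking the first recurrence in \eqref{eq:rec-q-bin}, namely $\qbinom{n}{k}=q^{k}\qbinom{n-1}{k}+\qbinom{n-1}{k-1}$, the whole computation reduces to the elementary exponent identity
\[
  (n-1)+\binom{n-1-k}{2} = k+\binom{n-k}{2},
  \qquad\text{equivalently}\qquad
  \binom{n-k}{2} = \binom{n-1-k}{2} + (n-1-k).
\]
Once this is in hand, the factor $q^{\binom{n-k}{2}}$ pulls out of both terms and the two sides agree, closing the induction.

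The main obstacle is really only the bookkeeping of the power of $q$: one has to make sure the exponent contributed by $w(0;n-1,0)=q^{n-1}$ combines correctly with the $q^k$ appearing in the $q$-binomial recurrence, which is precisely the content of the exponent identity above. Everything else is routine. As an aside, there is a slicker (though less self-contained) route that avoids the recurrence altogether: by \Cref{pro:w=overline w}, since $\overline{w}(0;i,j)=w(0;i,i-j)=q^{j}$ is exactly the weight system of \Cref{lem:q-binom-wt}, one immediately gets $e^w_{n,k}=h^{\overline{w}}_{n,k}=\qbinom{n}{k}$, and recovering $h^w_{n,k}$ from $e^w_{n,k}$ via the matrix inversion of \Cref{lem:dual-wt} is then precisely the classical $q$-binomial inversion. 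I expect the direct recurrence via \Cref{lem:rec1+} to be the cleanest to write out in full.
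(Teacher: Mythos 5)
Your proof is correct, but it takes a different route from the paper's. You peel off the \emph{last} east step via \Cref{lem:rec1+}, observe the self-similarity $w^{+}(0;i,j)=q^{(i+1)-(j+1)}=w(0;i,j)$ so the recurrence closes up as $h^w_{n,k}=q^{n-1}h^w_{n-1,k}+h^w_{n-1,k-1}$, and then match against the first recurrence in \eqref{eq:rec-q-bin}; the exponent identity $\binom{n-k}{2}=\binom{n-1-k}{2}+(n-1-k)$ you isolate is exactly the right bookkeeping, and the boundary values check out. The paper instead avoids any new recurrence: it notes that $w(0;i,j)=q^{i}\cdot w'(0;i,j)$ with $w'(0;i,j)=q^{-j}$, applies \Cref{lem:q-binom-wt} with $q\mapsto q^{-1}$ to get $h^{w'}_{n,k}=\Qbinom{n}{k}{q^{-1}}=q^{-k(n-k)}\qbinom{n}{k}$, and then uses the column-rescaling \Cref{lem:wt-mult} to pick up the factor $q^{k+(k+1)+\cdots+(n-1)}$, with the exponents combining to $\binom{n-k}{2}$. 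The paper's argument is shorter and reuses \Cref{lem:q-binom-wt} directly, at the cost of the slightly less transparent $q\to q^{-1}$ substitution; yours is more self-contained and parallels the proof of \Cref{lem:q-binom-wt} step for step. Your closing aside is also sound: $\overline{w}(0;i,j)=w(0;i,i-j)=q^{j}$ recovers the weight system of \Cref{lem:q-binom-wt}, so $e^{w}_{n,k}=\qbinom{n}{k}$ by \Cref{pro:w=overline w}, and inverting via \Cref{lem:dual-wt} is the classical $q$-binomial inversion — though as you say this is less self-contained than the recurrence.
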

\begin{proof}
  Let \( w' \) be the weight system of height \( 1 \) defined by
  \( w'(0;i,j) = q^{-j} \). By \Cref{lem:q-binom-wt},
  \( h^{w'}_{n,k} =  \Qbinom{n}{k}{q^{-1}} = q^{-k(n-k)}\qbinom{n}{k} \).
  Since \( w(0;i,j) = q^{i}w'(0;i,j) \), by \Cref{lem:wt-mult},
  \[
    h^{w}_{n,k} = q^{k+(k+1) + \cdots + (n-1)}h^{w'}_{n,k} =
    q^{\binom{n-k}{2}} \qbinom{n}{k}. \qedhere
  \]
\end{proof}

We now give a weight system for the mixed moments of Stieltjes--Wigert
polynomials.

\begin{prop}\label{prop:SW-wt}
  Let \( w \) be the weight system of height \( 1 \) defined by
  \( w(0;i,j) = q^{-i-j-1} \). Then
\[
  h^w_{n,k} = \sigma_{n,k}.
\]
\end{prop}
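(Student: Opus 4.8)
The plan is to deduce the claim from \Cref{lem:q-binom-wt2} by a single column-rescaling, using \Cref{lem:wt-mult}. The target is $\sigma_{n,k} = q^{k^2-n^2+\binom{n-k}{2}}\qbinom{n}{k}$, and \Cref{lem:q-binom-wt2} already delivers the factor $q^{\binom{n-k}{2}}\qbinom{n}{k}$ from the weight system $w'(0;i,j)=q^{i-j}$. So the entire task reduces to accounting for the remaining scalar factor $q^{k^2-n^2}$, which is exactly the kind of per-column prefactor that \Cref{lem:wt-mult} is designed to absorb.

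First I would exhibit the given weight as a rescaling of $w'$ depending only on the column index $i$. Since $w(0;i,j)=q^{-i-j-1}$ and $w'(0;i,j)=q^{i-j}$, their ratio is $w(0;i,j)/w'(0;i,j)=q^{-2i-1}$, which is independent of $j$. Hence $w(0;i,j)=C_i\,w'(0;i,j)$ with $C_i=q^{-2i-1}$, and \Cref{lem:wt-mult} applies (noting that a height-$1$ weight system is in particular a weight system), giving $h^w_{n,k}=\bigl(\prod_{i=k}^{n-1}C_i\bigr)\,h^{w'}_{n,k}$.

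Next I would evaluate the product of scaling factors. We have $\prod_{i=k}^{n-1}C_i=q^{-\sum_{i=k}^{n-1}(2i+1)}$, and since $2i+1=(i+1)^2-i^2$ the exponent sum telescopes to $\sum_{i=k}^{n-1}(2i+1)=n^2-k^2$. Therefore $\prod_{i=k}^{n-1}C_i=q^{k^2-n^2}$, and combining with \Cref{lem:q-binom-wt2} yields
\[
  h^w_{n,k}=q^{k^2-n^2}\cdot q^{\binom{n-k}{2}}\qbinom{n}{k}=q^{k^2-n^2+\binom{n-k}{2}}\qbinom{n}{k}=\sigma_{n,k},
\]
as required.

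There is essentially no serious obstacle here; the only point requiring care is the bookkeeping of the exponent, in particular the identity $\sum_{i=k}^{n-1}(2i+1)=n^2-k^2$. As an alternative route one could bypass the rescaling lemma and instead verify directly via \Cref{lem:rec} that $h^w_{n,k}$ satisfies the recurrence forced on $\sigma_{n,k}$ by the second relation in \eqref{eq:rec-q-bin} after peeling off the prefactor $q^{k^2-n^2+\binom{n-k}{2}}$; but this requires re-deriving a $q$-binomial recurrence, so the rescaling argument above is both shorter and cleaner.
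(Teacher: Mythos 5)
Your proof is correct and is essentially identical to the paper's own argument: both rescale the weight system $w'(0;i,j)=q^{i-j}$ of \Cref{lem:q-binom-wt2} by the column factor $C_i=q^{-2i-1}$ and apply \Cref{lem:wt-mult}, with the exponent sum $\sum_{i=k}^{n-1}(2i+1)=n^2-k^2$ computed the same way. No issues.
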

\begin{proof}
  Let \( w' \) be the weight system of height \( 1 \) defined by
  \( w'(0;i,j) = q^{i-j} \). By \Cref{lem:q-binom-wt2},
  \( h^{w'}_{n,k} = q^{\binom{n-k}{2}} \qbinom{n}{k} \).
  Since \( w(0;i,j) = q^{-2i-1}w'(0;i,j) \), by \Cref{lem:wt-mult},
  \[
    h^{w}_{n,k} = q^{-(2k+1) - (2k+3) - \cdots - (2n-1)}h^{w'}_{n,k} =
    q^{k^2-n^2+\binom{n-k}{2}} \qbinom{n}{k} = \sigma_{n,k}. \qedhere
  \]
\end{proof}

Since \( \sigma_{n,k} \) is a polynomial in \( q \), we do not
consider its weight system of infinite height.

\subsection{$q$-Bessel polynomials}

The (monic) \emph{\( q \)-Bessel polynomials} (also known as
\emph{alternative \( q \)-Charlier polynomials}) are defined by
\begin{equation}
  \label{eq:alt-q-charlier}
  p_n(x;a;q) = \frac{1}{(-1)^{n}q^{-\binom{n}{2}}(-aq^n;q)_{n}}
  \qHyper21{q^{-n},-aq^n}{0}{q,qx}.
\end{equation}
Let \( \sigma^b_{n,k}(a;q) \) and \( \nu^b_{n,k}(a;q) \) denote the mixed
moments and the dual mixed moments of the \( q \)-Bessel polynomials:
\[
  x^n = \sum_{k=0}^{n} \sigma^b_{n,k}(a;q) p_k(x;a;q),
  \qquad
  p_n(x;a;q) = \sum_{k=0}^{n} \nu^b_{n,k}(a;q) x^k.
\]
These quantities have simple product formulas.

\begin{prop}\label{pro:1}
  We have
  \begin{align}
    \label{eq:mu^C}
    \sigma^b_{n,k}(a;q)
    &=\qbinom{n}{k} \frac{1}{(-aq^{2k+1};q)_{n-k}}
      =\qbinom{n}{k} \frac{(-aq;q)_{2k}}{(-aq;q)_{n+k}},\\
    \label{eq:nu^C}
    \nu^b_{n,k}(a;q)
    &= (-1)^{n-k} q^{\binom{n-k}2} \qbinom{n}{k} \frac{1}{(-aq^{n+k};q)_{n-k}}
      =(-1)^{n-k} q^{\binom{n-k}2} \qbinom{n}{k} \frac{(-a;q)_{n+k}}{(-a;q)_{2n}}.
  \end{align}
\end{prop}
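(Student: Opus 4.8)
The plan is to read the coefficients $\nu^b_{n,k}(a;q)$ directly off the hypergeometric definition \eqref{eq:alt-q-charlier}, and then to recover the mixed moments $\sigma^b_{n,k}(a;q)$ by matrix inversion, using that $(\sigma^b_{n,k})_{n,k\ge0}$ and $(\nu^b_{n,k})_{n,k\ge0}$ are inverse matrices.

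First I would prove \eqref{eq:nu^C}. Expanding the terminating ${}_2\phi_1$ in \eqref{eq:alt-q-charlier} (with $(0;q)_j=1$) and extracting the coefficient of $x^k$ gives
\[
  \nu^b_{n,k}(a;q)=(-1)^{n}q^{\binom n2}\,
  \frac{(q^{-n};q)_k\,(-aq^{n};q)_k}{(-aq^{n};q)_n\,(q;q)_k}\,q^{k}.
\]
To reach the stated form I would insert $(q^{-n};q)_k=(-1)^kq^{\binom k2-nk}(q;q)_n/(q;q)_{n-k}$, gather the three $q$-factorials into $\qbinom nk$, apply the telescoping $(-aq^{n};q)_k(-aq^{n+k};q)_{n-k}=(-aq^{n};q)_n$ to turn the Pochhammer ratio into $1/(-aq^{n+k};q)_{n-k}$, and collapse the exponent using $\binom n2+\binom k2-nk+k=\binom{n-k}2$. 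This is the first equality in \eqref{eq:nu^C}; the second is the shift $(-aq^{n+k};q)_{n-k}=(-a;q)_{2n}/(-a;q)_{n+k}$. The two forms of \eqref{eq:mu^C} are likewise exchanged by $(-aq^{2k+1};q)_{n-k}=(-aq;q)_{n+k}/(-aq;q)_{2k}$, so for the mixed moments it is enough to prove one of them.

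Since $p_k(x;a;q)=\sum_j\nu^b_{k,j}(a;q)x^j$ and $x^n=\sum_k\sigma^b_{n,k}(a;q)p_k(x;a;q)$, the lower-triangular matrices $(\sigma^b_{n,k})$ and $(\nu^b_{n,k})$ are mutual inverses; as $(\nu^b_{n,k})$ is now known, its inverse is uniquely determined, so it suffices to check that the proposed \eqref{eq:mu^C} satisfies $\sum_{r=k}^{n}\sigma^b_{n,r}(a;q)\nu^b_{r,k}(a;q)=\delta_{n,k}$. Substituting the product formulas, the two denominators merge: the ranges giving $(-aq^{2r+1};q)_{n-r}$ and $(-aq^{r+k};q)_{r-k}$ combine, up to the single factor $1+aq^{2r}$, into $(-aq^{r+k};q)_{n-k+1}$. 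This reduces the identity to one terminating, balanced basic hypergeometric sum in $r$, which I expect to be an instance of the $q$-Pfaff--Saalsch\"utz summation (collapsing to a $q$-Chu--Vandermonde sum after removing the $1+aq^{2r}$ factor).

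The routine parts are the coefficient extraction for $\nu^b_{n,k}$ and the Pochhammer shifts relating the equivalent product forms. The main obstacle is the inversion step: putting $\sum_r\sigma^b_{n,r}\nu^b_{r,k}$ into balanced ${}_3\phi_2$ form and matching it with a known summation requires careful tracking of the powers $q^{\binom{r-k}2}$ and of the Pochhammer indices. An alternative that sidesteps the summation is to verify \eqref{eq:mu^C} by induction on $n$ using Viennot's recurrence $\sigma^b_{n,k}=\sigma^b_{n-1,k-1}+b_k\sigma^b_{n-1,k}+\lambda_k\sigma^b_{n-1,k+1}$, after reading the three-term recurrence coefficients $b_k,\lambda_k$ of the $q$-Bessel polynomials off \eqref{eq:alt-q-charlier}; this trades the summation for a single algebraic identity in $a,q$ at each step.
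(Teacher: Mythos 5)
Your overall strategy is the same as the paper's: read $\nu^b_{n,k}$ off the terminating ${}_2\phi_1$ in \eqref{eq:alt-q-charlier} (your intermediate expression matches the paper's exactly), and then obtain \eqref{eq:mu^C} by verifying that the two triangular matrices are mutual inverses via a terminating $q$-hypergeometric summation. The first half and the Pochhammer bookkeeping are fine.

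The one step that does not go through as written is your characterization of the final sum. You chose the order $\sum_{r}\sigma^b_{n,r}\nu^b_{r,k}$, and after merging the Pochhammers you are left with the factor $1+aq^{2r}$ \emph{inside} the sum; it depends on the summation index, so it cannot be ``removed,'' and its presence makes the series very-well-poised rather than balanced. Concretely, writing $r=k+i$ the summand carries $\frac{1+aq^{2k}q^{2i}}{1+aq^{2k}}\cdot\frac{(-aq^{2k};q)_i}{(-aq^{n+k+1};q)_i}$, which is a terminating very-well-poised ${}_6\phi_5$ (Rogers' summation), not an instance of $q$-Pfaff--Saalsch\"utz or a plain $q$-Chu--Vandermonde. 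The identity is still true and summable this way, but not by the tool you named. The painless fix is to transpose the product, as the paper does: computing $\sum_{j}\nu^b_{m,j}\sigma^b_{j,n}$ instead, the ratio of Pochhammers becomes $\frac{(-aq^{m+n};q)_{j-n}}{(-aq^{2n+1};q)_{j-n}}$ with no very-well-poised factor, and the sum is exactly
\[
  \qHyper21{q^{-m+n},-aq^{m+n}}{-aq^{2n+1}}{q,q},
\]
which the $q$-Chu--Vandermonde evaluation $\qHyper21{q^{-N},b}{c}{q,cq^N/b}=\frac{(c/b;q)_N}{(c;q)_N}$ kills immediately. (Your fallback via the three-term recurrence would also work, but it requires first computing $b_k,\lambda_k$ for the $q$-Bessel polynomials, which is extra work the inversion route avoids.)
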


\begin{proof}
  By \eqref{eq:alt-q-charlier}, we have
  \[
    \nu^b_{n,k}(a;q)
    = \frac{1}{(-1)^{n}q^{-\binom{n}{2}}(-aq^n;q)_{n}} \cdot
    \frac{(q^{-n};q)_{k} (-aq^{n};q)_{k}}{(q;q)_{k}} q^k,
  \]
  which is equivalent to \eqref{eq:nu^C}. To prove \eqref{eq:mu^C} let
  \( a_{n,k} \) and \( b_{n,k} \) be the right hand side of
  \eqref{eq:mu^C} and \eqref{eq:nu^C}, respectively. Then it suffices
  to show that
  \[
    \sum_{j=0}^{m} b_{m,j} a_{j,n} = \delta_{m,n}.
  \]
  Since the above sum is \( 0 \) if \( m<n \), we may assume
  \( m\ge n \). Then
  \begin{align*}
    \sum_{j=0}^{m} b_{m,j} a_{j,n}
    &= \frac{(-aq;q)_{2n}}{(-a;q)_{2m}}
      \sum_{j=n}^{m} (-1)^{m-j} q^{\binom{m-j}{2}}
      \qbinom{m}{j} \qbinom{j}{n} \frac{(-a;q)_{m+j}}{(-aq;q)_{n+j}} \\
    &= \frac{(-a;q)_{m+n}}{(-a;q)_{2m}} \qbinom{m}{n} (-1)^{m-n} q^{\binom{m-n}{2}}
      \qHyper21{q^{-m+n},-aq^{m+n}}{-aq^{2n+1}}{q,q}\\
    &= \frac{(-a;q)_{m+n}}{(-a;q)_{2m}} \qbinom{m}{n} (-1)^{m-n} q^{\binom{m-n}{2}}
      \frac{(q^{-m+n+1};q)_{m-n}}{(-aq^{2n+1};q)_{m-n}} = \delta_{m,n},
  \end{align*}
  where the following identity \cite[(1.5.2)]{GR} is used:
  \[
    \qHyper21{q^{-n},b}{c}{q,cq^n/b} = \frac{(c/b;q)_n}{(c;q)_n}.
  \]
\end{proof}

We will find several weight systems for \( \sigma^b_{n,k}(a;q) \). We
first give a weight system of height \( 1 \).

\begin{prop}\label{pro:charlier-wt-one-row}
Let \( w \) be the weight system of height \( 1 \) defined by
\[
    w(0;i,j) = \frac{q^{j}(1+aq^{i})}{(1+aq^{i+j})(1+aq^{i+j+1})}.
  \]
  Then the mixed moments of the \( q \)-Bessel polynomials are given by
  \[
    \sigma^b_{n,k}(a;q) = h^{w}_{n,k}.
  \]
\end{prop}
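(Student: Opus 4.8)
The plan is to prove $h^w_{n,k}=\sigma^b_{n,k}(a;q)$ by induction on $n$, treating $a$ as a free parameter, and to reduce the inductive step to a $q$-binomial recurrence via \Cref{lem:rec}. The crucial structural observation is that the given weight system is \emph{self-similar} under shifting the column index. Writing $w_a$ for the weight system with parameter $a$, a direct computation gives
\[
 w_a(0;i+1,j)=\frac{q^{j}(1+aq^{i+1})}{(1+aq^{i+j+1})(1+aq^{i+j+2})}=w_{aq}(0;i,j),
\]
so the shifted system $w'$ appearing in \Cref{lem:rec} (where $w'(0;i,j)=w(0;i+1,j)$) is exactly $w_{aq}$, the same family of weights with $a$ replaced by $aq$. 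This is what lets the induction close: \Cref{lem:rec} expresses $h^{w_a}_{n,k}$ in terms of $h^{w_{aq}}_{n-1,k}$ and $h^{w_{aq}}_{n-1,k-1}$, both at level $n-1$.

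For the induction, the base case $n=0$ is trivial and the diagonal case $k=n$ is immediate since $h^{w_a}_{n,n}=1=\sigma^b_{n,n}(a;q)$. For $0\le k<n$, \Cref{lem:rec} together with the inductive hypothesis (applied with parameter $aq$ at level $n-1$) gives
\[
 h^{w_a}_{n,k}=w_a(0;k,k)\,\sigma^b_{n-1,k}(aq;q)+\sigma^b_{n-1,k-1}(aq;q),
\]
where $w_a(0;k,k)=\dfrac{q^{k}(1+aq^{k})}{(1+aq^{2k})(1+aq^{2k+1})}$. Hence it suffices to verify the purely algebraic identity
\[
 \sigma^b_{n,k}(a;q)=w_a(0;k,k)\,\sigma^b_{n-1,k}(aq;q)+\sigma^b_{n-1,k-1}(aq;q).
\]

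To verify this I would use the product formula \eqref{eq:mu^C} in the form $\sigma^b_{n,k}(a;q)=\qbinom{n}{k}(-aq^{2k+1};q)_{n-k}^{-1}$ and record the two factorizations $(-aq^{2k+1};q)_{n-k}=(1+aq^{2k+1})(-aq^{2k+2};q)_{n-1-k}$ and $(-aq^{2k};q)_{n-k}=(1+aq^{2k})(-aq^{2k+1};q)_{n-k}/(1+aq^{n+k})$. Substituting the shifted moments $\sigma^b_{n-1,k}(aq;q)$ and $\sigma^b_{n-1,k-1}(aq;q)$ and clearing the common denominator $(1+aq^{2k})(-aq^{2k+1};q)_{n-k}$, the identity collapses to
\[
 (1+aq^{2k})\qbinom{n}{k}=q^{k}(1+aq^{k})\qbinom{n-1}{k}+(1+aq^{n+k})\qbinom{n-1}{k-1}.
\]

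Finally I would establish this last identity using both recurrences in \eqref{eq:rec-q-bin}: expanding the $\qbinom{n}{k}$ on the left via $\qbinom{n}{k}=q^{k}\qbinom{n-1}{k}+\qbinom{n-1}{k-1}$ and the $aq^{2k}\qbinom{n}{k}$ via $\qbinom{n}{k}=\qbinom{n-1}{k}+q^{n-k}\qbinom{n-1}{k-1}$, and then regrouping the coefficients of $\qbinom{n-1}{k}$ and $\qbinom{n-1}{k-1}$, reproduces the right-hand side exactly. The boundary cases $k=0$ and $k=n$ are covered by the conventions $\qbinom{m}{-1}=0$ and $h^w_{n,n}=1$. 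The only step requiring genuine care is the bookkeeping of the $q$-Pochhammer factors in reducing the moment identity to the $q$-binomial identity; once the self-similarity $w'=w_{aq}$ is spotted, everything else is routine.
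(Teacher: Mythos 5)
Your proposal is correct and follows essentially the same route as the paper: apply \Cref{lem:rec}, note the self-similarity $w'(0;i,j)=w_{aq}(0;i,j)$ so the induction closes with $a$ replaced by $aq$, and reduce via \eqref{eq:mu^C} to the identity $(1+aq^{2k})\qbinom{n}{k}=q^{k}(1+aq^{k})\qbinom{n-1}{k}+(1+aq^{n+k})\qbinom{n-1}{k-1}$, which follows from both recurrences in \eqref{eq:rec-q-bin}. The only cosmetic difference is that you use the first product form of $\sigma^b_{n,k}(a;q)$ while the paper uses the second; the resulting $q$-binomial identity is the same.
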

\begin{proof}
  Let us write \( h^{w}_{n,k} = h^{w}_{n,k}(a) \). By
  \Cref{lem:rec},
  \[
    h^{w}_{n,k}(a) =  w(0;k,k) h^{w'}_{n-1,k}(a) + h^{w'}_{n-1,k-1}(a).
  \]
  Since \( w'(0;i,j) = w(0;i+1,j) \), which is equal to
  \( w(0;i,j) \) with \( a \) replaced by \( aq \), we have
  \( h^{w'}_{n,k}(a) = h^{w}_{n,k}(aq) \). Thus
  \[
    h^{w}_{n,k}(a) =   \frac{q^{k}(1+aq^{k})}{(1+aq^{2k+1})(1+aq^{2k})}
    h^{w}_{n-1,k}(aq) + h^{w}_{n-1,k-1}(aq).
  \]
  
  By induction, it suffices to show that \( \sigma^b_{n,k}(a;q) \)
  also satisfies the same recurrence, where the initial conditions are
  \( h^{w}_{n,k}(a)=\sigma^b_{n,k}(a;q)=1 \) if \( n=k \), and
  \( h^{w}_{n,k}(a)=\sigma^b_{n,k}(a;q)=0 \) if \( n<k \). By
  \eqref{eq:mu^C}, the recurrence we need to establish is
\[
  \qbinom{n}{k} \frac{(-aq;q)_{2k}}{(-aq;q)_{n+k}}
  =  \frac{q^{k}(1+aq^{k})}{(1+aq^{2k+1})(1+aq^{2k})}
    \qbinom{n-1}{k} \frac{(-aq^2;q)_{2k}}{(-aq^2;q)_{n+k-1}}
+  \qbinom{n-1}{k-1} \frac{(-aq^2;q)_{2k-2}}{(-aq^2;q)_{n+k-2}},
\]
or equivalently,
\[
  (1+aq^{2k}) \qbinom{n}{k} = q^k(1+aq^{k})\qbinom{n-1}{k} + (1+aq^{n+k})
  \qbinom{n-1}{k-1}.
\]
This follows from the two recurrences in \eqref{eq:rec-q-bin}.
\end{proof}

To find a weight system of infinite height for
\( \sigma^b_{n,k}(a;q) \), we need some lemmas. The following lemma
allows us to find a weight system of infinite height for
\( \{\sigma_{n,k}\}_{n,k\ge0} \) if a certain recurrence is satisfied.

\begin{lem} [Construction of a weight system of infinite height]
  \label{lem:construction-infinite}
  Suppose that 
  \( \{\sigma_{n,k}\}_{n,k\ge0} \) satisfy
  the following recurrence:
  \[
    \sigma_{n,k} = \sum_{r=k}^n h^{w_1}_{n,r} \sigma_{r,k}
    \prod_{i=k}^{r-1} C_i.
  \]
  for some weight system \( w_1 \) of height \( 1 \).
  Then
  \[
    \sigma_{n,k} = h^w_{n,k},
  \]
  where \( w \) is the weight system of infinite height defined by
 \[
   w(t;i,j) = C_i^{t} w_1(0;i,j).
 \] 
\end{lem}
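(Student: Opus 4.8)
The plan is to prove the identity $\sigma_{n,k}=h^w_{n,k}$ by showing that the array $h^w_{n,k}$, built from the infinite-height weight system $w(t;i,j)=C_i^t\,w_1(0;i,j)$, satisfies \emph{exactly} the recurrence that $\sigma_{n,k}$ is assumed to satisfy, and then concluding that the two arrays coincide by induction on $n$. The two tools that make this work are \Cref{lem:rec2}, which splits a path from $(k,\infty)$ to $(n,0)$ across height $1$ into a top part and a bottom strip, and \Cref{lem:wt-mult}, which records how $h^w$ rescales when every east step in column $i$ is multiplied by a constant $C_i$.

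First I would identify the two auxiliary weight systems that \Cref{lem:rec2} attaches to $w$. Since $w(0;i,j)=C_i^{0}\,w_1(0;i,j)=w_1(0;i,j)$, the height-$1$ system in \Cref{lem:rec2} is precisely the given $w_1$. For the shifted system one computes $w^+(t;i,j)=w(t+1;i,j)=C_i^{t+1}w_1(0;i,j)=C_i\cdot w(t;i,j)$, so $w^+$ is obtained from $w$ by multiplying each east step in column $i$ by $C_i$; \Cref{lem:wt-mult} then gives
\[
  h^{w^+}_{r,k}=\left(\prod_{i=k}^{r-1}C_i\right)h^{w}_{r,k}.
\]
Substituting both facts into \Cref{lem:rec2} yields
\[
  h^w_{n,k}=\sum_{r=k}^{n} h^{w_1}_{n,r}\,h^{w^+}_{r,k}
  =\sum_{r=k}^{n} h^{w_1}_{n,r}\,h^{w}_{r,k}\prod_{i=k}^{r-1}C_i,
\]
which is exactly the recurrence hypothesized for $\sigma_{n,k}$.

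It then remains to deduce that two arrays obeying this recurrence must agree. Writing $D_{n,k}=\sigma_{n,k}-h^w_{n,k}$ and isolating the $r=n$ summand (where $h^{w_1}_{n,n}=1$) turns the common recurrence into
\[
  \left(1-\prod_{i=k}^{n-1}C_i\right)D_{n,k}
  =\sum_{r=k}^{n-1} h^{w_1}_{n,r}\,D_{r,k}\prod_{i=k}^{r-1}C_i.
\]
For fixed $k$ this expresses $D_{n,k}$ through the values $D_{r,k}$ with $r<n$; together with the base case $D_{k,k}=\sigma_{k,k}-h^w_{k,k}=0$ (using $\sigma_{n,n}=1=h^w_{n,n}$), an induction on $n$ forces $D_{n,k}=0$ for all $n\ge k$, i.e. $\sigma_{n,k}=h^w_{n,k}$.

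I expect the one genuine subtlety to be the self-referential $r=n$ term: the recurrence does not literally ``lower'' $n$ because $h^w_{n,k}$ reappears on the right, so solving for it is legitimate only when the scalar $1-\prod_{i=k}^{n-1}C_i$ is nonzero. In the intended applications the $C_i$ are nontrivial rational functions of $q$ and the parameters, so this product differs from $1$ as a rational function and the division is valid over the field of rational functions; I would state this nondegeneracy, along with the normalization $\sigma_{n,n}=h^w_{n,n}=1$ supplying the base case, as the conditions under which the induction closes.
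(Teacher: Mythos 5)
Your proof is correct and reaches the conclusion by the same overall strategy as the paper: show that \( h^w_{n,k} \) obeys the hypothesized recurrence and then conclude equality from the diagonal normalization. Your route to the recurrence is packaged a little differently — you obtain it directly by combining \Cref{lem:rec2} (splitting a path at height \( 1 \)) with \Cref{lem:wt-mult} applied to \( w^+(t;i,j)=C_i\,w(t;i,j) \), whereas the paper redoes the path decomposition ``similarly as in the proof of \Cref{prop:any-ht}''; your version is arguably cleaner because both ingredients are already stated lemmas. The genuinely valuable addition is your handling of the final step: the paper simply asserts that matching recurrences together with \( \sigma_{n,n}=h^w_{n,n}=1 \) suffice, but, as you observe, the \( r=n \) summand reproduces \( \sigma_{n,k} \) itself, so isolating it and running the induction requires \( 1-\prod_{i=k}^{n-1}C_i\ne 0 \). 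This nondegeneracy hypothesis is left implicit in the paper (and holds in all its applications, e.g.\ \( C_i=-aq^{2i+1} \) for the \( q \)-Bessel case, where the product is never \( 1 \) as a rational function in the parameters); your proof makes it explicit, which is a genuine improvement in rigor rather than a gap.
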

\begin{proof}
  Since \( \sigma_{n,n} = h^w_{n,n} = 1 \), it suffices to show that
  \( h^w_{n,k} \) satisfy the same recurrence, namely,
\[
     h^w_{n,k} = \sum_{r=k}^n h^{w_1}_{n,r} h^w_{r,k}
    \prod_{i=k}^{r-1} C_i.
\]
This can be proved similarly as in the proof of \Cref{prop:any-ht},
hence we omit the details.
\end{proof}

The mixed moments \( \sigma^b_{n,k}(a;q) \) satisfy a recurrence of
the form in \Cref{lem:construction-infinite}.

\begin{lem}\label{lem:q-bessel-rec}
  We have
 \[
   \sigma^b_{n,k}(a;q) = \sum_{r=k}^{n} \qbinom{n}{r} (-a)^{r-k}
   q^{r^2 - k^2} \sigma^b_{r,k}(a;q).
  \] 
\end{lem}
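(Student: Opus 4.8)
The plan is to verify the identity directly from the closed product form of the mixed moments in \eqref{eq:mu^C}, reducing it to a single one-variable $q$-series summation. Substituting $\sigma^b_{n,k}(a;q)=\qbinom{n}{k}\frac{(-aq;q)_{2k}}{(-aq;q)_{n+k}}$ and the analogous expression for $\sigma^b_{r,k}(a;q)$ into both sides, the common factor $(-aq;q)_{2k}$ cancels throughout, so it suffices to prove
\[
  \qbinom{n}{k}\frac{1}{(-aq;q)_{n+k}}
  = \sum_{r=k}^{n}\qbinom{n}{r}\qbinom{r}{k}(-a)^{r-k}q^{r^2-k^2}\frac{1}{(-aq;q)_{r+k}}.
\]
First I would apply the subset-of-subset identity $\qbinom{n}{r}\qbinom{r}{k}=\qbinom{n}{k}\qbinom{n-k}{r-k}$ and divide through by $\qbinom{n}{k}$, which removes the outer binomial from both sides and leaves a sum over $r$ involving only $\qbinom{n-k}{r-k}$.

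Next I would pass to cleaner variables. Writing $s=r-k$, $m=n-k$, and $A=aq^{2k}$, and using $(-aq;q)_{r+k}=(-aq;q)_{2k}(-aq^{2k+1};q)_{s}$ together with $(-a)^{s}q^{2ks}=(-A)^{s}$ and $-aq^{2k+1}=-Aq$, the identity collapses to the parameter-free statement
\[
  \frac{1}{(-Aq;q)_{m}} = \sum_{s=0}^{m}\qbinom{m}{s}\frac{(-A)^{s}q^{s^2}}{(-Aq;q)_{s}}.
\]
Rewriting $\qbinom{m}{s}$ through $(q^{-m};q)_s$, the exponent bookkeeping (the quadratic $q^{s^2}$ combines with the $q^{-\binom{s}{2}}$ produced by the $q$-binomial to leave a single $q^{\binom{s}{2}}$) identifies the right-hand side as a terminating confluent series,
\[
  \sum_{s=0}^{m}\qbinom{m}{s}\frac{(-A)^{s}q^{s^2}}{(-Aq;q)_{s}}
  = \qHyper11{q^{-m}}{-Aq}{q,-Aq^{m+1}}.
\]

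Finally I would evaluate this using the summation $\qhyper11{a}{c}{q,c/a}=\frac{(c/a;q)_\infty}{(c;q)_\infty}$ (the confluent $b\to\infty$ limit of the $q$-Gauss sum; see \cite{GR}), which applies here since $c/a=(-Aq)/q^{-m}=-Aq^{m+1}$. This yields $\frac{(-Aq^{m+1};q)_\infty}{(-Aq;q)_\infty}=\frac{1}{(-Aq;q)_m}$, completing the proof. The one delicate point is the middle step: the quadratic factor $q^{r^2-k^2}$ forces the sum to be a \emph{confluent} $q$-series, so one must recognize the $\qhyper11{}{}{}{}$ evaluation rather than attempt to coerce the sum into a terminating $\qhyper21{}{}{}{}$ of the $q$-Chu--Vandermonde type used in \Cref{pro:1}. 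Alternatively, the reduced identity $\frac{1}{(-Aq;q)_m}=\sum_{s}\qbinom{m}{s}\frac{(-A)^s q^{s^2}}{(-Aq;q)_s}$ can be established by induction on $m$ using the two $q$-Pascal recurrences in \eqref{eq:rec-q-bin}, which avoids quoting the confluent summation at the cost of a short computation.
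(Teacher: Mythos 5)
Your proposal is correct and follows essentially the same route as the paper: both substitute the product formula \eqref{eq:mu^C}, cancel the common factors, and reduce the lemma to the single identity \( \frac{1}{(-Aq;q)_m}=\sum_{s}\qbinom{m}{s}\frac{(-A)^sq^{s^2}}{(-Aq;q)_s} \) with \( A=aq^{2k} \). The only difference is cosmetic: the paper quotes this reduced identity as a known form of the \( q \)-binomial theorem from Andrews--Eriksson, whereas you derive it from the confluent \( \qhyper11{a}{c}{q}{c/a} \) summation (or by induction), which is the same identity in a different guise.
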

\begin{proof}
  By \eqref{eq:mu^C}, the identity we need to show is
  \[
    \qbinom{n}{k} \frac{(-aq;q)_{2k}}{(-aq;q)_{n+k}}
    = \sum_{r=k}^{n} \qbinom{n}{r} (-a)^{r-k} q^{r^2 - k^2}
    \qbinom{r}{k} \frac{(-aq;q)_{2k}}{(-aq;q)_{r+k}}.
  \]
  The above identity can be rewritten as
  \[
    \frac{1}{(-aq^{2k+1};q)_{n-k}} =
    \sum_{i=0}^{n-k} \qbinom{n-k}{i}
    (-a)^{i} q^{i^2+2ik} \frac{1}{(-aq^{2k+1};q)_{i}}.
  \]
  This is equivalent to the following form of the \( q \)-binomial
  theorem; see \cite[p.~77,~Exercise~101]{Andrews2004a}:
  \[
    \frac{1}{(aq;q)_n} = \sum_{k=0}^{n} a^kq^{k^2} \qbinom{n}{k} \frac{1}{(aq;q)_k}.
    \qedhere
  \]
\end{proof}

We are now ready to give a weight system of infinite height for
\( \sigma^b_{n,k}(a;q) \). Note that this weight system can be
constructed using \Cref{alg:1} with the monomial ordering defined by
\( q<a \). See \Cref{fig:q-Bessel} for the weight system in the
following proposition.

\begin{prop}\label{pro:charlier-wt}
  Let \( w \) be the weight system defined by
  \[
    w(t;i,j) = (-aq^{2i+1})^{t} q^{j}.
  \]
  The mixed moments  of the \( q \)-Bessel
  polynomials satisfy
 \[
    \sigma^b_{n,k}(a;q) = h^{w}_{n,k}.
  \] 
\end{prop}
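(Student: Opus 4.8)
The plan is to recognize that the claimed weight system has exactly the shifting form $w(t;i,j) = C_i^t\, w_1(0;i,j)$ required to apply \Cref{lem:construction-infinite}, with the height-$1$ base weight $w_1(0;i,j) = q^j$ and the column factors $C_i = -aq^{2i+1}$. With this identification the whole proposition reduces to checking that the hypothesis of \Cref{lem:construction-infinite} coincides with the already-proven recurrence \Cref{lem:q-bessel-rec}.

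Concretely, first I would invoke \Cref{lem:q-binom-wt} to record that the height-$1$ system $w_1(0;i,j) = q^j$ satisfies $h^{w_1}_{n,r} = \qbinom{n}{r}$. Next I would evaluate the product of column factors by a short telescoping computation:
\[
  \prod_{i=k}^{r-1} C_i = \prod_{i=k}^{r-1} (-aq^{2i+1})
  = (-a)^{r-k}\, q^{\sum_{i=k}^{r-1}(2i+1)}
  = (-a)^{r-k}\, q^{r^2-k^2},
\]
using $\sum_{i=k}^{r-1}(2i+1) = (r-k)(r+k) = r^2-k^2$. Substituting these two facts into the hypothesis of \Cref{lem:construction-infinite} turns the required recurrence
\[
  \sigma^b_{n,k}(a;q) = \sum_{r=k}^{n} h^{w_1}_{n,r}\, \sigma^b_{r,k}(a;q) \prod_{i=k}^{r-1} C_i
\]
into
\[
  \sigma^b_{n,k}(a;q) = \sum_{r=k}^{n} \qbinom{n}{r}\, (-a)^{r-k} q^{r^2-k^2}\, \sigma^b_{r,k}(a;q),
\]
which is precisely the statement of \Cref{lem:q-bessel-rec}.

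Finally, since $w(t;i,j) = C_i^t\, w_1(0;i,j) = (-aq^{2i+1})^t q^j$ is exactly the weight system in the proposition, \Cref{lem:construction-infinite} yields $\sigma^b_{n,k}(a;q) = h^w_{n,k}$, completing the proof. There is essentially no obstacle here: all the genuine combinatorial content has been pushed into \Cref{lem:construction-infinite} and the hypergeometric identity verified in \Cref{lem:q-bessel-rec}, so the only thing to get right is the bookkeeping of the exponent in the telescoping product and matching the two recurrences termwise.
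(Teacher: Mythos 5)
Your proposal is correct and matches the paper's own proof: both identify the height-$1$ base $w_1(0;i,j)=q^j$ with $h^{w_1}_{n,k}=\qbinom{n}{k}$ via \Cref{lem:q-binom-wt}, rewrite the recurrence of \Cref{lem:q-bessel-rec} using $\prod_{i=k}^{r-1}(-aq^{2i+1})=(-a)^{r-k}q^{r^2-k^2}$, and conclude by \Cref{lem:construction-infinite}. Your write-up just makes the telescoping exponent computation explicit where the paper leaves it implicit.
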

\begin{proof}
  Let \( w_1 \) be the weight system of height \( 1 \) given by
  \( w_1(0;i,j) = q^{j} \) so that \( h^{w_1}_{n,k} = \qbinom{n}{k} \)
  by \Cref{lem:q-binom-wt}. We can rewrite the recurrence in
  \Cref{lem:q-bessel-rec} as
 \[
   \sigma^b_{n,k}(a;q) = \sum_{r=k}^{n} h^{w_1}_{n,r} 
   \sigma^b_{r,k}(a;q) \prod_{i=k}^{r-1} (-aq^{2i+1}).
  \] 
  Then by \Cref{lem:construction-infinite} we obtain the desired result.
\end{proof}

\begin{figure}
  \centering
  \begin{tikzpicture}[scale=1.5]
    \LHLLL{3}4 
    \begin{scope}[shift={(-.5,.15)}]
      \node at (1,0) {$1$};
      \node at (2,0) {$1$};
      \node at (2,1/2) {$q$};
      \node at (3,0) {$1$};
      \node at (3,1/3) {$q$};
      \node at (3,2/3) {$q^2$};
    \end{scope}
    \begin{scope}[shift={(-.5,1.15)}]
      \node at (1,0) {$-aq$};
      \node at (2,0) {$-aq^3$};
      \node at (2,1/2) {$-aq^4$};
      \node at (3,0) {$-aq^5$};
      \node at (3,1/3) {$-aq^6$};
      \node at (3,2/3) {$-aq^7$};
    \end{scope}
    \begin{scope}[shift={(-.5,2.15)}]
      \node at (1,0) {$a^2q^2$};
      \node at (2,0) {$a^2q^6$};
      \node at (2,1/2) {$a^2q^7$};
      \node at (3,0) {$a^2q^{10}$};
      \node at (3,1/3) {$a^2q^{11}$};
      \node at (3,2/3) {$a^2q^{12}$};
    \end{scope}
    \begin{scope}[shift={(-.5,3.15)}]
      \node at (1,0) {$-a^3q^3$};
      \node at (2,0) {$-a^3q^9$};
      \node at (2,1/2) {$-a^3q^{10}$};
      \node at (3,0) {$-a^3q^{15}$};
      \node at (3,1/3) {$-a^3q^{16}$};
      \node at (3,2/3) {$-a^3q^{17}$};
    \end{scope}
    \begin{scope}[shift={(-.5,4.15)}]
      \node at (1,0) {$a^4q^4$};
      \node at (2,0) {$a^4q^{12}$};
      \node at (3,0) {$a^4q^{20}$};
    \end{scope}
    \node at (0.5,4.7) {\( \vdots \)};
    \node at (1.5,4.7) {\( \vdots \)};
    \node at (2.5,4.7) {\( \vdots \)};
    \node at (3.5,0.5) {\( \cdots \)};
    \node at (3.5,1.5) {\( \cdots \)};
    \node at (3.5,2.5) {\( \cdots \)};
    \node at (3.5,3.5) {\( \cdots \)};
  \end{tikzpicture}
  \caption{The weight system for the mixed moments of \( q \)-Bessel
    polynomials.}
  \label{fig:q-Bessel}
\end{figure}

Since we have both a weight system \( w_1 \) of height \( 1 \) and a
weight system \( w \) of infinite height for \( \sigma^b_{n,k} \), we
can construct a weight system of any height using \Cref{prop:any-ht}.

\begin{cor}\label{cor:ht-l}
  For a positive integer \( \ell \), let \( w_\ell \) be the weight
  system of height \( \ell \) defined by
  \[
    w_\ell(t;i,j)
    = \begin{cases} 
         (-aq^{2i+1})^tq^j & \mbox{if \( t<\ell-1 \)},\\
         \frac{(-aq^{2i+1})^tq^j(1+aq^i)} {(1+aq^{i+j})(1+aq^{i+j+1})} & \mbox{if \( t=\ell-1 \)}.
       \end{cases}
  \]
  Then
  \[
    \sigma^b_{n,k}(a;q) = h^{w_\ell}_{n,k}.
  \]
\end{cor}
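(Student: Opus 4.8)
We already have two weight systems for $\sigma^b_{n,k}(a;q)$: the height~$1$ weight system $w_1$ from Proposition~\ref{pro:charlier-wt-one-row}, defined by
\[
  w_1(0;i,j) = \frac{q^{j}(1+aq^{i})}{(1+aq^{i+j})(1+aq^{i+j+1})},
\]
which satisfies $h^{w_1}_{n,k} = \sigma^b_{n,k}(a;q)$, and the infinite-height weight system $w$ from Proposition~\ref{pro:charlier-wt}, defined by $w(t;i,j) = (-aq^{2i+1})^{t} q^{j}$, which also satisfies $h^{w}_{n,k} = \sigma^b_{n,k}(a;q)$. Thus the hypothesis $\sigma_{n,k} = h^w_{n,k} = h^{w_1}_{n,k}$ of Proposition~\ref{prop:any-ht} is met, provided $w$ has the shifting property \eqref{eq:shifting}.

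\textbf{The key verification is the shifting property.} I would set $C_i = -aq^{2i+1}$ and check \eqref{eq:shifting}: since
\[
  w(t;i,j) = (-aq^{2i+1})^{t} q^{j} = C_i^{t} \cdot q^{j} = C_i^{t}\, w(0;i,j),
\]
the identity $w(t;i,j) = C_i^{t} w(0;i,j)$ holds for all $t,i,j$ with $j\le i$. This is the crucial point but it is immediate from the explicit form of $w$, so there is essentially no obstacle here. With this $C_i$ in hand, Proposition~\ref{prop:any-ht} yields $\sigma^b_{n,k}(a;q) = h^{w^{(\ell)}}_{n,k}$ for every $\ell\ge 1$, where $w^{(\ell)}$ is the height-$\ell$ weight system given there.

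\textbf{The remaining step is to match $w^{(\ell)}$ with the stated $w_\ell$.} By Proposition~\ref{prop:any-ht}, for $0\le t<\ell-1$ we have $w^{(\ell)}(t;i,j) = w(t;i,j) = (-aq^{2i+1})^{t} q^{j}$, which agrees with the first case of $w_\ell$. For $t=\ell-1$, the proposition gives $w^{(\ell)}(t;i,j) = C_i^{\ell-1} w_1(0;i,j)$. Substituting $C_i = -aq^{2i+1}$ and the explicit $w_1(0;i,j)$ above, this becomes
\[
  (-aq^{2i+1})^{\ell-1}\cdot \frac{q^{j}(1+aq^{i})}{(1+aq^{i+j})(1+aq^{i+j+1})},
\]
which is exactly the $t=\ell-1$ case of $w_\ell$ after writing $(-aq^{2i+1})^{t}$ with $t=\ell-1$. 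Hence $w^{(\ell)} = w_\ell$, and the corollary follows. The only thing to be careful about is the bookkeeping identifying $C_i^{\ell-1}$ with $(-aq^{2i+1})^{t}$ at $t=\ell-1$, which is purely notational and presents no real difficulty.
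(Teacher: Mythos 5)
Your proof is correct and is essentially identical to the paper's: both verify the shifting property \eqref{eq:shifting} with \( C_i = -aq^{2i+1} \) and then combine \Cref{pro:charlier-wt-one-row}, \Cref{pro:charlier-wt}, and \Cref{prop:any-ht}. Your write-up just carries out the bookkeeping of matching \( w^{(\ell)} \) with \( w_\ell \) more explicitly than the paper does.
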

\begin{proof}
  The weight system \( w \) in \Cref{pro:charlier-wt} satisfies
  \[
    w(\ell-1;i,j) = (-aq^{2i+1})^{\ell-1} w(0;i,j).
  \]
  Thus the result follows from \Cref{pro:charlier-wt-one-row},
  \Cref{pro:charlier-wt}, and \Cref{prop:any-ht}.
\end{proof}

\subsection{Expanding weight systems}

In this subsection we prove some lemmas which allow us to expand one
row of a weight system to two rows. Here, a row means the part of a
weight system between \( y=\ell-1 \) and \( y=\ell \) for some
\( \ell\ge1 \).

\begin{figure}
  \centering
  \begin{tikzpicture}[scale=1.5]
    \LHLL{3}1
    \LHlabel{3}1
    \small
    \draw[white, thick] (0,1) -- (3,1);
    \begin{scope}[shift={(-.5,.15)}]
      \node at (1,0) {$1+b$};
      \node at (2,0) {$1+bq$};
      \node at (2,1/2) {$(1+bq)q$};
      \node at (3,0) {$1+bq^2$};
      \node at (3,1/3) {$(1+bq^2)q$};
      \node at (3,2/3) {$(1+bq^2)q^2$};
    \end{scope}
    \node at (3.5,0.5) {\( \cdots \)};
  \end{tikzpicture} \qquad
  \begin{tikzpicture}[scale=1.5]
    \LHLL{3}2
    \LHlabel{3}2
    \draw[white, thick] (0,2) -- (3,2);
    \begin{scope}[shift={(-.5,.15)}]
      \node at (1,0) {$1$};
      \node at (2,0) {$1$};
      \node at (2,1/2) {$q$};
      \node at (3,0) {$1$};
      \node at (3,1/3) {$q$};
      \node at (3,2/3) {$q^2$};
    \end{scope}
    \begin{scope}[shift={(-.5,1.15)}]
      \node at (1,0) {$b$};
      \node at (2,0) {$bq$};
      \node at (2,1/2) {$bq^2$};
      \node at (3,0) {$bq^2$};
      \node at (3,1/3) {$bq^3$};
      \node at (3,2/3) {$bq^4$};
    \end{scope}
    \node at (3.5,0.5) {\( \cdots \)};
    \node at (3.5,1.5) {\( \cdots \)};
  \end{tikzpicture}
  \caption{Two weight systems giving the same generating functions.}
  \label{fig:two-to-one}
\end{figure}

See \Cref{fig:two-to-one} for the weight systems in the following
lemma.

\begin{lem}[Expanding lemma 1]
  \label{lem:two-to-one} 
  Let \( w_1 \) and \( w_2 \) be the weight systems defined by
  \[
    w_1(t;i,j) =
    \begin{cases}
      (1+bq^{i}) q^{j} & \mbox{if \( t=0 \)},\\
      0 & \mbox{if \( t\ge1 \)},
    \end{cases}  \qquad 
    w_2(t;i,j) =
    \begin{cases}
      q^{j}& \mbox{if \( t=0 \)},\\
      b q^{i+j}& \mbox{if \( t=1 \)},\\
      0 & \mbox{if \( t\ge2 \).}
    \end{cases}
  \]
 Then
  \[
    \sum_{p:(k,1)\to(n,0)} w_1(p) = \sum_{p:(k,2)\to(n,0)} w_2(p).
  \]
\end{lem}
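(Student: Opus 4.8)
The plan is to observe that each side of the claimed identity is a height-restricted instance of the quantity $h^w_{n,k}$ from \Cref{def:h-e}. For a weight system of height $\ell$, any path carrying nonzero weight uses no east step in a row $t\ge\ell$, so restricting the starting point from $(k,\infty)$ to $(k,\ell)$ discards only zero-weight paths. Since $w_1$ has height $1$ and $w_2$ has height $2$, this gives
\[
  \sum_{p:(k,1)\to(n,0)} w_1(p) = h^{w_1}_{n,k}, \qquad
  \sum_{p:(k,2)\to(n,0)} w_2(p) = h^{w_2}_{n,k},
\]
so it suffices to evaluate both in closed form and check they agree.

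First I would handle the left-hand side. Writing $w_1(0;i,j)=(1+bq^{i})\cdot q^{j}$ exhibits it as $C_i$ times the weight system $q^{j}$ of \Cref{lem:q-binom-wt}, with $C_i=1+bq^{i}$. Applying \Cref{lem:wt-mult} then gives
\[
  h^{w_1}_{n,k}=\prod_{i=k}^{n-1}(1+bq^{i})\cdot\qbinom{n}{k}=(-bq^{k};q)_{n-k}\qbinom{n}{k}.
\]
Next I would compute the right-hand side by peeling off the bottom row of $w_2$ via \Cref{lem:rec2}. The bottom row $w_2(0;i,j)=q^{j}$ contributes the factor $h^{w_1}_{n,r}=\qbinom{n}{r}$, while the shifted system $w^{+}(0;i,j)=w_2(1;i,j)=bq^{i+j}$ is again of height $1$. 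Writing $bq^{i+j}=(bq^{i})\cdot q^{j}$ and applying \Cref{lem:wt-mult} once more yields $h^{w^{+}}_{r,k}=b^{r-k}q^{\binom{r}{2}-\binom{k}{2}}\qbinom{r}{k}$, so that
\[
  h^{w_2}_{n,k}=\sum_{r=k}^{n}\qbinom{n}{r}\qbinom{r}{k}\,b^{r-k}q^{\binom{r}{2}-\binom{k}{2}}.
\]

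Finally I would match the two expressions. Substituting $r=k+s$, using the subset identity $\qbinom{n}{k+s}\qbinom{k+s}{k}=\qbinom{n}{k}\qbinom{n-k}{s}$ and the exponent computation $\binom{k+s}{2}-\binom{k}{2}=ks+\binom{s}{2}$, the sum factors as $\qbinom{n}{k}\sum_{s=0}^{n-k}\qbinom{n-k}{s}q^{\binom{s}{2}}(bq^{k})^{s}$. The finite $q$-binomial theorem $\sum_{s=0}^{m}\qbinom{m}{s}q^{\binom{s}{2}}x^{s}=(-x;q)_{m}$ with $x=bq^{k}$ and $m=n-k$ collapses this to $(-bq^{k};q)_{n-k}\qbinom{n}{k}=h^{w_1}_{n,k}$, completing the argument. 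The only genuine work is this last reduction, and the main obstacle is the bookkeeping of $q$-powers together with recognizing that the inner sum is exactly the $q$-binomial theorem. A purely bijective alternative—matching the choice of the $bq^{i}$ term in expanding $\prod_i(1+bq^{i})$ with the decision to route the $i$th east step into the top row of $w_2$—would explain the name ``Expanding lemma'', but I expect it to be delicate, since one must verify that the lecture hall monotonicity constraint underlying \Cref{pro:3bij} is preserved when steps are moved between rows; the algebraic route sidesteps this entirely.
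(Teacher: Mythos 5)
Your proof is correct and follows essentially the same route as the paper: both evaluate the left-hand side as \( \qbinom{n}{k}(-bq^{k};q)_{n-k} \), decompose the right-hand side according to the point \( (r,1) \) where the path leaves the top row (your invocation of \Cref{lem:rec2} is just a formalization of this), and close the gap with the finite \( q \)-binomial theorem. The only cosmetic difference is that you route the two closed-form evaluations through \Cref{lem:q-binom-wt} and \Cref{lem:wt-mult}, whereas the paper states them directly.
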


\begin{proof}
  Observe that \( w_1 \) is obtained from the weight system in
  \Cref{lem:q-binom-wt} by multiplying \( (1+bq^i) \) for each step
  between \( x=i-1 \) and \( x=i \) for \( i\ge1 \). Thus, by \Cref{lem:wt-mult} and
  \Cref{lem:q-binom-wt}, we have
  \begin{equation}\label{eq:28}
    \sum_{p:(k,1)\to(n,0)} w_1(p)
    = \qbinom{n}{k} (1+bq^{k})(1+bq^{k+1})\cdots (1+bq^{n-1}).
  \end{equation}

  By \Cref{lem:rec2} with \( \ell=1 \), we have
  \[
    \sum_{p:(k,2)\to(n,0)} w_2(p)
    =  \sum_{r=k}^{n}   \sum_{p:(k,1)\to(r,0)} w'(p) \sum_{p:(r,1)\to(n,0)} w''(p),
  \]
  where \( w' \) and \( w'' \) are the weight systems of height
  \( 1 \) defined by \( w'(0;i,j) = bq^{i+j} \) and
  \( w''(0;i,j) = q^j \). By \Cref{lem:wt-mult} and
  \Cref{lem:q-binom-wt} again, we have
  \[
   \sum_{p:(k,1)\to(r,0)} w'(p) = b^{r-k}q^{k+(k+1)+\cdots+(r-1)} \qbinom{r}{k}, \qquad 
   \sum_{p:(r,1)\to(n,0)} w''(p) = \qbinom{n}{r}.
  \]
  Thus,
  \begin{align*}
    \sum_{p:(k,2)\to(n,0)} w_2(p)
    &= \sum_{r=k}^{n} b^{r-k}q^{k+(k+1)+\cdots+(r-1)} \qbinom{r}{k}
      \qbinom{n}{r} \\
    &=  \qbinom{n}{k} \sum_{r=k}^{n} b^{r-k}
      q^{\binom{r-k}{2}+k(r-k)} \qbinom{n-k}{r-k}\\
    &=  \qbinom{n}{k} \sum_{r=0}^{n-k}  b^{r}
      q^{\binom{r}{2}+kr} \qbinom{n-k}{r}\\
    &=  \qbinom{n}{k} (1+bq^k)(1+bq^{k+1})\cdots (1+bq^{n-1}),
  \end{align*}
  where the \( q \)-binomial theorem is used in the last equality.
  By \eqref{eq:28} and the above formula, we obtain the lemma.
\end{proof}

For an integer \( k \), let \( \chi_o(k)=1 \) if \( k \) is odd and
\( \chi_o(k)=0 \) otherwise. Similarly, let \( \chi_e(k)=1 \) if
\( k \) is even and \( \chi_e(k)=0 \) otherwise.

\begin{lem}[Expanding lemma 2]\label{lem:two-to-one-gen}
  Let \( w_1 \) and \( w_2 \) be the weight systems defined by
  \begin{align*}
    w_1(t;i,j) &= a_{i,t}(1+bq^i) q^{j},\\
    w_2(t;i,j) &= a_{i,\flr{t/2}}(bq^i)^{\chi_o(t)} q^{j},
  \end{align*}
  where \( a_{i,j} \) is an arbitrary quantity that depends on \( i \)
  and \( j \). Then
  \[
    \sum_{p:(k,\infty)\to(n,0)} w_1(p)
    = \sum_{p:(k,\infty)\to(n,0)} w_2(p).
  \]
\end{lem}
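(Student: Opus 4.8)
The plan is to reduce everything to the single-row Expanding Lemma (\Cref{lem:two-to-one}) and apply it to every row of $w_1$, one row at a time. The key observation is that $w_2$ is obtained from $w_1$ by replacing each row $t$, which carries the factor $a_{i,t}(1+bq^i)$, by the pair of rows $2t$ and $2t+1$ carrying $a_{i,t}$ and $a_{i,t}bq^i$ respectively; the special case $a_{i,0}=1$ and $a_{i,t}=0$ for $t\ge1$ is exactly \Cref{lem:two-to-one}. So the task is to promote that single-row statement to all rows simultaneously.

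First I would introduce interpolating weight systems $W^{(m)}$ for $m\ge0$, in which the bottom $2m$ rows are already expanded (as in $w_2$) while the rows above are still unexpanded (as in $w_1$, shifted upward):
\[
  W^{(m)}(t;i,j) =
  \begin{cases}
    a_{i,\flr{t/2}}(bq^i)^{\chi_o(t)} q^j & \text{if } t< 2m,\\
    a_{i,t-m}(1+bq^i) q^j & \text{if } t\ge 2m.
  \end{cases}
\]
By construction $W^{(0)}=w_1$, and for each fixed $(t;i,j)$ one has $W^{(m)}(t;i,j)=w_2(t;i,j)$ as soon as $2m>t$, so $W^{(m)}\to w_2$ pointwise.

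The main step is to prove $h^{W^{(m)}}_{n,k}=h^{W^{(m+1)}}_{n,k}$ for every $m$. Passing from $W^{(m)}$ to $W^{(m+1)}$ only replaces the single unexpanded row occupying the height band $2m\le y<2m+1$ (weights $a_{i,m}(1+bq^i)q^j$) by the two expanded rows filling $2m\le y<2m+2$ (weights $a_{i,m}q^j$ and $a_{i,m}bq^i q^j$), leaving the rows below untouched and translating the rows above up by exactly one band. I would decompose each path $p:(k,\infty)\to(n,0)$ at the integer height lines bounding this band (a shifted, iterated version of \Cref{lem:rec2}), so that both $h^{W^{(m)}}_{n,k}$ and $h^{W^{(m+1)}}_{n,k}$ factor as the same generating function for the part below the band, times the transfer through the band, times the same generating function for the part above the band; the two ``above'' factors agree because $\mathcal{G}$ is invariant under integer translation in the $y$-direction. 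The two transfers through the band are then equal by \Cref{lem:two-to-one}: the column-only factor $a_{i,m}$ is identical on both sides and can be pulled out by \Cref{lem:wt-mult} (every path crosses each vertical line exactly once), leaving precisely the identity of \Cref{lem:two-to-one} with the given $b$.

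Chaining these equalities gives $h^{w_1}_{n,k}=h^{W^{(m)}}_{n,k}$ for all $m$, and it remains to let $m\to\infty$ to reach $h^{w_2}_{n,k}$. The hard part is making this limit rigorous, which I would do by working in the ring of formal power series in the indeterminates $b$, $q$, and $\{a_{i,t}\}$: a path contributes exactly one factor $a_{i,t_i}$ per column $i\in\{k,\dots,n-1\}$, so each monomial in the $a_{i,t}$ has total $a$-degree $n-k$ and is produced by only finitely many paths, all confined to rows of index at most $\max_i t_i$. Hence each coefficient of $h^{W^{(m)}}_{n,k}$ stabilizes to the corresponding coefficient of $h^{w_2}_{n,k}$ once $2m$ exceeds that bound, and since $h^{W^{(m)}}_{n,k}$ is constant in $m$ we conclude $h^{w_1}_{n,k}=h^{w_2}_{n,k}$. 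Beyond this convergence point, the only remaining work is the bookkeeping of the shifted row indices across $W^{(m)}$ and $W^{(m+1)}$, which is exactly the matching sketched above.
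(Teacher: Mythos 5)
Your proof is correct and follows the same route as the paper, whose entire argument is the single sentence ``this can be done by applying \Cref{lem:two-to-one} to each row.'' Your interpolating systems $W^{(m)}$, the three-part decomposition of paths at the band boundaries (with translation invariance above and \Cref{lem:two-to-one} plus the column factor $a_{i,m}$ inside the band), and the coefficientwise stabilization argument for the limit $m\to\infty$ supply exactly the details the paper leaves implicit.
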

\begin{proof}
  Consider a path \( p:(k,\infty)\to(n,0) \). Let
  \( t_1>t_2 > \cdots >t_m \) be the integers such that \( p \) has at
  least one east step in the region \( \{(x,y): t_s\le y<t_s+1\} \)
  for each \( s \). Then the restriction of \( p \) to this region is
  a path \( p_s:(k_{s-1},t_s+1)\to (k_s,t_s) \) starting with a south
  step, for some integers \( k=k_0<k_1 < \cdots < k_m=n \). Observe
  that \( w_1(p) = w_1(p_1) \cdots w_1(p_m) \). Let
  \( p'_s:(k_{s-1},1)\to (k_s,0) \) be the path obtained by
  translating \( p_s \) by \( t_s \) units downwards. By the
  definition of \( w_1 \), we have
\[
  w_1(p_s) = a_{k_{s-1},t_s}a_{k_{s-1}+1,t_s} \cdots a_{k_s-1,t_s} w'_1(p'_s),
\]
where \( w'_1 \) is the weight system of height \( 1 \)
defined by \( w'_1(0;i,j) = (1+bq^i)q^j \).
Therefore,
\begin{equation}\label{eq:5}
    \sum_{p:(k,\infty)\to(n,0)} w_1(p)
    = \sum_{m\ge0}  \sum_{(\vec t, \vec k)} \left( \prod_{s=1}^{m} \prod_{i=k_{s-1}}^{k_s-1} a_{i,t_s} \right)
    h^{w'_1}_{k_m,k_{m-1}} h^{w'_1}_{k_{m-1},k_{m-2}} \cdots h^{w'_1}_{k_{1},k_{0}},
\end{equation}
where the last sum is over all pairs \( (\vec t,\vec k) \) of tuples
\( \vec t = (t_1>t_2 > \cdots >t_m) \) and
\( \vec k = (k=k_0<k_1 < \cdots < k_m=n) \).

For the weight system \( w_2 \), we consider a different decomposition
of a path \( p:(k,\infty)\to(n,0) \). Let \( t_1>t_2 > \cdots >t_m \)
be the integers such that \( p \) has at least one east step in the
region \( \{(x,y): 2t_s\le y<2t_s+2\} \) for each \( s \). Then the
restriction of \( p \) to this region is a path
\( p_s:(k_{s-1},2t_s+2)\to (k_s,2t_s) \) starting with a south step,
for some integers \( k=k_0<k_1 < \cdots < k_m=n \), and we have
\( w_2(p) = w_2(p_1) \cdots w_2(p_m) \). Let
\( p'_s:(k_{s-1},2)\to (k_s,0) \) be the path obtained by translating
\( p_s \) by \( 2t_s \) units downwards. By the definition of
\( w_2 \), we have
\[
  w_2(p_s) = a_{k_{s-1},t_s}a_{k_{s-1}+1,t_s} \cdots a_{k_s-1,t_s} w'_2(p'_s),
\]
where \( w'_2 \) is the weight system of height \( 2 \) defined by
\[
  w'_2(t;i,j) =
    \begin{cases}
      q^{j}& \mbox{if \( t=0 \)},\\
      b q^{i+j}& \mbox{if \( t=1 \)},\\
      0 & \mbox{if \( t\ge2 \).}
    \end{cases}
\] 
Therefore, we have
\begin{equation}\label{eq:7}
    \sum_{p:(k,\infty)\to(n,0)} w_2(p)
    = \sum_{m\ge0}  \sum_{(\vec t, \vec k)} \left( \prod_{s=1}^{m} \prod_{i=k_{s-1}}^{k_s-1} a_{i,t_s} \right)
    h^{w'_2}_{k_m,k_{m-1}} h^{w'_2}_{k_{m-1},k_{m-2}} \cdots h^{w'_2}_{k_{1},k_{0}},
\end{equation}
where the last sum is the same as in \eqref{eq:5}.

By \Cref{lem:two-to-one}, we have
\( h^{w'_1}_{n,k} = h^{w'_2}_{n,k} \) for all \( n \) and \( k \).
Therefore, the right-hand sides of \eqref{eq:5} and \eqref{eq:7} are
equal, which completes the proof.
\end{proof}

\subsection{Little $q$-Jacobi}

Let \( p_n(x;a,b;q) \) be the monic \emph{little \( q \)-Jacobi polynomial}:
\begin{equation}\label{eq:def-lqj}
  p_n(x;a,b;q) =
  \frac{(aq;q)_n}{(-1)^n q^{-\binom{n}{2}}(abq^{n+1};q)_n}
  \qHyper21{q^{-n},abq^{n+1}}{aq}{q,qx}.
\end{equation}
We denote by \( \sigma^L_{n,k}(a,b;q) \) and \( \nu^L_{n,k}(a,b;q) \)
the mixed moments and the dual mixed moments of the little
\( q \)-Jacobi polynomials, respectively:
\[
  x^n = \sum_{k=0}^{n} \sigma^L_{n,k}(a,b;q) p_k(x;a,b;q),
  \qquad
  p_n(x;a,b;q) = \sum_{k=0}^{n} \nu^L_{n,k}(a,b;q) x^k.
\]

The following lemma will be used to find formulas for
\( \sigma^L_{n,k}(a,b;q) \) and \( \nu^L_{n,k}(a,b;q) \).

\begin{lem}\label{lem:DAD-1}
  Let \( \{z_i\}_{i\ge0} \) be a sequence of nonzero quantities. If
  the inverse of a matrix \( (a_{i,j})_{i,j=0}^\infty \) is
  \( (b_{i,j})_{i,j=0}^\infty \), then the inverse of the matrix
  \( (a_{i,j}z_i/z_j)_{i,j=0}^\infty \) is \( (b_{i,j}z_i/z_j)_{i,j=0}^\infty \).
\end{lem}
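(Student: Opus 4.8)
The plan is to recognize the transformation $(a_{i,j}) \mapsto (a_{i,j}z_i/z_j)$ as conjugation by a diagonal matrix, and then exploit the fact that conjugation respects matrix products and hence sends mutually inverse matrices to mutually inverse matrices. Concretely, I would set $A=(a_{i,j})_{i,j=0}^\infty$, $B=(b_{i,j})_{i,j=0}^\infty$, and let $D=\operatorname{diag}(z_0,z_1,z_2,\dots)$ be the diagonal matrix with $D_{i,i}=z_i$. Since each $z_i$ is nonzero, $D^{-1}=\operatorname{diag}(z_0^{-1},z_1^{-1},\dots)$ is well defined, and both $D$ and $D^{-1}$ have a single nonzero entry in each row and column.

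The first step is the entrywise identification: a direct computation gives $(DAD^{-1})_{i,j}=z_i\,a_{i,j}\,z_j^{-1}=a_{i,j}z_i/z_j$, and likewise $(DBD^{-1})_{i,j}=b_{i,j}z_i/z_j$. Thus the two matrices appearing in the statement are precisely $DAD^{-1}$ and $DBD^{-1}$. The second step is to check the product: because $D$ is diagonal, I would rearrange $(DAD^{-1})(DBD^{-1})=D(AB)D^{-1}=DID^{-1}=I$, and symmetrically $(DBD^{-1})(DAD^{-1})=D(BA)D^{-1}=I$. This identifies $DBD^{-1}$ as the two-sided inverse of $DAD^{-1}$, which is exactly the claim.

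The only point requiring care is the legitimacy of the associativity rearrangement $(DAD^{-1})(DBD^{-1})=D(AB)D^{-1}$ for infinite matrices. The hard part is thus purely the bookkeeping of convergence, and it dissolves because $D$ and $D^{-1}$ are diagonal: inserting $D^{-1}D=I$ in the middle introduces no new summation, so the rearrangement reduces to the termwise identity $\sum_k (z_i a_{i,k} z_k^{-1})(z_k b_{k,j} z_j^{-1}) = z_i\bigl(\sum_k a_{i,k} b_{k,j}\bigr) z_j^{-1}$, where the factors $z_k^{-1}z_k$ cancel in each summand. The inner sum $\sum_k a_{i,k}b_{k,j}$ is assumed to converge to $\delta_{i,j}$ (this is the meaning of $B=A^{-1}$); moreover, in every application in this paper the matrices are triangular, so these sums have only finitely many nonzero terms and no analytic subtlety arises at all. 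This completes the argument.
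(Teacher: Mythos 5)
Your proof is correct and follows the same route as the paper: both identify the transformed matrices as $DAD^{-1}$ and $DBD^{-1}$ for the diagonal matrix $D=\operatorname{diag}(z_0,z_1,\dots)$ and conclude by conjugation. Your extra remarks on associativity for infinite matrices are a harmless elaboration of what the paper leaves implicit.
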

\begin{proof}
  Let \( D \) be the diagonal matrix with diagonal entries \( z_i \).
  Then
  \[
    \left( (a_{i,j}z_i/z_j)_{i,j=0}^\infty \right)^{-1} =
    \left( D (a_{i,j})_{i,j=0}^\infty D^{-1} \right)^{-1}
   = D (b_{i,j})_{i,j=0}^\infty D^{-1} = (b_{i,j}z_i/z_j)_{i,j=0}^\infty.
   \qedhere
  \]
\end{proof}

The following result has been proved in \cite[Lemma~2.5]{LHT} using
the \( q \)-Saalsch\"utz summation formula. We provide another proof
using \Cref{lem:DAD-1} because the same technique will be used later
for big \( q \)-Jacobi polynomials and Askey--Wilson polynomials.

\begin{prop} \label{pro:little-formula}
  We have
\begin{align}
  \label{eq:mu}
  \sigma^L_{n,k}(a,b;q) &=\qbinom{n}{k} \frac{(aq^{k+1};q)_{n-k}}{(abq^{2k+2};q)_{n-k}},\\
  \label{eq:nu}
  \nu^L_{n,k}(a,b;q) &= (-1)^{n-k} q^{\binom{n-k}2} \qbinom{n}{k} \frac{(aq^{k+1};q)_{n-k}}{(abq^{n+k+1};q)_{n-k}}.
\end{align}
Equivalently,
\begin{align}
  \label{eq:mu^L=mu^C}
  \sigma^L_{n,k}(a,b;q)
  &= (aq^{k+1};q)_{n-k} \sigma^b_{n,k}(-abq;q),\\
  \label{eq:nu^L=nu^C}
  \nu^L_{n,k}(a,b;q)
  &= (aq^{k+1};q)_{n-k} \nu^b_{n,k}(-abq;q),
\end{align}
where \( \sigma^b_{n,k}(a;q) \) and \( \nu^b_{n,k}(a;q) \) are the
mixed moments and the coefficients of the \( q \)-Bessel polynomials
in \eqref{eq:mu^C} and \eqref{eq:nu^C}.
\end{prop}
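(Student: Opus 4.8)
The plan is to establish the coefficient formula \eqref{eq:nu} first, reading it directly off the hypergeometric definition \eqref{eq:def-lqj}, and then to deduce the mixed moment formula \eqref{eq:mu} essentially for free from the fact that $(\sigma^L_{n,k})$ and $(\nu^L_{n,k})$ are inverse matrices, together with \Cref{lem:DAD-1}. This is what lets us avoid a second hypergeometric summation of the kind used in \cite[Lemma~2.5]{LHT}.

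First I would expand the ${}_2\phi_1$ series in \eqref{eq:def-lqj} as a power series in $x$ and extract the coefficient of $x^k$, giving
\[
  \nu^L_{n,k}(a,b;q)
  = \frac{(aq;q)_n}{(-1)^n q^{-\binom{n}{2}}(abq^{n+1};q)_n}
    \cdot \frac{(q^{-n};q)_k (abq^{n+1};q)_k}{(aq;q)_k (q;q)_k}\, q^k.
\]
The simplification to the product form proceeds by three routine regroupings: $(aq;q)_n/(aq;q)_k = (aq^{k+1};q)_{n-k}$ produces the numerator; the splitting $(abq^{n+1};q)_n = (abq^{n+1};q)_k(abq^{n+k+1};q)_{n-k}$ turns $(abq^{n+1};q)_k/(abq^{n+1};q)_n$ into $1/(abq^{n+k+1};q)_{n-k}$, producing the denominator; and the standard identity $(q^{-n};q)_k = (q;q)_n(q;q)_{n-k}^{-1}(-1)^kq^{\binom{k}{2}-nk}$ rewrites $(q^{-n};q)_k/(q;q)_k$ as $\qbinom{n}{k}(-1)^kq^{\binom{k}{2}-nk}$. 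Collecting signs and powers of $q$, the sign becomes $(-1)^{n-k}$ and the exponent $\binom{n}{2}+\binom{k}{2}-nk+k$ collapses to $\binom{n-k}{2}$, giving exactly \eqref{eq:nu}. Splitting $q$-Pochhammer symbols in the same way shows \eqref{eq:nu} is equivalent to \eqref{eq:nu^L=nu^C}, and that \eqref{eq:mu} is equivalent to \eqref{eq:mu^L=mu^C}; for the latter one substitutes $a\mapsto -abq$ in \eqref{eq:mu^C} and uses $(abq^2;q)_{n+k} = (abq^2;q)_{2k}(abq^{2k+2};q)_{n-k}$.

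The conceptual heart is obtaining \eqref{eq:mu^L=mu^C} from \eqref{eq:nu^L=nu^C}. Setting $z_i = (aq;q)_i$, the prefactor in \eqref{eq:nu^L=nu^C} is precisely $(aq^{k+1};q)_{n-k} = z_n/z_k$, so \eqref{eq:nu^L=nu^C} reads $\nu^L_{n,k}(a,b;q) = \nu^b_{n,k}(-abq;q)\,z_n/z_k$. Since $(\nu^b_{n,k}(-abq;q))$ and $(\sigma^b_{n,k}(-abq;q))$ are mutually inverse matrices (being the change-of-basis matrices between $\{x^n\}$ and the $q$-Bessel polynomials, as verified in the proof of \Cref{pro:1}), \Cref{lem:DAD-1} gives that the inverse of $(\nu^L_{n,k}(a,b;q))$ equals $(\sigma^b_{n,k}(-abq;q)\,z_n/z_k)$. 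But the inverse of $(\nu^L_{n,k})$ is $(\sigma^L_{n,k})$, whence $\sigma^L_{n,k}(a,b;q) = (aq^{k+1};q)_{n-k}\,\sigma^b_{n,k}(-abq;q)$, which is \eqref{eq:mu^L=mu^C}.

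I expect the only delicate point to be the bookkeeping in the direct computation of \eqref{eq:nu}: tracking the sign $(-1)^{n-k}$, verifying the power-of-$q$ identity $\binom{n}{2}+\binom{k}{2}-nk+k = \binom{n-k}{2}$, and making the correct substitution $a\mapsto -abq$ when passing to the $q$-Bessel formulas. Once \eqref{eq:nu^L=nu^C} is in hand in the form $\nu^b_{n,k}(-abq;q)\,z_n/z_k$, the passage to the mixed moments via \Cref{lem:DAD-1} is immediate, and it is exactly the point where the diagonal-conjugation trick replaces an explicit $q$-Saalsch\"utz evaluation.
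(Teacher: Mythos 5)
Your proposal is correct and follows essentially the same route as the paper's proof: read \eqref{eq:nu} and \eqref{eq:nu^L=nu^C} directly off the hypergeometric definition \eqref{eq:def-lqj}, note the equivalences obtained by splitting Pochhammer symbols, and then transfer to the mixed moments via \Cref{lem:DAD-1} with the diagonal $z_i=(aq;q)_i$, using that $(\sigma^b_{n,k}(-abq;q))$ and $(\nu^b_{n,k}(-abq;q))$ are mutually inverse. The paper's proof is just a terser version of the same argument (it too is presented as the diagonal-conjugation alternative to the $q$-Saalsch\"utz computation of \cite[Lemma~2.5]{LHT}), and your explicit bookkeeping of the sign and the exponent identity $\binom{n}{2}+\binom{k}{2}-nk+k=\binom{n-k}{2}$ checks out.
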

\begin{proof}
  By \eqref{eq:def-lqj} and \eqref{eq:nu^C}, we have \eqref{eq:nu} and
  \eqref{eq:nu^L=nu^C}. The equivalence of \eqref{eq:mu} and
  \eqref{eq:mu^L=mu^C} follows from \eqref{eq:mu^C}. Hence, we only
  need to prove \eqref{eq:mu^L=mu^C}. Since
  \( (\sigma^b_{n,k}(-abq;q))_{n,k} \) and
  \( (\nu^b_{n,k}(-abq;q))_{n,k} \) are inverses of each other, we
  obtain \eqref{eq:mu^L=mu^C} by \Cref{lem:DAD-1} and
  \eqref{eq:nu^L=nu^C}.
\end{proof}

\begin{figure}
  \centering
  \begin{tikzpicture}[scale=1.5]
    \LHLLL{3}4
    \begin{scope}[shift={(-.5,.15)}]
      \node at (1,0) {$1$};
      \node at (2,0) {$1$};
      \node at (2,1/2) {$q$};
      \node at (3,0) {$1$};
      \node at (3,1/3) {$q$};
      \node at (3,2/3) {$q^2$};
    \end{scope}
    \begin{scope}[shift={(-.5,1.15)}]
      \node at (1,0) {$-aq$};
      \node at (2,0) {$-aq^2$};
      \node at (2,1/2) {$-aq^3$};
      \node at (3,0) {$-aq^3$};
      \node at (3,1/3) {$-aq^4$};
      \node at (3,2/3) {$-aq^5$};
    \end{scope}
    \begin{scope}[shift={(-.5,2.15)}]
      \node at (1,0) {$abq^2$};
      \node at (2,0) {$abq^4$};
      \node at (2,1/2) {$abq^5$};
      \node at (3,0) {$abq^6$};
      \node at (3,1/3) {$abq^7$};
      \node at (3,2/3) {$abq^8$};
    \end{scope}
    \begin{scope}[shift={(-.5,3.15)}]
      \node at (1,0) {$-a^2bq^3$};
      \node at (2,0) {$-a^2bq^6$};
      \node at (2,1/2) {$-a^2bq^7$};
      \node at (3,0) {$-a^2bq^9$};
      \node at (3,1/3) {$-a^2bq^{10}$};
      \node at (3,2/3) {$-a^2bq^{11}$};
    \end{scope}
    \begin{scope}[shift={(-.5,4.15)}]
      \node at (1,0) {$a^2b^2q^4$};
      \node at (2,0) {$a^2b^2q^8$};
      \node at (3,0) {$a^2b^2q^{12}$};
    \end{scope}
    \node at (0.5,4.7) {\( \vdots \)};
    \node at (1.5,4.7) {\( \vdots \)};
    \node at (2.5,4.7) {\( \vdots \)};
    \node at (3.5,0.5) {\( \cdots \)};
    \node at (3.5,1.5) {\( \cdots \)};
    \node at (3.5,2.5) {\( \cdots \)};
    \node at (3.5,3.5) {\( \cdots \)};
  \end{tikzpicture}
  \caption{The weight system for the mixed moments of little
    \( q \)-Jacobi polynomials.}
  \label{fig:LHL-little}
\end{figure}

Now we give a weight system of height \( 1 \) for
\( \sigma^L_{n,k}(a,b;q) \).

\begin{prop}\label{pro:bessel-L-1}
Let \( w_1 \) be the weight system of height \( 1 \) defined by
\[
    w_1(0;i,j) = \frac{q^{j}(1-abq^{i+1})(1-aq^{i+1})}{(1-abq^{i+j+1})(1-abq^{i+j+2})}.
  \]
  Then the mixed moments of the little \( q \)-Jacobi polynomials satisfy
  \[
    \sigma^L_{n,k}(a,b;q) = h^{w_1}_{n,k}.
  \]
\end{prop}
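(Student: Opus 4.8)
The plan is to avoid setting up a fresh induction and instead reduce the claim to the already-established weight system for the $q$-Bessel polynomials via the factorization in \eqref{eq:mu^L=mu^C}. Recall that \Cref{pro:little-formula} gives $\sigma^L_{n,k}(a,b;q) = (aq^{k+1};q)_{n-k}\,\sigma^b_{n,k}(-abq;q)$, and that \Cref{pro:charlier-wt-one-row} already produces a height-$1$ weight system for $\sigma^b_{n,k}(a;q)$. So the first step is to specialize that weight system: let $w^b$ be the height-$1$ weight system of \Cref{pro:charlier-wt-one-row} with $a$ replaced by $-abq$, so that
\[
  w^b(0;i,j) = \frac{q^{j}(1-abq^{i+1})}{(1-abq^{i+j+1})(1-abq^{i+j+2})}, \qquad h^{w^b}_{n,k} = \sigma^b_{n,k}(-abq;q).
\]

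The second step is to recognize $w_1$ as a column-rescaling of $w^b$. Indeed, dividing the defining formula for $w_1(0;i,j)$ by $w^b(0;i,j)$ leaves exactly the factor $1-aq^{i+1}$, which depends only on $i$; that is, $w_1(0;i,j) = C_i\, w^b(0;i,j)$ with $C_i = 1-aq^{i+1}$. This is precisely the situation of \Cref{lem:wt-mult}, so $h^{w_1}_{n,k} = C_k C_{k+1}\cdots C_{n-1}\, h^{w^b}_{n,k}$. The final step is the Pochhammer bookkeeping: one checks $C_k C_{k+1}\cdots C_{n-1} = \prod_{i=k}^{n-1}(1-aq^{i+1}) = (aq^{k+1};q)_{n-k}$, so that $h^{w_1}_{n,k} = (aq^{k+1};q)_{n-k}\,\sigma^b_{n,k}(-abq;q) = \sigma^L_{n,k}(a,b;q)$ by \eqref{eq:mu^L=mu^C}. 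There is essentially no obstacle along this route; the only things to verify are the two elementary identities just mentioned, namely that the ratio $w_1/w^b$ equals $1-aq^{i+1}$ and that the telescoping product of the $C_i$ is the claimed $q$-shifted factorial.

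As an alternative that mirrors the proof of \Cref{pro:charlier-wt-one-row} directly, I would instead apply \Cref{lem:rec} to $w_1$. Writing $h^{w_1}_{n,k} = h^{w_1}_{n,k}(a,b)$ and noting that $w_1(0;i+1,j)$ equals $w_1(0;i,j)$ with $a\mapsto aq$, \Cref{lem:rec} yields a recurrence in which $h^{w_1}_{n-1,\cdot}(aq,b)$ appears. Induction on $n$ then reduces the statement, via the explicit formula \eqref{eq:mu}, to the single $q$-binomial identity
\[
  (1-abq^{2k+1})\qbinom{n}{k} = q^{k}(1-abq^{k+1})\qbinom{n-1}{k} + (1-abq^{n+k+1})\qbinom{n-1}{k-1}.
\]
This follows from the two recurrences in \eqref{eq:rec-q-bin}: expand $\qbinom{n}{k}=q^k\qbinom{n-1}{k}+\qbinom{n-1}{k-1}$ on the left, and use $q^k\qbinom{n-1}{k}+\qbinom{n-1}{k-1}=\qbinom{n-1}{k}+q^{n-k}\qbinom{n-1}{k-1}$ to match the remaining terms. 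The main (and very mild) obstacle in this second route is the algebraic simplification of the $q$-shifted factorials after applying \Cref{lem:rec}, using telescoping identities such as $(1-aq^{k+1})(aq^{k+2};q)_{n-1-k}=(aq^{k+1};q)_{n-k}$; the factorization route above sidesteps this entirely, so I would present that one as the primary argument.
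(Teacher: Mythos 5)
Your primary argument is exactly the paper's proof: it specializes \Cref{pro:charlier-wt-one-row} at \( a\mapsto -abq \), observes that \( w_1 \) differs from the resulting weight system by the column factor \( C_i=1-aq^{i+1} \) (the paper re-derives \Cref{lem:wt-mult} on the spot rather than citing it, but the step is the same), and concludes via \eqref{eq:mu^L=mu^C}. The alternative route via \Cref{lem:rec} is also sound --- the \( q \)-binomial identity you isolate is the correct reduction --- but it is not needed.
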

\begin{proof}
  By \Cref{pro:charlier-wt-one-row}, we have
  \[
    \sigma^b_{n,k}(-abq;q) = h^{w'_1}_{n,k},
  \]
  where
  \[
    w'_1(0;i,j) = \frac{q^{j}(1-abq^{i+1})}{(1-abq^{i+j+1})(1-abq^{i+j+2})}.
  \]
  Observe that \( w_1(0;i,j) = (1-aq^{i+1})w'_1(0;i,j) \). Since every path from
  \( (k,\infty) \) to \( (n,0) \) has exactly one east step
  between \( x=i \) and \( x=i+1 \) for each \( i=k,k+1,\dots,n-1 \), we obtain
  \[
    h_{n,k}^{w_1} = (aq^{k+1};q)_{n-k} h_{n,k}^{w_1'} =
    (aq^{k+1};q)_{n-k} \sigma^b_{n,k}(-abq;q) = \sigma^L_{n,k}(a,b;q),
  \]
  where \eqref{eq:mu^L=mu^C} is used for the last equality.
\end{proof}

Using the expanding lemma (\Cref{lem:two-to-one}) we obtain a weight
system of infinite height for \( \sigma^L_{n,k}(a,b;q) \). We note
that this is also discoverable by \Cref{alg:1} using the monomial
ordering \( q<a<b \). See \Cref{fig:LHL-little} for the weight
system in the following proposition.

\begin{prop}\label{pro:wt^L}
    \cite{Corteel2020}
  Let \( w \) be the weight system defined by
    \begin{align*}
    w(t;i,j)
    &= (-a)^{\ceil{t/2}} (-b)^{\flr{t/2}} q^{(i+1)t+j}\\
    & = \left( ab q^{2i+2} \right)^m q^j \times
      \begin{cases}
        1  & \mbox{if \( t=2m \)},\\
        -aq^{i+1}  & \mbox{if \( t=2m+1 \)}.
      \end{cases}
  \end{align*}
 Then the mixed moments of the
  little \( q \)-Jacobi polynomials satisfy
  \[
    \sigma^L_{n,k}(a,b;q) = h^{w}_{n,k}.
  \]
\end{prop}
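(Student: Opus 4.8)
The plan is to build the infinite-height weight system $w$ by transforming the already-known infinite-height weight system for the $q$-Bessel polynomials. Recall from \eqref{eq:mu^L=mu^C} that $\sigma^L_{n,k}(a,b;q) = (aq^{k+1};q)_{n-k}\,\sigma^b_{n,k}(-abq;q)$, and from \Cref{pro:charlier-wt} (after the substitution $a \mapsto -abq$) that $\sigma^b_{n,k}(-abq;q) = h^{w^b}_{n,k}$ for the weight system $w^b(t;i,j) = (abq^{2i+2})^t q^j$. So the starting point is $w^b$, and the task is to fold in the extra factor $(aq^{k+1};q)_{n-k}$ and then reorganize the rows so that the result matches the asserted $w$.

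First I would absorb the factorial factor. Since $(aq^{k+1};q)_{n-k} = \prod_{i=k}^{n-1}(1-aq^{i+1})$ and every path in $\SE((k,\infty)\to(n,0))$ uses exactly one east step between $x=i$ and $x=i+1$ for each $k \le i \le n-1$, \Cref{lem:wt-mult} with $C_i = 1-aq^{i+1}$ gives $\sigma^L_{n,k}(a,b;q) = h^{w''}_{n,k}$, where $w''(t;i,j) = (1-aq^{i+1})(abq^{2i+2})^t q^j$. This is already an infinite-height weight system for the little $q$-Jacobi mixed moments, but each row still carries a factor $(1-aq^{i+1})$ that I want to split off into its own row.

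Next I would apply the expanding lemma in its general form. Writing $w''(t;i,j) = a_{i,t}(1 + Bq^i)q^j$ with $a_{i,t} = (abq^{2i+2})^t$ and $B = -aq$ (so that $1 + Bq^i = 1 - aq^{i+1}$), \Cref{lem:two-to-one-gen} produces an equivalent weight system $w_2$ with $h^{w_2}_{n,k} = h^{w''}_{n,k}$, namely $w_2(t;i,j) = a_{i,\flr{t/2}}(Bq^i)^{\chi_o(t)}q^j = (abq^{2i+2})^{\flr{t/2}}(-aq^{i+1})^{\chi_o(t)}q^j$. It then remains to check that $w_2$ coincides with the weight system $w$ in the statement: for $t = 2m$ one gets $(abq^{2i+2})^m q^j$, and for $t = 2m+1$ one gets $(abq^{2i+2})^m(-aq^{i+1})q^j$, matching the two cases in the second displayed formula for $w$, and hence its closed form $(-a)^{\ceil{t/2}}(-b)^{\flr{t/2}}q^{(i+1)t+j}$. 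This yields $\sigma^L_{n,k}(a,b;q) = h^w_{n,k}$.

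The computations here are all routine; the only point that requires care is the bookkeeping in the parametrization of \Cref{lem:two-to-one-gen}, namely recognizing that the coefficient in the binomial-type factor of $w''$ is $-aq$ rather than the little $q$-Jacobi parameter $b$, and that the row-dependent prefactor $a_{i,t}$ is $(abq^{2i+2})^t$. Once those identifications are made, confirming $w_2 = w$ is a direct case check on the parity of $t$, and I expect no genuine obstacle beyond this matching.
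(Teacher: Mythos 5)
Your proof is correct and follows essentially the same route as the paper: both obtain the intermediate weight system $w'(t;i,j)=(1-aq^{i+1})(abq^{2i+2})^tq^j$ from \Cref{pro:charlier-wt} together with \eqref{eq:mu^L=mu^C}, and then split off the factor $(1-aq^{i+1})$ via \Cref{lem:two-to-one-gen} with exactly the identifications you describe. The only cosmetic difference is that you cite \Cref{lem:wt-mult} explicitly where the paper says ``by the same argument as in the proof of \Cref{pro:bessel-L-1}.''
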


\begin{proof}
  By the same argument as in the proof of \Cref{pro:bessel-L-1},
  we obtain from \Cref{pro:charlier-wt} and \eqref{eq:mu^L=mu^C} that
  \[
    \sigma^L_{n,k}(a,b;q) = h^{w'}_{n,k},
  \]
 where 
  \[
    w'(t;i,j) = (1-aq^{i+1})(ab q^{2i+2})^t q^j.
  \]
  By \Cref{lem:two-to-one-gen}, we have
  \( h^{w'}_{n,k} = h^{w}_{n,k} \), where
  \[
    w(t;i,j)
    = (abq^{2i+2})^{\flr{t/2}} (-aq^{i+1})^{\chi_o(t)} q^{j}
    = (-a)^{\ceil{t/2}} (-b)^{\flr{t/2}} q^{(i+1)t+j},
  \]
  as desired.
\end{proof}

\begin{remark}
Using \Cref{cor:ht-l}, it is possible to obtain a weight system of any
height for \( \sigma^L_{n,k}(a,b;q) \). The same method also works for
big \( q \)-Jacobi polynomials and Askey--Wilson polynomials. We omit
the details.
\end{remark}

\subsection{Big $q$-Jacobi}

Let \( p_n(x;a,b,c;q) \) be the monic big \( q \)-Jacobi polynomial:
\begin{equation}\label{eq:big-q-jacobi-def}
  p_n(x;a,b,c;q) =
  \frac{(aq,cq;q)_n}{(abq^{n+1};q)_n}
  \qHyper32{q^{-n},abq^{n+1},x}{aq,cq}{q,q}.
\end{equation}
We denote by \( \sigma^B_{n,k}(a,b,c;q) \) and \( \nu^B_{n,k}(a,b,c;q) \)
the mixed moments and the dual mixed moments of the big \( q \)-Jacobi
polynomials, respectively:
\[
  x^n = \sum_{k=0}^{n} \sigma^B_{n,k}(a,b,c;q) p_k(x;a,b,c;q),
  \qquad
  p_n(x;a,b,c;q) = \sum_{k=0}^{n} \nu^B_{n,k}(a,b,c;q) x^k.
\]
We will first consider the factorial mixed moments
\( \ts^B_{n,k}(a,b,c;q) \) and factorial dual mixed moments
\( \tn^B_{n,k}(a,b,c;q) \) defined by
\[
  (x|\vq)^n = \sum_{k=0}^{n} \ts^B_{n,k}(a,b,c;q) p_k(x;a,b,c;q),
  \qquad
  p_n(x;a,b,c;q) = \sum_{k=0}^{n} \tn^B_{n,k}(a,b,c;q) (x|\vq)^k,
\]
where \( \vq=(1,q^{-1},q^{-2},\dots) \) so that
\[
  (x|\vq)^k=(x-1)(x-q^{-1})\cdots(x-q^{-k+1}).
\]
\begin{prop}\label{pro:big-formula}
We have
  \begin{align}
    \label{eq:ts-B}
    \ts^B_{n,k}(a,b,c;q) &=
                           (-1)^{n-k} q^{\binom{k}{2}-\binom{n}{2}} \qbinom{n}{k}
                           \frac{(aq^{k+1};q)_{n-k}(cq^{k+1};q)_{n-k}}{(abq^{2k+2};q)_{n-k}},\\
    \label{eq:tn-B}
    \tn^B_{n,k}(a,b,c;q) &=
                           q^{k(k-n)} \qbinom{n}{k}
                           \frac{(aq^{k+1},cq^{k+1};q)_{n-k}}{(abq^{n+k+1};q)_{n-k}}.
  \end{align}
  Equivalently,
\begin{align}
  \label{eq:ts^B=mu^L}
  \ts^B_{n,k}(a,b,c;q)
  &=(-1)^{n-k} q^{\binom{k}{2}-\binom{n}{2}} (cq^{k+1};q)_{n-k}
    \sigma_{n,k}^L(a,b;q),\\
  \label{eq:tn^B=nu^L}
  \tn^B_{n,k}(a,b,c;q)
  &=(-1)^{n-k} q^{\binom{k}{2}-\binom{n}{2}} (cq^{k+1};q)_{n-k}
    \nu_{n,k}^L(a,b;q),
\end{align}
where \( \sigma_{n,k}^L(a,b;q) \) and \( \nu_{n,k}^L(a,b;q) \) are the
mixed and dual mixed moments of the little \( q \)-Jacobi polynomials
given in \eqref{eq:mu} and \eqref{eq:nu}.
\end{prop}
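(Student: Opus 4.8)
The plan is to mirror the proof of \Cref{pro:little-formula} for the little $q$-Jacobi polynomials: first read off the factorial dual mixed moments $\tn^B_{n,k}$ directly from the defining $_3\phi_2$, and then deduce the factorial mixed moments $\ts^B_{n,k}$ by inverting the associated matrix via \Cref{lem:DAD-1}. The equivalence of the two displayed forms of each formula is pure $q$-factorial bookkeeping, so the real content is establishing the product formulas for the big $q$-Jacobi quantities.

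First I would expand $p_n(x;a,b,c;q)$ in the basis $\{(x|\vq)^k\}$. The only new ingredient beyond the little $q$-Jacobi case is the change of basis from the $q$-shifted factorial $(x;q)_j$ appearing in the $_3\phi_2$ to $(x|\vq)^j$. Since $\vq=(1,q^{-1},q^{-2},\dots)$, each factor satisfies $x-q^{-i}=-q^{-i}(1-xq^i)$, so that
\[
  (x|\vq)^j=(-1)^jq^{-\binom j2}(x;q)_j,\qquad\text{equivalently}\qquad (x;q)_j=(-1)^jq^{\binom j2}(x|\vq)^j.
\]
Substituting this into \eqref{eq:big-q-jacobi-def} and reading off the coefficient of $(x|\vq)^k$ gives
\[
  \tn^B_{n,k}(a,b,c;q)=\frac{(aq,cq;q)_n}{(abq^{n+1};q)_n}\cdot\frac{(q^{-n};q)_k(abq^{n+1};q)_k}{(aq;q)_k(cq;q)_k(q;q)_k}(-1)^kq^{k+\binom k2}.
\]
It then remains to simplify: using $(aq;q)_n/(aq;q)_k=(aq^{k+1};q)_{n-k}$ (and likewise for $c$), the identity $(abq^{n+1};q)_k/(abq^{n+1};q)_n=1/(abq^{n+k+1};q)_{n-k}$, and the standard reversal $(q^{-n};q)_k=(-1)^kq^{\binom k2-nk}(q;q)_n/(q;q)_{n-k}$, the leftover scalar factor collapses to $q^{k(k-n)}\qbinom nk$, which yields \eqref{eq:tn-B}. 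Comparing the result with formula \eqref{eq:nu} for $\nu^L_{n,k}(a,b;q)$ then produces \eqref{eq:tn^B=nu^L}; this is the routine part.

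The key structural observation is that \eqref{eq:tn^B=nu^L} can be written as $\tn^B_{n,k}=(z_n/z_k)\,\nu^L_{n,k}(a,b;q)$ with $z_m=(-1)^mq^{-\binom m2}(cq;q)_m$, because $(cq^{k+1};q)_{n-k}=(cq;q)_n/(cq;q)_k$ absorbs the $c$-dependence while $(-1)^{n-k}q^{\binom k2-\binom n2}$ is exactly $z_n/z_k$ with the $c$-factor removed. Now the matrices $(\ts^B_{n,k})$ and $(\tn^B_{n,k})$ are inverse to one another, being the change-of-basis matrices between $\{p_n\}$ and $\{(x|\vq)^n\}$, and the same holds for $(\sigma^L_{n,k})$ and $(\nu^L_{n,k})$. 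Applying \Cref{lem:DAD-1} with this sequence $\{z_m\}$ to the pair $(\nu^L_{n,k}),(\sigma^L_{n,k})$ shows that the inverse of $(\nu^L_{n,k}z_n/z_k)=(\tn^B_{n,k})$ is $(\sigma^L_{n,k}z_n/z_k)$, hence $\ts^B_{n,k}=(z_n/z_k)\,\sigma^L_{n,k}(a,b;q)$, which is \eqref{eq:ts^B=mu^L}; substituting \eqref{eq:mu} recovers \eqref{eq:ts-B}.

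I expect the main obstacle to be the bookkeeping in the direct computation of $\tn^B_{n,k}$, in particular carrying out the change of basis $(x;q)_j\mapsto(x|\vq)^j$ and the $q$-factorial reversals precisely enough that the prefactor simplifies to a clean product. Once $\tn^B_{n,k}$ is in the product form \eqref{eq:tn-B}, recognizing the $z_n/z_k$ structure and invoking \Cref{lem:DAD-1} is immediate, exactly as in the little $q$-Jacobi case, and no new summation identity (such as a $q$-Saalschütz evaluation) is needed.
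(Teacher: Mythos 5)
Your proposal is correct and follows essentially the same route as the paper: read off $\tn^B_{n,k}$ from the $_3\phi_2$ definition via the basis change $(x;q)_j=(-1)^jq^{\binom j2}(x|\vq)^j$, identify the resulting product formula as $(z_n/z_k)\,\nu^L_{n,k}(a,b;q)$, and invoke \Cref{lem:DAD-1} to transfer the identity to the mixed moments, exactly as in the proof of \Cref{pro:little-formula} that the paper cites. The computations (the simplification to \eqref{eq:tn-B} and the choice $z_m=(-1)^mq^{-\binom m2}(cq;q)_m$) check out.
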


\begin{proof}
  This can be proved similarly as in the proof of
  \Cref{pro:little-formula}. By \eqref{eq:big-q-jacobi-def} and
  \eqref{eq:nu^L=nu^C}, we obtain \eqref{eq:tn-B} and
  \eqref{eq:tn^B=nu^L}. The equivalence of \eqref{eq:ts-B} and
  \eqref{eq:ts^B=mu^L} follows from \eqref{eq:mu^L=mu^C}. Hence, we
  only need to prove \eqref{eq:ts^B=mu^L}. Since
  \( (\sigma^L_{n,k}(a,b;q))_{n,k} \) and
  \( (\nu^L_{n,k}(a,b;q))_{n,k} \) are inverses of each other, we
  obtain \eqref{eq:ts^B=mu^L} by \Cref{lem:DAD-1} and
  \eqref{eq:tn^B=nu^L}.
\end{proof}

We give a weight system of height \( 1 \) and a weight system of
infinite height for \( \ts^B_{n,k}(a,b,c;q) \) in the next two
propositions.

\begin{prop}\label{pro:L-B-1}
Let \( \widetilde{w}_1 \) be the weight system of height \( 1 \) defined by
\[
    \widetilde{w}_1(0;i,j) = \frac{-q^{j-i}(1-abq^{i+1})(1-aq^{i+1})(1-cq^{i+1})}{(1-abq^{i+j+1})(1-abq^{i+j+2})}.
  \]
  Then the factorial mixed moments of the big \( q \)-Jacobi polynomials satisfy
  \[
    \ts^B_{n,k}(a,b,c;q) = h^{\widetilde{w}_1}_{n,k}.
  \]
\end{prop}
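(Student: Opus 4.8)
The plan is to reduce this to the little $q$-Jacobi case already settled in \Cref{pro:bessel-L-1}, exploiting the factorization \eqref{eq:ts^B=mu^L} together with the scaling lemma (\Cref{lem:wt-mult}). Recall that \Cref{pro:bessel-L-1} exhibits the height-$1$ weight system
\[
  w_1(0;i,j) = \frac{q^{j}(1-abq^{i+1})(1-aq^{i+1})}{(1-abq^{i+j+1})(1-abq^{i+j+2})}
\]
with $h^{w_1}_{n,k} = \sigma^L_{n,k}(a,b;q)$. The first and crucial observation is that the proposed weight $\widetilde{w}_1$ is obtained from $w_1$ by multiplying by a factor depending only on $i$: a direct comparison gives $\widetilde{w}_1(0;i,j) = C_i\, w_1(0;i,j)$ with $C_i = -q^{-i}(1-cq^{i+1})$, since all the $j$-dependent parts and the factors $(1-abq^{i+1})$, $(1-aq^{i+1})$ coincide, and the ratio of the two prefactors $-q^{j-i}/q^{j}$ collapses to $-q^{-i}$.

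Given this, I would simply invoke \Cref{lem:wt-mult} to conclude
\[
  h^{\widetilde{w}_1}_{n,k} = \left(\prod_{i=k}^{n-1} C_i\right) h^{w_1}_{n,k} = \left(\prod_{i=k}^{n-1} C_i\right) \sigma^L_{n,k}(a,b;q).
\]
The only remaining task is to evaluate the telescoping-style product of the $C_i$. Separating the sign, the power of $q$, and the remaining factor, one has $\prod_{i=k}^{n-1} C_i = (-1)^{n-k}\, q^{-\sum_{i=k}^{n-1} i}\,\prod_{i=k}^{n-1}(1-cq^{i+1})$. The exponent of $q$ is $-\sum_{i=k}^{n-1} i = \binom{k}{2}-\binom{n}{2}$, and reindexing $j=i+1$ turns the last product into the $q$-Pochhammer symbol $(cq^{k+1};q)_{n-k}$, so that
\[
  \prod_{i=k}^{n-1} C_i = (-1)^{n-k}\, q^{\binom{k}{2}-\binom{n}{2}}\, (cq^{k+1};q)_{n-k}.
\]

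Substituting this back yields exactly the right-hand side of \eqref{eq:ts^B=mu^L}, namely $h^{\widetilde{w}_1}_{n,k} = \ts^B_{n,k}(a,b,c;q)$, completing the argument. This proof closely mirrors the structure of \Cref{pro:bessel-L-1}, where the extra factor $(1-aq^{i+1})$ was absorbed into a product $(aq^{k+1};q)_{n-k}$; here the new ingredient is the factor $(1-cq^{i+1})$ coming from big $q$-Jacobi, plus the sign and the $q$-power $q^{\binom{k}{2}-\binom{n}{2}}$ produced by the $q^{-i}$ scaling. I do not anticipate a genuine obstacle: the entire content is the recognition that $\widetilde{w}_1/w_1$ is a function of $i$ alone, and the only place requiring care is bookkeeping in the product computation (correctly identifying the $q$-exponent as $\binom{k}{2}-\binom{n}{2}$ and the range of the Pochhammer symbol), which is routine.
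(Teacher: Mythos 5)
Your proof is correct and is precisely the argument the paper intends: the paper's own proof is a one-line remark that the statement follows ``similarly as in the proof of \Cref{pro:bessel-L-1} using that proposition and \eqref{eq:ts^B=mu^L}'', which is exactly your reduction via the $i$-dependent ratio $C_i=-q^{-i}(1-cq^{i+1})$ and \Cref{lem:wt-mult}. The bookkeeping (sign, exponent $\binom{k}{2}-\binom{n}{2}$, and the Pochhammer factor $(cq^{k+1};q)_{n-k}$) all checks out.
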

\begin{proof}
  This can be proved similarly as in the proof of
  \Cref{pro:bessel-L-1} using that proposition and
  \eqref{eq:ts^B=mu^L}.
\end{proof}

See \Cref{fig:fac-big-q-lht} for the weight system in the following
proposition.

\begin{prop}\label{pro:tilde-wt^B}
  Let
  \begin{align*}
    \widetilde{w}(t;i,j)
    &= - (-a)^{\flr{(t+2)/4}} (-b)^{\flr{t/4}}(-c)^{\chi_o(t)} q^{(i+1)\ceil{t/2}-i+j}\\
    &= (abq^{2i+2})^m q^j \times
      \begin{cases}
         -q^{-i}  & \mbox{if \( t=4m \)},\\
         cq  & \mbox{if \( t=4m+1 \)},\\
         aq  & \mbox{if \( t=4m+2 \)},\\
         -acq^{i+2}  & \mbox{if \(t=4m+3 \)}.
       \end{cases}
  \end{align*}
  Then the factorial mixed moments of the big \( q \)-Jacobi polynomials satisfy
  \[
    \ts^B_{n,k}(a,b,c;q) = h^{\widetilde{w}}_{n,k}.
  \]
\end{prop}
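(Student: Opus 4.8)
The plan is to mirror the proof of \Cref{pro:wt^L} for the little $q$-Jacobi polynomials, replacing the single application of the expanding lemma used there with one that also absorbs the extra parameter $c$. The starting point is the factorization \eqref{eq:ts^B=mu^L}, which writes $\ts^B_{n,k}(a,b,c;q)$ as $\sigma^L_{n,k}(a,b;q)$ times the prefactor $(-1)^{n-k} q^{\binom{k}{2}-\binom{n}{2}} (cq^{k+1};q)_{n-k}$. The key observation is that this prefactor is a product over $i=k,\dots,n-1$: since $\binom{n}{2}-\binom{k}{2}=\sum_{i=k}^{n-1} i$ and $(cq^{k+1};q)_{n-k}=\prod_{i=k}^{n-1}(1-cq^{i+1})$, we have
\[
  (-1)^{n-k} q^{\binom{k}{2}-\binom{n}{2}} (cq^{k+1};q)_{n-k} = \prod_{i=k}^{n-1} C_i, \qquad C_i = -q^{-i}(1-cq^{i+1}).
\]

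First I would invoke \Cref{pro:wt^L}, which gives $\sigma^L_{n,k}(a,b;q)=h^w_{n,k}$ for the infinite-height weight system equal to $(abq^{2i+2})^{\flr{t/2}} q^j$ times $1$ or $-aq^{i+1}$ according as $t$ is even or odd. Multiplying every east step between $x=i$ and $x=i+1$ by $C_i$ and applying \Cref{lem:wt-mult} then yields $\ts^B_{n,k}(a,b,c;q)=h^{w'}_{n,k}$, where $w'(t;i,j)=-q^{-i}(1-cq^{i+1})\,w(t;i,j)$. Writing this as $w'(t;i,j)=a_{i,t}(1-cq^{i+1})q^j$ with $a_{i,t}=-q^{-i}(abq^{2i+2})^{\flr{t/2}}$ times $1$ or $-aq^{i+1}$, the binomial factor $(1-cq^{i+1})=(1+(-cq)q^i)$ is exactly the feature that prevents $w'$ from being monomial, and hence from matching $\widetilde{w}$.

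The final step removes this binomial by the expanding lemma. Applying \Cref{lem:two-to-one-gen} with $b=-cq$ converts $w'$ into the equal-generating-function weight system $w''(t;i,j)=a_{i,\flr{t/2}}(-cq^{i+1})^{\chi_o(t)}q^j$, so that $h^{w'}_{n,k}=h^{w''}_{n,k}=\ts^B_{n,k}(a,b,c;q)$. I would then unwind $w''$ by a case analysis on $t \bmod 4$: setting $t'=\flr{t/2}$, the two-periodicity of $a_{i,t'}$ in $t'$ combines with $(-cq^{i+1})^{\chi_o(t)}$ to produce the four cases $-q^{-i}$, $cq$, $aq$, $-acq^{i+2}$, each carrying the common factor $(abq^{2i+2})^{\flr{t/4}} q^j$, for $t \equiv 0,1,2,3 \pmod 4$ respectively. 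Checking that signs and powers of $q$ collapse correctly — for instance $-q^{-i}\cdot(-aq^{i+1})\cdot(-cq^{i+1})=-acq^{i+2}$ in the case $t=4m+3$ — recovers precisely the claimed $\widetilde{w}$.

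The only real obstacle is bookkeeping: one must keep three nested layers of periodicity straight (the original two-periodicity of $w$ in $t$, the doubling introduced by the expanding lemma, and the resulting four-periodicity of $\widetilde{w}$) and verify that the prefactor $-q^{-i}$ and the alternating signs distribute correctly across all four residue classes. No new summation identity is needed beyond those already established; the argument is entirely a composition of \Cref{pro:wt^L}, \Cref{lem:wt-mult}, and \Cref{lem:two-to-one-gen}.
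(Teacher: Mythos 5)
Your proposal is correct and follows essentially the same route as the paper's proof: both start from the factorization \eqref{eq:ts^B=mu^L}, use \Cref{pro:wt^L} together with the column-wise rescaling of \Cref{lem:wt-mult} to obtain the intermediate weight system $w'(t;i,j)=-q^{-i}(1-cq^{i+1})\,w(t;i,j)$, and then apply \Cref{lem:two-to-one-gen} (with $b=-cq$) to expand the binomial factor into the four-periodic monomial weight system $\widetilde{w}$. Your explicit verification of the four residue classes mod $4$ is just the unpacking the paper leaves implicit in its final displayed identity.
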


\begin{figure}
  \centering
  \begin{tikzpicture}[scale=1.5]
    \LHLLL{3}4
    \begin{scope}[shift={(-.5,.15)}]
      \node at (1,0) {$-1$};
      \node at (2,0) {$-q^{-1}$};
      \node at (2,1/2) {$-1$};
      \node at (3,0) {$-q^{-2}$};
      \node at (3,1/3) {$-q^{-1}$};
      \node at (3,2/3) {$-1$};
    \end{scope}
    \begin{scope}[shift={(-.5,1.15)}]
      \node at (1,0) {$cq$};
      \node at (2,0) {$cq$};
      \node at (2,1/2) {$cq^2$};
      \node at (3,0) {$cq$};
      \node at (3,1/3) {$cq^2$};
      \node at (3,2/3) {$cq^3$};
    \end{scope}
    \begin{scope}[shift={(-.5,2.15)}]
      \node at (1,0) {$aq$};
      \node at (2,0) {$aq$};
      \node at (2,1/2) {$aq^2$};
      \node at (3,0) {$aq$};
      \node at (3,1/3) {$aq^2$};
      \node at (3,2/3) {$aq^3$};
    \end{scope}
    \begin{scope}[shift={(-.5,3.15)}]
      \node at (1,0) {$-acq^2$};
      \node at (2,0) {$-acq^3$};
      \node at (2,1/2) {$-acq^4$};
      \node at (3,0) {$-acq^4$};
      \node at (3,1/3) {$-acq^5$};
      \node at (3,2/3) {$-acq^6$};
    \end{scope}
    \begin{scope}[shift={(-.5,4.15)}]
      \node at (1,0) {$-abq^2$};
      \node at (2,0) {$-abq^3$};
      \node at (3,0) {$-abq^4$};
    \end{scope}
    \node at (0.5,4.7) {\( \vdots \)};
    \node at (1.5,4.7) {\( \vdots \)};
    \node at (2.5,4.7) {\( \vdots \)};
    \node at (3.5,0.5) {\( \cdots \)};
    \node at (3.5,1.5) {\( \cdots \)};
    \node at (3.5,2.5) {\( \cdots \)};
    \node at (3.5,3.5) {\( \cdots \)};
  \end{tikzpicture}
  \caption{The lecture hall graph model for the factorial mixed
    moments of big \( q \)-Jacobi polynomials.}
  \label{fig:fac-big-q-lht}
\end{figure}

\begin{proof}
  By \Cref{pro:wt^L} and \eqref{eq:ts^B=mu^L}
  we have
  \[
    \ts^B_{n,k}(a,b,c;q) = h^{w'}_{n,k},
  \]
  where
  \[
    w'(t;i,j) = - q^{-i} (1-cq^{i+1}) \cdot (-a)^{\ceil{t/2}}
    (-b)^{\flr{t/2}} q^{(i+1)t+j}.
  \]
  By \Cref{lem:two-to-one-gen}, we can replace the weight system
  \( w' \) by \( \widetilde{w} \), where
  \begin{align*}
    \widetilde{w}(t;i,j)
    &= - (-cq^{i+1})^{\chi_o(t)} (-a)^{\ceil{\flr{t/2}/2}} (-b)^{\flr{\flr{t/2}/2}} q^{(i+1)\flr{t/2}-i+j}\\
    &= - (-a)^{\flr{(t+2)/4}} (-b)^{\flr{t/4}}(-c)^{\chi_o(t)} q^{(i+1)\ceil{t/2}-i+j},
  \end{align*}
  as desired.
\end{proof}

By \Cref{pro:add-one-row}, we can construct a weight system for
\( \sigma^B_{n,k}(a,b,c;q) \) from the weight system
\( \widetilde{w} \) in \Cref{pro:tilde-wt^B} by adding one row coming
from the factorial basis \( \{(x|\vq)^n\}_{n\ge0} \) at the bottom.
Miraculously, the new row and the bottom row of \( \widetilde{w} \)
cancel each other and the resulting weight system is
\( \widetilde{w} \) with bottom row removed as described in the next
proposition. We note that this weight system is also discoverable by
\Cref{alg:1} using the monomial ordering \( q<c<a<b \). See
\Cref{fig:big-q-lht} for the weight system in the following
proposition.

\begin{prop}\label{pro:wt^B}
  Let
  \begin{align*}
    w(t;i,j)
    &= - (-a)^{\flr{(t+3)/4}} (-b)^{\flr{(t+1)/4}}(-c)^{\chi_e(t)} q^{(i+1)\flr{t/2}+j+1}\\
    &= (abq^{2i+2})^m q^j \times
      \begin{cases}
        cq      & \mbox{if \( t=4m \)},\\
        aq      & \mbox{if \( t=4m+1 \)},\\
        -acq^{i+2}  & \mbox{if \( t=4m+2 \)},\\
        -abq^{i+2}  & \mbox{if \(t=4m+3 \)}.
      \end{cases}
  \end{align*}
  Then the mixed moments of the big \( q \)-Jacobi polynomials satisfy
  \[
    \sigma^B_{n,k}(a,b,c;q) = h^{w}_{n,k}.
  \]
\end{prop}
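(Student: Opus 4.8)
The plan is to obtain this weight system from the one for the \emph{factorial} mixed moments in \Cref{pro:tilde-wt^B} by appending a single row at the bottom via \Cref{pro:add-one-row}, and then to show that this appended row cancels against the bottom row of $\widetilde{w}$. Since $\vq=(1,q^{-1},q^{-2},\dots)$ corresponds to $d_j=q^{-j}$, applying \Cref{pro:add-one-row} to \Cref{pro:tilde-wt^B} gives $\sigma^B_{n,k}(a,b,c;q)=h^{w^\sharp}_{n,k}$, where $w^\sharp$ is the weight system with $w^\sharp(0;i,j)=q^{-j}$ and $w^\sharp(t;i,j)=\widetilde{w}(t-1;i,j)$ for $t\ge1$. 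Comparing the piecewise formulas, one checks directly that the weight system $w$ in the statement is exactly $\widetilde{w}$ with its bottom row deleted, i.e. $w(t;i,j)=\widetilde{w}(t+1;i,j)$; for instance the row $t=4m+3$ of $w$ equals the row $t=4(m+1)$ of $\widetilde{w}$, whose factor $(abq^{2i+2})^{m+1}(-q^{-i})$ produces the required $-abq^{i+2}$. Thus it remains to prove $h^{w^\sharp}_{n,k}=h^{w}_{n,k}$.

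Next I would split a path $p\colon(k,\infty)\to(n,0)$ counted by $h^{w^\sharp}_{n,k}$ at its crossing point $(r,2)$ of the line $y=2$, as in \Cref{lem:rec2} and \Cref{prop:any-ht}. The rows of $w^\sharp$ at heights $t\ge2$ coincide with $w$ shifted up by two rows, so the portion of $p$ above $y=2$ contributes $h^{w}_{r,k}$, while the portion below $y=2$ uses only the two bottom rows of $w^\sharp$ and contributes a factor I will call $A_{n,r}$. This yields
\[
  h^{w^\sharp}_{n,k}=\sum_{r=k}^{n} A_{n,r}\,h^{w}_{r,k},
\]
so the claim reduces to showing $A_{n,r}=\delta_{n,r}$, which is precisely the asserted cancellation of the two bottom rows.

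Finally, to evaluate $A_{n,r}$ I would view the two bottom rows as height-$1$ weight systems $u_1(0;i,j)=q^{-j}$ (the appended row) and $u_2(0;i,j)=-q^{j-i}$ (the row $t=0$ of $\widetilde{w}$). Splitting the lower portion of the path at $y=1$ gives $A_{n,r}=\sum_{s=r}^{n}h^{u_1}_{n,s}h^{u_2}_{s,r}$, i.e. $A$ is the product of the lower-triangular matrices $(h^{u_1}_{n,k})_{n,k\ge0}$ and $(h^{u_2}_{n,k})_{n,k\ge0}$. The key observation is that $u_2=-\overline{u_1}$ in the notation of \Cref{pro:w=overline w}, since $\overline{u_1}(0;i,j)=u_1(0;i,i-j)=q^{j-i}$. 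Hence \Cref{pro:w=overline w} together with \Cref{lem:wt-mult} (with constants $C_i=-1$) gives $h^{u_2}_{n,k}=(-1)^{n-k}e^{u_1}_{n,k}$, and then \Cref{lem:dual-wt} shows that $(h^{u_1}_{n,k})$ and $(h^{u_2}_{n,k})$ are mutually inverse. Therefore $A_{n,r}=\delta_{n,r}$, whence $h^{w^\sharp}_{n,k}=h^{w}_{n,k}=\sigma^B_{n,k}(a,b,c;q)$.

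I expect the main obstacle to be this last step: recognizing the cancellation as the mutual inverseness of two height-$1$ systems rather than attempting a direct hypergeometric evaluation of $A_{n,r}$. Once the identification $u_2=-\overline{u_1}$ is made, the duality results of \Cref{sec:guess-prov-techn} do all the work, which is exactly why the cancellation looks ``miraculous'' at the level of the explicit $q$-product formulas.
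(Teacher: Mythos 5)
Your proposal is correct and follows essentially the same route as the paper: append the height-$1$ row $q^{-j}$ via \Cref{pro:add-one-row}, observe that $w$ is $\widetilde{w}$ with its bottom row removed, and then cancel the two bottom rows by splitting paths at $y=2$ and $y=1$ and invoking \Cref{pro:w=overline w} together with \Cref{lem:dual-wt}. Your identification $u_2=-\overline{u_1}$ is exactly the paper's mechanism for the ``miraculous'' cancellation, so nothing is missing.
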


\begin{figure}
  \centering
  \begin{tikzpicture}[scale=1.5]
    \LHLLL{3}4
    \begin{scope}[shift={(-.5,.15)}]
      \node at (1,0) {$cq$};
      \node at (2,0) {$cq$};
      \node at (2,1/2) {$cq^2$};
      \node at (3,0) {$cq$};
      \node at (3,1/3) {$cq^2$};
      \node at (3,2/3) {$cq^3$};
    \end{scope}
    \begin{scope}[shift={(-.5,1.15)}]
      \node at (1,0) {$aq$};
      \node at (2,0) {$aq$};
      \node at (2,1/2) {$aq^2$};
      \node at (3,0) {$aq$};
      \node at (3,1/3) {$aq^2$};
      \node at (3,2/3) {$aq^3$};
    \end{scope}
    \begin{scope}[shift={(-.5,2.15)}]
      \node at (1,0) {$-acq^2$};
      \node at (2,0) {$-acq^3$};
      \node at (2,1/2) {$-acq^4$};
      \node at (3,0) {$-acq^4$};
      \node at (3,1/3) {$-acq^5$};
      \node at (3,2/3) {$-acq^6$};
    \end{scope}
    \begin{scope}[shift={(-.5,3.15)}]
      \node at (1,0) {$-abq^2$};
      \node at (2,0) {$-abq^3$};
      \node at (2,1/2) {$-abq^4$};
      \node at (3,0) {$-abq^4$};
      \node at (3,1/3) {$-abq^5$};
      \node at (3,2/3) {$-abq^6$};
    \end{scope}
    \begin{scope}[shift={(-.5,4.15)}]
      \node at (1,0) {$abcq^3$};
      \node at (2,0) {$abcq^5$};
      \node at (3,0) {$abcq^7$};
    \end{scope}
    \node at (0.5,4.7) {\( \vdots \)};
    \node at (1.5,4.7) {\( \vdots \)};
    \node at (2.5,4.7) {\( \vdots \)};
    \node at (3.5,0.5) {\( \cdots \)};
    \node at (3.5,1.5) {\( \cdots \)};
    \node at (3.5,2.5) {\( \cdots \)};
    \node at (3.5,3.5) {\( \cdots \)};
  \end{tikzpicture}
  \caption{The weight system for the mixed moments of big \( q \)-Jacobi polynomials.}
  \label{fig:big-q-lht}
\end{figure}

\begin{proof}
  Let \( w' \) be the weight system defined by
  \[
    w'(t;i,j) =
    \begin{cases}
     \widetilde{w}(t-1;i,j) & \mbox{if \( t\ge1 \)},\\
     q^{-j} & \mbox{if \( t=0 \),}
    \end{cases}
  \]
  where \( \widetilde{w} \) is the weight system in
  \Cref{pro:tilde-wt^B}. In other words, \( w' \) is the weight system
  obtained from \( \widetilde{w} \) by adding the weight system
  \( w_1 \) of height \( 1 \) defined by \( w_1(0;i,j) = q^{-j} \) at
  the bottom as shown in \Cref{fig:fac-big-q-lht-2}. By
  \Cref{pro:add-one-row} and \Cref{pro:tilde-wt^B}, we have
  \(\sigma^B_{n,k}(a,b,c;q) = h^{w'}_{n,k} \). Thus it suffices to
  show that \( h^{w'}_{n,k} = h^{w}_{n,k} \).

  \begin{figure}
  \centering
  \begin{tikzpicture}[scale=1.5]
    \LHLLL{3}4
    \begin{scope}[shift={(-.5,.15)}]
      \node at (1,0) {$1$};
      \node at (2,0) {$1$};
      \node at (2,1/2) {$q^{-1}$};
      \node at (3,0) {$1$};
      \node at (3,1/3) {$q^{-1}$};
      \node at (3,2/3) {$q^{-2}$};
    \end{scope}
    \begin{scope}[shift={(-.5,1.15)}]
      \node at (1,0) {$-1$};
      \node at (2,0) {$-q^{-1}$};
      \node at (2,1/2) {$-1$};
      \node at (3,0) {$-q^{-2}$};
      \node at (3,1/3) {$-q^{-1}$};
      \node at (3,2/3) {$-1$};
    \end{scope}
    \begin{scope}[shift={(-.5,2.15)}]
      \node at (1,0) {$cq$};
      \node at (2,0) {$cq$};
      \node at (2,1/2) {$cq^2$};
      \node at (3,0) {$cq$};
      \node at (3,1/3) {$cq^2$};
      \node at (3,2/3) {$cq^3$};
    \end{scope}
    \begin{scope}[shift={(-.5,3.15)}]
      \node at (1,0) {$aq$};
      \node at (2,0) {$aq$};
      \node at (2,1/2) {$aq^2$};
      \node at (3,0) {$aq$};
      \node at (3,1/3) {$aq^2$};
      \node at (3,2/3) {$aq^3$};
    \end{scope}
    \begin{scope}[shift={(-.5,4.15)}]
      \node at (1,0) {$-acq^2$};
      \node at (2,0) {$-acq^3$};
      \node at (3,0) {$-acq^4$};
    \end{scope}
    \node at (0.5,4.7) {\( \vdots \)};
    \node at (1.5,4.7) {\( \vdots \)};
    \node at (2.5,4.7) {\( \vdots \)};
    \node at (3.5,0.5) {\( \cdots \)};
    \node at (3.5,1.5) {\( \cdots \)};
    \node at (3.5,2.5) {\( \cdots \)};
    \node at (3.5,3.5) {\( \cdots \)};
  \end{tikzpicture}
  \caption{The weight system \( w' \) in the proof of
    \Cref{pro:wt^B}.}
  \label{fig:fac-big-q-lht-2}
\end{figure}

Since \( \ceil{(t+1)/2}=\flr{t/2}+1 \), comparing the definitions of
\( w(t;i,j) \) and \( \widetilde{w}(t;i,j) \), we have
\( w(t;i,j) = \widetilde{w}(t+1;i,j) = w'(t+2;i,j) \) for all
\( t\ge0 \). Thus, by \Cref{lem:rec2} with \( \ell=2 \), we have
\[
  h^{w'}_{n,k} = \sum_{r=k}^n h^{w'_2}_{n,r} h^w_{r,k},
\]
where \( w'_2 \) is the weight system of height \( 2 \)
given by 
\[
  w'_2(t;i,j) =
  \begin{cases}
    w'(t;i,j) & \mbox{if \( t<2 \)},\\
    0 & \mbox{if \( t\ge 2 \)}.
  \end{cases}
\]
Therefore, to prove \( h^{w'}_{n,k} = h^{w}_{n,k} \), it suffices to
show that \( h^{w'_2}_{n,r} = \delta_{n,r} \).

Let \( \overline{w_1} \) be the weight system of height \( 1 \)
defined by \( \overline{w_1}(0;i,j) = w_1(0;i,i-j) \). Observe that
\( w'_2(0;i,j) = q^{-j} = w_1(0;i,j) \) and
\( w'_2(1;i,j) = -q^{j-i} = -w_1(0;i,i-j) = - \overline{w_1}(0;i,j)
\). By \Cref{lem:rec2} with \( \ell=1 \) and \Cref{lem:wt-mult}, we
have
\[
  h^{w'_2}_{n,r} = \sum_{m=r}^n h^{w_1}_{n,m} (-1)^{m-r} h^{\overline{w_1}}_{m,r}.
\]
Then, by \Cref{pro:w=overline w} and \Cref{lem:dual-wt}, we obtain
\[
  h^{w'_2}_{n,r} = \sum_{m=r}^n h^{w_1}_{n,m} (-1)^{m-r} e^{w_1}_{m,r} = \delta_{n,r},
\]
which completes the proof.
\end{proof}

\subsection{Askey--Wilson polynomials}

The monic \emph{Askey--Wilson polynomials}
\( p_n(x;a,b,c,d|q) \) are defined by
\begin{equation}\label{eq:AW-def}
  p_n(x;a,b,c,d|q) =
  \frac{(ab,ac,ad;q)_n}{2^na^n (abcdq^{n-1};q)_n}
  \qHyper43{q^{-n},abcdq^{n-1},ae^{i\theta}, ae^{-i\theta}}{ab,ac,ad}{q,q},
\end{equation}
where \( x=\cos \theta = (e^{i\theta}+e^{-i\theta})/2 \). Let
\( f=(f_0,f_1, \dots) \) be the sequence given by
\( f_j = (aq^{j}+a^{-1}q^{-j})/2 \). Then it is easy to check that
\[
  (ae^{i\theta};q)_k (ae^{-i\theta};q)_k = (-2a)^k q^{\binom{k}{2}} (x|f)^k.
\]

We denote by \( \sigma^{AW}_{n,k}(a,b,c,d;q) \) and
\( \nu^{AW}_{n,k}(a,b,c,d;q) \) the mixed moments and the dual mixed
moments of the Askey--Wilson polynomials, respectively:
\[
  x^n = \sum_{k=0}^{n} \sigma^{AW}_{n,k}(a,b,c,d;q) p_k(x;a,b,c,d|q),
  \qquad
  p_n(x;a,b,c,d|q) = \sum_{k=0}^{n} \nu^{AW}_{n,k}(a,b,c,d;q) x^k.
\]
As in the case of big \( q \)-Jacobi polynomials, we will first
consider the factorial mixed moments \( \ts^{AW}_{n,k}(a,b,c,d;q) \)
and factorial dual mixed moments \( \tn^{AW}_{n,k}(a,b,c,d;q) \)
defined by
\[
  (x|f)^n = \sum_{k=0}^{n} \ts^{AW}_{n,k}(a,b,c,d;q) p_k(x;a,b,c,d|q),
  \qquad
  p_n(x;a,b,c,d|q) = \sum_{k=0}^{n} \tn^{AW}_{n,k}(a,b,c,d;q) (x|f)^k.
\]

\begin{prop}\label{prop:ts_tn-AW}
  We have
  \begin{align}
    \label{eq:ts-AW}
    \ts^{AW}_{n,k}(a,b,c,d;q)
    &= (-2a)^{k-n} q^{\binom{k}{2}-\binom{n}{2}} \qbinom{n}{k}
      \frac{(abq^{k},acq^{k},adq^{k};q)_{n-k}}{(abcdq^{2k};q)_{n-k}},\\
    \label{eq:tn-AW}
    \tn^{AW}_{n,k}(a,b,c,d;q)
    &= (2a)^{k-n}q^{k(k-n)} \qbinom{n}{k}
      \frac{(abq^{k},acq^{k},adq^{k};q)_{n-k}}{(abcdq^{n+k-1};q)_{n-k}}.
  \end{align}
  Equivalently,
  \begin{align}
    \label{eq:ts-AW1}
    \ts^{AW}_{n,k}(a,b,c,d;q)
    &=(2a)^{k-n} (adq^{k};q)_{n-k} \ts_{n,k}^B(ab/q,cd/q,ac/q;q),\\
    \label{eq:tn-AW1}
    \tn^{AW}_{n,k}(a,b,c,d;q)
    &=(2a)^{k-n}  (adq^{k};q)_{n-k} \tn_{n,k}^B(ab/q,cd/q,ac/q;q),
  \end{align}
  where \( \ts_{n,k}^B(ab/q,cd/q,ac/q) \) and
  \( \tn_{n,k}^B(ab/q,cd/q,ac/q) \) are the factorial mixed and dual
  mixed moments of the big \( q \)-Jacobi polynomials in
  \eqref{eq:ts-B} and \eqref{eq:tn-B}.
\end{prop}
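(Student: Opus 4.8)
The plan is to follow the same two-step template used for \Cref{pro:little-formula} and \Cref{pro:big-formula}: first read off the factorial \emph{dual} mixed moments $\tn^{AW}_{n,k}$ directly from the hypergeometric definition, and then obtain the factorial mixed moments $\ts^{AW}_{n,k}$ formally from them via the diagonal-conjugation lemma \Cref{lem:DAD-1}, exploiting that $(\ts^{AW}_{n,k})$ and $(\tn^{AW}_{n,k})$ are mutually inverse, being the transition matrices between the bases $\{(x|f)^n\}$ and $\{p_n(x;a,b,c,d|q)\}$. Since the stated product formulas \eqref{eq:ts-AW} and \eqref{eq:tn-AW} are equivalent to the big $q$-Jacobi expressions \eqref{eq:ts-AW1} and \eqref{eq:tn-AW1} by the single substitution $(A,B,C) = (ab/q,\,cd/q,\,ac/q)$ in \eqref{eq:ts-B}--\eqref{eq:tn-B} (under which $Aq^{k+1}=abq^k$, $Cq^{k+1}=acq^k$, and $ABq^{n+k+1}=abcdq^{n+k-1}$), it suffices to prove the two ``Equivalently'' identities.

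First I would establish \eqref{eq:tn-AW}. Substituting the identity $(ae^{i\theta};q)_k(ae^{-i\theta};q)_k = (-2a)^k q^{\binom{k}{2}}(x|f)^k$ into the ${}_4\phi_3$ in \eqref{eq:AW-def} expresses $p_n(x;a,b,c,d|q)$ as an explicit linear combination of the $(x|f)^k$, and the coefficient of $(x|f)^k$ is by definition $\tn^{AW}_{n,k}$. Reading this off gives
\[
  \tn^{AW}_{n,k} = \frac{(ab,ac,ad;q)_n}{2^na^n(abcdq^{n-1};q)_n}\cdot\frac{(q^{-n};q)_k(abcdq^{n-1};q)_k(-2a)^kq^{\binom{k}{2}}}{(ab,ac,ad;q)_k(q;q)_k}\,q^k.
\]
The simplification then uses $(ab,ac,ad;q)_n/(ab,ac,ad;q)_k = (abq^k,acq^k,adq^k;q)_{n-k}$, the cancellation $(abcdq^{n-1};q)_k/(abcdq^{n-1};q)_n = 1/(abcdq^{n+k-1};q)_{n-k}$, and the standard reduction $(q^{-n};q)_k/(q;q)_k = (-1)^kq^{\binom{k}{2}-nk}\qbinom{n}{k}$. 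Collecting the signs and powers of $q$ yields \eqref{eq:tn-AW}, and comparison with \eqref{eq:tn-B} under the substitution above confirms \eqref{eq:tn-AW1}.

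For the mixed moments, I would rewrite the prefactor in \eqref{eq:tn-AW1} as a diagonal conjugation: setting $D_m = (2a)^{-m}(ad;q)_m$ we have $(2a)^{k-n}(adq^k;q)_{n-k} = D_n/D_k$, so \eqref{eq:tn-AW1} reads $\tn^{AW}_{n,k} = (D_n/D_k)\,\tn^B_{n,k}(ab/q,cd/q,ac/q;q)$. Since $(\ts^B_{n,k})$ and $(\tn^B_{n,k})$ are inverse matrices (again as transition matrices, now between $\{(x|\vq)^n\}$ and $\{p_n\}$), \Cref{lem:DAD-1} shows that the diagonal conjugate $((D_n/D_k)\ts^B_{n,k})$ is inverse to $((D_n/D_k)\tn^B_{n,k}) = (\tn^{AW}_{n,k})$. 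As $(\ts^{AW}_{n,k})$ is also the inverse of $(\tn^{AW}_{n,k})$, uniqueness of the inverse forces $\ts^{AW}_{n,k} = (D_n/D_k)\ts^B_{n,k}(ab/q,cd/q,ac/q;q) = (2a)^{k-n}(adq^k;q)_{n-k}\ts^B_{n,k}(ab/q,cd/q,ac/q;q)$, which is \eqref{eq:ts-AW1} and hence \eqref{eq:ts-AW}.

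The only genuine obstacle is the bookkeeping in the first step: one must track two separate contributions of $q^{\binom{k}{2}}$ (one from the $(x|f)^k$ identity and one from the reduction of $(q^{-n};q)_k$) together with the two factors of $(-1)^k$, and verify that the signs cancel and that the $q$-exponent collapses to $k(k-n)$. The diagonal-conjugation step, by contrast, is purely formal once the prefactor is recognized in the form $D_n/D_k$, so no summation identity (such as a $q$-Saalsch\"utz or $q$-Vandermonde evaluation) is needed here, in contrast with the original proof for little $q$-Jacobi polynomials in \cite{LHT}.
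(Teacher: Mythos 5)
Your proposal is correct and follows exactly the route the paper intends: the paper's proof is just "this can be proved in a similar way to \Cref{pro:little-formula}," i.e., read off the dual (factorial) mixed moments from the hypergeometric definition and then transfer the formula to the mixed moments via the matrix-inverse property and \Cref{lem:DAD-1}. Your bookkeeping (the two $q^{\binom{k}{2}}$ contributions, the sign cancellation, and the identification of the prefactor as $D_n/D_k$ with $D_m=(2a)^{-m}(ad;q)_m$) all checks out.
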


\begin{proof}
This can be proved in a similar way to \Cref{pro:big-formula}.
\end{proof}

As in the previous subsections, we give a weight system of height
\( 1 \) for \( \ts^{AW}_{n,k}(a,b,c,d;q) \).

\begin{prop}\label{prop:wt_1floor-AW}
  Let
  \[
    \widetilde{w}_1(0;i,j) =  \frac{q^{j-i}(1-abq^{i})(1-acq^{i})(1-adq^{i})(1-abcdq^{i-1})}
    {-2a(1-abcdq^{i+j-1})(1-abcdq^{i+j})}.
  \]
  Then the factorial mixed moments of the Askey--Wilson polynomials satisfy
  \[
    \ts^{AW}_{n,k}(a,b,c,d;q) = h^{\widetilde{w}_1}_{n,k}.
  \]
\end{prop}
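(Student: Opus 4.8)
The plan is to follow the template established for the little and big $q$-Jacobi polynomials in \Cref{pro:bessel-L-1} and \Cref{pro:L-B-1}: I obtain the desired height-$1$ weight system by specializing the parameters of an already-known weight system and then rescaling each column via \Cref{lem:wt-mult}. The input is \Cref{pro:L-B-1}, which provides a height-$1$ weight system $\widetilde{w}_1^B$ with $h^{\widetilde{w}_1^B}_{n,k} = \ts^B_{n,k}(a,b,c;q)$, namely
\[
  \widetilde{w}_1^B(0;i,j) = \frac{-q^{j-i}(1-abq^{i+1})(1-aq^{i+1})(1-cq^{i+1})}{(1-abq^{i+j+1})(1-abq^{i+j+2})}.
\]

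First I would specialize the parameters according to the relation \eqref{eq:ts-AW1}, replacing $(a,b,c)$ by $(ab/q,\,cd/q,\,ac/q)$. Tracking the $q$-Pochhammer factors, the product $ab$ becomes $abcd/q^{2}$, so that $aq^{i+1}\mapsto abq^{i}$, $cq^{i+1}\mapsto acq^{i}$, $abq^{i+1}\mapsto abcdq^{i-1}$, and the two denominator factors $abq^{i+j+1},\,abq^{i+j+2}$ become $abcdq^{i+j-1},\,abcdq^{i+j}$, while the prefactor $q^{j-i}$ is unchanged. The specialized weight system $w'$ therefore satisfies
\[
  w'(0;i,j) = \frac{-q^{j-i}(1-abq^{i})(1-acq^{i})(1-abcdq^{i-1})}{(1-abcdq^{i+j-1})(1-abcdq^{i+j})}, \qquad h^{w'}_{n,k} = \ts^B_{n,k}(ab/q,cd/q,ac/q;q).
\]

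It then remains to absorb the prefactor $(2a)^{k-n}(adq^{k};q)_{n-k}$ appearing in \eqref{eq:ts-AW1}. Since this prefactor factors as $\prod_{i=k}^{n-1}(1-adq^{i})/(2a)$, I would set $C_i = (1-adq^{i})/(2a)$ and observe that $\widetilde{w}_1(0;i,j) = C_i\, w'(0;i,j)$ is exactly the claimed formula (here $-\tfrac{1}{2a}=\tfrac{1}{-2a}$). Because every path in $\SE((k,\infty)\to(n,0))$ contains exactly one east step between $x=i$ and $x=i+1$ for each $i=k,\dots,n-1$, \Cref{lem:wt-mult} yields
\[
  h^{\widetilde{w}_1}_{n,k} = (2a)^{k-n}(adq^{k};q)_{n-k}\,\ts^B_{n,k}(ab/q,cd/q,ac/q;q) = \ts^{AW}_{n,k}(a,b,c,d;q),
\]
the last equality being \eqref{eq:ts-AW1}. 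No new combinatorial ingredient is required beyond \Cref{pro:L-B-1} and \Cref{lem:wt-mult}; the only genuinely delicate point is the bookkeeping of the Pochhammer specialization in the second step, which I expect to be routine.
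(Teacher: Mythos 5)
Your proposal is correct and is exactly the argument the paper intends: the paper's proof of this proposition is deferred to the proof of \Cref{pro:L-B-1}, which in turn rests on taking the known height-one weight system one level down in the scheme, specializing parameters according to the relation between the two families (here \eqref{eq:ts-AW1} with $(a,b,c)\mapsto(ab/q,cd/q,ac/q)$), and absorbing the column-wise prefactor $\prod_{i=k}^{n-1}(1-adq^i)/(2a)$ via \Cref{lem:wt-mult}. Your bookkeeping of the Pochhammer substitution and of the sign is accurate, so nothing is missing.
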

\begin{proof}
  This can be proved in a similar way to \Cref{pro:L-B-1} using
  that proposition and \eqref{eq:ts-AW1}.
\end{proof}

\begin{figure}
  \centering
  \begin{tikzpicture}[scale=1.5]
    \LHLLL{4}8
    \small
    \begin{scope}[shift={(-.5,.15)}]
      \node at (1,0/1) {$- 1/(2a)$};
      \node at (1,1/1) {$ d/2$};
      \node at (1,2/1) {$ c/2$};
      \node at (1,3/1) {$- a c d/2$};
      \node at (1,4/1) {$ b/2$};
      \node at (1,5/1) {$- a b d/2$};
      \node at (1,6/1) {$- a b c/2$};
      \node at (1,7/1) {$ a^{2} b c d/2$};
      \node at (1,8/1) {$- b c d/2$};
      \node at (2,0/2) {$- 1/(2aq)$};
      \node at (2,1/2) {$- 1/(2a)$};
      \node at (2,2/2) {$ d/2$};
      \node at (2,3/2) {$ d q/2$};
      \node at (2,4/2) {$ c/2$};
      \node at (2,5/2) {$ c q/2$};
      \node at (2,6/2) {$- a c d q/2$};
      \node at (2,7/2) {$- a c d q^{2}/2$};
      \node at (2,8/2) {$ b/2$};
      \node at (2,9/2) {$ b q/2$};
      \node at (2,10/2) {$- a b d q/2$};
      \node at (2,11/2) {$- a b d q^{2}/2$};
      \node at (2,12/2) {$- a b c q/2$};
      \node at (2,13/2) {$- a b c q^{2}/2$};
      \node at (2,14/2) {$ a^{2} b c d q^{2}/2$};
      \node at (2,15/2) {$ a^{2} b c d q^{3}/2$};
      \node at (2,16/2) {$- b c d q/2$};
      \node at (3,0/3) {$- 1/(2aq^2)$};
      \node at (3,1/3) {$- 1/(2aq)$};
      \node at (3,2/3) {$- 1/(2a)$};
      \node at (3,3/3) {$ d/2$};
      \node at (3,4/3) {$ d q/2$};
      \node at (3,5/3) {$ d q^{2}/2$};
      \node at (3,6/3) {$ c/2$};
      \node at (3,7/3) {$ c q/2$};
      \node at (3,8/3) {$ c q^{2}/2$};
      \node at (3,9/3) {$- a c d q^{2}/2$};
      \node at (3,10/3) {$- a c d q^{3}/2$};
      \node at (3,11/3) {$- a c d q^{4}/2$};
      \node at (3,12/3) {$ b/2$};
      \node at (3,13/3) {$ b q/2$};
      \node at (3,14/3) {$ b q^{2}/2$};
      \node at (3,15/3) {$- a b d q^{2}/2$};
      \node at (3,16/3) {$- a b d q^{3}/2$};
      \node at (3,17/3) {$- a b d q^{4}/2$};
      \node at (3,18/3) {$- a b c q^{2}/2$};
      \node at (3,19/3) {$- a b c q^{3}/2$};
      \node at (3,20/3) {$- a b c q^{4}/2$};
      \node at (3,21/3) {$ a^{2} b c d q^{4}/2$};
      \node at (3,22/3) {$ a^{2} b c d q^{5}/2$};
      \node at (3,23/3) {$ a^{2} b c d q^{6}/2$};
      \node at (3,24/3) {$- b c d q^{2}/2$};
      \node at (4,0/4) {$- 1/(2aq^3)$};
      \node at (4,1/4) {$- 1/(2aq^2)$};
      \node at (4,2/4) {$- 1/(2aq)$};
      \node at (4,3/4) {$- 1/(2a)$};
      \node at (4,4/4) {$ d/2$};
      \node at (4,5/4) {$ d q/2$};
      \node at (4,6/4) {$ d q^{2}/2$};
      \node at (4,7/4) {$ d q^{3}/2$};
      \node at (4,8/4) {$ c/2$};
      \node at (4,9/4) {$ c q/2$};
      \node at (4,10/4) {$ c q^{2}/2$};
      \node at (4,11/4) {$ c q^{3}/2$};
      \node at (4,12/4) {$- a c d q^{3}/2$};
      \node at (4,13/4) {$- a c d q^{4}/2$};
      \node at (4,14/4) {$- a c d q^{5}/2$};
      \node at (4,15/4) {$- a c d q^{6}/2$};
      \node at (4,16/4) {$ b/2$};
      \node at (4,17/4) {$ b q/2$};
      \node at (4,18/4) {$ b q^{2}/2$};
      \node at (4,19/4) {$ b q^{3}/2$};
      \node at (4,20/4) {$- a b d q^{3}/2$};
      \node at (4,21/4) {$- a b d q^{4}/2$};
      \node at (4,22/4) {$- a b d q^{5}/2$};
      \node at (4,23/4) {$- a b d q^{6}/2$};
      \node at (4,24/4) {$- a b c q^{3}/2$};
      \node at (4,25/4) {$- a b c q^{4}/2$};
      \node at (4,26/4) {$- a b c q^{5}/2$};
      \node at (4,27/4) {$- a b c q^{6}/2$};
      \node at (4,28/4) {$ a^{2} b c d q^{6}/2$};
      \node at (4,29/4) {$ a^{2} b c d q^{7}/2$};
      \node at (4,30/4) {$ a^{2} b c d q^{8}/2$};
      \node at (4,31/4) {$ a^{2} b c d q^{9}/2$};
      \node at (4,32/4) {$- b c d q^{3}/2$};
    \end{scope}

    \node at (0.5,8.7) {\( \vdots \)};
    \node at (1.5,8.7) {\( \vdots \)};
    \node at (2.5,8.7) {\( \vdots \)};
    \node at (3.5,8.7) {\( \vdots \)};
    \node at (4.5,0.5) {\( \cdots \)};
    \node at (4.5,1.5) {\( \cdots \)};
    \node at (4.5,2.5) {\( \cdots \)};
    \node at (4.5,3.5) {\( \cdots \)};
    \node at (4.5,4.5) {\( \cdots \)};
    \node at (4.5,5.5) {\( \cdots \)};
    \node at (4.5,6.5) {\( \cdots \)};
    \node at (4.5,7.5) {\( \cdots \)};
  \end{tikzpicture}
  \caption{The lecture hall graph for the factorial moments of
    Askey--Wilson polynomials. }
  \label{fig:AW-lht-tilde}
\end{figure}

Now we give a weight system of infinite height for
\( \ts^{AW}_{n,k}(a,b,c,d;q) \). This is also discoverable by
\Cref{alg:1} using the monomial ordering \( q<d<c<a<b \). See
\Cref{fig:AW-lht-tilde} for the weight system in the following
proposition.

\begin{prop}\label{pro:AW-tw}
  Let
  \begin{align*}
    \widetilde{w}(t;i,j) &= -(2a)^{-1}q^{(i+1)\flr{(t+2)/4}-i+j}\\
    & \qquad \times (-ab/q)^{\flr{(t+4)/8}} (-cd/q)^{\flr{t/8}}
      (-ac/q)^{\chi_o(\flr{t/2})}(-adq^{i})^{\chi_o(t)}\\
    &=  \frac{1}{2} \left( abcd q^{2i} \right)^m q^j \times
         \begin{cases}
         - a^{-1} q^{-i} & \mbox{if \( t=8m \)},\\
         d & \mbox{if \( t=8m+1 \)},\\
         c & \mbox{if \( t=8m+2 \)},\\
         -acd  q^{i} & \mbox{if \( t=8m+3 \)},\\
         b & \mbox{if \( t=8m+4 \)},\\
         - abd q^{i} & \mbox{if \( t=8m+5 \)},\\
         - abc q^{i} & \mbox{if \( t=8m+6 \)},\\
         a^2bcd  q^{2i} & \mbox{if \( t=8m+7 \)}.
       \end{cases}
  \end{align*}
  Then the factorial mixed moments of the Askey--Wilson polynomials satisfy
  \[
    \ts^{AW}_{n,k}(a,b,c,d;q) = h^{\widetilde{w}}_{n,k}.
  \]
\end{prop}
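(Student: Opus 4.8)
The plan is to bootstrap from the big $q$-Jacobi result \Cref{pro:tilde-wt^B}, exactly mirroring how that proposition was itself obtained from the little $q$-Jacobi weight system in \Cref{pro:wt^L}. The engine is the reparametrization identity together with the Expanding Lemma.

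First I would invoke \eqref{eq:ts-AW1},
\[
  \ts^{AW}_{n,k}(a,b,c,d;q) = (2a)^{k-n}(adq^{k};q)_{n-k}\,\ts^B_{n,k}(ab/q,cd/q,ac/q;q),
\]
which reduces the problem to a known weight system up to two scalar factors. Applying \Cref{pro:tilde-wt^B} with the substitution $(a,b,c)\mapsto(ab/q,cd/q,ac/q)$ produces a weight system of infinite height for $\ts^B_{n,k}(ab/q,cd/q,ac/q;q)$; under this substitution the base factor $abq^{2i+2}$ becomes $abcd\,q^{2i}$, and the four period-four weights $-q^{-i},\,cq,\,aq,\,-acq^{i+2}$ become $-q^{-i},\,ac,\,ab,\,-a^2bc\,q^{i}$ respectively.

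Next I would absorb the two scalar prefactors using \Cref{lem:wt-mult}. Since every path in $\SE((k,\infty)\to(n,0))$ contains exactly one east step between $x=i$ and $x=i+1$ for each $i=k,\dots,n-1$, and $(adq^{k};q)_{n-k}=\prod_{i=k}^{n-1}(1-adq^{i})$, multiplying every east-step weight in column $i$ by the constant-in-$(t,j)$ factor $C_i=(2a)^{-1}(1-adq^{i})$ yields a weight system $w'$ with $h^{w'}_{n,k}=\ts^{AW}_{n,k}(a,b,c,d;q)$. The final step is to recognize $w'$ as an input to the Expanding Lemma: writing $C_i=(2a)^{-1}(1+(-ad)q^{i})$ places $w'$ in the form $a_{i,t}(1+bq^{i})q^{j}$ of \Cref{lem:two-to-one-gen} with $b=-ad$. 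That lemma doubles the period, splitting each of the four rows into a pair indexed by the parity $\chi_o(t)$, and so converts the period-four system into the claimed period-eight system $\widetilde{w}$.

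The only genuine work is the bookkeeping in this last step: one must verify that the eight cases produced by \Cref{lem:two-to-one-gen} coincide with those in the statement. This is routine but requires care, since the parity-splitting interacts with the $q^{i}$-powers carried by $b=-ad$. For instance, $t=8m+1$ has $\flr{t/2}=4m$ and $\chi_o(t)=1$, giving $(2a)^{-1}(abcd\,q^{2i})^{m}(-q^{-i})(-adq^{i})q^{j}=\tfrac{d}{2}(abcd\,q^{2i})^{m}q^{j}$, while $t=8m+7$ has $\flr{t/2}=4m+3$ and $\chi_o(t)=1$, giving $(2a)^{-1}(abcd\,q^{2i})^{m}(-a^2bc\,q^{i})(-adq^{i})q^{j}=\tfrac{1}{2}a^2bcd\,q^{2i}(abcd\,q^{2i})^{m}q^{j}$, both matching the displayed formula. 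I expect no conceptual obstacle; the proposition follows by the same template as \Cref{pro:wt^L} and \Cref{pro:tilde-wt^B}, and once the eight cases are checked the identity $\ts^{AW}_{n,k}(a,b,c,d;q)=h^{\widetilde{w}}_{n,k}$ is immediate.
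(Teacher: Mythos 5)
Your proposal is correct and follows essentially the same route as the paper's proof: invoke \eqref{eq:ts-AW1} together with \Cref{pro:tilde-wt^B} under the substitution \( (a,b,c)\mapsto(ab/q,cd/q,ac/q) \), absorb the column factors \( (2a)^{-1}(1-adq^i) \), and then apply \Cref{lem:two-to-one-gen} with \( b=-ad \) to double the period from four to eight. Your spot-checks of the eight resulting cases are accurate, so nothing further is needed.
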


\begin{proof}
  By \eqref{eq:ts-AW1} and \Cref{pro:tilde-wt^B}, we have
  \[
    \ts^{AW}_{n,k}(a,b,c,d;q) = h^{\widetilde{w}'}_{n,k},
  \]
  where \( w' \) is the weight system given by
 \[
    \widetilde{w}'(t;i,j) = (2a)^{-1}(1-adq^i) \cdot (-1)
    (-ab/q)^{\flr{(t+2)/4}} (-cd/q)^{\flr{t/4}}(-ac/q)^{\chi_o(t)}
    q^{(i+1)\ceil{t/2}-i+j}.
  \] 
  By \Cref{lem:two-to-one-gen},
  \( h^{\widetilde{w}'}_{n,k} = h^{\widetilde{w}}_{n,k} \), where
  \( \widetilde{w} \) is the weight system given by
  \begin{align*}
    &\widetilde{w}(t;i,j)\\
    &= -(2a)^{-1}(-adq^i)^{\chi_o(t)} 
    (-ab/q)^{\flr{(\flr{t/2}+2)/4}} (-cd/q)^{\flr{\flr{t/2}/4}}(-ac/q)^{\chi_o(\flr{t/2})}
    q^{(i+1)\ceil{\flr{t/2}/2}-i+j}\\
    &= -(2a)^{-1}(-adq^i)^{\chi_o(t)} 
    (-ab/q)^{\flr{(t+4)/8}} (-cd/q)^{\flr{t/8}}(-ac/q)^{\chi_o(\flr{t/2})}
    q^{(i+1)\flr{(t+2)/4}-i+j},
  \end{align*}
  as desired.
\end{proof}

Finally, by \Cref{pro:add-one-row} and \Cref{pro:AW-tw}, we obtain a
weight system for the original mixed moments of Askey--Wilson
polynomials. See \Cref{fig:AW-lht} for the weight system in the
following theorem.

\begin{thm}\label{thm:AW-wt-full}
  Let
  \[
    w(t;i,j) =
    \begin{cases}
      (aq^{j}+a^{-1}q^{-j})/2 & \mbox{if \( t=0 \)},\\
      \widetilde{w}(t-1;i,j) & \mbox{if \( t\ge1 \)},
    \end{cases}
  \]
  where \( \widetilde{w} \) is the weight system given in
  \Cref{pro:AW-tw}.  Then the mixed moments of
  the Askey--Wilson polynomials satisfy
  \[
    \sigma^{AW}_{n,k}(a,b,c,d;q) = h^{w}_{n,k}.
  \]
\end{thm}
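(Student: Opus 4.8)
The plan is to recognize that this theorem is an immediate consequence of the general ``add one row'' mechanism of \Cref{pro:add-one-row}, combined with the infinite-height weight system for the factorial mixed moments already constructed in \Cref{pro:AW-tw}. The entire reason for introducing the factorial mixed moments \( \ts^{AW}_{n,k} \) relative to \( \{(x|f)^n\}_{n\ge0} \) was to reduce the computation of the ordinary mixed moments \( \sigma^{AW}_{n,k} \) (relative to \( \{x^n\} \)) to a single base-change step, and that step is exactly what \Cref{pro:add-one-row} performs.

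First I would assemble the two ingredients. From \Cref{pro:AW-tw} we already have \( \ts^{AW}_{n,k}(a,b,c,d;q) = h^{\widetilde w}_{n,k} \), where \( \widetilde w \) is the infinite-height weight system described there and the factorial basis is \( \{(x|f)^n\}_{n\ge0} \) with \( f=(f_0,f_1,\dots) \) and \( f_j = (aq^{j}+a^{-1}q^{-j})/2 \). This is precisely the hypothesis of \Cref{pro:add-one-row}: a polynomial sequence \( \{p_n(x;a,b,c,d|q)\}_{n\ge0} \) together with a weight system realizing its factorial mixed moments relative to \( \{(x|d)^n\} \), here with \( d=f \). Applying \Cref{pro:add-one-row} directly then produces the weight system obtained by placing a single height-\( 1 \) row with weights \( w(0;i,j)=d_j=f_j=(aq^{j}+a^{-1}q^{-j})/2 \) at the bottom and shifting \( \widetilde w \) up by one row, i.e.\ \( w(t;i,j)=\widetilde w(t-1;i,j) \) for \( t\ge1 \). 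This is exactly the weight system in the statement, so \( \sigma^{AW}_{n,k}(a,b,c,d;q)=h^{w}_{n,k} \) follows. Internally, \Cref{pro:add-one-row} rests on \Cref{lem:inter-mixed2} together with the identity \( x^n=\sum_{k} h^{w_1}_{n,k}(x|f)^k \), valid because the height-\( 1 \) row \( w_1(0;i,j)=f_j \) converts the factorial basis into the monomial basis.

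The only genuine content to verify — and hence the main point to check — is that the bottom row supplied by \Cref{pro:add-one-row} matches the claimed one. Since the displacement sequence of the factorial basis is exactly \( f_j=(aq^{j}+a^{-1}q^{-j})/2 \), the weight \( d_j \) produced by the proposition coincides with the \( t=0 \) case of the statement, so no further simplification is required. I note that this is in contrast to the big \( q \)-Jacobi case treated in \Cref{pro:wt^B}, where the newly added row and the bottom row of \( \widetilde w \) happened to cancel, forcing a separate sign-reversing argument via \Cref{lem:dual-wt} and \Cref{pro:w=overline w}. For the Askey--Wilson polynomials the displacement \( f_j \) is a genuine two-term quantity rather than a single monomial, so the bottom row survives intact and no such cancellation occurs; the proof is therefore a direct citation of \Cref{pro:AW-tw} and \Cref{pro:add-one-row}.
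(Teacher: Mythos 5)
Your proposal is correct and follows exactly the paper's route: the theorem is obtained by citing \Cref{pro:AW-tw} for the infinite-height weight system realizing \( \ts^{AW}_{n,k} \) and then invoking \Cref{pro:add-one-row} with the displacement sequence \( f_j=(aq^{j}+a^{-1}q^{-j})/2 \), which supplies precisely the claimed bottom row. Your closing remark contrasting this with the cancellation phenomenon in \Cref{pro:wt^B} is accurate but not needed for the argument.
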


\begin{figure}
  \centering
  \begin{tikzpicture}[scale=1.5]
    \LHLLL{4}{9}
    \small
\begin{scope}[shift={(-.5,1.15)}]
      \node at (1,0/1) {$- 1/(2a)$};
      \node at (1,1/1) {$ d/2$};
      \node at (1,2/1) {$ c/2$};
      \node at (1,3/1) {$- a c d/2$};
      \node at (1,4/1) {$ b/2$};
      \node at (1,5/1) {$- a b d/2$};
      \node at (1,6/1) {$- a b c/2$};
      \node at (1,7/1) {$ a^{2} b c d/2$};
      \node at (1,8/1) {$- b c d/2$};
      \node at (2,0/2) {$- 1/(2aq)$};
      \node at (2,1/2) {$- 1/(2a)$};
      \node at (2,2/2) {$ d/2$};
      \node at (2,3/2) {$ d q/2$};
      \node at (2,4/2) {$ c/2$};
      \node at (2,5/2) {$ c q/2$};
      \node at (2,6/2) {$- a c d q/2$};
      \node at (2,7/2) {$- a c d q^{2}/2$};
      \node at (2,8/2) {$ b/2$};
      \node at (2,9/2) {$ b q/2$};
      \node at (2,10/2) {$- a b d q/2$};
      \node at (2,11/2) {$- a b d q^{2}/2$};
      \node at (2,12/2) {$- a b c q/2$};
      \node at (2,13/2) {$- a b c q^{2}/2$};
      \node at (2,14/2) {$ a^{2} b c d q^{2}/2$};
      \node at (2,15/2) {$ a^{2} b c d q^{3}/2$};
      \node at (2,16/2) {$- b c d q/2$};
      \node at (3,0/3) {$- 1/(2aq^2)$};
      \node at (3,1/3) {$- 1/(2aq)$};
      \node at (3,2/3) {$- 1/(2a)$};
      \node at (3,3/3) {$ d/2$};
      \node at (3,4/3) {$ d q/2$};
      \node at (3,5/3) {$ d q^{2}/2$};
      \node at (3,6/3) {$ c/2$};
      \node at (3,7/3) {$ c q/2$};
      \node at (3,8/3) {$ c q^{2}/2$};
      \node at (3,9/3) {$- a c d q^{2}/2$};
      \node at (3,10/3) {$- a c d q^{3}/2$};
      \node at (3,11/3) {$- a c d q^{4}/2$};
      \node at (3,12/3) {$ b/2$};
      \node at (3,13/3) {$ b q/2$};
      \node at (3,14/3) {$ b q^{2}/2$};
      \node at (3,15/3) {$- a b d q^{2}/2$};
      \node at (3,16/3) {$- a b d q^{3}/2$};
      \node at (3,17/3) {$- a b d q^{4}/2$};
      \node at (3,18/3) {$- a b c q^{2}/2$};
      \node at (3,19/3) {$- a b c q^{3}/2$};
      \node at (3,20/3) {$- a b c q^{4}/2$};
      \node at (3,21/3) {$ a^{2} b c d q^{4}/2$};
      \node at (3,22/3) {$ a^{2} b c d q^{5}/2$};
      \node at (3,23/3) {$ a^{2} b c d q^{6}/2$};
      \node at (3,24/3) {$- b c d q^{2}/2$};
      \node at (4,0/4) {$- 1/(2aq^3)$};
      \node at (4,1/4) {$- 1/(2aq^2)$};
      \node at (4,2/4) {$- 1/(2aq)$};
      \node at (4,3/4) {$- 1/(2a)$};
      \node at (4,4/4) {$ d/2$};
      \node at (4,5/4) {$ d q/2$};
      \node at (4,6/4) {$ d q^{2}/2$};
      \node at (4,7/4) {$ d q^{3}/2$};
      \node at (4,8/4) {$ c/2$};
      \node at (4,9/4) {$ c q/2$};
      \node at (4,10/4) {$ c q^{2}/2$};
      \node at (4,11/4) {$ c q^{3}/2$};
      \node at (4,12/4) {$- a c d q^{3}/2$};
      \node at (4,13/4) {$- a c d q^{4}/2$};
      \node at (4,14/4) {$- a c d q^{5}/2$};
      \node at (4,15/4) {$- a c d q^{6}/2$};
      \node at (4,16/4) {$ b/2$};
      \node at (4,17/4) {$ b q/2$};
      \node at (4,18/4) {$ b q^{2}/2$};
      \node at (4,19/4) {$ b q^{3}/2$};
      \node at (4,20/4) {$- a b d q^{3}/2$};
      \node at (4,21/4) {$- a b d q^{4}/2$};
      \node at (4,22/4) {$- a b d q^{5}/2$};
      \node at (4,23/4) {$- a b d q^{6}/2$};
      \node at (4,24/4) {$- a b c q^{3}/2$};
      \node at (4,25/4) {$- a b c q^{4}/2$};
      \node at (4,26/4) {$- a b c q^{5}/2$};
      \node at (4,27/4) {$- a b c q^{6}/2$};
      \node at (4,28/4) {$ a^{2} b c d q^{6}/2$};
      \node at (4,29/4) {$ a^{2} b c d q^{7}/2$};
      \node at (4,30/4) {$ a^{2} b c d q^{8}/2$};
      \node at (4,31/4) {$ a^{2} b c d q^{9}/2$};
      \node at (4,32/4) {$- b c d q^{3}/2$};
    \end{scope}
        \begin{scope}[shift={(-.5,.15)}]
      \node at (1,0/1) {$(a+)/2$};
      \node at (2,0/2) {$(a+)/2$};
      \node at (2,1/2) {$(aq+)/2$};
      \node at (3,0/3) {$(a+)/2$};
      \node at (3,1/3) {$(aq+)/2$};
      \node at (3,2/3) {$(aq^2+)/2$};
      \node at (4,0/4) {$(a+)/2$};
      \node at (4,1/4) {$(aq+)/2$};
      \node at (4,2/4) {$(aq^2+)/2$};
      \node at (4,3/4) {$(aq^3+)/2$};
    \end{scope}
    \node at (0.5,9.7) {\( \vdots \)};
    \node at (1.5,9.7) {\( \vdots \)};
    \node at (2.5,9.7) {\( \vdots \)};
    \node at (3.5,9.7) {\( \vdots \)};
    \node at (4.5,0.5) {\( \cdots \)};
    \node at (4.5,1.5) {\( \cdots \)};
    \node at (4.5,2.5) {\( \cdots \)};
    \node at (4.5,3.5) {\( \cdots \)};
  \end{tikzpicture}
  \caption{The lecture hall graph for the moments of Askey--Wilson
    polynomials. Here, \( (x+) \) means \( x+x^{-1} \).}
  \label{fig:AW-lht}
\end{figure}

\section{Another bootstrapping method from continuous $q$-Hermite polynomials}
\label{sec:anoth-bootstr-meth}

In this section, we provide another bootstrapping method to find a
combinatorial model for mixed moments of Askey--Wilson polynomials
relative to the continuous \( q \)-Hermite polynomials, which are one
of the most well-studied families of orthogonal polynomials in the
\( q \)-Askey scheme. At the end of this section, we give a lecture
hall graph model for the mixed moments of the continuous
\( q \)-Hermite polynomials. As applications of the results in this
section, we prove \Cref{cor:1}, which establishes the total positivity
of the matrix of mixed moments of Askey--Wilson polynomials relative
to continuous \( q \)-Hermite polynomials, and we also give the first
combinatorial proof of the symmetry of \( a,b,c,d \) in the
Askey--Wilson polynomials in \Cref{sec:comb-prop-askey}.

Our results build on those of Kim and Stanton \cite{KS15}, who used a
bootstrapping method to compute the moments of Askey--Wilson
polynomials from continuous \( q \)-Hermite polynomials.

\subsection{Definitions}

The (monic) \emph{continuous $q$-Hermite polynomials}
\( H_n(x|q) \), \emph{continuous big $q$-Hermite polynomials}
\( H_n(x;a|q) \), \emph{Al--Salam--Chihara polynomials}
\( Q_n(x;a,b|q) \), and \emph{continuous dual $q$-Hahn polynomials}
\(p^{dH}_n(x; a, b, c|q)\) are defined by
\begin{align*}
  H_n(x|q)
  &= \frac{e^{in\theta}}{2^n} \qHyper20{q^{-n},0}{-}{q,q^n e^{-2i\theta}},\\
  H_n(x;a|q)
  &= \frac{1}{(2a)^n}
  \qHyper32{q^{-n},ae^{i\theta},ae^{-i\theta}}{0,0}{q,q}
  = \frac{e^{in\theta}}{2^n}
  \qHyper20{q^{-n},ae^{i\theta}}{-}{q,q^n e^{-2i\theta}}, \\
  Q_n(x;a,b|q)
  &= \frac{(ab;q)_{n}}{(2a)^n}
  \qHyper32{q^{-n},ae^{i\theta},ae^{-i\theta}}{ab,0}{q,q},\\
  p^{dH}_n(x; a, b, c|q)
  &= \frac{(ab, ac; q)_n}{(2a)^n}\qHyper32{q^{-n},a e^{i\theta}, a e^{-i\theta}}{ab, ac}{q,q}.
\end{align*}
Here, \( x=\cos\theta \). Note that these polynomials are special
cases of the Askey--Wilson polynomials \( p_n(x;a,b,c,d|q) \) defined
in \eqref{eq:AW-def}, namely,
\begin{align*}
 H_n(x|q) &= p_n(x;0,0,0,0|q),\\
 H_n(x;a|q) &= p_n(x;a,0,0,0|q),\\
 Q_n(x;a,b|q) &= p_n(x;a,b,0,0|q),\\
 p^{dH}_n(x; a, b, c|q) &= p_n(x;a,b,c,0|q).
\end{align*}

Let \( \sigma^{H}_{n,k}(q) \) be the mixed moments of the continuous
\( q \)-Hermite polynomials \( \{H_n(x|q)\}_{n\ge0} \):
\[
 x^n  = \sum_{k=0}^n \sigma^{H}_{n,k}(q) H_k(x|q).
\]
The Touchard--Riordan-like formula due to Josuat-Verg\`es
\cite[Proposition 5.1]{Josuat-Verges2011a} and Cigler and Zeng
\cite[Proposition 15]{Cigler2011a} (see also \cite[(12)]{KS15}) can be
stated as
\[
  \sigma^H_{n,k}(q)= \frac{1}{2^{n-k}} \sum_{r=k}^n \left(
    \binom{n}{\frac{n-r}2} - \binom{n}{\frac{n-r}2-1}\right)
  (-1)^{(r-k)/2} q^{\binom{(r-k)/2+1}2}
  \qbinom{\frac{r+k}2}{\frac{r-k}2},
\]
where we define \( \binom{i}{j}=\qbinom{i}{j}=0 \) unless both \( i \) and
\( j \) are integers.

We denote by \( \sigma^{bH,H}_{n,k}(a;q) \),
\( \sigma^{Q,H}_{n,k}(a,b;q) \), \( \sigma^{dH,H}_{n,k}(a,b,c;q) \),
and \( \sigma^{AW,H}_{n,k}(a,b,c,d;q) \) the mixed moments of
continuous big \( q \)-Hermite polynomials, Al--Salam--Chihara
polynomials, continuous dual \( q \)-Hahn polynomials, and
Askey--Wilson polynomials relative to the continuous \( q \)-Hermite
polynomials:
\begin{align}
  \label{eq:8}
  H_n(x|q)  &= \sum_{k=0}^n \sigma^{bH,H}_{n,k}(a;q) H_k(x;a|q),\\
  \notag
 H_n(x|q)  &= \sum_{k=0}^n \sigma^{Q,H}_{n,k}(a,b;q) Q_k(x;a,b|q),\\
  \notag
 H_n(x|q)  &= \sum_{k=0}^n \sigma^{dH,H}_{n,k}(a,b,c;q) p^{dH}_k(x;a,b,c|q),\\
  \notag
 H_n(x|q)  &= \sum_{k=0}^n \sigma^{AW,H}_{n,k}(a,b,c,d;q) p_k(x;a,b,c,d|q).
\end{align}

\subsection{Another bootstrapping method}

By re-normalizing the results in \cite[(8)-(11)]{KS15}, we have
\begin{align}
\label{eq:cc0}
H_n(x|q) &= \sum_{k=0}^n \qbinom{n}{k} \left( \frac{a}{2} \right)^{n-k} H_k(x;a|q) ,\\
\notag
H_n(x;a|q) &=\sum_{k=0}^n \qbinom nk \left( \frac{b}{2} \right)^{n-k} Q_k(x;a,b|q) ,\\
\notag
  Q_n(x;a,b|q) &= \sum_{k=0}^n \qbinom nk \left( \frac{c}{2} \right)^{n-k} (abq^k;q)_{n-k}
                 p^{dH}_k(x;a,b,c|q) ,\\
\notag
p^{dH}_n(x;a,b,c|q) &=\sum_{k=0}^n \qbinom{n}{k}
                 \left( \frac{d}{2} \right)^{n-k}
                 \frac{(abq^k,acq^k,bcq^k;q)_{n-k}}{(abcdq^{2k};q)_{n-k}}
                 p_k(x;a,b,c,d|q).
\end{align}
We will first find weight systems for the coefficients in the sums of
the above equations. Then, using \Cref{lem:inter-mixed2}, we can build
a weight system for \( \sigma^{AW,H}_{n,k}(a,b,c,d;q) \) by
successively stacking these weight systems.

See Figures~\ref{fig:w(1)}, \ref{fig:w(2)}, and \ref{fig:w(3)} for the
weight systems \( w^{(1)} \), \( w^{(2)} \), and \( w^{(3)} \),
respectively in the following lemma.

\begin{lem}\label{lem:hw(1-4)}
  Let \( w^{(1)} \), \( w^{(2)} \), \( w^{(3)} \), and \( w^{(4)} \)
  be the weight systems defined as follows:
\begin{align}
\label{eq:6}  w^{(1)}(t;i,j)
  &=  \begin{cases}
       aq^j/2 & \mbox{if \( t=0 \)},\\
       0 & \mbox{if \( t\ge1 \),}
      \end{cases} \\
\notag w^{(2)}(t;i,j)
  &=  \begin{cases}
       bq^j/2 & \mbox{if \( t=0 \)},\\
       0 & \mbox{if \( t\ge1 \),}
      \end{cases} \\
\notag  w^{(3)}(t;i,j)
  &=  \begin{cases}
       cq^j/2 & \mbox{if \( t=0 \)},\\
       -abcq^{i+j}/2 & \mbox{if \( t=1 \)},\\
       0 & \mbox{if \( t\ge2 \),}
      \end{cases} \\
\notag  w^{(4)}(t;i,j)
  &=  \frac{d}{2}(abcdq^{2i})^{\flr{t/8}}
  (-bcq^i)^{\chi_o(\flr{t/4})}
  (-acq^i)^{\chi_o(\flr{t/2})}
    (-abq^i)^{\chi_o(t)} q^j\\
  \notag
  &=  \frac{1}{2} \left( abcd q^{2i} \right)^m q^j \times
         \begin{cases}
         d & \mbox{if \( t=8m \)},\\
         -abd q^{i} & \mbox{if \( t=8m+1 \)},\\
         -acd q^{i} & \mbox{if \( t=8m+2 \)},\\
         a^2bcd q^{2i} & \mbox{if \( t=8m+3 \)},\\
         -bcd q^{i} & \mbox{if \( t=8m+4 \)},\\
         ab^2cd q^{2i} & \mbox{if \( t=8m+5 \)},\\
         abc^2d q^{2i} & \mbox{if \( t=8m+6 \)},\\
         -a^2b^2c^2d q^{3i} & \mbox{if \( t=8m+7 \)}.
       \end{cases}
\end{align}
 Then
\begin{align}
\label{eq:hw(1)} h^{w^{(1)}}_{n,k} &= \qbinom{n}{k} \left( \frac{a}{2} \right)^{n-k}, \\
\label{eq:hw(2)} h^{w^{(2)}}_{n,k} &= \qbinom nk \left( \frac{b}{2} \right)^{n-k}, \\
\label{eq:hw(3)} h^{w^{(3)}}_{n,k} &= \qbinom nk \left( \frac{c}{2} \right)^{n-k}(abq^k;q)_{n-k}, \\
\label{eq:hw(4)} h^{w^{(4)}}_{n,k} &= \qbinom{n}{k}
                 \left( \frac{d}{2} \right)^{n-k}
                 \frac{(abq^k,acq^k,bcq^k;q)_{n-k}}{(abcdq^{2k};q)_{n-k}}.
\end{align}
\end{lem}

\begin{proof}
  By \Cref{lem:q-binom-wt} and \Cref{lem:wt-mult}, we have
  \eqref{eq:hw(1)} and \eqref{eq:hw(2)}. By \Cref{lem:q-binom-wt} and
  \Cref{lem:two-to-one}, we have \eqref{eq:hw(3)}. To prove
  \eqref{eq:hw(4)}, observe that its right-hand side is
\[
 \qbinom{n}{k} \left( \frac{d}{2} \right)^{n-k}
 \frac{(abq^k,acq^k,bcq^k;q)_{n-k}}{(abcdq^{2k};q)_{n-k}}
 = \left( \frac{d}{2} \right)^{n-k}
 (abq^k,acq^k,bcq^k;q)_{n-k} \cdot \sigma^b_{n,k}(-abcd/q;q),
\]
where \( \sigma^b_{n,k}(a;q) \) is the mixed moment of \( q \)-Bessel
polynomials in \eqref{eq:mu^C}. Thus, by \Cref{pro:charlier-wt}, if we
define \( w_1(t;i,j)= (abcdq^{2i})^t q^j \), then
\[
  h^{w_1}_{n,k} = \sigma^b_{n,k}(-abcd/q;q).
\]
By \Cref{lem:wt-mult}, if we define
\( w_2(t;i,j)= (abcdq^{2i})^t q^j (1-abq^i)(1-acq^i)(1-bcq^i)d/2 \), then
\[
  h^{w_2}_{n,k} = \left( \frac{d}{2} \right)^{n-k}
  (abq^k,acq^k,bcq^k;q)_{n-k} \cdot \sigma^b_{n,k}(-abcd/q;q).
\]
Now by applying \Cref{lem:two-to-one-gen} to \( w_2(t;i,j) \) three
times with the factors \( (1-bcq^i) \), \( (1-acq^i) \), and
\( (1-abq^i) \) in this order, we obtain that
\( h^{w_2}_{n,k} = h^{w^{(4)}}_{n,k} \), which completes the proof of
\eqref{eq:hw(4)}.
\end{proof}

By \Cref{lem:inter-mixed2} and \Cref{lem:hw(1-4)}, we obtain the
following lemma.

\begin{lem}\label{lem:1}
  Let \( w^{(1)}, w^{(2)}, w^{(3)},  \) and \( w^{(4)} \) be the weight systems given in
  \Cref{lem:hw(1-4)}. Then
\begin{align*}
  \sigma^{bH,H}_{n,k}(a;q)
  &= h^{w^{(1)}}_{n,k}, \\
  \sigma^{Q,H}_{n,k}(a,b;q)
  &= h^{w^{(1)} \sqcup w^{(2)}}_{n,k}, \\
  \sigma^{dH,H}_{n,k}(a,b,c;q)
  &= h^{w^{(1)} \sqcup w^{(2)} \sqcup w^{(3)}}_{n,k}, \\
  \sigma^{AW,H}_{n,k}(a,b,c,d;q)
  &= h^{w^{(1)} \sqcup w^{(2)} \sqcup w^{(3)} \sqcup w^{(4)}}_{n,k}.
\end{align*}
\end{lem}

\begin{figure}
  \centering
\begin{tikzpicture}[scale=1.5]
    \LHLL{4}1 \LHlabel41
    \small
    \begin{scope}[shift={(-.5,.15)}]
      \node at (1,0/1) {$a/2$};
      \node at (2,0/2) {$a/2$};
      \node at (2,1/2) {$aq/2$};
      \node at (3,0/3) {$a/2$};
      \node at (3,1/3) {$aq/2$};
      \node at (3,2/3) {$aq^2/2$};
      \node at (4,0/4) {$a/2$};
      \node at (4,1/4) {$aq/2$};
      \node at (4,2/4) {$aq^2/2$};
      \node at (4,3/4) {$aq^3/2$};
    \end{scope}
    \node at (4.5,0.5) {\( \cdots \)};
  \end{tikzpicture}
  \caption{The weight system \( w^{(1)} \).}
  \label{fig:w(1)}
\end{figure}

\begin{figure}
\centering
\begin{tikzpicture}[scale=1.5]
    \LHLL{4}1 \LHlabel41
    \small
    \begin{scope}[shift={(-.5,.15)}]
      \node at (1,0/1) {$b/2$};
      \node at (2,0/2) {$b/2$};
      \node at (2,1/2) {$bq/2$};
      \node at (3,0/3) {$b/2$};
      \node at (3,1/3) {$bq/2$};
      \node at (3,2/3) {$bq^2/2$};
      \node at (4,0/4) {$b/2$};
      \node at (4,1/4) {$bq/2$};
      \node at (4,2/4) {$bq^2/2$};
      \node at (4,3/4) {$bq^3/2$};
    \end{scope}
    \node at (4.5,0.5) {\( \cdots \)};
  \end{tikzpicture}
  \caption{The weight system \( w^{(2)} \).}
  \label{fig:w(2)}
\end{figure}

\begin{figure}
\centering
\begin{tikzpicture}[scale=1.5]
    \LHLL{4}2 \LHlabel42
    \small
    \begin{scope}[shift={(-.5,.15)}]
      \node at (1,0/1) {$c/2$};
      \node at (2,0/2) {$c/2$};
      \node at (2,1/2) {$cq/2$};
      \node at (3,0/3) {$c/2$};
      \node at (3,1/3) {$cq/2$};
      \node at (3,2/3) {$cq^2/2$};
      \node at (4,0/4) {$c/2$};
      \node at (4,1/4) {$cq/2$};
      \node at (4,2/4) {$cq^2/2$};
      \node at (4,3/4) {$cq^3/2$};
    \end{scope}
    \begin{scope}[shift={(-.5,1.15)}]
      \node at (1,0/1) {$-abc/2$};
      \node at (2,0/2) {$-abcq/2$};
      \node at (2,1/2) {$-abcq^2/2$};
      \node at (3,0/3) {$-abcq^2/2$};
      \node at (3,1/3) {$-abcq^3/2$};
      \node at (3,2/3) {$-abcq^4/2$};
      \node at (4,0/4) {$-abcq^3/2$};
      \node at (4,1/4) {$-abcq^4/2$};
      \node at (4,2/4) {$-abcq^5/2$};
      \node at (4,3/4) {$-abcq^6/2$};
    \end{scope}
    \node at (4.5,0.5) {\( \cdots \)};
    \node at (4.5,1.5) {\( \cdots \)};
  \end{tikzpicture}
  \caption{The weight system \( w^{(3)} \).}
  \label{fig:w(3)}
\end{figure}

Rewriting
\( h^{w^{(1)} \sqcup w^{(2)} \sqcup w^{(3)} \sqcup w^{(4)}}_{n,k} \)
in \Cref{lem:1} we obtain the following combinatorial model for
\( \sigma^{AW,H}_{n,k}(a,b,c,d;q) \).

\begin{thm}\label{thm:1}
  Let \( w^{AW,H} \) be the weight system defined by 
\begin{align*}
   w^{AW,H}(t;i,j)
  &= (-a^2q^{i})^{\delta_{t,0}} \frac{d}{2}(abcdq^{2i})^{\flr{(t-4)/8}}
  (-bcq^i)^{\chi_e(\flr{t/4})}
  (-acq^i)^{\chi_o(\flr{t/2})}
    (-abq^i)^{\chi_o(t)} q^j\\
  &=  \frac{1}{2} \left( abcd q^{2i} \right)^m q^{j} \times
         \begin{cases}
          a & \mbox{if \( t=8m=0 \)},\\
          -a^{-1}q^{-i} & \mbox{if \( t=8m\ne0 \)},\\
         b & \mbox{if \( t=8m+1 \)},\\
         c & \mbox{if \( t=8m+2 \)},\\
         -abc q^{i} & \mbox{if \( t=8m+3 \)},\\
         d & \mbox{if \( t=8m+4 \)},\\
         -abd q^{i} & \mbox{if \( t=8m+5 \)},\\
         -acd q^{i} & \mbox{if \( t=8m+6 \)},\\
         a^2bcd q^{2i} & \mbox{if \( t=8m+7 \)}.
       \end{cases}
\end{align*}
Then we have
  \[
    h^{w^{AW,H}}_{n,k} = \sigma^{AW,H}_{n,k}(a,b,c,d;q).
  \]
\end{thm}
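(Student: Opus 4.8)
The plan is to recognize the statement as nothing more than an explicit rewriting of the stacked weight system in \eqref{eq:11}. By \Cref{lem:1} we already know that
\[
  \sigma^{AW,H}_{n,k}(a,b,c,d;q) = h^{w^{(1)} \sqcup w^{(2)} \sqcup w^{(3)} \sqcup w^{(4)}}_{n,k}.
\]
Since the generating function $h^w_{n,k}$ depends only on the weight system $w$, it suffices to prove the purely formal identity of weight systems $w^{AW,H} = w^{(1)} \sqcup w^{(2)} \sqcup w^{(3)} \sqcup w^{(4)}$; no further path combinatorics is then needed.

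First I would unwind the definition of $\sqcup$ using the heights of the four pieces. The systems $w^{(1)}$ and $w^{(2)}$ have height $1$, $w^{(3)}$ has height $2$, and $w^{(4)}$ has infinite height, so the stacking occupies row $t=0$ with $w^{(1)}$, row $t=1$ with $w^{(2)}$, rows $t=2,3$ with $w^{(3)}$, and all rows $t\ge 4$ with a shifted copy of $w^{(4)}$. Concretely, the stacked system equals $aq^j/2$, $bq^j/2$, $cq^j/2$, $-abcq^{i+j}/2$ for $t=0,1,2,3$ respectively, and equals $w^{(4)}(t-4;i,j)$ for $t\ge 4$. The first four values match the $t=0,1,2,3$ cases of $w^{AW,H}$ on the nose, so all of the content lies in the rows $t\ge 4$.

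The main (though purely bookkeeping) step is to reconcile the two period-$8$ descriptions. The formula \eqref{eq:12} for $w^{(4)}(s;i,j)$ is organized by $s \bmod 8$, whereas the shift $s=t-4$ reorganizes it by $t \bmod 8$; this is exactly why the cases of $w^{AW,H}$ are rotated relative to those of $w^{(4)}$. Writing $t=8m+r$, the four residues $r=4,5,6,7$ of $w^{AW,H}$ reproduce the residues $s\equiv 0,1,2,3$ of $w^{(4)}$ verbatim, with common factor $\tfrac12(abcdq^{2i})^m q^j$. The residues $r=0,1,2,3$ (for $m\ge 1$) must instead be read off from the residues $s\equiv 4,5,6,7$ of $w^{(4)}$ at level $m-1$, and this is where the single nontrivial simplification occurs: one uses the monomial identities $(abcdq^{2i})(-a^{-1}q^{-i})=-bcdq^{i}$ and $(abcdq^{2i})\,b=ab^2cdq^{2i}$ (together with the two analogous ones for $c$ and $-abcq^i$), so that the $-bcdq^i,\ ab^2cdq^{2i},\ abc^2dq^{2i},\ -a^2b^2c^2dq^{3i}$ entries of $w^{(4)}$ at level $m-1$ become the $-a^{-1}q^{-i},\ b,\ c,\ -abcq^i$ entries of $w^{AW,H}$ at level $m$.

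The only genuinely new feature is the appearance of the negative powers $-a^{-1}q^{-i}$ at $t=8m$ with $m\ge 1$, which are absorbed into the $(abcdq^{2i})^m$ prefactor, as opposed to the $t=0$ entry $a/2$, which comes from $w^{(1)}$ and is captured by the $(-a^2q^i)^{\delta_{t,0}}$ factor in the closed-form expression. I do not expect any real obstacle beyond careful indexing: one simply checks that the closed-form product and the case list for $w^{AW,H}$ agree with the unwound stack on all eight residues, which yields $w^{AW,H} = w^{(1)} \sqcup w^{(2)} \sqcup w^{(3)} \sqcup w^{(4)}$ and completes the proof.
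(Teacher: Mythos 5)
Your proposal is correct and is essentially the paper's own proof: the paper derives \Cref{thm:1} by ``rewriting'' \( h^{w^{(1)}\sqcup w^{(2)}\sqcup w^{(3)}\sqcup w^{(4)}}_{n,k} \) from \Cref{lem:1}, i.e.\ exactly the identification \( w^{AW,H}=w^{(1)}\sqcup w^{(2)}\sqcup w^{(3)}\sqcup w^{(4)} \) that you verify (the paper merely omits the residue-by-residue bookkeeping that you carry out, and your monomial identities such as \( (abcdq^{2i})(-a^{-1}q^{-i})=-bcdq^{i} \) are the correct way to reconcile the shifted period-\(8\) case lists).
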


\subsection{A weight system for continuous $q$-Hermite polynomials}
\label{sec:continuous-q-Hermite}

In order to obtain a weight system for the original mixed moment
\( \sigma^{AW}_{n,k}(a,b,c,d;q) \) of Askey--Wilson polynomials from
\Cref{thm:1}, we can simply add a weight system for the mixed moment
\( \sigma^H_{n,k}(q) \) of continuous $q$-Hermite polynomials below
the weight system \( w^{AW,H} \). In this subsection we find a weight
system for \( \sigma^H_{n,k}(q) \). To do this we first consider
continuous big $q$-Hermite polynomials \( H_n(x;a|q) \).

Let \( \sigma_{n,k}^{bH}(a;q) \) be the mixed moment of continuous big
\( q \)-Hermite polynomials:
\[
   x^n  = \sum_{k=0}^n \sigma^{bH}_{n,k}(a;q) H_k(x;a|q).
\]
Since \( H_n(x;a|q)= p_n(x;a,0,0,0|q) \), setting \( b=c=d=0 \) in
\Cref{thm:AW-wt-full} gives the following proposition; see
\Cref{fig:CBH-lht} for the weight system.

\begin{prop}\label{pro:3}
  Let \( w^{bH} \) be the weight system of height \( 2 \) defined by
  \[
    w^{bH}(t;i,j) =
    \begin{cases}
      (aq^{j}+a^{-1}q^{-j})/2 & \mbox{if \( t=0 \)},\\
      -1/(2aq^{i-j})  & \mbox{if \( t=1 \)}.
    \end{cases}
  \]  
 Then
  \[
    \sigma_{n,k}^{bH}(a;q) = h^{w^{bH}}_{n,k}.
  \]
\end{prop}

\begin{figure}
  \centering
  \begin{tikzpicture}[scale=1.5]
    \LHLL{4}2 \LHlabel42
    \small
    \begin{scope}[shift={(-.5,1.15)}]
      \node at (1,0/1) {$- 1/(2a)$};
      \node at (2,0/2) {$- 1/(2aq)$};
      \node at (2,1/2) {$- 1/(2a)$};
      \node at (3,0/3) {$- 1/(2aq^2)$};
      \node at (3,1/3) {$- 1/(2aq)$};
      \node at (3,2/3) {$- 1/(2a)$};
      \node at (4,0/4) {$- 1/(2aq^3)$};
      \node at (4,1/4) {$- 1/(2aq^2)$};
      \node at (4,2/4) {$- 1/(2aq)$};
      \node at (4,3/4) {$- 1/(2a)$};
    \end{scope}
    \begin{scope}[shift={(-.5,.15)}]
      \node at (1,0/1) {$(a+)/2$};
      \node at (2,0/2) {$(a+)/2$};
      \node at (2,1/2) {$(aq+)/2$};
      \node at (3,0/3) {$(a+)/2$};
      \node at (3,1/3) {$(aq+)/2$};
      \node at (3,2/3) {$(aq^2+)/2$};
      \node at (4,0/4) {$(a+)/2$};
      \node at (4,1/4) {$(aq+)/2$};
      \node at (4,2/4) {$(aq^2+)/2$};
      \node at (4,3/4) {$(aq^3+)/2$};
    \end{scope}
    \node at (4.5,0.5) {\( \cdots \)};
    \node at (4.5,1.5) {\( \cdots \)};
  \end{tikzpicture}
  \caption{The lecture hall graph for the mixed moments of
    continuous big \( q \)-Hermite polynomials. Here, \( (x+) \) means
    \( x+x^{-1} \).}
  \label{fig:CBH-lht}
\end{figure}

\begin{figure}
  \centering
  \begin{tikzpicture}[scale=1.5]
    \LHLL{4}1 \LHlabel41
    \small
    \begin{scope}[shift={(-.5,0.15)}]
      \node at (1,0/1) {$- a/2$};
      \node at (2,0/2) {$- aq/2$};
      \node at (2,1/2) {$- a/2$};
      \node at (3,0/3) {$- aq^2/2$};
      \node at (3,1/3) {$- aq/2$};
      \node at (3,2/3) {$- a/2$};
      \node at (4,0/4) {$- aq^3/2$};
      \node at (4,1/4) {$- aq^2/2$};
      \node at (4,2/4) {$- aq/2$};
      \node at (4,3/4) {$- a/2$};
    \end{scope}
    \node at (4.5,0.5) {\( \cdots \)};
  \end{tikzpicture}
  \caption{The weight system \( w' \) in \Cref{lem:3}.}
  \label{fig:w'}
\end{figure}

To find a weight system for \( \sigma^H_{n,k}(q) \), we compare two
polynomials \( H_n(x|q) \) and \( H_n(x;a|q) \). See \Cref{fig:w'} for
the weight system in the following lemma.

\begin{lem}\label{lem:3}
  Let \( w' \) be the weight system of height \( 1 \) defined by
  \[
    w'(0;i,j) = -\frac{aq^{i-j}}{2}.
  \]
Then
\[
H_n(x;a|q) = \sum_{k=0}^n h^{w'}_{n,k} H_k(x|q).
\]
\end{lem}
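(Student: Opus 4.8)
The plan is to reduce the stated expansion $H_n(x|q)=\sum_{k=0}^{n}h^{w'}_{n,k}H_k(x;a|q)$ to the connection formula \eqref{eq:cc0}, which already expands $H_n(x|q)$ in the basis $\{H_k(x;a|q)\}_{k\ge0}$. Since the coefficients of a polynomial relative to a fixed polynomial basis are unique, the lemma becomes the purely combinatorial statement that $h^{w'}_{n,k}$ equals the coefficient of $H_k(x;a|q)$ recorded in \eqref{eq:cc0}. All analytic content is then carried by \eqref{eq:cc0}, and the remaining task is to evaluate the height-$1$ generating function $h^{w'}_{n,k}$ and compare.

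To evaluate $h^{w'}_{n,k}$, I would factor the weight system as $w'(0;i,j)=C_i\,\overline w(0;i,j)$, with the constant $C_i=-a/2$ and $\overline w(0;i,j)=q^{\,i-j}$. By \Cref{lem:q-binom-wt2}, $h^{\overline w}_{n,k}=q^{\binom{n-k}{2}}\qbinom{n}{k}$, and since every $p\in\SE((k,\infty)\to(n,0))$ has exactly one east step in each of the columns $i=k,\dots,n-1$, \Cref{lem:wt-mult} applies with $\prod_{i=k}^{n-1}C_i=(-a/2)^{n-k}$. This gives
\[
  h^{w'}_{n,k}=\left(-\tfrac a2\right)^{n-k}q^{\binom{n-k}{2}}\qbinom{n}{k}.
\]

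It then remains to identify this with the change-of-basis coefficient of $H_k(x;a|q)$ in $H_n(x|q)$. A self-contained alternative to quoting \eqref{eq:cc0} is to argue directly from the hypergeometric forms \eqref{eq:CqH-def} and \eqref{eq:big_q_Hermite1}: the generating series of the two Hermite families differ by a factor of the shape $(at;q)_\infty^{\pm1}$, and expanding that factor by the $q$-binomial theorem turns the identity into a single $q$-binomial convolution whose $t^n$-coefficient produces the connection coefficients. The step I expect to be the main obstacle is the bookkeeping of the monic normalization together with the sign $(-1)^{n-k}$ and the Gaussian power $q^{\binom{n-k}{2}}$ that the exponent $i-j$ in $w'$ forces into $h^{w'}_{n,k}$; these are exactly the factors produced by $q$-binomial inversion, so one must track carefully which direction of the change of basis is being computed when matching against \eqref{eq:cc0}.
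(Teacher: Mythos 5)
Your evaluation of the path generating function is correct, and it is a useful complement to the paper, which never records the closed form: by \Cref{lem:q-binom-wt2} and \Cref{lem:wt-mult} one indeed gets \( h^{w'}_{n,k} = (-a/2)^{n-k} q^{\binom{n-k}{2}} \qbinom{n}{k} \). But your central reduction fails. The coefficient of \( H_k(x;a|q) \) recorded in \eqref{eq:cc0} is \( \qbinom{n}{k}(a/2)^{n-k} \), which differs from \( h^{w'}_{n,k} \) by exactly the sign \( (-1)^{n-k} \) and the Gaussian power \( q^{\binom{n-k}{2}} \) that you computed; so ``uniqueness of coefficients against \eqref{eq:cc0}'' does not prove the displayed identity --- it refutes it. (Check \( n=1 \): the display would give \( H_1(x|q) = -\tfrac{a}{2} + (x-\tfrac{a}{2}) = x-a \), while \( H_1(x|q)=x \).) The identity that is actually true, and the one the paper needs when it stacks \( w^{bH}\sqcup w' \) in \Cref{pro:4} via \Cref{lem:inter-mixed2}, is the \emph{inverse} relation \( H_n(x;a|q) = \sum_{k=0}^n h^{w'}_{n,k} H_k(x|q) \): the statement as printed (and the displays in the paper's own proof) have the two Hermite families transposed, and your closed form for \( h^{w'}_{n,k} \) is precisely the connection coefficient in that direction, as follows from \( (at;q)_\infty = \sum_{m\ge0} (-a)^m q^{\binom{m}{2}} t^m/(q;q)_m \) in the generating-function comparison you sketch (together with the monic rescaling by \( 2^{-n} \)). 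A careful execution of your own plan would have exposed this discrepancy rather than closed the proof.

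So the missing idea is the inversion step separating the lemma from \eqref{eq:cc0}: you gesture at it (``\( q \)-binomial inversion'', the \( (at;q)_\infty^{\pm1} \) factor, ``track which direction''), but the proposal never carries it out, and its main assertion is coefficient-matching in the wrong direction. The paper performs this inversion purely combinatorially, with no closed forms: by \Cref{lem:hw(1-4)} the coefficients in \eqref{eq:cc0} equal \( h^{w^{(1)}}_{n,k} \) for the height-one system \( w^{(1)}(0;i,j)=aq^{j}/2 \); \Cref{pro:h-e-dual} inverts the unitriangular system to produce coefficients \( (-1)^{n-k} e^{w^{(1)}}_{n,k} \); and \Cref{pro:w=overline w} converts \( e^{w^{(1)}}_{n,k} \) into \( h^{\overline{w^{(1)}}}_{n,k} \) for \( \overline{w^{(1)}}(0;i,j) = w^{(1)}(0;i,i-j) = aq^{i-j}/2 \), the sign being absorbed columnwise (\Cref{lem:wt-mult}) into \( w' = -\overline{w^{(1)}} \). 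The reflection \( j\mapsto i-j \) is exactly the combinatorial source of your factor \( q^{\binom{n-k}{2}} \). If you repair your argument by applying \( q \)-binomial inversion to \eqref{eq:cc0} (equivalently, expanding \( (at;q)_\infty \) rather than \( 1/(at;q)_\infty \)) and state the conclusion with the two families in the correct order, then your route --- explicit evaluation of \( h^{w'}_{n,k} \) plus analytic identification of the connection coefficients --- becomes a valid alternative to the paper's duality argument; as written, however, the key step is asserted in the wrong direction and the proof does not go through.
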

\begin{proof}
  Let \( w^{(1)} \) be the weight system given in \eqref{eq:6}. By
  \eqref{eq:8} and \Cref{lem:1}, we have
\[
H_n(x|q) = \sum_{k=0}^n h^{w^{(1)}}_{n,k} H_k(x;a|q).
\]
By \Cref{pro:h-e-dual}, the above equation implies
\begin{equation}\label{eq:14}
H_n(x;a|q) = \sum_{k=0}^n (-1)^{n-k} e^{w^{(1)}}_{n,k} H_k(x|q).
\end{equation}
Since \( w^{(1)} \) is a weight system of height \( 1 \), by
\Cref{pro:w=overline w}, we have
\( e^{w^{(1)}}_{n,k} = h^{\overline{w^{(1)}}}_{n,k} \), where
\( \overline{w^{(1)}} \) is the weight system of height \( 1 \)
defined by
\( \overline{w^{(1)}} (0;i,j) = w^{(1)}(0;i,i-j) = aq^{i-j}/2 \).
Since \( w'(0;i,j) = - \overline{w^{(1)}}(0;i,j) \), \eqref{eq:14} is
equivalent to the identity in this proposition.
\end{proof}

By \Cref{lem:inter-mixed2}, a weight system for
\(\sigma_{n,k}^{H}(q) \) can be obtained by adding the weight system
\( w' \) in \Cref{lem:3} at the top of a weight system of
\(\sigma_{n,k}^{bH}(a;q) \). Therefore, by \Cref{pro:3},
\( w^{bH}\sqcup w' \) is a weight system for \(\sigma_{n,k}^{H}(q) \).
However, since \(\sigma_{n,k}^{H}(q) \) is independent of \( a \), we
can replace \( a \) by any number. In particular, if we set \( a=1 \)
in the weight system \( w^{bH}\sqcup w' \), we obtain the following
proposition; see \Cref{fig:continuous-q-Hermite-three-rows} for the
weight system.

\begin{prop}\label{pro:4}
  Let \( w^H \) be the weight system of height \( 3 \) defined by
  \[
    w^{H}(t;i,j) =
    \begin{cases}
      (q^{j}+q^{-j})/2 & \mbox{if \( t=0 \)},\\
      -q^{j-i}/2  & \mbox{if \( t=1 \)},\\
      -q^{i-j}/2  & \mbox{if \( t=2 \)}.
    \end{cases}
  \]  
 Then
  \[
    \sigma^H_{n,k}(q) = h^{w^H}_{n,k}.
  \]
\end{prop}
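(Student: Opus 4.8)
The plan is to realize $w^H$ as a specialization of a stacked weight system. Concretely, I would stack the height-$1$ weight system $w'$ of \Cref{lem:3} on top of the height-$2$ weight system $w^{bH}$ of \Cref{pro:3}, obtaining a weight system $w^{bH}\sqcup w'$ that computes $\sigma^H_{n,k}(q)$ for \emph{every} value of $a$, and then set $a=1$. The workhorse is the concatenation lemma \Cref{lem:inter-mixed2}, which reads off the composition of two change-of-basis matrices from a single lecture hall graph, together with the factorization identity of \Cref{lem:inter-mixed}.

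First I would fix the three polynomial families entering \Cref{lem:inter-mixed}: take $b_n(x)=x^n$, $q_n(x)=H_n(x;a|q)$ (continuous big $q$-Hermite), and $p_n(x)=H_n(x|q)$ (continuous $q$-Hermite). The three relevant expansions are then $x^n=\sum_k \sigma^H_{n,k}(q)\,H_k(x|q)$, giving $\sigma_{n,k}=\sigma^H_{n,k}(q)$; the expansion $x^n=\sum_k \sigma^{bH}_{n,k}(a;q)\,H_k(x;a|q)$ of the power basis in the big $q$-Hermite basis, giving $\tau_{n,k}=\sigma^{bH}_{n,k}(a;q)$; and the expansion $H_n(x;a|q)=\sum_k \ts_{n,k}\,H_k(x|q)$ of the big $q$-Hermite polynomials in the $q$-Hermite basis, giving $\ts_{n,k}$. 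By \Cref{pro:3} we have $\tau_{n,k}=h^{w^{bH}}_{n,k}$ with $w^{bH}$ of finite height $2$, and by \Cref{lem:3} we have $\ts_{n,k}=h^{w'}_{n,k}$. Since $w^{bH}$ has finite height, the hypotheses of \Cref{lem:inter-mixed2} are satisfied, and it yields $\sigma^H_{n,k}(q)=h^{w^{bH}\sqcup w'}_{n,k}$, valid as an identity in the parameter $a$.

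Finally I would specialize $a=1$. The quantity $\sigma^H_{n,k}(q)$ is defined purely through the continuous $q$-Hermite polynomials and carries no dependence on $a$; all entries of $w^{bH}$ and $w'$ (namely $(aq^j+a^{-1}q^{-j})/2$, $-1/(2aq^{i-j})$, and $-aq^{i-j}/2$) are regular at $a=1$, so the identity $\sigma^H_{n,k}(q)=h^{w^{bH}\sqcup w'}_{n,k}$ persists there. It then remains to compare $(w^{bH}\sqcup w')|_{a=1}$ with $w^H$ row by row: the two $w^{bH}$ rows give $w^{bH}(0;i,j)|_{a=1}=(q^j+q^{-j})/2$ and $w^{bH}(1;i,j)|_{a=1}=-q^{j-i}/2$, while the $w'$ row sits at level $t=2$ (as $w^{bH}$ has height $2$) and gives $w'(0;i,j)|_{a=1}=-q^{i-j}/2$, matching the three cases defining $w^H$.

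The row-by-row comparison at the end is entirely routine. The one delicate point, where I expect the main care is needed, is orienting the intermediate change of basis correctly: \Cref{lem:3} must be invoked to express the continuous \emph{big} $q$-Hermite polynomials in the continuous $q$-Hermite basis, since that is precisely the factor $\ts_{n,k}$ demanded by \Cref{lem:inter-mixed}. The complementary subtlety is conceptual rather than computational, namely that although each of the two factors $\sigma^{bH}_{n,k}(a;q)$ and $h^{w'}_{n,k}$ genuinely depends on $a$, their convolution $\sum_r \sigma^{bH}_{n,r}(a;q)\,h^{w'}_{r,k}$ does not; it is exactly this $a$-independence that licenses the final specialization $a=1$.
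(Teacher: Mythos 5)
Your proposal is correct and is essentially the paper's own argument: stack the height-one system $w'$ of \Cref{lem:3} on top of the height-two system $w^{bH}$ of \Cref{pro:3} via \Cref{lem:inter-mixed2}, note that the resulting $h^{w^{bH}\sqcup w'}_{n,k}$ equals the $a$-independent quantity $\sigma^H_{n,k}(q)$ for every $a$, and then specialize $a=1$. The one point worth recording is that the direction in which you invoke \Cref{lem:3}, namely $H_n(x;a|q)=\sum_k h^{w'}_{n,k}H_k(x|q)$ (big $q$-Hermite expanded in the $q$-Hermite basis), is exactly what the convolution of \Cref{lem:inter-mixed} demands and is the identity the lemma's proof actually establishes, even though the lemma as printed states the transposed expansion $H_n(x|q)=\sum_k h^{w'}_{n,k}H_k(x;a|q)$; the case $n=1$, where $H_1(x|q)=x$ and $H_1(x;a|q)=x-a/2$ while $h^{w'}_{1,0}=-a/2$, confirms your reading.
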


\begin{figure}
  \centering
  \begin{tikzpicture}[scale=1.5]
    \LHLL{4}3 \LHlabel43
    \small
    \begin{scope}[shift={(-.5,2.15)}]
      \node at (1,0/1) {$- 1/2$};
      \node at (2,0/2) {$- q/2$};
      \node at (2,1/2) {$- 1/2$};
      \node at (3,0/3) {$- q^2/2$};
      \node at (3,1/3) {$- q/2$};
      \node at (3,2/3) {$- 1/2$};
      \node at (4,0/4) {$- q^3/2$};
      \node at (4,1/4) {$- q^2/2$};
      \node at (4,2/4) {$- q/2$};
      \node at (4,3/4) {$- 1/2$};
    \end{scope}
    \begin{scope}[shift={(-.5,1.15)}]
      \node at (1,0/1) {$- 1/2$};
      \node at (2,0/2) {$- 1/(2q)$};
      \node at (2,1/2) {$- 1/2$};
      \node at (3,0/3) {$- 1/(2q^2)$};
      \node at (3,1/3) {$- 1/(2q)$};
      \node at (3,2/3) {$- 1/2$};
      \node at (4,0/4) {$- 1/(2q^3)$};
      \node at (4,1/4) {$- 1/(2q^2)$};
      \node at (4,2/4) {$- 1/(2q)$};
      \node at (4,3/4) {$- 1/2$};
    \end{scope}
    \begin{scope}[shift={(-.5,.15)}]
      \node at (1,0/1) {$(1+)/2$};
      \node at (2,0/2) {$(1+)/2$};
      \node at (2,1/2) {$(q+)/2$};
      \node at (3,0/3) {$(1+)/2$};
      \node at (3,1/3) {$(q+)/2$};
      \node at (3,2/3) {$(q^2+)/2$};
      \node at (4,0/4) {$(1+)/2$};
      \node at (4,1/4) {$(q+)/2$};
      \node at (4,2/4) {$(q^2+)/2$};
      \node at (4,3/4) {$(q^3+)/2$};
    \end{scope}
    \node at (4.5,0.5) {\( \cdots \)};
    \node at (4.5,1.5) {\( \cdots \)};
    \node at (4.5,2.5) {\( \cdots \)};
  \end{tikzpicture}
  \caption{The weight system \( w^H \) for the mixed moments of
    continuous \( q \)-Hermite polynomials. Here, \( (x+) \) means
    \( x+x^{-1} \).}
  \label{fig:continuous-q-Hermite-three-rows}
\end{figure}

\begin{figure}
  \centering
  \begin{tikzpicture}[scale=1.5]
    \LHLLL{4}9
    \small
    \begin{scope}[shift={(-.5,.15)}]
      \node at (1,0/1) {$a$};
      \node at (2,0/2) {$a$};
      \node at (2,1/2) {$aq$};
      \node at (3,0/3) {$a$};
      \node at (3,1/3) {$aq$};
      \node at (3,2/3) {$aq^2$};
      \node at (4,0/4) {$a$};
      \node at (4,1/4) {$aq$};
      \node at (4,2/4) {$aq^2$};
      \node at (4,3/4) {$aq^3$};
    \end{scope}
    \begin{scope}[shift={(-.5,.15)}]
      \node at (1,1/1) {$ b$};
      \node at (1,2/1) {$ c$};
      \node at (1,3/1) {$ a b c$};
      \node at (1,4/1) {$ d$};
      \node at (1,5/1) {$ a b d$};
      \node at (1,6/1) {$ a c d$};
      \node at (1,7/1) {$ a^{2} b c d$};
      \node at (1,8/1) {$ b c d$};
      \node at (1,9/1) {$a b^2 c d$};
      \node at (2,2/2) {$ b$};
      \node at (2,3/2) {$ b q$};
      \node at (2,4/2) {$ c$};
      \node at (2,5/2) {$ c q$};
      \node at (2,6/2) {$ a b c q$};
      \node at (2,7/2) {$ a b c q^{2}$};
      \node at (2,8/2) {$ d$};
      \node at (2,9/2) {$ d q$};
      \node at (2,10/2) {$ a b d q$};
      \node at (2,11/2) {$ a b d q^{2}$};
      \node at (2,12/2) {$ a c d q$};
      \node at (2,13/2) {$ a c d q^{2}$};
      \node at (2,14/2) {$ a^{2} b c d q^{2}$};
      \node at (2,15/2) {$ a^{2} b c d q^{3}$};
      \node at (2,16/2) {$ b c d q$};
      \node at (2,17/2) {$ b c d q^2$};
      \node at (2,18/2) {$a b^2 c d\, q^{2}$};
      \node at (3,3/3) {$ b$};
      \node at (3,4/3) {$ b q$};
      \node at (3,5/3) {$ b q^{2}$};
      \node at (3,6/3) {$ c$};
      \node at (3,7/3) {$ c q$};
      \node at (3,8/3) {$ c q^{2}$};
      \node at (3,9/3) {$ a b c q^{2}$};
      \node at (3,10/3) {$ a b c q^{3}$};
      \node at (3,11/3) {$ a b c q^{4}$};
      \node at (3,12/3) {$ d$};
      \node at (3,13/3) {$ d q$};
      \node at (3,14/3) {$ d q^{2}$};
      \node at (3,15/3) {$ a b d q^{2}$};
      \node at (3,16/3) {$ a b d q^{3}$};
      \node at (3,17/3) {$ a b d q^{4}$};
      \node at (3,18/3) {$ a c d q^{2}$};
      \node at (3,19/3) {$ a c d q^{3}$};
      \node at (3,20/3) {$ a c d q^{4}$};
      \node at (3,21/3) {$ a^{2} b c d q^{4}$};
      \node at (3,22/3) {$ a^{2} b c d q^{5}$};
      \node at (3,23/3) {$ a^{2} b c d q^{6}$};
      \node at (3,24/3) {$ b c d q^{2}$};
      \node at (3,25/3) {$ b c d q^{3}$};
      \node at (3,26/3) {$ b c d q^{4}$};
      \node at (3,27/3) {$a b^2 c d\, q^{4}$};
      \node at (4,4/4) {$ b$};
      \node at (4,5/4) {$ b q$};
      \node at (4,6/4) {$ b q^{2}$};
      \node at (4,7/4) {$ b q^{3}$};
      \node at (4,8/4) {$ c$};
      \node at (4,9/4) {$ c q$};
      \node at (4,10/4) {$ c q^{2}$};
      \node at (4,11/4) {$ c q^{3}$};
      \node at (4,12/4) {$ a b c q^{3}$};
      \node at (4,13/4) {$ a b c q^{4}$};
      \node at (4,14/4) {$ a b c q^{5}$};
      \node at (4,15/4) {$ a b c q^{6}$};
      \node at (4,16/4) {$ d$};
      \node at (4,17/4) {$ d q$};
      \node at (4,18/4) {$ d q^{2}$};
      \node at (4,19/4) {$ d q^{3}$};
      \node at (4,20/4) {$ a b d q^{3}$};
      \node at (4,21/4) {$ a b d q^{4}$};
      \node at (4,22/4) {$ a b d q^{5}$};
      \node at (4,23/4) {$ a b d q^{6}$};
      \node at (4,24/4) {$ a c d q^{3}$};
      \node at (4,25/4) {$ a c d q^{4}$};
      \node at (4,26/4) {$ a c d q^{5}$};
      \node at (4,27/4) {$ a c d q^{6}$};
      \node at (4,28/4) {$ a^{2} b c d q^{6}$};
      \node at (4,29/4) {$ a^{2} b c d q^{7}$};
      \node at (4,30/4) {$ a^{2} b c d q^{8}$};
      \node at (4,31/4) {$ a^{2} b c d q^{9}$};
      \node at (4,32/4) {$ b c d q^{3}$};
      \node at (4,33/4) {$ b c d q^{4}$};
      \node at (4,34/4) {$ b c d q^{5}$};
      \node at (4,35/4) {$ b c d q^{6}$};
      \node at (4,36/4) {$a b^2 c d\, q^{6}$};
    \end{scope}

    \node at (0.5,9.7) {\( \vdots \)};
    \node at (1.5,9.7) {\( \vdots \)};
    \node at (2.5,9.7) {\( \vdots \)};
    \node at (3.5,9.7) {\( \vdots \)};
    \node at (4.5,0.5) {\( \cdots \)};
    \node at (4.5,1.5) {\( \cdots \)};
    \node at (4.5,2.5) {\( \cdots \)};
    \node at (4.5,3.5) {\( \cdots \)};
    \node at (4.5,4.5) {\( \cdots \)};
    \node at (4.5,5.5) {\( \cdots \)};
    \node at (4.5,6.5) {\( \cdots \)};
    \node at (4.5,7.5) {\( \cdots \)};
    \node at (4.5,8.5) {\( \cdots \)};
  \end{tikzpicture}
  \caption{The lecture hall graph for the mixed moments of (rescaled)
    Askey--Wilson polynomials relative to continuous \( q \)-Hermite
    moments. Except for the first row, the rows \( 2,3,\dots,9 \) repeat
    modulo \( 8 \) with an extra factor of \( abcdq^{2i} \) for every
    \( 8 \) rows.}
  \label{fig:AW-lht-hermite}
\end{figure}

Using \Cref{thm:1} and \Cref{pro:4}, we obtain another weight
system for the original mixed moments
\( \sigma^{AW}_{n,k}(a,b,c,d;q) \) of Askey--Wilson polynomials.

\begin{thm}\label{thm:2}
  Let \( w^{AW} = w^{H} \sqcup w^{AW,H} \), where \( w^{AW,H} \) is
  the weight system in \Cref{thm:1} and \( w^H \) is the weight system
  in \Cref{pro:4}. Then we have
  \[
    h^{w^{AW}}_{n,k} = \sigma^{AW}_{n,k}(a,b,c,d;q).
  \]
\end{thm}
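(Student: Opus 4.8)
The plan is to realize $\sigma^{AW}_{n,k}(a,b,c,d;q)$ as a composition of the two mixed-moment families already handled, using the continuous $q$-Hermite polynomials as an intermediate basis, and then to translate this composition into the stacking operation $\sqcup$ via \Cref{lem:inter-mixed2}. Concretely, I would apply \Cref{lem:inter-mixed} with the three polynomial sequences $b_n(x)=x^n$, $q_n(x)=H_n(x|q)$ (the continuous $q$-Hermite polynomials), and $p_n(x)=p_n(x;a,b,c,d|q)$ (the Askey--Wilson polynomials). With these choices the three expansion coefficients in \Cref{lem:inter-mixed} are precisely $\sigma_{n,k}=\sigma^{AW}_{n,k}(a,b,c,d;q)$ coming from $x^n=\sum_k\sigma^{AW}_{n,k}p_k(x)$, then $\tau_{n,k}=\sigma^H_{n,k}(q)$ coming from $x^n=\sum_k\sigma^H_{n,k}H_k(x|q)$, and finally $\ts_{n,k}=\sigma^{AW,H}_{n,k}(a,b,c,d;q)$ coming from $H_n(x|q)=\sum_k\sigma^{AW,H}_{n,k}p_k(x)$. \Cref{lem:inter-mixed} then gives the identity $\sigma^{AW}_{n,k}=\sum_{r=k}^n\sigma^H_{n,r}\,\sigma^{AW,H}_{r,k}$, which is exactly the bookkeeping that the $\sqcup$ operation implements on the level of weight systems.

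Next I would supply weight systems for the two factors and verify the hypotheses of \Cref{lem:inter-mixed2}. The factor $\tau_{n,k}=\sigma^H_{n,k}(q)$ is governed by the weight system $w^H$ of \Cref{pro:4}, and crucially $w^H$ has \emph{finite} height (it is a weight system of height $3$), so it is eligible to play the role of the bottom piece $w^{(1)}$ in the definition of $\sqcup$. The factor $\ts_{n,k}=\sigma^{AW,H}_{n,k}(a,b,c,d;q)$ is governed by the weight system $w^{AW,H}$ of \Cref{thm:1}, which serves as the arbitrary upper piece $w^{(2)}$. Having checked that $h^{w^H}_{n,k}=\tau_{n,k}$ and $h^{w^{AW,H}}_{n,k}=\ts_{n,k}$, I would invoke \Cref{lem:inter-mixed2} to conclude
\[
  h^{w^H\sqcup w^{AW,H}}_{n,k}=\sum_{r=k}^n\tau_{n,r}\ts_{r,k}=\sigma^{AW}_{n,k}(a,b,c,d;q).
\]
Since $w^{AW}$ is defined to be exactly $w^H\sqcup w^{AW,H}$, this is the assertion of the theorem.

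There is no substantial computational obstacle here, because all of the analytic work (the explicit product formulas and the cancellations yielding $w^H$ and $w^{AW,H}$) has been carried out in \Cref{pro:4} and \Cref{thm:1}. The only point requiring genuine care is confirming that the height hypothesis of \Cref{lem:inter-mixed2} is met, that is, that the bottom weight system has finite height: this is what licenses the use of $\sqcup$ and ensures that the geometric stacking of $w^{AW,H}$ above $w^H$ (with the appropriate vertical shift by the height of $w^H$, as depicted in \Cref{fig:AW-lht-hermite}) faithfully records the convolution $\sum_r\sigma^H_{n,r}\sigma^{AW,H}_{r,k}$. Once this is noted, the proof is an immediate chaining of \Cref{lem:inter-mixed2}, \Cref{pro:4}, and \Cref{thm:1}.
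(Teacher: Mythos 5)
Your proposal is correct and matches the paper's intended argument exactly: the paper gives no explicit proof, stating only that the theorem follows from \Cref{thm:1} and \Cref{pro:4}, and the implicit reasoning is precisely your chain of \Cref{lem:inter-mixed} (with $b_n(x)=x^n$, $q_n(x)=H_n(x|q)$, $p_n(x)=p_n(x;a,b,c,d|q)$) followed by \Cref{lem:inter-mixed2} applied to the height-$3$ system $w^H$ stacked under $w^{AW,H}$. Your explicit check that $w^H$ has finite height, which is the hypothesis licensing the $\sqcup$ construction, is the right point to flag.
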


\section{Combinatorial properties of Askey--Wilson polynomials}
\label{sec:comb-prop-askey}

In this section, we study in more detail the weight system
\( w^{AW,H} \) in \Cref{thm:1} for the mixed moment
\( \sigma^{AW,H}_{n,k} \) of Askey--Wilson polynomials relative to
continuous \( q \)-Hermite polynomials. We then find a more efficient
combinatorial model for \( \sigma^{AW,H}_{n,k} \). Using our new
combinatorial model we give the first combinatorial proof of the
well-known fact that the mixed moments (and hence the coefficients as
well) of Askey--Wilson polynomials are symmetric in \( a,b,c,d \). As
applications we also find some interesting properties of Askey--Wilson
polynomials.

\subsection{Another combinatorial model for mixed moments}

Let \( \ts^{AW,H}_{n,k}(a,b,c,d;q) \) be the following rescaled mixed
moments of Askey--Wilson polynomials relative to continuous
\( q \)-Hermite polynomials:
\[
  \ts^{AW,H}_{n,k}(a,b,c,d;q) := \frac{2^{n-k}}{\vi^{n-k}}
  \sigma^{AW,H}_{n,k}(\vi a,\vi b,\vi c,\vi d;q),
\]
where \( \vi \) is the imaginary number \( \sqrt{-1} \). Using the
substitution \( (a,b,c,d)\mapsto (\vi a,\vi b,\vi c,\vi d) \), we can
restate \Cref{thm:1} as follows; see \Cref{fig:AW-lht-hermite} for the
weight system.

\begin{thm}\label{thm:4}
  Let \( \widetilde{w}^{AW,H} \) be the weight system defined by 
  \begin{equation}\label{eq:pos-wt}
       \widetilde{w}^{AW,H}(t;i,j)
  = \left( abcd q^{2i} \right)^m q^{j} \times
         \begin{cases}
          a & \mbox{if \( t=8m=0 \)},\\
          a^{-1}q^{-i} & \mbox{if \( t=8m\ne 0 \)},\\
         b & \mbox{if \( t=8m+1 \)},\\
         c & \mbox{if \( t=8m+2 \)},\\
         abc q^{i} & \mbox{if \( t=8m+3 \)},\\
         d & \mbox{if \( t=8m+4 \)},\\
         abd q^{i} & \mbox{if \( t=8m+5 \)},\\
         acd q^{i} & \mbox{if \( t=8m+6 \)},\\
         a^2bcd q^{2i} & \mbox{if \( t=8m+7 \)}.
       \end{cases}
  \end{equation}
Then we have
  \[
    h^{\widetilde{w}^{AW,H}}_{n,k} = \ts^{AW,H}_{n,k}(a,b,c,d;q).
  \]
\end{thm}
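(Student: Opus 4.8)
The plan is to deduce Theorem~\ref{thm:4} directly from Theorem~\ref{thm:1} by tracking how the weight system $w^{AW,H}$ transforms under the substitution $(a,b,c,d)\mapsto(\vi a,\vi b,\vi c,\vi d)$. Unwinding the definition of $\ts^{AW,H}_{n,k}$ and invoking Theorem~\ref{thm:1}, we may write
\[
  \ts^{AW,H}_{n,k}(a,b,c,d;q)
  = \frac{2^{n-k}}{\vi}\,\sigma^{AW,H}_{n,k}(\vi a,\vi b,\vi c,\vi d;q)
  = \frac{2^{n-k}}{\vi} \sum_{p\,:\,(k,\infty)\to(n,0)}
      w^{AW,H}(p)\Big|_{(a,b,c,d)\mapsto(\vi a,\vi b,\vi c,\vi d)}.
\]
So it suffices to understand the substituted weight of a single east step and then multiply the contributions over an entire path.

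The heart of the argument is the per-step identity
\[
  w^{AW,H}(t;i,j)\Big|_{(a,b,c,d)\mapsto(\vi a,\vi b,\vi c,\vi d)}
  = \frac{\vi}{2}\,\widetilde{w}^{AW,H}(t;i,j),
  \qquad \text{for all } t,i,j.
\]
I would verify this by running through the eight residue classes of $t$ modulo $8$ (together with the special subcase $t=0$) appearing in Theorems~\ref{thm:1} and~\ref{thm:4}. In each class $w^{AW,H}(t;i,j)$ is a sign times $\tfrac12 q^{j}$ times a monomial in $a,b,c,d$ whose total degree $D$ is odd; the substitution multiplies that monomial by $\vi^{D}$, and using $\vi^{4}=1$ and $\vi^{-1}=-\vi$ one checks case by case that the sign times $\vi^{D}$ collapses to $\vi$. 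For example, in the class $t=8m+3$ the weight has sign $-$ and degree $D=4m+3$, and $-\vi^{4m+3}=-\vi^{3}=\vi$; in the class $t=8m$ with $m\neq0$ the weight has sign $-$ and degree $4m-1$, giving $-\vi^{4m-1}=-\vi^{-1}=\vi$; the special case $t=0$ has sign $+$ and degree $1$, giving $+\vi=\vi$; and the remaining classes behave identically. This is exactly the mechanism by which the substitution rotates away every minus sign of $w^{AW,H}$ and produces the manifestly positive weights $\widetilde{w}^{AW,H}$, which is the whole point of passing to $\ts^{AW,H}_{n,k}$.

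Finally, I would assemble the per-step identity into the path statement. Since every path $p\in\SE((k,\infty)\to(n,0))$ has exactly one east step between $x=i$ and $x=i+1$ for each $i=k,\dots,n-1$, it has exactly $n-k$ east steps, so the weights multiply to $w^{AW,H}(p)|_{\mathrm{sub}} = (\vi/2)^{n-k}\,\widetilde{w}^{AW,H}(p)$. Summing over $p$ gives
\[
  \sigma^{AW,H}_{n,k}(\vi a,\vi b,\vi c,\vi d;q)
  = \Big(\frac{\vi}{2}\Big)^{n-k} h^{\widetilde{w}^{AW,H}}_{n,k},
\]
and substituting this into the expression above and simplifying the resulting powers of $2$ and $\vi$ against the normalization in the definition of $\ts^{AW,H}_{n,k}$ yields $\ts^{AW,H}_{n,k}=h^{\widetilde{w}^{AW,H}}_{n,k}$. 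The main obstacle is the per-step verification: the eight cases must be handled carefully, because the precise placement of the signs in Theorem~\ref{thm:1} is exactly what forces the uniform factor $\vi$ to emerge, and a single wrong sign or degree would destroy the uniformity. Everything after that is routine collection of the powers of $\vi$ and $2$.
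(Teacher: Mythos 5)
Your proposal is correct and is exactly the argument the paper intends: the paper's own ``proof'' is the single remark that Theorem~\ref{thm:4} is Theorem~\ref{thm:1} restated under $(a,b,c,d)\mapsto(\vi a,\vi b,\vi c,\vi d)$, and your case-by-case check that each signed weight $w^{AW,H}(t;i,j)$ becomes $\tfrac{\vi}{2}\,\widetilde{w}^{AW,H}(t;i,j)$ (sign times $\vi^{D}$ collapsing to $\vi$ in all eight residue classes) is precisely the verification being left to the reader. One small point your bookkeeping actually exposes: since a path has $n-k$ east steps, one gets $\sigma^{AW,H}_{n,k}(\vi a,\vi b,\vi c,\vi d;q)=(\vi/2)^{n-k}h^{\widetilde{w}^{AW,H}}_{n,k}$, so the normalizing prefactor in the definition of $\ts^{AW,H}_{n,k}$ must be read as $(2/\vi)^{n-k}$ rather than the literal $2^{n-k}/\vi$ printed in the paper, which would otherwise leave a stray factor $\vi^{\,n-k-1}$.
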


Observe that \( \widetilde{w}^{AW,H}(t;i,j) \) in \eqref{eq:pos-wt} is
a monomial in \( a,b,c,d \), and \( q \). Thus \Cref{thm:4} implies
that \( \ts^{AW,H}_{n,k}(a,b,c,d;q) \) is a formal power series in
\( a,b,c,d,q \) with nonnegative integer coefficients.
More generally, we have the following corollary.

\begin{cor}\label{cor:1}
  The matrix
  \[
    \left( \ts^{AW,H}_{n,k}(a,b,c,d;q) \right)_{n,k=0}^\infty
  \]
  is totally positive. More precisely, every minor of this matrix is
  a formal power series in \( a,b,c,d \), and \( q \) with nonnegative
  integer coefficients.
\end{cor}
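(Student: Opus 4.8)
The plan is to derive the corollary from the path model of \Cref{thm:4} together with the Lindström--Gessel--Viennot (LGV) lemma \cite{LGV}. By \Cref{thm:4} we have $\ts^{AW,H}_{n,k}(a,b,c,d;q)=h^{\widetilde{w}^{AW,H}}_{n,k}$, and the decisive feature of the weight system $\widetilde{w}^{AW,H}$ in \eqref{eq:pos-wt} is that every east-step weight $\widetilde{w}^{AW,H}(t;i,j)$ is a single monomial in $a,b,c,d,q$ with coefficient $1$. Since $h^{\widetilde{w}^{AW,H}}_{n,k}=\sum_{p\in\SE((k,\infty)\to(n,0))}\widetilde{w}^{AW,H}(p)$ and each path weight is a product of such monomials, every entry of the matrix is a formal power series in $a,b,c,d,q$ with nonnegative integer coefficients; this is the observation already recorded after \Cref{thm:4}. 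It remains to treat arbitrary minors.

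Fix increasing index sets $n_1<\cdots<n_r$ (rows) and $k_1<\cdots<k_r$ (columns), and regard the corresponding submatrix $(\ts^{AW,H}_{n_a,k_b})_{a,b=1}^r$ as a path matrix with sources $(k_1,\infty),\dots,(k_r,\infty)$ and sinks $(n_1,0),\dots,(n_r,0)$. First I would verify the non-crossing hypothesis of LGV: for $a<b$ and any permutation $\sigma$ with $\sigma(a)>\sigma(b)$, any two paths $P\in\SE((k_a,\infty)\to(n_{\sigma(a)},0))$ and $Q\in\SE((k_b,\infty)\to(n_{\sigma(b)},0))$ must share a vertex. Indeed $P$ starts strictly to the left of $Q$ (as $k_a<k_b$) but ends strictly to the right (as $n_{\sigma(a)}>n_{\sigma(b)}$); since every path is weakly monotone (east steps increase the column, south steps decrease the height), both paths have a vertex in each column of the overlap range $[k_b,n_{\sigma(b)}]$, where $Q$ lies above $P$ at column $k_b$ (its source) and $P$ lies at or above $Q$ at column $n_{\sigma(b)}$ (the sink of $Q$). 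The relative vertical order therefore reverses, forcing a common vertex. Hence the identity permutation, of sign $+1$, is the only one admitting nonintersecting families.

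Granting the LGV lemma in this setting, one obtains
\[
  \det\left( \ts^{AW,H}_{n_a,k_b}(a,b,c,d;q) \right)_{a,b=1}^r
  = \sum_{(P_1,\dots,P_r)} \prod_{a=1}^r \widetilde{w}^{AW,H}(P_a),
\]
the sum ranging over all families of pairwise nonintersecting paths $P_a\in\SE((k_a,\infty)\to(n_a,0))$. As each $\widetilde{w}^{AW,H}(P_a)$ is a product of coefficient-$1$ monomials, the right-hand side is a formal power series in $a,b,c,d,q$ with nonnegative integer coefficients, which gives both total positivity and the sharper statement about every minor.

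The main obstacle is that the lecture hall graph has infinite height and the sources $(k,\infty)$ lie ``at infinity,'' so the usual finite LGV lemma does not apply verbatim. I would resolve this by a degree-truncation argument. Each east step at height $t$ carries a factor $(abcdq^{2i})^{\flr{t/8}}$, so the monomials grow in total $(a,b,c,d)$-degree with height; hence for each fixed degree bound only finitely many paths, and finitely many nonintersecting families, contribute, and each minor is a well-defined formal power series. Concretely, truncate $\widetilde{w}^{AW,H}$ to heights $t<T$, apply the ordinary LGV lemma on the resulting finite planar graph exactly as in the proof of \Cref{prop:MM/MM}, and let $T\to\infty$. Since the truncated identities stabilize coefficient-by-coefficient, the displayed identity holds as an identity of formal power series, completing the proof.
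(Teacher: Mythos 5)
Your proof is correct and follows exactly the route the paper intends: the paper states the corollary without an explicit proof, as an immediate consequence of \Cref{thm:4} via the Lindstr\"om--Gessel--Viennot lemma applied to the monomial weight system \( \widetilde{w}^{AW,H} \), which is precisely your argument. Your additional care with the non-crossing hypothesis and with the sources at infinity (degree truncation to justify the formal power series identity) fills in details the paper leaves implicit, and raises no issues.
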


We will find another combinatorial model for
\( \ts^{AW,H}_{n,k}(a,b,c,d;q) \) derived from \Cref{thm:4}. To do
this, we need some definitions. First, observe that every weight
\( \widetilde{w}^{AW,H}(t;i,j) \) in \Cref{thm:4} is of the form
\( a^{i_1}b^{i_2}c^{i_3}d^{i_4}q^j \) such that \( i_1+i_2+i_3+i_4 \)
is odd and \( |i_r-i_s|\le 1 \) for all \( 1\le r,s\le 4 \).

\begin{defn}\label{def:1}
Let \( \TT \) denote the set of quadruples \( (t_1,t_2,t_3,t_4) \) of
nonnegative integers satisfying the following conditions:
\begin{enumerate}
\item \( t_1+t_2+t_3+t_4 \) is odd and 
\item \( |t_i-t_j|\le 1 \) for all \( 1\le i,j\le 4 \).
\end{enumerate}
We define a total order \( \le \) on \( \TT \) by
\[
  (t_1,t_2,t_3,t_4) \le (s_1,s_2,s_3,s_4)
  \quad \mbox{if and only if} \quad 
  (t_4,t_3,t_2,-t_1) \le_{\mathrm{lex}} (s_4,s_3,s_2,-s_1),
\]
where \( \le_{\mathrm{lex}} \) is the lexicographic order. In other words,
\( (t_1,t_2,t_3,t_4) \le (s_1,s_2,s_3,s_4) \) if and only if
\( t_4<s_4 \) or (\( t_4=s_4 \) and \( t_3<s_3 \)) or
(\( t_4=s_4, t_3=s_3 \) and \( t_2<s_2 \)) or
(\( t_4=s_4, t_3=s_3, t_2=s_2 \) and \( -t_1\le -s_1 \)).
\end{defn}

For example, \( (3,3,4,3) \le (3,3,3,4) \) and
\( (4,3,3,3) \le (2,3,3,3) \). The motivation of the above definition
is the following lemma, which says that \( (\TT,\le) \) is essentially
the totally ordered set on the tuples \( (t_1,t_2,t_3,t_4) \) of the powers of
\( a,b,c,d \) appearing in the weights
\( \widetilde{w}^{AW,H}(t;i,j) \) ordered by their heights \( t \).

\begin{lem}\label{lem:2}
  For a nonnegative integer \( t \), define \( \kappa(t) \) to be the
  tuple \( (t_1,t_2,t_3,t_4) \) of integers satisfying
  \[
    a^{t_1} b^{t_2} c^{t_3} d^{t_4}
  = \left( abcd \right)^m \times
         \begin{cases}
          a & \mbox{if \( t=8m=0 \)},\\
          a^{-1} & \mbox{if \( t=8m\ne 0 \)},\\
         b & \mbox{if \( t=8m+1 \)},\\
         c & \mbox{if \( t=8m+2 \)},\\
         abc & \mbox{if \( t=8m+3 \)},\\
         d & \mbox{if \( t=8m+4 \)},\\
         abd & \mbox{if \( t=8m+5 \)},\\
         acd & \mbox{if \( t=8m+6 \)},\\
         a^2bcd & \mbox{if \( t=8m+7 \)}.
       \end{cases}
  \]
  Then \( \kappa: \ZZ_{\ge0} \to \TT \) is a bijection such that, for
  \( t,s\in \ZZ_{\ge0} \), we have \( t\le s \) if and only if
  \( \kappa(t) \le \kappa(s) \). Moreover, if
  \( \kappa(t) = (t_1,t_2,t_3,t_4) \), then
  for all \( i,j \in \ZZ_{\ge0} \) with \( j\le i \), we have
\[
  \widetilde{w}^{AW,H}(t;i,j)
  = a^{t_1} b^{t_2} c^{t_3} d^{t_4} q^{i(t_1+t_2+t_3+t_4-1)/2} \cdot q^j.
\]
\end{lem}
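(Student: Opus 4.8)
The plan is to prove all three assertions — that $\kappa$ is a bijection, that it is order-preserving, and the weight formula — by first making $\kappa$ completely explicit on each residue class of $t$ modulo $8$ and then reading off each claim from the resulting table. Writing $t = 8m+r$ with $0\le r\le 7$ and distributing the factor $(abcd)^m$ in the defining relation, I would record the nine cases; for instance $\kappa(0)=(1,0,0,0)$, then $\kappa(8m)=(m-1,m,m,m)$ for $m\ge 1$, $\kappa(8m+3)=(m+1,m+1,m+1,m)$, and $\kappa(8m+7)=(m+2,m+1,m+1,m+1)$, with the remaining cases $8m+1,8m+2,8m+4$ (resp.\ $8m+5,8m+6$) obtained by moving the single $+1$ (resp.\ the single $m$) through the four coordinates. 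From each such tuple it is immediate that the coordinate sum is odd and that the coordinates differ pairwise by at most $1$, so $\kappa$ indeed lands in $\TT$; this is the well-definedness step.

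The core structural input is a description of $\TT$ itself: every quadruple in $\TT$ has all coordinates in $\{m_0,m_0+1\}$, where $m_0$ is its minimum, and if $k$ denotes the number of coordinates equal to $m_0+1$ then the sum $4m_0+k$ must be odd, forcing $k\in\{1,3\}$. Hence each element of $\TT$ is determined by $m_0\ge0$, the choice $k\in\{1,3\}$, and the position of the unique exceptional coordinate (the one equal to $m_0+1$ when $k=1$, the one equal to $m_0$ when $k=3$), giving exactly eight elements for each value of $m_0$. Matching these eight-per-level families against the explicit values of $\kappa$ exhibits, for every element of $\TT$, a unique $t$ with $\kappa(t)$ equal to it, which gives surjectivity and in fact an explicit inverse. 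I would then establish order-preservation by checking $\kappa(t)<\kappa(t+1)$ for all $t$: since the order on $\TT$ compares the keys $(t_4,t_3,t_2,-t_1)$ lexicographically, this reduces to a finite verification across the residue classes. For example, passing from $\kappa(8m+2)=(m,m,m+1,m)$, with key $(m,m+1,m,-m)$, to $\kappa(8m+3)=(m+1,m+1,m+1,m)$, with key $(m,m+1,m+1,-m-1)$, strictly increases the third key-coordinate. A strictly increasing surjection between two total orders is automatically injective and an order isomorphism, so the bijection and the order-equivalence $t\le s\Leftrightarrow\kappa(t)\le\kappa(s)$ both follow.

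For the weight formula, the factor $a^{t_1}b^{t_2}c^{t_3}d^{t_4}$ agrees with the non-$q$ part of $\widetilde{w}^{AW,H}(t;i,j)$ by the very definition of $\kappa$, and the factor $q^j$ matches trivially, so the only remaining point is that the exponent of $q^i$ equals $(t_1+t_2+t_3+t_4-1)/2$. This is a one-line comparison in each residue class: from $\widetilde{w}^{AW,H}$ the $i$-exponent is $2m$ (from $(abcd\,q^{2i})^m$) plus the contribution of the case factor, which is $-1,0,+1,$ or $+2$, while $(t_1+\cdots+t_4-1)/2$ is read off the tuple directly. For $t=8m+7$, say, the sum $4m+5$ gives $(4m+4)/2=2m+2$, matching $2m$ plus the $+2$ coming from $a^2bcd\,q^{2i}$.

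The routine-but-delicate part, and the main place to be careful, is the bookkeeping of the nine cases together with the nonstandard order on $\TT$. The special role of $t=0$ must be separated out, since the generic formula $\kappa(8m)=(m-1,m,m,m)$ would produce the negative entry $(-1,0,0,0)$ at $m=0$, while the correct value there is $(1,0,0,0)$; and the reversed, negated last coordinate $-t_1$ in the comparison key is easy to mishandle when verifying monotonicity. Once the explicit table of the nine cases is in place, however, each of the three assertions is a direct finite check.
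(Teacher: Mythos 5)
Your proof is correct and takes essentially the same route as the paper, which simply declares both statements ``immediate from the definition of \( \TT \) and \Cref{thm:4}''; you have carried out in full the case-by-case verification (explicit values \( \kappa(8m+r) \), the eight-elements-per-level description of \( \TT \), the consecutive comparisons in the key order \( (t_4,t_3,t_2,-t_1) \), and the exponent count \( (|T|-1)/2 \)) that the paper leaves to the reader. Your explicit table, including the special value \( \kappa(0)=(1,0,0,0) \) versus \( \kappa(8m)=(m-1,m,m,m) \) for \( m\ge 1 \), matches the paper's definitions exactly.
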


\begin{proof}
  Both statements are immediate from the definition of the totally ordered set
  \( \TT \) and \Cref{thm:4}.
\end{proof}

From now on, we will use the parameters \( a_1,a_2,a_3,a_4 \) in place
of \( a,b,c,d \), respectively. For any quadruple
\( T=(t_1,t_2,t_3,t_4) \) of nonnegative integers, we define
\begin{align*}
  \va^T  &:= a_1^{t_1}a_2^{t_2}a_3^{t_3}a_4^{t_4},\\
  |T|  &:= t_1 +t_2+t_3+t_4.
\end{align*}
For integers \( n\ge k\ge 0 \), let
\[
  \TT_{n,k} = \{ (T_{k},T_{k+1},\dots,T_{n-1})\in \TT^{n-k}: T_{k} \ge T_{k+1} \ge \cdots \ge T_{n-1} \}.
\]
Note that the set \( \TT_{n,k} \) depends only on the difference
\( n-k \). However, we keep both \( n \) and \( k \) in the notation
\( \TT_{n,k} \) to emphasize that we index the elements of a tuple
\( (T_{k},T_{k+1},\dots,T_{n-1})\in \TT_{n,k} \) from \( k \) to
\( n-1 \).

For \( \vT=(T_{k},\dots,T_{n-1}) \in \TT_{n,k} \), the \emph{sum}
\( s(\vT) \) and the \emph{multiplicity} \( m(\vT) \) of \( \vT \) are
defined by
\begin{align*}
  s(\vT) &:= T_k + \cdots + T_{n-1},\\
  m(\vT) &:= (m_1,\dots,m_r),
\end{align*}
where \( T_{k} + \cdots + T_{n-1} \) means the component-wise addition
and \( m_1,\dots,m_r \) are the positive integers such that
\( m_1 + \cdots + m_r = n-k \) and
\begin{equation}\label{eq:15}
  T_{k} = \cdots = T_{k+m_1-1}
  > T_{k+m_1} = \cdots = T_{k+m_1+m_2-1} > \cdots 
  > T_{k+m_1 + \cdots + m_{r-1}} = \cdots = T_{n-1}.
\end{equation}
We will use the following notation:
\begin{align*}
  \norm{\vT}_{n,k}  &:= \sum_{i=k}^{n-1}  \frac{i(|T_i|-1)}{2}, \\
  \qbinom{n}{k,m(\vT)} &:= \qbinom{n}{k,m_1,\dots,m_r}= \frac{[n]_q!}{[k]_q! [m_1]_q! \cdots [m_r]_q!},
\end{align*}
where \( m(\vT) = (m_1,\dots,m_r) \).

We are now ready to state another combinatorial formula for
\( \ts^{AW,H}_{n,k}(a_1,a_2,a_3,a_4;q) \).

\begin{thm}\label{thm:5}
  We have
  \[
    \ts^{AW,H}_{n,k}(a_1,a_2,a_3,a_4;q)
    = \sum_{\vT \in \TT_{n,k}} \va^{s(\vT)}
    q^{\norm{\vT}_{n,k}} \qbinom{n}{k,m(\vT)} .
  \]
\end{thm}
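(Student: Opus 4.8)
The plan is to start from \Cref{thm:4}, which gives $\ts^{AW,H}_{n,k}(a_1,a_2,a_3,a_4;q)=h^{\widetilde{w}^{AW,H}}_{n,k}$, and to evaluate the path generating function on the right by organizing the sum according to the heights of the east steps. Concretely, I would first apply the bijection $\phi$ of \Cref{pro:3bij} to rewrite $h^{\widetilde{w}^{AW,H}}_{n,k}$ as a sum over anti-lecture hall compositions $(\alpha_{k+1},\dots,\alpha_n)\in AL_{n,k}$. Writing $\alpha_i=t_i\,i+j_i$ with $t_i=\flr{\alpha_i/i}$ and $j_i=\alpha_i-it_i\in\{0,1,\dots,i-1\}$, the east step between $x=i-1$ and $x=i$ carries weight $\widetilde{w}^{AW,H}(t_i;i-1,j_i)$. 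Setting $\widehat{T}_{i-1}:=\kappa(t_i)$ for the bijection $\kappa$ of \Cref{lem:2}, that lemma evaluates each factor as $\va^{\widehat{T}_{i-1}}q^{(i-1)(|\widehat{T}_{i-1}|-1)/2}q^{j_i}$, so the weight of the whole path is exactly $\va^{s(\vT)}q^{\norm{\vT}_{n,k}}q^{j_{k+1}+\cdots+j_n}$, where $\vT=(\widehat{T}_k,\dots,\widehat{T}_{n-1})$.

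Next I would translate the defining inequalities $\alpha_i/i\ge\alpha_{i+1}/(i+1)$ of $AL_{n,k}$ into conditions on the pairs $(t_i,j_i)$. Since $0\le j_i/i<1$, each inequality forces $t_i\ge t_{i+1}$; because $\kappa$ is order-preserving, this is precisely the requirement $\widehat{T}_k\ge\cdots\ge\widehat{T}_{n-1}$, i.e.\ $\vT\in\TT_{n,k}$. Moreover, when $t_i>t_{i+1}$ the inequality holds automatically for all $j_i,j_{i+1}$, whereas when $t_i=t_{i+1}$ it reduces to $j_i/i\ge j_{i+1}/(i+1)$. Thus, after fixing $\vT\in\TT_{n,k}$ (equivalently the weakly decreasing sequence $t_{k+1}\ge\cdots\ge t_n$), the residual sum over the $j_i$ is $\sum q^{j_{k+1}+\cdots+j_n}$, where the only surviving constraints link $j_i$ to $j_{i+1}$ within each maximal block on which $t$ is constant; by construction these blocks have sizes $m(\vT)=(m_1,\dots,m_r)$.

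Since no constraint links the $j$'s across block boundaries, the residual sum factors as a product over blocks. On a single block occupying the $\alpha$-positions $a+1,\dots,a+m$, the remaining conditions are $j_{a+1}/(a+1)\ge\cdots\ge j_{a+m}/(a+m)$ with $0\le j_i\le i-1$, which is exactly the index set appearing in the height-$1$ computation of \Cref{lem:q-binom-wt} (with the parametrization spelled out in the proof of \Cref{pro:w=overline w}); hence the block contributes $\qbinom{a+m}{a}$. Taking the product over the $r$ blocks gives the telescoping product $\prod_{s=1}^{r}\qbinom{k+m_1+\cdots+m_s}{k+m_1+\cdots+m_{s-1}}=\qbinom{n}{k,m_1,\dots,m_r}=\qbinom{n}{k,m(\vT)}$. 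Multiplying by the factor $\va^{s(\vT)}q^{\norm{\vT}_{n,k}}$ extracted in the first step and summing over $\vT\in\TT_{n,k}$ then yields the claimed formula.

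The genuinely routine parts are the integer-part argument forcing $t_i\ge t_{i+1}$ and the $q$-multinomial telescoping. The main obstacle I expect is bookkeeping: keeping the off-by-one between the path index $i$ and the weight-system index $i-1$ consistent throughout, and verifying carefully that the block decomposition of the $j$-constraints is governed exactly by $m(\vT)$ — in particular that a strict drop $t_i>t_{i+1}$ really does remove all coupling between $j_i$ and $j_{i+1}$, independently of the actual sizes of $\alpha_i$ and $\alpha_{i+1}$, so that the residual sum factors cleanly over the blocks.
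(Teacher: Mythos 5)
Your proposal is correct and follows essentially the same route as the paper's proof: decompose each path by the heights $t$ of its east steps (which are forced to be weakly decreasing), transport these to tuples in $\TT$ via the order-preserving bijection $\kappa$ of \Cref{lem:2}, and observe that within each maximal constant-$t$ block the residual sum over the $j$'s is a $q$-binomial coefficient whose product telescopes to $\qbinom{n}{k,m(\vT)}$. The only cosmetic difference is that you re-parametrize through the bijection $\phi$ to anti-lecture hall compositions before extracting the $(t_i,j_i)$ data, whereas the paper works directly with the path's step data; the bookkeeping you flag (the off-by-one in indices and the decoupling of $j$'s across a strict drop in $t$) works out exactly as you anticipate.
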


\begin{proof}
Let \( w= \widetilde{w}^{AW,H} \) be the weight system in
\Cref{thm:4} so that
\[
  \ts^{AW,H}_{n,k}(a,b,c,d;q) = \sum_{p:(k,\infty) \to (n,0)} w(p).
\]
Consider a path \( p:(k,\infty) \to (n,0) \). For \( k\le i\le n-1 \),
let \( w(t^{(i)};i,j^{(i)}) \) be the weight of the east step of
\( p \) between \( x=i \) and \( x=i+1 \). Then \( p \) is determined
by the numbers \( t^{(i)} \)'s and \( j^{(i)} \)'s. Let
\( T_i = \kappa(t^{(i)})= (t^{(i)}_1, t^{(i)}_2, t^{(i)}_3, t^{(i)}_4)
\) be the tuple defined in \Cref{lem:2}, which satisfies
\[
  \widetilde{w}^{AW,H}(t^{(i)};i,j^{(i)}) = a^{t^{(i)}_1} b^{t^{(i)}_2} c^{t^{(i)}_3} d^{t^{(i)}_4}
  q^{i(t^{(i)}_1+t^{(i)}_2+t^{(i)}_3+t^{(i)}_4-1)/2} \cdot q^{j^{(i)}}.
\]
Since \( t^{(k)} \ge \cdots \ge t^{(n-1)} \), we have
\( \vT=(T_k,\dots,T_{n-1})\in \TT_{n,k} \). Let
\( m(\vT) = (m_1,\dots,m_r) \).
Then \eqref{eq:15} is equivalent to
\[
  t^{(c_0)} = \cdots = t^{(c_1-1)}
  > t^{(c_1)} = \cdots = t^{(c_2-1)} > \cdots 
  > t^{(c_{r-1})} = \cdots = t^{(c_r-1)},
\]
where \( c_\ell= k+m_1 + \cdots + m_\ell\). Since the
\( y \)-coordinates of the starting points of the east steps must be
weakly decreasing, this implies that
\begin{equation}\label{eq:17}
  c_\ell \ge j^{(c_\ell)} \ge j^{(c_\ell+1)} \ge \cdots \ge j^{(c_{\ell+1}-1)}\ge 0.
\end{equation}

Note that
\[
  \sum q^{j^{(c_\ell)} + \cdots + j^{(c_{\ell+1}-1)}} = \qbinom{c_{\ell+1}}{m_{\ell+1}},
\]
where the sum is over all tuples
\( (j^{(c_\ell)},\dots,j^{(c_{\ell+1}-1)}) \) satisfying \eqref{eq:17}.
Hence, the sum of \( w(p) \) for all paths
\( p:(k,\infty) \to (n,0) \) corresponding to a fixed tuple
\( \vT \in \TT_{n,k} \) with
\( m(\vT) = (m_1,\dots,m_r) \) is equal to
\begin{equation}\label{eq:18}
 \va^{s(\vT)}
  \qbinom{c_1}{m_1} \cdots \qbinom{c_r}{m_r}
  q^{\norm{\vT}_{n,k}}.
\end{equation}
Since
\( \qbinom{c_1}{m_1} \cdots \qbinom{c_r}{m_r} =
\qbinom{n}{k,m_1,\dots,m_r} = \qbinom{n}{k,m(\vT)} \), summing
\eqref{eq:18} over all \( \vT \in \TT_{n,k} \) gives the theorem.
\end{proof}

Using \Cref{thm:5}, we obtain the following ``shifting'' property of
\( \ts^{AW,H}_{n,k}(a_1,a_2,a_3,a_4;q) \). Here,
\( [\va^{\vec s}] f(a_1,a_2,a_3,a_4) \) means the coefficient of
\( \va^{\vec s} \) in \( f(a_1,a_2,a_3,a_4) \).

\begin{cor}
  Let \( \vec s = (s_1,s_2,s_3,s_4) \).
  Then
  \[
    [\va^{\vec s}]  \ts^{AW,H}_{n+j,k+j}(a_1,a_2,a_3,a_4;q)
    = q^{j(|\vec s|-(n-k))/2} \frac{(q^{n+1};q)_{j}}{(q^{k+1};q)_j}
    [\va^{\vec s}] \ts^{AW,H}_{n,k}(a_1,a_2,a_3,a_4;q).
  \]
\end{cor}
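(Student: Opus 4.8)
The plan is to substitute the closed form of \Cref{thm:5} into both sides and compare the two sums term by term through the order-preserving index shift. Since $\TT_{n,k}$ depends only on $n-k$, shifting every index up by $j$ is a bijection $\TT_{n,k}\to\TT_{n+j,k+j}$ sending $\vT=(T_k,\dots,T_{n-1})$ to $\vT'=(T'_{k+j},\dots,T'_{n+j-1})$ with $T'_{i+j}=T_i$. This bijection preserves the sum $s(\vT)=s(\vT')$ and the multiplicity $m(\vT)=m(\vT')$, since neither depends on the actual labels of the indices. Hence applying $[\va^{\vec s}]$ to each side restricts both sums to the \emph{same} index set, namely the $\vT$ with $s(\vT)=\vec s$, and the problem reduces to comparing, for each such $\vT$, the scalars $q^{\norm{\vT}_{n,k}}\qbinom{n}{k,m(\vT)}$ and $q^{\norm{\vT'}_{n+j,k+j}}\qbinom{n+j}{k+j,m(\vT')}$.

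I would then isolate the two independent sources of change. For the power of $q$, the definition of $\norm{\cdot}$ gives directly
\[
  \norm{\vT'}_{n+j,k+j}-\norm{\vT}_{n,k}
  =\sum_{i=k}^{n-1}\frac{(i+j)-i}{2}\,(|T_i|-1)
  =\frac{j}{2}\sum_{i=k}^{n-1}(|T_i|-1).
\]
Because every $|T_i|$ is odd and $\sum_{i=k}^{n-1}|T_i|=|\vec s|$ on the locus $s(\vT)=\vec s$, this difference equals $\tfrac{j}{2}\bigl(|\vec s|-(n-k)\bigr)$; in particular it is independent of $\vT$, so the corresponding power of $q$ pulls out of the sum. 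For the $q$-multinomial, the parts $m_1,\dots,m_r$ are untouched by the shift while the head index moves from $n$ to $n+j$ and the tail from $k$ to $k+j$, so
\[
  \frac{\qbinom{n+j}{k+j,m(\vT')}}{\qbinom{n}{k,m(\vT)}}
  =\frac{[n+j]_q!\,[k]_q!}{[k+j]_q!\,[n]_q!}
  =\frac{[n+1]_q\cdots[n+j]_q}{[k+1]_q\cdots[k+j]_q}
  =\frac{(q^{n+1};q)_j}{(q^{k+1};q)_j},
\]
where the last step uses $[m]_q=(1-q^m)/(1-q)$ so the factors of $(1-q)^{-1}$ cancel. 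This ratio too is independent of $\vT$.

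Combining the two $\vT$-independent factors, each term of $[\va^{\vec s}]\ts^{AW,H}_{n+j,k+j}$ is the matching term of $[\va^{\vec s}]\ts^{AW,H}_{n,k}$ multiplied by a single scalar prefactor built from the two displays above, and summing over the common index set yields the shifting identity. The step that deserves the most care — and the only genuinely delicate point — is the $q$-exponent bookkeeping in the first display: one must verify that the per-index contributions $\tfrac{j}{2}(|T_i|-1)$ aggregate to a quantity depending on $\vT$ only through $|\vec s|$ and $n-k$, which is precisely what allows the prefactor to be extracted from the sum and which pins down its exact shape (so it is worth matching carefully against the stated exponent). The $q$-multinomial telescoping, by contrast, is the routine $q$-Pochhammer computation displayed above.
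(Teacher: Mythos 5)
Your route is precisely the paper's: both reduce the claim, via \Cref{thm:5} and the identification \( \TT_{n+j,k+j}=\TT_{n,k} \), to the term-by-term comparison of \( q^{\norm{\vT}_{n+j,k+j}}\qbinom{n+j}{k+j,m(\vT)} \) with \( q^{\norm{\vT}_{n,k}}\qbinom{n}{k,m(\vT)} \); the paper declares that comparison ``straightforward to verify'' and you actually carry it out. Your two displayed computations (the shift of \( \norm{\cdot} \) and the telescoping of the \( q \)-multinomial) are both correct.

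The gap is that you stop one step short of the conclusion your own computation forces. You correctly obtain the exponent shift \( \tfrac{j}{2}\bigl(|\vec s|-(n-k)\bigr) \), note that it is ``worth matching carefully against the stated exponent'' \( \tfrac{j}{2}(|\vec s|-1) \), and then nevertheless assert that the argument ``yields the shifting identity''. It does not: the two exponents agree only when \( n-k=1 \), and the discrepancy cannot be repaired --- your exponent is the right one and the stated one is in error. Indeed, since each \( |T_i| \) is odd, \( |\vec s|\equiv n-k\pmod 2 \) on the support of \( [\va^{\vec s}] \), so \( j(|\vec s|-1)/2 \) is not even an integer when \( n-k \) is even and \( j \) is odd. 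Concretely, for \( \vec s=(2,0,0,0) \) the only contributing tuple in \( \TT_{k+2,k} \) is \( T_k=T_{k+1}=(1,0,0,0) \), which gives \( [\va^{\vec s}]\ts^{AW,H}_{k+2,k}=\qbinom{k+2}{2} \) and hence the ratio \( \qbinom{k+j+2}{2}\big/\qbinom{k+2}{2}=(q^{k+3};q)_j/(q^{k+1};q)_j \) with no extra power of \( q \), whereas the statement predicts an additional factor \( q^{j/2} \). So the honest conclusion of your argument is that the corollary (and the term-by-term identity asserted in the paper's proof) holds with \( j(|\vec s|-1)/2 \) replaced by \( j(|\vec s|-(n-k))/2 \); leaving the mismatch as a ``point to check'' while claiming the stated identity is proved is where your write-up fails.
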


\begin{proof}
  By \Cref{thm:5},
  \[
    [\va^{\vec s}]  \ts^{AW,H}_{n+j,k+j}(a_1,a_2,a_3,a_4;q)
    = \sum_{\substack{\vT \in \TT_{n+j,k+j}\\ s(\vT) = \vec s}} 
    q^{\norm{\vT}_{n+j,k+j}} \qbinom{n+j}{k+j,m(\vT)} .
  \]
  Since the two sets \( \TT_{n+j,k+j} \) and \( \TT_{n,k} \) are
  identical, we obtain the desired formula from
  \[
    q^{\norm{\vT}_{n+j,k+j}} \qbinom{n+j}{k+j,m(\vT)} =
    q^{j(|\vec s|-(n-k))/2} \frac{(q^{n+1};q)_{j}}{(q^{k+1};q)_j}
    \cdot q^{\norm{\vT}_{n,k}} \qbinom{n}{k,m(\vT)},
  \]
  which is straightforward to verify by reindexing
  $\norm{\vT}_{n+j,k+j}=\sum_{i=k+j}^{n+j-1}\tfrac{i(|T_i|-1)}{2}$
  via $i\mapsto i'+j$.
\end{proof}

\subsection{The symmetry of $a,b,c,d$ in mixed moments}

As another application of \Cref{thm:5}, we give a combinatorial proof
of the symmetry of the parameters of \( a_1,a_2,a_3,a_4 \) in
\( \ts^{AW,H}_{n,k}(a_1,a_2,a_3,a_4;q) \). To this end we introduce
some definitions.

For a function
\( f(a_1,a_2,a_3,a_4) \) in the variables \( a_1,a_2,a_3,a_4 \) and a
permutation \( \tau \in \sym_4 \), let
\[
  \tau \cdot f(a_1,a_2,a_3,a_4) =
  f(a_{\tau(1)},a_{\tau(2)},a_{\tau(3)},a_{\tau(4)}).
\]
Our goal is to show that \( \ts^{AW,H}_{n,k}(a_1,a_2,a_3,a_4;q) \) is
symmetric in \( a_1,a_2,a_3,a_4 \), that is, for any permutation
\( \tau \in \sym_4 \),
\begin{equation}\label{eq:21}
  \tau \cdot \ts^{AW,H}_{n,k}(a_1,a_2,a_3,a_4;q) =
  \ts^{AW,H}_{n,k}(a_1,a_2,a_3,a_4;q) .
\end{equation}
Since the simple transpositions generate \( \sym_4 \), it suffices to
prove \eqref{eq:21} for the cases \( \tau = (1,2) \),
\( \tau= (2,3) \), and \( \tau = (3,4) \).

An \emph{interval} of \( \TT \) is a subset \( I \) of \( \TT \) such
that if \( T_1,T_2\in I \) and \( T_1 \le T \le T_2 \), then
\( T\in I \). For an interval \( I \) of \( \TT \), we define
\[
  \TT_{n,k}(I) := \TT_{n,k} \cap I^{n-k} = \{
  (T_{k},T_{k+1},\dots,T_{n-1})\in I^{n-k}: T_{k} \ge T_{k+1} \ge
  \cdots \ge T_{n-1} \}
\]
and
\begin{equation}\label{eq:13}
    \sigma_{n,k}(I)
    = \sum_{\vT \in \TT_{n,k}(I)} \va^{s(\vT)}
    q^{\norm{\vT}_{n,k}} \qbinom{n}{k,m(\vT)} .
\end{equation}
For two intervals \( I_1 \) and \( I_2 \) of \( \TT \) we write
\( I_1 < I_2 \) if \( T_1<T_2 \) for all \( T_1\in I_1 \) and
\( T_2\in I_2 \). An \emph{increasing interval partition} of \( \TT \)
is an infinite sequence \( (I_0,I_1,\dots) \) of nonempty intervals
\( I_0 <I_1 < \cdots \) in \( \TT \) such that
\( I_0 \cup I_1 \cup \cdots = \TT \). Note that in this case the
intervals \( I_j \)'s are pairwise disjoint and indeed form a
partition of \( \TT \).

\begin{lem}\label{lem:9}
  Let \( (I_0,I_1,\dots) \) be an increasing interval partition of
  \( \TT \). Then
  \[
    \ts^{AW,H}_{n,k}(a_1,a_2,a_3,a_4;q) = \sum_{n=n_0\ge n_1\ge \cdots
      \ge k} \prod_{j\ge0} \sigma_{n_j,n_{j+1}}(I_j),
  \]
  where the sum is over all infinite sequences
  \( (n_0,n_1, \dots) \) of integers such that
  \( n=n_0\ge n_1\ge \cdots \ge k \) and \( n_j = k \) for all
  sufficiently large \( j \).
\end{lem}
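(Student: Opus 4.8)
The plan is to expand the left-hand side by \Cref{thm:5}, which writes it as
\[
  \sum_{\vT\in\TT_{n,k}} \va^{s(\vT)}\, q^{\norm{\vT}_{n,k}} \qbinom{n}{k,m(\vT)},
\]
and then to reorganize this sum over \( \TT_{n,k} \) according to how the entries of each tuple distribute among the intervals \( I_0<I_1<\cdots \). Since \eqref{eq:13} realizes each factor \( \sigma_{n_j,n_{j+1}}(I_j) \) on the right as the analogous sum restricted to tuples with all entries in \( I_j \), it suffices to produce a weight-preserving bijection that splits a single \( \vT \) into one block per interval.

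First I would define the splitting. Given \( \vT=(T_k,\dots,T_{n-1})\in\TT_{n,k} \), let \( a_j=\#\{i: T_i\in I_j\} \), and set \( n_0=n \) and \( n_{j+1}=n_j-a_j \); since \( \sum_j a_j=n-k \), the resulting sequence satisfies \( n=n_0\ge n_1\ge\cdots \) and stabilizes at \( k \). Because \( \vT \) is weakly decreasing and the intervals are totally separated with \( I_0<I_1<\cdots \), the entries of \( \vT \) lying in a given \( I_j \) occupy the consecutive index range \( [n_{j+1},n_j) \) (higher intervals at smaller indices), so the restriction \( \vT^{(j)} \) of \( \vT \) to these indices lies in \( \TT_{n_j,n_{j+1}}(I_j) \). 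The map \( \vT\mapsto\bigl((n_j)_{j\ge0},(\vT^{(j)})_{j\ge0}\bigr) \) is a bijection onto the index data of the right-hand side, its inverse being concatenation of the blocks (which is again weakly decreasing since the intervals are disjoint and ordered).

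The heart of the proof is the factorization of the weight into \( \prod_{j\ge0} \) of the corresponding summand of \( \sigma_{n_j,n_{j+1}}(I_j) \). Two factors are immediate: \( \va^{s(\vT)}=\prod_j\va^{s(\vT^{(j)})} \) by additivity of the componentwise sum \( s \), and \( q^{\norm{\vT}_{n,k}}=\prod_j q^{\norm{\vT^{(j)}}_{n_j,n_{j+1}}} \) because \( \norm{\cdot} \) is a sum over the index set and the ranges \( [n_{j+1},n_j) \) partition \( [k,n) \). The one real identity to establish is
\[
  \qbinom{n}{k,m(\vT)} = \prod_{j\ge0}\qbinom{n_j}{n_{j+1},m(\vT^{(j)})}.
\]
Since entries in distinct intervals are distinct elements of \( \TT \), no maximal constant run of \( \vT \) crosses a block boundary, so the multiset of multiplicities \( m(\vT) \) is the disjoint union of the \( m(\vT^{(j)}) \). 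The identity then follows from the telescoping product \( \prod_j [n_j]_q!/[n_{j+1}]_q!=[n]_q!/[k]_q! \) together with the fact that the denominators \( [m_i]_q! \) are exactly those collected blockwise; the order in which the blocks' compositions are concatenated is irrelevant because \( \qbinom{n}{k,m(\vT)} \) depends only on the multiset \( \{m_i\} \).

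The step I expect to require the most care is precisely this multiplicity bookkeeping—verifying rigorously that a constant run never straddles two intervals and that the concatenation order does not matter. Granting it, summing the factored weight over all \( \vT\in\TT_{n,k} \) and interchanging the single sum with the block decomposition turns it into \( \sum_{n=n_0\ge n_1\ge\cdots\ge k}\prod_{j\ge0}\sum_{\vT^{(j)}\in\TT_{n_j,n_{j+1}}(I_j)}(\cdots) \), which by \eqref{eq:13} is exactly \( \sum\prod_j\sigma_{n_j,n_{j+1}}(I_j) \). Finiteness of the product causes no trouble: only finitely many blocks are nonempty, and each empty block contributes the factor \( \sigma_{m,m}(I_j)=1 \).
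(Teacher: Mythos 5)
Your proposal is correct and follows essentially the same route as the paper's proof: expand via \Cref{thm:5}, group the tuples \( \vT\in\TT_{n,k} \) by how many entries fall in each interval \( I_j \) (which determines the \( n_j \)'s and splits \( \vT \) into consecutive blocks \( \vT^{(j)}\in\TT_{n_j,n_{j+1}}(I_j) \)), and factor the weight, with the only nontrivial point being the multinomial identity \( \qbinom{n}{k,m(\vT)}=\prod_j\qbinom{n_j}{n_{j+1},m(\vT^{(j)})} \) via the telescoping product \( \prod_j [n_j]_q!/[n_{j+1}]_q!=[n]_q!/[k]_q! \) and the observation that constant runs cannot straddle blocks. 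Your extra remarks on the block-boundary and empty-block issues are correct and only make explicit what the paper leaves implicit.
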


\begin{proof}
  Let \( X(n_0,n_1,\dots) \) be the set of tuples
  \( \vT=(T_k,\dots,T_{n-1})\in \TT_{n,k} \) such that the number of
  elements of \( \vT \) in \( I_j \) is \( n_j-n_{j+1} \) for all \( j\ge0 \).
  Then by \Cref{thm:5},
  \begin{equation}\label{eq:20}
    \ts^{AW,H}_{n,k}(a_1,a_2,a_3,a_4;q) = \sum_{n=n_0\ge n_1\ge \cdots
      \ge k} \sum_{\vT \in X(n_0,n_1,\dots)} \va^{s(\vT)}
    q^{\norm{\vT}_{n,k}} \qbinom{n}{k,m(\vT)} .
  \end{equation}
  Consider \( \vT = (T_k,\dots,T_{n-1})\in X(n_0,n_1,\dots) \).
  For \( j\ge0 \), let 
  \[
    \vT_j = (T_{n_{j+1}}, T_{n_{j+1}+1},\dots,T_{n_j-1}).
  \]
  Then
  \[
    \va^{s(\vT)} q^{\norm{\vT}_{n,k}} = \prod_{j\ge0} \va^{s(\vT_j)}
    q^{\norm{\vT_j}_{n_{j},n_{j+1}}}
  \]
  and
  \[
   \prod_{j\ge0}  \qbinom{n_{j}}{n_{j+1},m(\vT_j)}
   = \prod_{j\ge0}  \frac{[n_j]_q!}{[n_{j+1}]_q!} \prod_{m\in m(\vT_j)} \frac{1}{[m]_q!}
   = \frac{[n]_q!}{[k]_q!} \prod_{m\in m(\vT)} \frac{1}{[m]_q!}
   = \qbinom{n}{k,m(\vT)}.
  \]
  Therefore
  \[
   \va^{s(\vT)} q^{\norm{\vT}_{n,k}} \qbinom{n}{k,m(\vT)}  
   =  \prod_{j\ge0} \va^{s(\vT_j)} q^{\norm{\vT_j}_{n_{j},n_{j+1}}} \qbinom{n_{j}}{n_{j+1},m(\vT_j)}.
  \]
  This shows that
  \begin{equation}\label{eq:19}
    \sum_{\vT \in X(n_0,n_1,\dots)} \va^{s(\vT)}
    q^{\norm{\vT}_{n,k}} \qbinom{n}{k,m(\vT)} = \prod_{j\ge0} \sigma_{n_{j},n_{j+1}}(I_j).
  \end{equation}
  By \eqref{eq:20} and \eqref{eq:19} we obtain the lemma.
\end{proof}

By \Cref{lem:9}, in order to prove that
\( \ts^{AW,H}_{n,k}(a_1,a_2,a_3,a_4;q) \) is invariant under a
permutation \( \tau\in \sym_4 \), it suffices to show that there is an
increasing interval partition \( (I_0, I_1,\dots) \) of \( \TT \) such
that \( \tau \cdot \sigma_{r,s}(I_j) = \sigma_{r,s}(I_j) \) for any
integers \( r\ge s\ge 0\) and \( j\ge0 \). We will find such an
increasing interval partition of \( \TT \) for the three cases that
\( \tau=(2,3) \), \( \tau=(3,4) \), and \( \tau=(1,2) \) in this
order.

First, we consider the simplest case \( \tau=(2,3) \).

\begin{lem}\label{lem:(23)}
  Let \( \vec I = (I_0,I^+_0,I^{++}_0,I^{--}_1,I^-_1,I_1,I^+_1,I^{++}_1,I^{--}_2,I^-_2,I_2,I^+_2,I^{++}_2,\dots) \)
  be the increasing interval partition of \( \TT \) given by
\begin{align*}
I_t  &:=  \{(t_1,t_2,t_3,t_4)\in \TT: t_4=t_3=t_2=t \},\\
I^+_t  &:=  \{(t_1,t_2,t_3,t_4)\in \TT: t_4=t, \{t_2,t_3\} = \{t,t+1\} \},\\
I^{++}_t  &:=  \{(t_1,t_2,t_3,t_4)\in \TT: t_4=t,\ t_2=t_3=t+1 \},\\
I^-_t  &:=  \{(t_1,t_2,t_3,t_4)\in \TT: t_4=t, \{t_2,t_3\} = \{t,t-1\} \},\\
I^{--}_t  &:=  \{(t_1,t_2,t_3,t_4)\in \TT: t_4=t,\ t_2=t_3=t-1 \}.
\end{align*}
Let \( \tau = (2,3) \). Then for all integers \( n\ge k\ge 0 \) and
\( I\in \vec I \), we have
\[
  \tau \cdot \sigma_{n,k}(I) = \sigma_{n,k}(I).
\]
\end{lem}

\begin{proof}
  By the definition \eqref{eq:13} of \( \sigma_{n,k}(I) \), it
  suffices to find a bijection
  \( \theta:\TT_{n,k}(I)\to \TT_{n,k}(I) \) such that if
  \( \theta(\vT) = \vS \) then
  \( \tau\cdot \va^{s(\vT)} = \va^{s(\vS)} \),
  \( \norm{\vT}_{n,k} = \norm{\vS}_{n,k} \), and
  \( m(\vT) = m(\vS) \). To do this consider a tuple
  \( \vT= (T_{k},\dots,T_{n-1}) \in \TT_{n,k}(I) \). For
  \( k\le i\le n-1 \), let
  \( T_i = (t^{(i)}_1, t^{(i)}_2, t^{(i)}_3, t^{(i)}_4) \) and define
  \( T'_i = (t^{(i)}_1, t^{(i)}_3, t^{(i)}_2, t^{(i)}_4) \). Note that
  \( |T_i| = |T'_i| \) for all \( i \). We define \( \theta(\vT) \) to
  be the weakly decreasing rearrangement \( \vS \) of
  \( (T'_{k},\dots,T'_{n-1}) \) in the total order \( \le \) on
  \( \TT \). By the construction,
  \( \theta:\TT_{n,k}(I)\to \TT_{n,k}(I) \) is a bijection such that
  if \( \theta(\vT) = \vS \) then
  \( \tau\cdot \va^{s(\vT)} = \va^{s(\vS)} \) and
  \( m(\vT) = m(\vS) \). Thus it remains to show that
  \( \norm{\vT}_{n,k} = \norm{\vS}_{n,k} \).

  If \( I \) is one of \( I_t \), \( I^{++}_t \), or \( I^{--}_t \),
  then every \( T\in I \) satisfies \( t_2=t_3 \), so \( T'=T \) and
  hence \( \vT=\vS \); in particular \( \norm{\vT}_{n,k} =
  \norm{\vS}_{n,k} \). If \( I=I^+_t \)
  (resp.~\( I=I^-_t \)), then every element \( T\in I \) is either
  \( (t,t,t+1,t) \) or \( (t,t+1,t,t) \)
  (resp.~\( (t,t,t-1,t) \) or \( (t,t-1,t,t) \)). Hence we
  always have \( |T|=|S| \) and therefore
  \[
    \norm{\vS}_{n,k} = \sum_{i=k}^{n-1} \frac{i(|S_i|-1)}{2} =
    \sum_{i=k}^{n-1} \frac{i(|T_i|-1)}{2} = \norm{\vT}_{n,k},
  \]
  which completes the proof.
\end{proof}

Before finding an increasing interval partition of \( \TT \) for
\( \tau=(3,4) \), we need some definitions. Let \( \sym(0^r,1^s) \) be
the set of words \( \pi = \pi_1 \cdots \pi_{r+s} \) consisting of
\( r \) \( 0 \)'s and \( s \) \( 1 \)'s. For such a word
\( \pi = \pi_1 \cdots \pi_{r+s} \), an \emph{inversion} is a pair
\( (i,j) \) of integers \( 1\le i<j\le r+s \) such that
\( \pi_i>\pi_j \). Let \( \inv(\pi) \) denote the number of inversions
of \( \pi \). It is well known \cite[1.7.1~Proposition]{EC1} that
  \begin{equation}\label{eq:23}
    \sum_{\pi\in \sym(0^r, 1^s)} q^{\inv(\pi)} = \qbinom{r+s}{r}.
  \end{equation}

Now we are ready to consider the case \( \tau=(3,4) \).

\begin{lem}\label{lem:(34)}
Let \( \vec I = (I_0,I_0^+,I_1,I_1^+,\dots) \) be the increasing interval partition of \( \TT \)
given by
\begin{align*}
I_t  &:=  \{(t_1,t_2,t_3,t_4)\in \TT: t_3=t_4=t\},\\
I^+_t  &:=  \{(t_1,t_2,t_3,t_4)\in \TT: \{t_3,t_4\} = \{t,t+1\} \}.
\end{align*}
Let \( \tau = (3,4) \). Then for all integers \( n\ge k\ge 0 \) and
\( I\in \vec I \), we have
\[
  \tau \cdot \sigma_{n,k}(I) = \sigma_{n,k}(I).
\]
\end{lem}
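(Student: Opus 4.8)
The plan is to follow the template of the proof of \Cref{lem:(23)}: by \Cref{lem:9}, for the given increasing interval partition $\vec I = (I_0,I_0^+,I_1,I_1^+,\dots)$ it suffices to prove $\tau\cdot\sigma_{n,k}(I) = \sigma_{n,k}(I)$ for each interval $I\in\vec I$ and all $n\ge k\ge 0$, where $\tau=(3,4)$ acts by swapping $a_3$ and $a_4$. The intervals $I_t$ are immediate: every $T\in I_t$ has $t_3=t_4=t$, so for every $\vT\in\TT_{n,k}(I_t)$ the third and fourth entries of $s(\vT)$ coincide; hence $\tau$ fixes $\va^{s(\vT)}$ termwise and $\sigma_{n,k}(I_t)$ is $\tau$-invariant via the identity map. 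All the content lies in the intervals $I_t^+$.

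For $I_t^+$ I would first record its elements. The defining conditions of $\TT$ force $t_1=t_2\in\{t,t+1\}$, so $I_t^+=\{A,B,C,D\}$ with $A=(t,t,t,t+1)$, $B=(t,t,t+1,t)$, $C=(t+1,t+1,t,t+1)$, $D=(t+1,t+1,t+1,t)$, ordered $B<D<A<C$, and with $|A|=|B|=4t+1$ while $|C|=|D|=4t+3$. Thus a tuple in $\TT_{n,k}(I_t^+)$ is exactly a weakly decreasing word $C^aA^bD^cB^d$ with $a+b+c+d=N:=n-k$, and $\tau$ swaps $A\leftrightarrow B$ and $C\leftrightarrow D$. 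The naive bijection used in \Cref{lem:(23)}---apply $\tau$ entrywise and re-sort---here sends $C^aA^bD^cB^d$ to $C^cA^dD^aB^b$; it matches $s(\vT)$ and the multinomial factor but changes $\norm{\cdot}_{n,k}$ by $bc-ad$. \emph{This failure is the main obstacle}, and its source is structural: unlike the intervals in \Cref{lem:(23)}, the value $|T|$ is not constant on $I_t^+$, so permuting the high elements $C,D$ (of weight $4t+3$) among the positions alters the $q$-exponent.

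To get around this I would compute $\sigma_{n,k}(I_t^+)$ directly and expose the symmetry algebraically rather than through a single entrywise involution. Writing $h=a+c$ for the number of high elements and using $\binom a2+\binom c2+ca=\binom h2$, one checks
\[
  \norm{C^aA^bD^cB^d}_{n,k}=2t\sum_{i=k}^{n-1} i+hk+\binom h2+bc,
\]
while removing the $\tau$-symmetric factor $(a_1a_2a_3a_4)^{tN}$ leaves $\va^{s(\vT)}$ contributing $(a_1a_2)^{h}a_3^{c+d}a_4^{a+b}$. Since $\qbinom{n}{k,m(\vT)}=\qbinom nk\qbinom{N}{a,b,c,d}=\qbinom nk\qbinom Nh\qbinom ha\qbinom{N-h}{b}$, the sum \eqref{eq:13} factors through $h$, and the coefficient of $a_3^{N-e}a_4^{e}$ in the $h$-summand equals $\qbinom Nh$ times
\[
  g(e):=\sum_{a+b=e} q^{b(h-a)}\qbinom ha\qbinom{N-h}{b}.
\]

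The final step identifies $g(e)=\qbinom Ne$ by the $q$-Vandermonde identity; combinatorially this is \eqref{eq:23} applied to the splitting of the $N$ positions into $h$ high slots and $N-h$ low slots, with $q^{b(h-a)}$ counting the cross-inversions. The symmetry $a_3\leftrightarrow a_4$ is then precisely $e\leftrightarrow N-e$, i.e. the symmetry $\qbinom Ne=\qbinom N{N-e}$, realized on words in $\sym(0^{N-e},1^{e})$ by reverse--complement, which preserves $\inv$. Hence each $h$-summand, and therefore $\sigma_{n,k}(I_t^+)$, is symmetric in $a_3,a_4$, which is the desired $\tau$-invariance. I expect the only delicate bookkeeping to be the reduction of the $q$-exponent to the single coupling term $q^{bc}$ and the clean matching of $g(e)$ with the inversion statistic of \eqref{eq:23}.
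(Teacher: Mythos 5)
Your proposal is correct and follows essentially the same route as the paper: the same enumeration of the four elements of \( I^+_t \), the same reduction of the \( q \)-exponent to the single cross term \( q^{bc} \) via the substitution \( h=a+c \), and the same appeal to the inversion statistic \eqref{eq:23} (i.e.\ \( q \)-Vandermonde), with the \( I_t \) case handled trivially in both. The only difference is a transposition of the double sum: the paper fixes the \( a_3,a_4 \)-exponents \( (M_1,M_2)=(a+b,c+d) \) and shows the inner sum equals \( \qbinom{n-k}{h} \) independently of them, whereas you fix \( h \) and show the coefficient of \( a_3^{N-e}a_4^{e} \) is \( \qbinom{N}{e} \), symmetric under \( e\leftrightarrow N-e \) --- two marginals of the same computation.
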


\begin{proof}
  The case \( I=I_t \) can be proved similarly as in the proof of
  \Cref{lem:(23)}. For the case \( I=I^+_t \), we investigate the summand
  in
\[
  \sigma_{n,k}(I^+_t) = \sum_{\vT \in \TT_{n,k}(I^+_t)} \va^{s(\vT)}
  q^{\norm{\vT}_{n,k}} \qbinom{n}{k,m(\vT)}.
\]
Note that \( I^+_t \) has exactly four elements \( T_1,T_2,T_3,T_4 \),
where
  \[
    T_1= (t+1,t+1,t,t+1) > T_2 = (t,t,t,t+1) > T_3 = (t+1,t+1,t+1,t) > T_4 = (t,t,t+1,t).
  \]
  Therefore, every element \( \vT \in \TT_{n,k}(I^+_t) \) is of the
  form \( \vT= (T_1^{m_1}, T_2^{m_2} , T_3^{m_3}, T_4^{m_4}) \), where
  \( T_i^{m_i} \) means the sequence \( T_i,\dots,T_i \) of \( m_i \)
  occurrences of \( T_i \). Then
  \begin{align*}
    \va^{s(\vT)} &= (a_1a_2a_3a_4)^{(n-k)t} (a_1a_2a_4)^{m_1} a_4^{m_2} (a_1a_2a_3)^{m_3} a_3^{m_4},\\
    \norm{\vT}_{n,k} &=  \sum_{i=k}^{n-1} i(2t) +  \sum_{i=k}^{k+m_1-1}i 
                       + \sum_{i=k+m_1+m_2}^{k+m_1+m_2+m_3-1}i,\\
    \qbinom{n}{k,m(\vT)} &= \qbinom{n}{k,m_1,m_2,m_3,m_4}.
  \end{align*}

  Letting \( M_1 = m_1+m_2 \), \( M_2 = m_3+m_4 \), and
  \( N = m_1+m_3 \), we can rewrite the above equations as follows:
  \begin{align*}
    \va^{s(\vT)} &= (a_1a_2a_3a_4)^{(n-k)t} a_1^{N}a_2^{N} a_3^{M_2} a_4^{M_1},\\
    \norm{\vT}_{n,k} &=  2t \left( \binom{n}{2} - \binom{k}{2} \right)
      + kN + \binom{N}{2} + m_2 m_3,\\
    \qbinom{n}{k,m(\vT)} &= \qbinom{n}{k,M_1,M_2} \qbinom{M_1}{m_1} \qbinom{M_2}{m_3}.
  \end{align*}
  Therefore,
  \begin{equation}\label{eq:16}
    \sigma_{n,k}(I^+_t)
    = \sum_{\substack{M_1+M_2=n-k\\ 0\le N\le n-k}}f(M_1,M_2,N) g(M_1,M_2,N),
  \end{equation}
  where
  \begin{align}
\notag    f(M_1,M_2,N)
    &=  (a_1a_2a_3a_4)^{(n-k)t} a_1^{N}a_2^{N} a_3^{M_2} a_4^{M_1}
      q^{2t \left( \binom{n}{2} - \binom{k}{2} \right) +kN + \binom{N}{2}}
      \qbinom{n}{k,M_1,M_2},\\
\label{eq:24}    g(M_1,M_2,N)
    &= \sum_{(m_1,m_2,m_3,m_4)\in X(M_1,M_2,N)} q^{m_2m_3} \qbinom{M_1}{m_1}
      \qbinom{M_2}{m_3},
  \end{align}
  and \( X(M_1,M_2,N) \) is the set of tuples \( (m_1,m_2,m_3,m_4) \)
  of nonnegative integers such that \( m_1+m_2 = M_1 \),
  \( m_3+m_4 = M_2 \), and \( m_1+m_3 = N \).

  Since \( \tau \cdot f(M_1,M_2,N)= f(M_2,M_1,N) \) and
  \( \tau \cdot g(M_1,M_2,N)= g(M_1,M_2,N) \), applying \( \tau \) to
  \eqref{eq:16} yields
  \begin{equation}\label{eq:22}
    \tau \cdot\sigma_{n,k}(I^+_t)
    = \sum_{\substack{M_1+M_2=n-k\\ 0\le N\le n-k}} f(M_2,M_1,N)  g(M_1,M_2,N).
  \end{equation}
  By \eqref{eq:16} and \eqref{eq:22}, in order to show that
  \( \tau \cdot \sigma_{n,k}(I^+_t) = \sigma_{n,k}(I^+_t) \), it
  suffices to verify that \( g(M_1,M_2,N) = g(M_2,M_1,N) \). More
  generally, we will prove that \( g(M_1,M_2,N) \) is independent of
  \( M_1 \) and \( M_2 \) by showing the following claim:
  \begin{equation}\label{eq:25}
    g(M_1,M_2,N) = \sum_{\pi\in \sym(0^{N}, 1^{n-k-N})} q^{\inv(\pi)}
    = \qbinom{n-k}{N}.
  \end{equation}

  To see this, consider
  \( \pi = \pi_1 \cdots \pi_{n-k}\in \sym(0^{N}, 1^{n-k-N}) \). Let
  \( m_1 \) (resp.~\( m_2 \)) be the number of \( 0 \)'s
  (resp.~\( 1 \)) in \( \pi'=\pi_1 \cdots \pi_{M_1}\) and let
  \( m_3 \) (resp.~\( m_4 \)) be the number of \( 0 \)'s
  (resp.~\( 1 \)) in \( \pi'' = \pi_{M_1+1} \cdots \pi_{n-k}\). Then
  \( (m_1,m_2,m_3,m_4)\in X(M_1,M_2,N) \) and
  \( \inv(\pi) = m_2m_3+\inv(\pi')+\inv(\pi'') \). Hence, the
  right-hand side of \eqref{eq:25} is equal to
  \[
    \sum_{(m_1,m_2,m_3,m_4)\in X(M_1,M_2,N)} q^{m_2 m_3}
    \sum_{\pi'\in \sym(0^{m_1}, 1^{m_2})} q^{\inv(\pi')} 
    \sum_{\pi''\in \sym(0^{m_3}, 1^{m_4})} q^{\inv(\pi'')} .
  \]
  By \eqref{eq:23}, this is equal to the right-hand side of
  \eqref{eq:24}. Therefore the claim \eqref{eq:25} holds and the proof
  is completed.
\end{proof}

Finally we consider the case \( \tau=(1,2) \), which is similar to the
case \( \tau=(3,4) \).

\begin{lem}\label{lem:(12)}
Let \( \vec I = (I_0,I_0^+,I_1,I_1^+,\dots) \) be the increasing interval partition of \( \TT \)
given by
\begin{align*}
I_t  &:=  \{(t_1,t_2,t_3,t_4)\in \TT: t_3=t_4=t\},\\
I^+_t  &:=  \{(t_1,t_2,t_3,t_4)\in \TT: \{t_3,t_4\} = \{t,t+1\} \}.
\end{align*}
Let \( \tau = (1,2) \). Then for all integers \( n\ge k\ge 0 \) and
\( I\in \vec I \), we have
\[
  \tau \cdot \sigma_{n,k}(I) = \sigma_{n,k}(I).
\]
\end{lem}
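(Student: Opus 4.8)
The plan is to follow the template of \Cref{lem:(34)} almost verbatim, invoking \Cref{lem:9} to reduce the claim to showing $\tau\cdot\sigma_{n,k}(I)=\sigma_{n,k}(I)$ for each interval $I$ of the given partition. The point that makes this particular partition usable for $\tau=(1,2)$ is that membership in $I_t$ and $I^+_t$ depends only on the coordinates $t_3,t_4$, which $\tau=(1,2)$ fixes. Consequently the rearrangement map $\theta$ that swaps the first two entries of each $T_i$ and then sorts the result into weakly decreasing order, exactly as in \Cref{lem:(23)} and \Cref{lem:(34)}, restricts to a bijection $\theta:\TT_{n,k}(I)\to\TT_{n,k}(I)$ with $\tau\cdot\va^{s(\vT)}=\va^{s(\theta(\vT))}$ and $m(\vT)=m(\theta(\vT))$. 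As always, the only thing that requires work is the equality $\norm{\vT}_{n,k}=\norm{\theta(\vT)}_{n,k}$.

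Compared with \Cref{lem:(34)}, the roles of $I^+_t$ and $I_t$ are interchanged. On $I^+_t$ the constraint $\{t_3,t_4\}=\{t,t+1\}$ together with $|t_i-t_j|\le 1$ and the parity condition forces $t_1=t_2$ for every element (the four elements being $(t{+}1,t{+}1,t{+}1,t)$, $(t,t,t{+}1,t)$, $(t{+}1,t{+}1,t,t{+}1)$, $(t,t,t,t{+}1)$); hence $\tau=(1,2)$ acts as the identity on $\TT_{n,k}(I^+_t)$ and invariance is immediate with $\theta=\mathrm{id}$, as in the easy case of \Cref{lem:(23)}. The work is therefore concentrated on $I_t$, where $t_3=t_4=t$ forces $t_1+t_2$ odd, i.e. $|t_1-t_2|=1$. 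For $t\ge1$ this yields exactly four elements, which in decreasing $\le$-order are
\[
 T_1=(t,t+1,t,t)>T_2=(t-1,t,t,t)>T_3=(t+1,t,t,t)>T_4=(t,t-1,t,t),
\]
with $|T_1|=|T_3|=4t+1$ and $|T_2|=|T_4|=4t-1$, and $\tau=(1,2)$ swaps $T_1\leftrightarrow T_3$ and $T_2\leftrightarrow T_4$. This is precisely the structure of $I^+_t$ in \Cref{lem:(34)}: two $\le$-ordered pairs exchanged by $\tau$, each pair sharing a common value of $|T|$.

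I would then run the identical computation. Writing $\vT=(T_1^{m_1},T_2^{m_2},T_3^{m_3},T_4^{m_4})$ and setting $M_1=m_1+m_2$, $M_2=m_3+m_4$, $N=m_1+m_3$, one finds $\va^{s(\vT)}=(a_1a_2a_3a_4)^{(n-k)t}a_1^{N-M_1}a_2^{N-M_2}$, then $\norm{\vT}_{n,k}=(2t-1)\big(\binom n2-\binom k2\big)+kN+\binom N2+m_2m_3$, and $\qbinom{n}{k,m(\vT)}=\qbinom{n}{k,M_1,M_2}\qbinom{M_1}{m_1}\qbinom{M_2}{m_3}$; the only change from \Cref{lem:(34)} is the harmless constant $2t-1$ in place of $2t$, which depends only on $t,n,k,N$ and is symmetric in $M_1,M_2$. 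This gives $\sigma_{n,k}(I_t)=\sum_{M_1+M_2=n-k,\,0\le N\le n-k}f(M_1,M_2,N)\,g(M_1,M_2,N)$ with $\tau\cdot f(M_1,M_2,N)=f(M_2,M_1,N)$ and $g(M_1,M_2,N)=\sum_{(m_1,m_2,m_3,m_4)\in X(M_1,M_2,N)}q^{m_2m_3}\qbinom{M_1}{m_1}\qbinom{M_2}{m_3}$, so the inversion identity \eqref{eq:23} yields $g(M_1,M_2,N)=\qbinom{n-k}{N}$ independently of $(M_1,M_2)$, exactly as in claim \eqref{eq:25}, and $\tau\cdot\sigma_{n,k}(I_t)=\sigma_{n,k}(I_t)$ follows. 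The degenerate interval $I_0$ must be flagged separately: nonnegativity of the coordinates deletes $T_2$ and $T_4$, leaving only $T_1=(0,1,0,0)$ and $T_3=(1,0,0,0)$, both with $|T|=1$, so there invariance is the trivial case as in \Cref{lem:(23)}. I expect the main obstacle to be purely bookkeeping: correctly enumerating the elements of each interval — in particular spotting that $t_1=t_2$ is forced on $I^+_t$ and that $I_0$ is truncated by nonnegativity — and checking that the $\norm{}$ and $\va$-exponent accounting lines up with the $(M_1,M_2,N)$ decomposition so that the identity \eqref{eq:25} transfers unchanged.
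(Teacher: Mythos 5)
Your proof is correct and follows essentially the same route as the paper: the case \( I=I^+_t \) is dispatched by observing that \( t_1=t_2 \) is forced so \( \tau=(1,2) \) acts trivially, and the case \( I=I_t \) is handled by the same four-element enumeration, the same \( (M_1,M_2,N) \) reparametrization, and the same reduction to the inversion identity \eqref{eq:25} used for \Cref{lem:(34)}. Your explicit remark that \( I_0 \) degenerates to the two elements \( (0,1,0,0) \) and \( (1,0,0,0) \) is a small point the paper's proof glosses over, but it does not change the argument.
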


\begin{proof}
  Since every tuple \( (t_1,t_2,t_3,t_4)\in I^+_t \) satisfies
  \( t_1=t_2 \), the case \( I=I^+_t \) can be proved similarly as in
  the proof of \Cref{lem:(23)}. To prove the case \( I=I_t \) we use a
  similar approach as in the proof of \Cref{lem:(34)}. For \( t\ge 1 \),
  \( I_t \) has exactly four elements \( T_1,T_2,T_3,T_4 \), where
  \[
    T_1= (t,t+1,t,t) > T_2 = (t-1,t,t,t) > T_3 = (t+1,t,t,t) > T_4 = (t,t-1,t,t).
  \]
  For \( t=0 \), the tuples \( T_2 \) and \( T_4 \) have a negative
  coordinate and hence do not lie in \( \TT \), so
  \( I_0=\{T_1,T_3\} \); the analysis below remains valid in this case
  with the convention \( m_2=m_4=0 \).
  Therefore, every element \( \vT \in \TT_{n,k}(I_t) \) is of the form
  \( \vT= (T_1^{m_1}, T_2^{m_2} , T_3^{m_3}, T_4^{m_4}) \) and
  \begin{align*}
    \va^{s(\vT)} &= (a_1a_2a_3a_4)^{(n-k)t} a_1^{m_3-m_2} a_2^{m_1-m_4},\\
    \qbinom{n}{k,m(\vT)} &= \qbinom{n}{k,m_1,m_2,m_3,m_4},\\
    \norm{\vT}_{n,k} &=  \sum_{i=k}^{n-1} i(2t-1) +  \sum_{i=k}^{k+m_1-1} i
                       + \sum_{i=k+m_1+m_2}^{k+m_1+m_2+m_3-1} i.
  \end{align*}

  Letting \( M_1 = m_1+m_2 \), \( M_2 = m_3+m_4 \), and
  \( N = m_1+m_3 \), we can rewrite the above equations as follows:
  \begin{align*}
    \va^{s(\vT)} &= (a_1a_2a_3a_4)^{(n-k)t} a_1^{N-M_1}a_2^{N-M_2},\\
    \qbinom{n}{k,m(\vT)} &= \qbinom{n}{k,M_1,M_2} \qbinom{M_1}{m_1} \qbinom{M_2}{m_3},\\
    \norm{\vT}_{n,k} &=  (2t-1) \left( \binom{n}{2} - \binom{k}{2} \right)
      + kN + \binom{N}{2} + m_2 m_3.
  \end{align*}
  Then by the same argument in the proof of \Cref{lem:(34)}, we obtain
  \( \tau \cdot \sigma_{n,k}(I_t) = \sigma_{n,k}(I_t) \).
\end{proof}

By Lemmas~\ref{lem:(23)}, \ref{lem:(34)}, and \ref{lem:(12)}, we establish
the symmetry of \( a,b,c,d \) in the mixed moments of Askey--Wilson
polynomials. Let \( \nu^{AW,H}_{n,k}(a,b,c,d;q) \) be the coefficient
in the expansion
\[
  p_n(x;a,b,c,d|q)  = \sum_{k=0}^n \nu^{AW,H}_{n,k}(a,b,c,d;q)\, H_k(x|q).
\]

\begin{thm}
  The rescaled mixed moment \( \ts^{AW,H}_{n,k}(a_1,a_2,a_3,a_4;q) \)
  is symmetric in \( a_1,a_2,a_3,a_4 \). Equivalently, the mixed
  moment \( \sigma^{AW,H}_{n,k}(a,b,c,d;q) \) (and also the
  coefficient \( \nu^{AW,H}_{n,k}(a,b,c,d;q) \)) relative to
  continuous \( q \)-Hermite polynomials is symmetric in
  \( a,b,c,d \).
\end{thm}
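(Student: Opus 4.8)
The plan is to assemble the theorem from \Cref{lem:9} together with the three transposition-specific invariance results, after reducing the full symmetry to the simple transpositions. First I would recall that $\sym_4$ is generated by the adjacent transpositions $(1,2)$, $(2,3)$, and $(3,4)$, so it suffices to establish \eqref{eq:21} for each of these three permutations. The mechanism for each is uniform: fix a simple transposition $\tau$, choose the increasing interval partition $\vec I$ tailored to $\tau$, and apply \Cref{lem:9}, which for any such partition gives
\[
  \ts^{AW,H}_{n,k}(a_1,a_2,a_3,a_4;q) = \sum_{n=n_0\ge n_1\ge \cdots \ge k} \prod_{j\ge0} \sigma_{n_j,n_{j+1}}(I_j).
\]
Because $\tau$ acts only on the parameters $a_1,\dots,a_4$ and not on the combinatorial indices $n_0,n_1,\dots$, it commutes with both the sum and the product on the right-hand side; applying it and using that every factor is fixed yields $\tau\cdot\ts^{AW,H}_{n,k} = \ts^{AW,H}_{n,k}$.

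Concretely, for $\tau=(2,3)$ I would use the partition of \Cref{lem:(23)} and its conclusion $\tau\cdot\sigma_{n_j,n_{j+1}}(I_j)=\sigma_{n_j,n_{j+1}}(I_j)$; for $\tau=(3,4)$ the partition and invariance of \Cref{lem:(34)}; and for $\tau=(1,2)$ the partition and invariance of \Cref{lem:(12)}. Each of these verifies \eqref{eq:21} for the corresponding generator, and since the generators sweep out all of $\sym_4$, the rescaled moment $\ts^{AW,H}_{n,k}(a_1,a_2,a_3,a_4;q)$ is symmetric in $a_1,a_2,a_3,a_4$.

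To transfer the conclusion to the unrescaled moments, I would invoke the defining relation $\ts^{AW,H}_{n,k}(a,b,c,d;q) = \tfrac{2^{n-k}}{\vi}\,\sigma^{AW,H}_{n,k}(\vi a,\vi b,\vi c,\vi d;q)$. The prefactor $2^{n-k}/\vi$ is parameter-free and the substitution $a_r\mapsto \vi a_r$ is applied uniformly to all four variables, so permuting $a,b,c,d$ in $\sigma^{AW,H}$ corresponds exactly to permuting $a_1,a_2,a_3,a_4$ in $\ts^{AW,H}$; symmetry of the latter therefore forces symmetry of the former. For the coefficients, I would observe that the matrices $(\sigma^{AW,H}_{n,k})_{n,k}$ and $(\nu^{AW,H}_{n,k})_{n,k}$ are mutually inverse (equivalently, I would cite \Cref{pro:h-e-dual}, noting that the sign $(-1)^{n-k}$ is parameter-independent). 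Since every entry of $(\sigma^{AW,H}_{n,k})_{n,k}$ is fixed by permuting $a,b,c,d$, the whole matrix is fixed by this substitution, and hence so is its inverse entrywise, which gives the symmetry of $\nu^{AW,H}_{n,k}$.

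I do not anticipate a genuine obstacle: the substantive combinatorics lives entirely in \Cref{lem:9} and in \Cref{lem:(23)}, \ref{lem:(34)}, and \ref{lem:(12)}, so what remains is organizational. The one point requiring a little care is that a \emph{different} interval partition is used for each transposition; this is legitimate precisely because \Cref{lem:9} holds for an arbitrary increasing interval partition of $\TT$, leaving each $\tau$ free to adopt whichever partition makes its factors $\tau$-invariant. The only thing to double-check is that applying $\tau$ really does pass inside the sum-over-sequences in \Cref{lem:9}, which is immediate since $\tau$ touches none of the indices being summed.
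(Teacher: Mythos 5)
Your proposal is correct and follows exactly the paper's own argument: reduce to the simple transpositions $(1,2)$, $(2,3)$, $(3,4)$, invoke \Cref{lem:9} with a transposition-specific increasing interval partition, and conclude from \Cref{lem:(23)}, \Cref{lem:(34)}, and \Cref{lem:(12)}; the transfer to $\sigma^{AW,H}_{n,k}$ and $\nu^{AW,H}_{n,k}$ via the uniform substitution and matrix inversion is also the intended (if unstated) final step. No discrepancies to report.
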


\section*{Acknowledgments}

The authors are grateful to Donghyun Kim for fruitful discussions. They
also thank the anonymous referees for their careful reading of the
manuscript and for helpful comments. SC is partially supported by NSF
grant DMS-2054482 and ANR grants ANR-19-CE48-0011 and ANR-18-CE40-0033.
BJ and JPK are pleased to acknowledge support from ERC Advanced Grant
740900 (LogCorRM). JSK is supported by NRF grant RS-2025-00557835.

\bibliographystyle{abbrv}

\end{document}